\numberwithin{equation}{section}
\providecommand{\abs}[1]{\left\vert#1\right\vert}
\providecommand{\norm}[1]{\left\Vert#1\right\Vert}
\providecommand{\pnorm}[2]{\left\Vert#1\right\Vert_{L^{#2}}}
\providecommand{\Rn}[1]{\mathbb{R}^{#1}}
\providecommand{\br}[1]{\left\langle #1 \right\rangle}
\providecommand{\snorm}[2]{\left\Vert#1\right\Vert_{H^{#2}}}
\providecommand{\snormspace}[3]{\left\Vert#1\right\Vert_{H^{#2}({#3})}}
\providecommand{\sd}[1]{\mathcal{D}_{#1}}
\providecommand{\se}[1]{\mathcal{E}_{#1}}
\providecommand{\sdb}[1]{\bar{\mathcal{D}}_{#1}}
\providecommand{\seb}[1]{\bar{\mathcal{E}}_{#1}}
\providecommand{\snorm}[2]{\left\Vert#1\right\Vert_{H^{#2}}}
\providecommand{\snormspace}[3]{\left\Vert#1\right\Vert_{H^{#2}({#3})}}
\providecommand{\ns}[1]{\norm{#1}^2}
\providecommand{\ip}[2]{\left(#1,#2\right)}
\providecommand{\iph}[3]{\left(#1,#2\right)_{\mathcal{H}^{#3}}}
\providecommand{\hn}[2]{\norm{#1}_{\mathcal{H}^{#2}}}
\def\nab{\nabla}
\def\al{\alpha}
\def\dt{\partial_t}
\def\hal{\frac{1}{2}}
\def\ep{\varepsilon}
\def\ls{\lesssim}
\def\p{\partial}
\def\pa{\partial^\alpha}
\def\sg{\mathbb{D}}
\def\da{\Delta_{\mathcal{A}}}
\def\naba{\nab_{\mathcal{A}}}
\def\diva{\diverge_{\mathcal{A}}}
\def\Sa{S_{\mathcal{A}}}
\def\Hsig{{_0}H^1_\sigma(\Omega)}
\def\H1{{_0}H^1(\Omega)}
\def\dis{\displaystyle}
\def\a{\mathcal{A}}
\def\f{\mathcal{F}_{2N}}
\def\g{\mathcal{G}_{2N}}
\def\h{\mathcal{H}}
\def\fj1{\mathcal{J}^{-1}}
\def\k{\mathcal{K}}
\def\n{\mathcal{N}}
\def\x{\mathcal{X}}
\def\y{\mathcal{Y}}
\def\z{\mathcal{Z}}
\def\rest{\hskip 1pt{\hbox to 10.8pt{\hfill\vrule height 7pt width 0.4pt depth 0pt\hbox{\vrule height 0.4pt
width 7.6pt depth 0pt}\hfill}}}
\def\evalu{\hskip 1pt{\hbox to 2pt{\hfill \vrule height -6pt width 0.4pt depth0pt}}}
\DeclareMathOperator{\diverge}{div}
\DeclareMathOperator{\supp}{supp}
\newtheorem{lem}{Lemma}[section]
\newtheorem{prop}[lem]{Proposition}
\newtheorem{thm}[lem]{Theorem}
\newtheorem{remark}[lem]{Remark}
\title[Viscous surface waves]{Zero surface tension limit of viscous surface waves}
\author{Zhong Tan}
\address{
School of Mathematical Sciences\\
Xiamen University\\
Xiamen, Fujian 361005, China}
\email[Z. Tan]{ztan85@163.com}
\author{Yanjin Wang}
\address{
School of Mathematical Sciences\\
Xiamen University\\
Xiamen, Fujian 361005, China}
\email[Y. J. Wang]{yanjin$\_$wang@xmu.edu.cn}
\thanks{This research was supported by the National Natural Science Foundation of China (No. 11201389, 11271305) and the Natural Science Foundation of Fujian Province of China (No. 2012J05011).}
\keywords{Viscous surface waves; Free boundary; Surface tension; Navier-Stokes equations.}
\subjclass[2000]{Primary 35Q30, 35R35, 76D03; Secondary 35B40, 76E17}
\begin{document}
\begin{abstract}
We consider the free boundary problem for a layer of viscous, incompressible fluid in a uniform gravitational field, lying above a rigid bottom and below the atmosphere. For the ``semi-small" initial data, we prove the zero surface tension limit of the problem within a local time interval. The unique local strong solution with surface tension is constructed as the limit of a sequence of approximate solutions to a special parabolic regularization. For the small initial data, we prove the global-in-time zero surface tension limit of the problem.
\end{abstract}

\maketitle
\setcounter{tocdepth}{2}



\section{Introduction}

\subsection{Formulation of the equations in Eulerian coordinates}

We will follow Guo and Tice \cite{GT_lwp,GT_per,GT_inf} to formulate our problem.
We consider a viscous, incompressible fluid evolving in a moving domain
\begin{equation}
\Omega(t) = \{ y \in \Sigma \times \Rn{} \;\vert\; -b(y_1,y_2) < y_3 < \eta(y_1,y_2,t)\}.
\end{equation}
Here we assume that either $\Sigma = \Rn{2}$, or $\Sigma = (L_1 \mathbb{T}) \times (L_2 \mathbb{T})$ for $\mathbb{T} = \Rn{} / \mathbb{Z}$ the usual $1-$torus and $L_1, L_2 >0$ the periodicity lengths. The lower boundary of $\Omega(t)$, denoted by $\Sigma_b$, is assumed to be rigid and given, but the upper boundary, denoted by $\Sigma(t)$, is a free surface that is the graph of the unknown function $\eta: \Sigma\times\Rn{+} \to \Rn{}$. We assume that
\begin{equation}
\begin{cases}
0 < b_-\le b \in C^\infty(\Sigma)  & \text{if } \Sigma = (L_1 \mathbb{T}) \times (L_2 \mathbb{T}) \\
0 < b_-\le b \in C^\infty(\Sigma) \text{ and }b\rightarrow b_\infty>0 \text{ as }|(y_1,y_2)|\rightarrow\infty & \text{if } \Sigma = \Rn{2}
\end{cases}
\end{equation}
for some constants $b_-,b_\infty$.
The fluid is described by its velocity and pressure functions, which are given for each $t\ge0$ by $ u(\cdot,t):\Omega(t) \to \Rn{3} $ and $ p(\cdot,t):\Omega(t) \to \Rn{}$, respectively.  For each $t>0$ we require that $(u, p, \eta)$ satisfy the following equations
\begin{equation}\label{ns_euler}
\begin{cases}
\partial_t u + u \cdot \nabla u + \nabla p-\mu  \Delta u  = 0& \text{in }
\Omega(t) \\
\diverge{u}=0 & \text{in }\Omega(t) \\
(p I - \mu \mathbb{D}(u) ) \nu = g \eta \nu-\sigma H \nu & \text{on } \Sigma(t) \\
\partial_t \eta = u_3 - u_1 \partial_{y_1}\eta - u_2 \partial_{y_2}\eta &
\text{on }\Sigma(t) \\
u = 0 & \text{on } \Sigma_b%
\end{cases}%
\end{equation}
for $I$ the $3 \times 3$ identity matrix,  $(\mathbb{D} u)_{ij} = \partial_i u_j + \partial_j u_i$ the symmetric gradient of $u$, $\mu>0$ the viscosity, and $g>0$ the strength of gravity. The tensor $p I - \mu \mathbb{D}(u)$ is known as the viscous stress tensor.
We take $\sigma> 0$ to be the constant coefficient of surface tension, and it should be understood that when $\sigma=0$ it means the case without surface tension.
In this paper, we let $\nabla_\ast$ denote the horizontal gradient, $\diverge_\ast$ denote the horizontal divergence and $\Delta_\ast$ denote the horizontal Laplace operator. Then the outward-pointing unit normal on $\Sigma(t)$, $\nu$, is given by
\begin{equation}
\nu=\frac{(-\nabla_\ast\eta ,1)}
{\sqrt{1+|\nabla_\ast\eta |^2}},
\end{equation}
and  $H $, twice the mean curvature of the surface $\Sigma(t)$, is given by the formula
\begin{equation}
H =\diverge_\ast\left(\frac{\nabla_\ast\eta }
{\sqrt{1+|\nabla_\ast\eta|^2}}\right).
\end{equation}
The fourth equation in \eqref{ns_euler} is called the kinematic boundary condition which implies that the free surface is advected with the fluid.  Note that in \eqref{ns_euler} we have shifted the gravitational forcing to the boundary and eliminated the constant atmospheric pressure, $p_{atm}$, in the usual way by adjusting the actual pressure $\bar{p}$ according to $p = \bar{p} + g y_3 - p_{atm}$. Without loss of generality, we may assume that $\mu = g = 1$. For a more physical description of the equations \eqref{ns_euler} and the boundary conditions in \eqref{ns_euler}, we refer to \cite{WL}.

To complete the formulation of the problem, we must specify the initial conditions. We suppose that the initial surface $\Sigma(0)$ is given by the graph of the function $\eta(0)=\eta_0: \Sigma\rightarrow \mathbb{R}$, which yields the initial domain $\Omega(0)$ on which we specify the initial data for the velocity, $u(0)=u_0: \Omega(0) \rightarrow \mathbb{R}^3$. We will assume that $\eta_0 > -b$ on $\Sigma$ and that $(u_0, \eta_0)$ satisfy certain compatibility conditions, which we will describe in detail later.  When $\Sigma = (L_1 \mathbb{T}) \times (L_2 \mathbb{T})$ we shall refer to the problem as the ``periodic case'',  and when $\Sigma = \Rn{2}$  we shall refer to it as  the ``non-periodic case''. In this paper, for the local-in-time analysis we will consider both the non-periodic and periodic cases simultaneously.  When different analysis is needed for each case, we will indicate so.  Otherwise, the argument we write works in both cases. For the global-in-time analysis we will only consider the periodic case for simplify.

\subsection{Geometric form of the equations}
The movement of the free boundary $\Sigma(t)$ and the subsequent change of the domain $\Omega(t)$  create numerous mathematical difficulties. To circumvent these, as usual, we will transform the free boundary problem under consideration to a problem with a fixed domain and fixed boundary.
We will not use a Lagrangian coordinate transformation as in \cite{beale_1,solon}, but rather a flattening transformation introduced by Beale in \cite{beale_2}.  To this end, we consider the fixed equilibrium domain
\begin{equation}
\Omega:= \{x \in \Sigma \times \Rn{} \; \vert\;  -b(x_1,x_2) < x_3 < 0  \}
\end{equation}
for which we will write the coordinates as $x\in \Omega$.  We will think of $\Sigma$ as the upper boundary of $\Omega$, and we continue to view $\eta$ as a function on $\Sigma \times \Rn{+}$.  We then define \begin{equation}
 \bar{\eta}:= \mathcal{P} \eta = \text{harmonic extension of }\eta \text{ into the lower half space},
\end{equation}
where $\mathcal{P} \eta$ is defined by \eqref{poisson_def_inf} when $\Sigma = \Rn{2}$ and by \eqref{poisson_def_per} when $\Sigma = (L_1 \mathbb{T}) \times (L_2 \mathbb{T})$.  The harmonic extension $\bar{\eta}$ allows us to flatten the coordinate domain via the mapping
\begin{equation}\label{mapping_def}
 \Omega \ni x \mapsto   (x_1,x_2, x_3 +  \bar{\eta}(x,t)(1+ x_3/b(x_1,x_2) )) := \Phi(x,t) = (y_1,y_2,y_3) \in \Omega(t).
\end{equation}
Note that $\Phi(\Sigma,t) = \Sigma(t)$ and $\Phi(\cdot,t)\vert_{\Sigma_b} = Id_{\Sigma_b}$, i.e. $\Phi$ maps $\Sigma$ to the free surface and keeps the lower surface fixed.   We have
\begin{equation}\label{A_def}
 \nab \Phi =
\begin{pmatrix}
 1 & 0 & 0 \\
 0 & 1 & 0 \\
 A & B & J
\end{pmatrix}
\text{ and }
 \mathcal{A} := (\nab \Phi^{-1})^T =
\begin{pmatrix}
 1 & 0 & -A K \\
 0 & 1 & -B K \\
 0 & 0 & K
\end{pmatrix}
\end{equation}
for
\begin{equation}\label{ABJ_def}
\begin{split}
A &= \p_1 \bar{\eta} \tilde{b} -( x_3 \bar{\eta} \p_1 b )/b^2,\;\;\;  B = \p_2 \bar{\eta} \tilde{b} -( x_3 \bar{\eta} \p_2 b )/b^2,  \\
J &=  1+ \bar{\eta}/b + \p_3 \bar{\eta} \tilde{b},  \;\;\; K = J^{-1}, \\
\tilde{b}  &= (1+x_3/b).
\end{split}
\end{equation}
Here $J = \det{\nab \Phi}$ is the Jacobian of the coordinate transformation.

If $\eta$ is sufficiently small (in an appropriate Sobolev space), then the mapping $\Phi$ is a diffeomorphism.  This allows us to transform the problem to one on the fixed spatial domain $\Omega$ for $t \ge 0$.  In the new coordinates, the system \eqref{ns_euler} becomes
\begin{equation}\label{geometric}
 \begin{cases}
  \dt u - \dt \bar{\eta} \tilde{b} K \p_3 u + u \cdot \naba u -\da u + \naba p     =0 & \text{in } \Omega \\
 \diva u = 0 & \text{in }\Omega \\
 S_\a(p,u) \n = \eta \n-\sigma H \n & \text{on } \Sigma \\
 \dt \eta = u \cdot \n & \text{on } \Sigma \\
 u = 0 & \text{on } \Sigma_b \\
 (u , \eta)\mid_{t=0} = (u_0,\eta_0).
 \end{cases}
\end{equation}
Here we have written the differential operators $\naba$, $\diva$, and $\da$ with their actions given by $(\naba f)_i := \a_{ij} \p_j f$, $\diva X := \a_{ij}\p_j X_i$, and $\da f := \diva \naba f$ for appropriate $f$ and $X$.  We have also written  $\n := (-\p_1 \eta, - \p_2 \eta,1)$ for the non-unit normal to $\Sigma(t)$,  and we have written $\Sa(p,u): = p I  - \sg_{\a} u$ for the stress tensor, where $(\sg_{\a} u)_{ij}: = \a_{ik} \p_k u_j + \a_{jk} \p_k u_i$ is the symmetric $\a-$gradient.  Note that if we extend $\diva$ to act on symmetric tensors in the natural way, then $\diva \Sa(p,u) = \naba p - \da u$ for vector fields satisfying $\diva u=0$.

Recall that $\a$ is determined by $\eta$ through the relation \eqref{A_def}.  This means that all of the differential operators in \eqref{geometric} are connected to $\eta$, and hence to the geometry of the free surface.  This geometric structure is essential to our analysis, as it allows us to control high-order derivatives that would otherwise be out of reach. This was well explained in \cite{GT_lwp,GT_per,GT_inf}.

\subsection{Main results}
Free boundary problems in fluid mechanics have been studied by many authors in many different contexts. Here we will mention only the work most relevant to our present setting. For a more thorough review of the literature, we refer to the review paper of Shibata and Shimizu \cite{ShSh} and the references therein.

The problem of describing the motion of an isolated mass of viscous fluid bounded by a free boundary was studied by Solonnikov in a series of papers; for instance, \cite{solon} considers the problem without surface tension, while \cite{So_2,So_3} concern the problem with surface tension.  Solonnikov's technique for proving the well-posedness in these papers did not rely on energy methods, but rather on H\"older estimates in the case without surface tension and Fourier-Laplace transform methods in the case with surface tension.  A local well-posedness for the problem with surface tension that based on the energy method was developed by Coutand and Shkoller \cite{coutand_shkoller_2}.

The viscous surface wave problem in our present setting that describes the motion of a layer of viscous fluid lying above a fixed bottom  has attracted the attention of many mathematicians since the pioneering work of Beale \cite{beale_1}. For the problem without surface tension, Beale \cite{beale_1} proved the local well-posedness in the Sobolev spaces and Sylvester \cite{sylvester} studied the global well-posedness by using Beale's method. For the periodic case with a completely flat fixed bottom, Hataya \cite{hataya} obtained the global existence of small solutions with an algebraic decay rate in time. Recently, Guo and Tice \cite{GT_lwp,GT_per,GT_inf} developed a new two-tier energy method to prove the global well-posedness and decay of this problem; they proved that the solution for the non-periodic case decays to the equilibrium at an algebraic rate, while the solution for the periodic case decays at an almost exponential rate. For the problem with surface tension, Beale \cite{beale_2} proved the global well-posedness in the Sobolev spaces, while Allain \cite{A} obtained a local existence theorem in two dimension using a different method. Bae \cite{bae} showed the global solvability in Sobolev spaces via the energy method. Beale and Nishida \cite{beale_nishida} showed that the solution  for the non-periodic case obtained in \cite{beale_2} decays at an optimal algebraic rate. For the periodic case with a flat fixed bottom, Nishida, Teramoto and Yoshihara \cite{nishida_1}  showed the global existence of small solutions with an exponential decay rate. Tani \cite{Ta} and Tani and Tanaka \cite{tani_tanaka} also discussed the solvability of the problem with or without surface tension within Beale-Solonnikov's functional framework.

Formally, when the surface tension coefficients $\sigma$ tend to
zero the solutions of the problem \eqref{geometric} with $\sigma>0$
should converge to the limit that solves the problem without surface
tension. However, there is no general theory that guarantees such
asymptotic limit; it depends on whether one can develop an
appropriate well-posedness theory for the problem which is unified
in the surface tension. We note that the well-posedness theories for
the problem with surface tension mentioned above highly depend on
the surface tension, so it can not be applied to show the zero
surface tension limit. Motivated by the recent works of Guo and Tice
\cite{GT_lwp,GT_per,GT_inf}, in this paper we shall develop a {\it
unified} framework of nonlinear energy method of proving the
well-posedness of \eqref{geometric} with and without surface
tension. Hence, it will allow us to rigorously justify the zero
surface tension limit in \eqref{geometric}. Moreover, for the small
initial data this limit is showed to be global in time. We remark
that the problem of the zero surface tension limit was considered in
the context of the water wave problem by Ambrose and Masmoudi
\cite{AM1,AM2} and  the Stefan problem by Hadzic and Shkoller
\cite{HS}, and both of them are restricted within a local time
interval.

Before we state our results, let us now mention the issue of compatibility conditions for the initial data $(u_0,\eta_0)$.  We will work in a high-regularity context, essentially with regularity up to $2N$ temporal derivatives.  This requires us to use $u_0$ and $\eta_0$ to construct the initial data $\dt^j u(0)$ and $\dt^j \eta(0)$ for $j=1,\dotsc,2N$ and $\dt^j p(0)$ for $j = 0,\dotsc, 2N-1$.  These other data must then satisfy various conditions (essentially what one gets by applying $\dt^j$ to \eqref{geometric} and then setting $t=0$), which in turn require $u_0$ and $\eta_0$ to satisfy $2N$ compatibility conditions.  We describe these conditions in detail in Section \ref{l_data_section} and state them explicitly in \eqref{l_comp_cond_2N}.

To state out our results, we now define some quantities. We shall recall from Section \ref{nota} our notation for Sobolev spaces and norms. For a generic integer $n\ge 3$, we define the  energy as
\begin{equation}\label{p_energy_def}
 \se{n}^\sigma: = \sum_{j=0}^{n}  \ns{\dt^j u}_{2n-2j}   + \sum_{j=0}^{n-1} \ns{\dt^j p}_{2n-2j-1}+\sigma\ns{\eta}_{2n+1}+\ns{\eta}_{2n} +\sum_{j=1}^{n+1}  \ns{\dt^j \eta}_{2n-2j+3/2}
\end{equation}
and the corresponding dissipation as
\begin{equation}\label{p_dissipation_def}
\begin{split}
 \sd{n}^\sigma := &\sum_{j=0}^{n} \ns{\dt^j u}_{2n-2j+1} + \sum_{j=0}^{n-1} \ns{\dt^j p}_{2n-2j}+  \sigma^2\ns{ \eta}_{2n+3/2} +  \sigma^2\ns{ \dt\eta}_{2n+1/2} \\
&+  \ns{ \eta}_{2n-1/2}+ \ns{\dt \eta}_{2n-1/2} + \sum_{j=2}^{n+1} \ns{\dt^j \eta}_{2n-2j+5/2}.
\end{split}
\end{equation}
For our use, we will take both $n=2N$ and $n=N+2$. We define
\begin{equation}\label{00eta}
\mathcal{E}_0^\sigma:=\ns{u_0^\sigma}_{4N}+\ns{\eta_0^\sigma}_{4N+1/2}+ \sigma\ns{\eta_0^\sigma}_{4N+1}.
\end{equation}
Note that we allow for $\sigma=0$ in the definitions of energy functionals throughout the paper. We also define
\begin{equation}\label{fff}
\f:=  \ns{\eta}_{4N+1/2}  .
\end{equation}

We first state the local-in-time results for the ``semi-small" initial data; we take either $\Sigma = \Rn{2}$ or $\Sigma = (L_1 \mathbb{T}) \times (L_2 \mathbb{T})$.
For any integer $N\ge 3$, we define
\begin{equation} \label{energysigma}
\mathcal{K}_{2N}^\sigma(t):=   \sup_{0 \le r \le t} \se{2N}^\sigma(r) + \int_0^t \sd{2N}^\sigma(r) dr
+ \ns{\dt^{2N+1} u}_{(\x_t)^*} + \sup_{0 \le r \le t}  \f(r),
\end{equation}
where we have referred to the space $\x_t$ defined later in \eqref{X_def}.
\begin{thm}\label{lwp}
Let $N\ge 3$ be an integer.
Assume that the initial data $ u_0^\sigma$ and $\eta_0^\sigma$ satisfy the bound $\mathcal{E}_0^\sigma<\infty$ as well as the $(2N)^{th}$ compatibility conditions  \eqref{l_comp_cond_2N} with $0<\sigma\le 1$. There exists a universal $\varepsilon_0>0$ such that if $\ns{\eta_0^\sigma}_{4N-1/2}\le \varepsilon_0/2$, then there exists a $T_0=T_0(\mathcal{E}_0^\sigma)>0$ so that there exists a unique solution $(u^\sigma,p^\sigma,\eta^\sigma)$ to \eqref{geometric} with initial data $(u_0^\sigma,\eta_0^\sigma)$ on the time interval $[0,T_0]$. The solution $(u^\sigma,p^\sigma,\eta^\sigma)$ obeys the estimates
\begin{equation}\label{zero_1}
\mathcal{K}_{2N}^\sigma(u^\sigma,p^\sigma,\eta^\sigma)(T_0)
\le   P ( \mathcal{E}_0^\sigma)\text{ and } \sup_{0\le t\le T_0}\ns{\eta^\sigma(t)}_{ {4N-1/2}}\le \varepsilon_0
\end{equation}
for a universal polynomial $P$ with $P(0)=0$.

As a consequence, if as $\sigma\rightarrow0$, $u_0^\sigma\rightarrow u_0$ in $H^{4N}(\Omega)$, $\eta_0^\sigma\rightarrow \eta_0$ in $H^{4N+1/2}(\Sigma)$ and $\sqrt{\sigma} \eta_0^\sigma \rightarrow0$ in $H^{4N+1}(\Sigma)$, then $(u^\sigma,p^\sigma,\eta^\sigma)$ converges to the limit $(u,p,\eta)$ which is the unique solution to \eqref{geometric} for $\sigma=0$ with initial data $(u_0,\eta_0)$ on  $[0,T_0]$. The limit $(u,p,\eta)$ obeys the estimates
\begin{equation}\label{zero_123}
\mathcal{K}_{2N}^0(u,p,\eta)(T_0)
\le   P (\mathcal{E}_0)\text{ and } \sup_{0\le t\le T_0}\ns{\eta(t)}_{ {4N-1/2}} \le \varepsilon_0
\end{equation}
\end{thm}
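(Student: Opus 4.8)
The plan is to run the nonlinear energy method in the spirit of Guo--Tice, but arranged so that \emph{every} estimate is independent of the surface tension coefficient $\sigma\in(0,1]$; the theorem then follows in four stages: (i) $\sigma$-uniform a priori estimates; (ii) existence for each fixed $\sigma>0$ on a $\sigma$-independent time interval; (iii) uniqueness; (iv) passage to the limit $\sigma\to0$.

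Stage (i) is the heart. Suppose $(u^\sigma,p^\sigma,\eta^\sigma)$ solves \eqref{geometric} on $[0,T]$ with $\sup_{[0,T]}\ns{\eta^\sigma(t)}_{4N-1/2}$ small. Differentiating \eqref{geometric} in time $j$ times (for $j=0,\dots,2N$ and for $j=0,\dots,N+2$), testing the momentum equation against $\dt^j u^\sigma$, and using the divergence-free and boundary conditions yields the basic energy identity. The surface-tension contribution coming from $-\sigma H\n$ in the dynamic boundary condition, combined with the kinematic equation $\dt\eta=u\cdot\n$, integrates into the perfect derivative $\frac{d}{dt}\big(\sigma\!\int_\Sigma |\nabla_\ast\dt^j\eta^\sigma|^2/\sqrt{1+|\nabla_\ast\eta^\sigma|^2}\big)$ plus remainder terms that carry a \emph{positive} power of $\sigma$ and hence are harmless as $\sigma\to0$. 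This produces differential inequalities of the type $\frac{d}{dt}\se{2N}^\sigma+\sd{2N}^\sigma\ls P(\sqrt{\se{2N}^\sigma})\,\sd{2N}^\sigma$ and its analogue for $n=N+2$. The top-order behaviour of $\eta^\sigma$ is recovered \emph{not} from $\sigma$ but from the structure of the system: the dynamic boundary condition $\Sa(p^\sigma,u^\sigma)\n=\eta^\sigma\n-\sigma H\n$ is solved for $\eta^\sigma$ (and for $\sigma H$) in terms of $u^\sigma,p^\sigma$ by elliptic and trace estimates, giving $\ns{\eta^\sigma}_{2n-1/2}+\sigma^2\ns{\eta^\sigma}_{2n+3/2}\ls\sd{n}^\sigma+\dots$ with no negative power of $\sigma$, while the kinematic equation, a transport equation for $\eta^\sigma$, yields the $\f$-estimate $\sup_{[0,t]}\f(r)\ls\ns{\eta_0^\sigma}_{4N+1/2}+\big(\int_0^t\sqrt{\sd{2N}^\sigma}\,\big)^2(1+\sup_{[0,t]}\f)$; the dual-norm bound on $\dt^{2N+1}u^\sigma$ in $(\x_t)^\ast$ comes by duality against test fields in $\x_t$. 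A continuity/bootstrap argument then gives, for $T_0=T_0(\mathcal{E}_0^\sigma)$ small, $\mathcal{K}_{2N}^\sigma(T_0)\le P(\mathcal{E}_0^\sigma)$; since $4N-1/2$ is a lower-order index controlled by $\sd{2N}^\sigma$, one also has $\sup_{[0,T_0]}\ns{\eta^\sigma(t)}_{4N-1/2}\le\ns{\eta_0^\sigma}_{4N-1/2}+T_0P(\mathcal{E}_0^\sigma)\le\varepsilon_0$ after shrinking $T_0$.

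For stage (ii), the solution $(u^\sigma,p^\sigma,\eta^\sigma)$ on $[0,T_0]$ is built, for each fixed $\sigma>0$, as the limit of solutions of the special parabolic regularization announced in the abstract; those approximate solutions satisfy the same $\sigma$-uniform bounds by stage (i), so the limit exists on $[0,T_0]$ and, by weak lower semicontinuity of the norms, obeys \eqref{zero_1}. Stage (iii), uniqueness, is obtained by subtracting two solutions with the same data and running a low-regularity energy (Gronwall) estimate on the difference, the surface-tension contributions again being controlled uniformly in $\sigma$. Because $u_0^\sigma\to u_0$ in $H^{4N}$, $\eta_0^\sigma\to\eta_0$ in $H^{4N+1/2}$ and $\sqrt{\sigma}\,\eta_0^\sigma\to0$ in $H^{4N+1}$, the quantity $\mathcal{E}_0^\sigma$ stays bounded, so a single $T_0$ works for all small $\sigma$.

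Stage (iv): the bounds \eqref{zero_1}, uniform on $[0,T_0]$ for $\sigma\in(0,1]$, give a sequence $\sigma_k\to0$ along which $(u^{\sigma_k},p^{\sigma_k},\eta^{\sigma_k})\wstar(u,p,\eta)$ in the spaces underlying $\mathcal{K}_{2N}^0(T_0)$, and, by the Aubin--Lions lemma applied with the dissipation (in particular the $(\x_t)^\ast$-control of $\dt^{2N+1}u$), strongly in lower-order norms sufficient to pass to the limit in every nonlinear term of \eqref{geometric}. The sole $\sigma$-dependent term, $-\sigma H\n$, converges strongly to $0$, since $\sigma\ns{\eta^{\sigma_k}}_{2n+1}$ is bounded uniformly and thus $-\sigma H\n=O(\sqrt{\sigma})\to0$ in the relevant space. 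Hence $(u,p,\eta)$ solves \eqref{geometric} with $\sigma=0$ and data $(u_0,\eta_0)$, and the uniqueness of stage (iii) (with $\sigma=0$) upgrades subsequential convergence to convergence of the whole family; \eqref{zero_123} then follows by weak lower semicontinuity. The main obstacle throughout is stage (i): producing the energy--dissipation inequality, the elliptic bounds for $\eta^\sigma$ from the dynamic boundary condition, and the transport ($\f$) bound with constants and lifespan depending only on $\mathcal{E}_0^\sigma$ and \emph{not} on $\sigma$ --- in particular arranging the top-order $\eta^\sigma$ estimate to be robust as $\sigma\to0$, which is precisely the ``unified'' feature the paper exploits.
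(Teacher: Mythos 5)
Your overall architecture (regularize, derive $\sigma$-uniform a priori bounds, continuity argument, compactness as $\sigma\to0$, uniqueness upgrading to full-family convergence) matches the paper's, and several of your structural observations are the right ones: recovering $\sigma^2\ns{\eta^\sigma}_{2n+3/2}+\ns{\eta^\sigma}_{2n-1/2}$ from the dynamic boundary condition, controlling $\f$ through the kinematic transport equation, and bounding $\dt^{2N+1}u^\sigma$ by duality in $(\x_t)^\ast$. But the core of your stage (i) has a genuine gap: the differential inequality you propose, $\frac{d}{dt}\se{2N}^\sigma+\sd{2N}^\sigma\ls P\bigl(\sqrt{\se{2N}^\sigma}\bigr)\sd{2N}^\sigma$, only closes when the full energy is small enough to absorb the right-hand side into the dissipation. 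Theorem \ref{lwp} is stated for ``semi-small'' data: only $\ns{\eta_0^\sigma}_{4N-1/2}$ is assumed small, while $\mathcal{E}_0^\sigma$ (hence $\se{2N}^\sigma$ initially) may be arbitrarily large. With a large energy you can neither absorb $P(\sqrt{\se{2N}^\sigma})\sd{2N}^\sigma$ nor run Gronwall, since $\sd{2N}^\sigma$ is of higher order than $\se{2N}^\sigma$; so your inequality does not produce a lifespan $T_0=T_0(\mathcal{E}_0^\sigma)$ for such data, and the claimed bootstrap collapses exactly at the step you identify as the heart of the proof.

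The paper closes this by a different mechanism: it works with the time-integrated functional $\mathfrak{K}^\sigma$ (sup of energy plus $\int_0^T$ of dissipation plus the dual norm), estimates all nonlinearities through the linear $\a$--Stokes theory and elliptic regularity for $(D_t^ju,\dt^jp)$, and extracts explicit powers of $T$ (typically $T^{1/4}$, via Sobolev interpolation in time as in \eqref{eses3}, \eqref{jaja2}, \eqref{keta1}) in front of every term that involves the large part of the data; the only terms for which no power of $T$ is available are those stemming from the mean curvature and the geometric coefficients, and precisely these are controlled by the \emph{assumed} smallness of $\sup_t\ns{\eta^\sigma}_{4N-1/2}$ (the $\varepsilon_0\sqrt{\mathcal{D}}$ terms in Lemma \ref{gle}). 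This yields the closed inequality \eqref{cco}, $\mathfrak{K}^\sigma\le(P(\mathcal{E}_0^\sigma)+P(1+\mathfrak{K}^\sigma)T^{1/4})\exp((\cdots)T)$, which a continuity argument converts into $\mathfrak{K}^\sigma\le 2P(\mathcal{E}_0^\sigma)$ on a $\sigma$-independent interval. If you want your plan to prove the stated theorem, you must replace the pointwise-in-time energy--dissipation inequality by this $T$-power bookkeeping (or an equivalent device) and isolate where the $H^{4N-1/2}$ smallness of $\eta$ is genuinely needed. Two secondary points: the $\kappa$-regularization requires the compensator $\Psi$ so that the regularized problem keeps the same compatibility conditions as \eqref{geometric} (otherwise your approximate solutions do not attain the given data), and the reason the regularization is needed at all is that a direct iteration fails because the transport equation gives $\eta$ too little regularity to control $\sigma H\n$; your sketch silently assumes both issues away.
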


We now state the global-in-time results for the small initial data; we take $\Sigma = (L_1 \mathbb{T}) \times (L_2 \mathbb{T})$. We assume further that the initial surface function satisfies the ``zero-average'' condition
\begin{equation}\label{z_avg}
\frac{1}{L_1 L_2} \int_\Sigma \eta_0 =0.
\end{equation}
For any integer $N\ge 3$, we define
\begin{equation}
\mathcal{G}_{2N}^\sigma (t) := \sup_{0 \le r \le t} \se{2N}^\sigma(r) + \int_0^t \sd{2N}^\sigma(r) dr + \sup_{0 \le r \le t} (1+r)^{4N-8} \se{N+2}^\sigma(r) + \sup_{0 \le r \le t} \frac{\f(r)}{(1+r)}.
\end{equation}

\begin{thm}\label{gwp}
Let $N\ge 3$ be an integer.
Assume that the initial data $ u_0^\sigma$ and $\eta_0^\sigma$ satisfy the $(2N)^{th}$ compatibility conditions  \eqref{l_comp_cond_2N} with $0<\sigma\le 1$, and that $\eta_0^\sigma$ satisfies the zero average condition \eqref{z_avg}. There exists a universal $\delta_0>0$ such that if $\mathcal{E}_0^\sigma\le \delta_0$, then there exists a unique solution $(u^\sigma,p^\sigma,\eta^\sigma)$ to \eqref{geometric} with initial data $(u_0^\sigma,\eta_0^\sigma)$ on  $[0,\infty)$.  The solution $(u^\sigma,p^\sigma,\eta^\sigma)$ obeys the estimate
\begin{equation}\label{zz22}
\g^\sigma(u^\sigma,p^\sigma,\eta^\sigma)(\infty)
\le  C \mathcal{E}_0^\sigma
\end{equation}
for a universal constant $C>0$.

As a consequence, if as $\sigma\rightarrow0$, $u_0^\sigma\rightarrow u_0$ in $H^{4N}(\Omega)$, $\eta_0^\sigma\rightarrow \eta_0$ in $H^{4N+1/2}(\Sigma)$ and $\sqrt{\sigma} \eta_0^\sigma \rightarrow0$ in $H^{4N+1}(\Sigma)$, then $(u^\sigma,p^\sigma,\eta^\sigma)$ converges to the limit $(u,p,\eta)$ which is the unique solution to \eqref{geometric} for $\sigma=0$ with initial data $(u_0,\eta_0)$ on  $[0,\infty)$. The limit $(u,p,\eta)$ obeys the estimate
\begin{equation}\label{zz33}
\g^0(u,p,\eta)(\infty)
\le  C \mathcal{E}_0.
\end{equation}
\end{thm}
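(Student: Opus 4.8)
The plan is to run the two-tier nonlinear energy method in the spirit of Guo--Tice, but with every estimate carried out \emph{uniformly in} $\sigma\in(0,1]$, and then to combine the resulting uniform bound with a compactness argument to let $\sigma\to0$. \textbf{Step 1 (uniform a priori estimates).} Suppose a solution exists on $[0,T]$ with $\g^\sigma(T)$ sufficiently small. I would first derive the high-level energy--dissipation inequality at $n=2N$: applying $\dt^j$ and tangential spatial derivatives to \eqref{geometric}, using $\diva u=0$ and the identity $\diva\Sa(p,u)=\naba p-\da u$, and integrating by parts with the boundary conditions, one obtains $\dtt\,\se{2N}^\sigma+\sd{2N}^\sigma\ls\sqrt{\se{2N}^\sigma}\,\sd{2N}^\sigma+(\text{cubic remainders})$, where crucially the surface-tension contributions $\sigma\ns{\eta}_{2n+1}$, $\sigma^2\ns{\eta}_{2n+3/2}$, $\sigma^2\ns{\dt\eta}_{2n+1/2}$ enter with a favorable sign and with constants \emph{independent of} $\sigma$ (since $H$ contributes a good elliptic term). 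The same scheme at the low level $n=N+2$ gives $\dtt\,\se{N+2}^\sigma+\sd{N+2}^\sigma\ls\sqrt{\se{2N}^\sigma}\,\sd{N+2}^\sigma$.

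\textbf{Step 2 (decay of the low tier).} In the periodic, zero-average case the Poincar\'e inequality on $\Sigma$ together with the Stokes/elliptic structure yields $\se{N+2}^\sigma\ls\sd{N+2}^\sigma$ \emph{except} for the top-order surface terms in $\se{N+2}^\sigma$ not seen by $\sd{N+2}^\sigma$ (the half-derivative gap in $\eta$); these I would interpolate against $\f=\ns{\eta}_{4N+1/2}$ to get $\se{N+2}^\sigma\ls\sd{N+2}^\sigma+(\f)^{\theta}(\se{N+2}^\sigma)^{1-\theta}$ for suitable $\theta$, hence $\dtt\,\se{N+2}^\sigma+c\,\se{N+2}^\sigma\ls(\text{small, decaying})$. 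Combined with a transport-type estimate for $\eta$ along the (almost-)characteristics of the fourth equation in \eqref{geometric}, which gives only the slow growth $\f(t)\ls\mathcal{E}_0^\sigma(1+t)$, a Gronwall argument produces $\se{N+2}^\sigma(t)\ls\mathcal{E}_0^\sigma(1+t)^{-(4N-8)}$ --- the exponent being exactly what makes $(1+t)^{4N-8}\se{N+2}^\sigma$ bounded. Feeding this decay back into the high-level estimate of Step 1 and into the $\eta$-transport estimate closes the bound for the full functional: $\g^\sigma(T)\ls\mathcal{E}_0^\sigma+(\g^\sigma(T))^{3/2}$, uniformly in $\sigma$.

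\textbf{Step 3 (global existence, uniform in $\sigma$).} Theorem \ref{lwp}, whose constants are $\sigma$-uniform, provides a local solution and the propagation of the compatibility conditions \eqref{l_comp_cond_2N}; the a priori estimate of Step 2 then closes a standard continuity/bootstrap argument provided $\mathcal{E}_0^\sigma\le\delta_0$ with $\delta_0$ universal, so the solution extends to $[0,\infty)$ with $\g^\sigma(\infty)\le C\mathcal{E}_0^\sigma$, i.e.\ \eqref{zz22}. \textbf{Step 4 (zero surface tension limit).} Under the stated convergence of the initial data, $\mathcal{E}_0^\sigma$ stays bounded, so by \eqref{zz22} the family $(u^\sigma,p^\sigma,\eta^\sigma)$ is bounded --- uniformly in $\sigma$ and in $t$ --- in the spaces defining $\g^0$. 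Weak-$*$ compactness extracts a subsequence converging to some $(u,p,\eta)$, and an Aubin--Lions-type argument upgrades this to strong convergence in lower-order norms on compact time intervals, enough to pass to the limit in every nonlinear term of \eqref{geometric} (the geometry $\a$, $J$, $\n$ are continuous functions of $\eta$). The only genuinely $\sigma$-dependent term, $\sigma H\n$ on $\Sigma$, tends to $0$ because $\sigma\|H(\eta^\sigma)\|$ is dominated by $\sqrt\sigma$ times a quantity bounded via \eqref{zz22} (and the datum piece $\sqrt\sigma\,\eta_0^\sigma\to0$ by hypothesis). Hence $(u,p,\eta)$ solves \eqref{geometric} with $\sigma=0$ and data $(u_0,\eta_0)$; uniqueness in the $\sigma=0$ theory forces the limit to be independent of the subsequence, so the whole family converges, and \eqref{zz33} follows from weak lower semicontinuity of the norms.

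\textbf{Main obstacle.} Step 2 is the crux: making the entire two-tier machinery --- energy estimates, Stokes/elliptic regularity, the decay Gronwall argument, and the $\eta$-transport estimate --- go through with constants uniform in $\sigma$. The delicate point is that for $\sigma>0$ the natural energy $\se{n}^\sigma$ carries an extra half-derivative of $\eta$ at top order that is simply \emph{absent} when $\sigma=0$; the argument must never consume that extra regularity, and must instead recover all top-order control of $\eta$ from the geometric structure together with interpolation against the slowly growing $\f$.
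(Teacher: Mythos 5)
Your proposal follows essentially the same architecture as the paper's proof: the two-tier Guo--Tice energy method carried out with all constants uniform in $\sigma\in(0,1]$ (high-tier energy $\se{2N}^\sigma$ bounded, low-tier energy $\se{N+2}^\sigma$ decaying, $\f$ allowed to grow linearly and compensated by the decay of $\k\ls\se{N+2}^\sigma$), closed by a continuity argument starting from Theorem \ref{lwp}, and then weak-$*$ compactness plus uniqueness of the $\sigma=0$ solution to pass to the limit and upgrade subsequential to whole-family convergence. Your Step 1 is slightly too optimistic in calling the remainders ``cubic'': the paper's high-tier inequality \eqref{pp} necessarily carries the terms $\int_0^t\sqrt{\sd{2N}^\sigma\,\k\f}+\k\f$, and absorbing them is exactly where the low-tier decay is consumed; you do acknowledge this feedback, so this is a matter of bookkeeping rather than a wrong idea.

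The one point that is genuinely misstated is the decay mechanism in Step 2, which is the crux. The inequality $\dtt\,\se{N+2}^\sigma+c\,\se{N+2}^\sigma\ls(\text{small, decaying})$ cannot be obtained uniformly in $\sigma$: the dissipation $\sd{N+2}^\sigma$ misses half a derivative of $\eta$ at top order (it controls $\ns{\eta}_{2(N+2)-1/2}$ and $\sigma^2\ns{\eta}_{2(N+2)+3/2}$, versus $\ns{\eta}_{2(N+2)}$ and $\sigma\ns{\eta}_{2(N+2)+1}$ inside $\se{N+2}^\sigma$), and interpolation does not produce a linear absorption with a solution-independent constant. What the paper does (Propositions \ref{Dgle} and \ref{decaylm}) is interpolate the missing terms against the \emph{bounded} high-tier energy, $\se{N+2}^\sigma\ls(\sd{N+2}^\sigma)^{\theta}(\se{2N}^\sigma)^{1-\theta}$ with $\theta=(4N-8)/(4N-7)$, turning $\dtt\,\se{N+2}^\sigma+\sd{N+2}^\sigma\le0$ into the nonlinear ODE $\dtt\,\se{N+2}^\sigma+C\mathcal{M}_0^{-s}(\se{N+2}^\sigma)^{1+s}\le0$ with $s=1/(4N-8)$, whose solutions decay only algebraically, at the rate $(1+t)^{-(4N-8)}$ that is built into $\g^\sigma$. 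Your alternative of interpolating against $\f$ can be salvaged (the extra factor $(1+t)$ from the linear growth of $\f$ is lost again upon integrating the resulting time-weighted nonlinear ODE, and the same rate reappears), but the resulting differential inequality is nonlinear and time-weighted, not the linear damping you wrote; moreover one must respect the circularity that the linear-growth bound for $\f$ (Proposition \ref{p_f_bound}) itself uses $\int_0^t\sd{2N}^\sigma$ and the decay of $\k$, which is why the paper proves the boundedness, the decay, and the $\f$ bound simultaneously under the single bootstrap hypothesis $\g^\sigma(T)\le\delta$ rather than sequentially. Finally, for the $\sigma$-uniform comparison of $\sd{n}^\sigma$ with the horizontal dissipation the paper does not redo the vorticity bootstrap of Guo--Tice but uses the Dirichlet Stokes problem (Lemma \ref{i_linear_elliptic2}) with boundary regularity of $u$ read off from the localized horizontal dissipation, recovering $\eta$ and $p$ directly from the boundary conditions with the $\sigma$- and $\sigma^2$-weighted norms; this is consistent with, and makes precise, your remark that the surface-tension terms must only ever be used with their built-in weights.
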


\begin{remark}
In Theorem \ref{lwp} (resp. Theorem \ref{gwp}) the convergence of $(u^\sigma,p^\sigma,\eta^\sigma)$ to $(u ,p ,\eta )$ is weak  in the norms in the definition of $\mathcal{K}_{2N}^0$ (resp. $\g^0$)
and strong in the norms of any functional space that $\mathcal{K}_{2N}^0$ (resp. $\g^0$) can compactly embed into. The convergence is more that sufficient for us to pass to the limit as $\sigma\rightarrow0$ in \eqref{geometric}.
\end{remark}

\begin{remark}
Note that in Theorem \ref{lwp} we only assume that the $H^{4N-1/2}$ norm of $\eta_0$ is small, but the higher-order norm of $\eta_0$ and the norm of $u_0$ can be arbitrarily large. In this sense we may regard the initial data as to be ``semi-small".
Theorems \ref{lwp}--\ref{gwp} contain the local well-posedness of the viscous surface wave problem \eqref{geometric} both with and without surface tension for the ``semi-small" initial data and global well-posedness for the small initial data. Moreover, the solution without surface tension decays at an almost exponential rate and the solution with surface tension decays at an exponential rate (cf. Theorem \ref{globall}). Since $\eta$ is such that the mapping $\Phi(\cdot,t)$, defined by \eqref{mapping_def}, is a diffeomorphism ($C^{4N-3/2}$ for $\sigma>0$ and $C^{4N-2}$ for $\sigma=0$) for each $t>0$, we may change coordinates to $y \in \Omega(t)$ to produce solutions to \eqref{ns_euler}.
\end{remark}

We now explain the strategy of proving Theorems \ref{lwp} and \ref{gwp}. After collecting some preliminaries in Section \ref{sec pre}, we start to prove Theorem \ref{lwp} of the local-in-time results. The key step is to establish a {\it unified} local well-posedness theory for the problem \eqref{geometric} with and without surface tension. We manage to first produce the local unique strong solutions to \eqref{geometric} for each fixed $\sigma>0$. In the framework of the energy method, it is more natural to use the geometric formulation \eqref{geometric} rather than the perturbed form as well explained in Guo and Tice \cite{GT_lwp,GT_per,GT_inf}. For the local well-posedness without surface tension, Guo and Tice \cite{GT_lwp} used an iteration scheme based on the solvability of the two linear problems which are separated from \eqref{geometric}, that is, the linear $\a$--Navier--Stokes equations for $(u,p)$, where   $\a$ and $\n$ are given (in terms of $\eta$),
\begin{equation}\label{intro_A_NS}
 \begin{cases}
\dt u - \da u + \naba p = F^1 & \text{in }\Omega \\
\diva{u}=0 & \text{in }\Omega\\
\Sa(p,u) \n = F^3 & \text{on }\Sigma \\
u =0 & \text{on }\Sigma_b,
 \end{cases}
\end{equation}
and the linear transport equation for $\eta$, where $u$ is given,
\begin{equation}\label{intro_A_T}
\dt \eta + u\cdot\nab_\ast\eta = u_3\ \text{ on }\Sigma.
\end{equation}
Hereafter $u\cdot\nab_\ast\eta:=u_1 \p_1 \eta + u_2 \p_2 \eta$, and
etc. A subtle point for the iteration in \cite{GT_lwp} could be
useful is that for the nonlinear problem \eqref{geometric} without
surface tension, $F^3=\eta\n$ ($\sim\nab\bar{\eta}\sim \nab u$);
this would imply that the regularity of $\eta$ obtaining from
\eqref{intro_A_T} ($ \eta\sim \int_0^tu_3$) can control the
nonlinear term $F^3$. However, for the problem \eqref{geometric}
with surface tension,  $F^3=\eta \n-\sigma H\n$
($\sim\nab_\ast^2\eta\sim \nab u$); then the regularity of $\eta$
obtaining from \eqref{intro_A_T} is insufficient to control the
nonlinear term $F^3$. Hence, we can not apply the iteration scheme
of \cite{GT_lwp} directly. One may then instead use an iteration
scheme based on the solvability of the following linear problem for
$(u,p,\eta)$, where $\a$ and $\n$ are given (in terms of, say,
$\zeta$),
\begin{equation}\label{lld}
 \begin{cases}
  \dt u - \da u + \naba p = F^1  & \text{in } \Omega \\
 \diva u = 0 & \text{in }\Omega \\
 S_\a(p,u) \n = \eta \n-\sigma \Delta_\ast\eta\n+F^3  & \text{on } \Sigma \\
\dt \eta = u \cdot \n & \text{on } \Sigma\\
  u = 0 & \text{on } \Sigma_b .
 \end{cases}
\end{equation}
We may remark here that we could solve the problem \eqref{lld} in a weak sense by using a time-dependent Galerkin method similarly as \cite{GT_lwp} (restricted to the $\diverge_\a $--free vectors), however, the standard way of improving the regularity by using the horizontal difference quotient as in \cite{coutand_shkoller_2,bae} would fail to improve the regularity of the weak solution due to that the horizontal difference quotient does not commute with the projection along the normal $\n$.

Our way of getting around the difficulty explained above is to introduce an artificial viscosity term in the transport equation, that is, we consider the following regularized $\kappa$-problem, where $\kappa>0$ is the artificial viscosity coefficient,
\begin{equation} \label{lldsd}
 \begin{cases}
  \dt u - \dt \bar{\eta} \tilde{b} K \p_3 u + u \cdot \naba u -\da u + \naba p     =0 & \text{in } \Omega \\
 \diva u = 0 & \text{in }\Omega \\
 S_\a(p,u) \n =  \eta\n -\sigma H \n & \text{on } \Sigma \\
 \dt \eta -\kappa\Delta_\ast\eta=\kappa\Psi+ u \cdot \n & \text{on } \Sigma \\
 u = 0 & \text{on } \Sigma_b.
 \end{cases}
\end{equation}
Note that we have also introduced along the so-called compensator
function $\Psi$ in the transport equation owing to the issue of
compatibility conditions for the initial data. The function $\Psi$
is determined by the initial data, and we postpone its construction
in Section \ref{sec compensator}. By the introduction of such
$\Psi$, then at time $t = 0$, we essentially add nothing on the
boundary, and hence the compatibility conditions to \eqref{lldsd}
are same as \eqref{geometric}. This allows us that the initial data
$(u_0,\eta_0)$ for the original problem \eqref{geometric} can be
exactly taken as the one for the regularized problem  \eqref{lldsd};
otherwise, even though $(u_0,\eta_0)$ satisfy the compatibility
conditions for \eqref{geometric}, in general they do not satisfy the
corresponding compatibility conditions for \eqref{lldsd}. One may
refer to \cite{CS3} for more discussions about the issue of
compatibility conditions. It is worth noting that we employ a
different construction of the compensator $\Psi$ from \cite{CS3}.
The way of \cite{CS3} is to define the Taylor polynomial of
$-\Delta_\ast\eta(t)$ at $t=0$ to be the compensator which requires
more regularity on the initial data; hence a very complicated
argument of regularizing the initial data that ensures the
compatibility conditions needs to be carried out. Our construction
avoids this.

By adding this artificial viscosity term, the regularity of $\eta$ obtaining from the fourth equation of \eqref{lldsd} ($\Delta_\ast \eta\sim u_3$) can control the nonlinear term $\eta \n-\sigma H\n$. This will allow us to modify the iteration scheme of \cite{GT_lwp} to establish a local well-posedness of the $\kappa$-problem \eqref{lldsd} in Section \ref{sec kk} based on the solvability of the linear $\a$--Navier--Stokes equations \eqref{intro_A_NS} and the following linear regularized surface problem
\begin{equation}
 \dt \eta -\kappa\Delta_\ast\eta=\kappa\Psi+ F^4 \  \text{on } \Sigma.
\end{equation}
Having obtained the unique strong solutions
$(u^\kappa,p^\kappa,\eta^\kappa)$ to \eqref{lldsd} on a local time
interval $[0,T_0^\kappa]$ for each $\kappa>0$ (cf. Theorem
\ref{l_knwp}), in Section \ref{sec local} we then derive the
$\kappa$-independent estimates for $(u^\kappa,p^\kappa,\eta^\kappa)$
which allow us to pass to the limit as $\kappa\rightarrow0$ in
\eqref{lldsd} to produce the unique strong solution
$(u^\sigma,p^\sigma,\eta^\sigma)$ to \eqref{geometric} on a local
time interval $[0,T_0^\sigma]$ for each fixed $\sigma>0$ (cf.
Theorem \ref{l_sigmanwp}). We next continue to derive the
$\sigma$-independent estimates for $(u^\sigma,p^\sigma,\eta^\sigma)$
which allow us to pass to the limit as $\sigma\rightarrow0$ in
\eqref{geometric} within a local time interval $[0,T_0]$. In order
to derive the $\sigma$-independent estimates (and the derivation of
the $\kappa$-independent estimates is similar), it is a very key
point to design the special quantity $\mathfrak{K}^\sigma$ (defined
by \eqref{ensigma}) which allows us to establish the following
estimate
\begin{equation} \label{cco}
    \mathfrak{K}^{\sigma}
 \le  \left(  P(\mathcal{E}_0^\sigma)  + P (1+ \mathfrak{K}^{\sigma}) T^{1/4}
  \right)  \exp\left( \left(  P(\mathcal{E}_0^\sigma)  + P(  T^{1/4} \mathfrak{K}^{\sigma})
  \right) T \right).
\end{equation}
Since $\mathcal{E}_0^\sigma(u_0^\sigma,
\eta_0^\sigma)\rightarrow\mathcal{E}_0^0(u_0, \eta_0)$ as
$\sigma\rightarrow0$, a standard continuity argument basing on
\eqref{cco} then implies that the solutions
$(u^\sigma,p^\sigma,\eta^\sigma)$ to \eqref{geometric} all exist on
a $\sigma$-independent time interval $[0,T_0]$ and satisfy
$\mathfrak{K}^{\sigma}\le 2P(\mathcal{E}_0^\sigma)$. We can then
improve this estimate to be \eqref{zero_1} by improving the
estimates of $\eta$. Note that we will refine many estimates of
\cite{GT_lwp} so that Theorem \ref{lwp} can be established for our
``semi-small" initial data which is not necessary to be genuinely
small. This completes the proof of Theorem \ref{lwp}.

We then proceed to prove Theorem \ref{gwp} of the global-in-time
results in Section \ref{sec global}, and the key step is to derive
the global-in-time $\sigma$-independent estimates for the local
solutions implicitly contained in Theorem \ref{lwp}. We will employ
the two-tier energy method recently developed by Guo and Tice
\cite{GT_per,GT_inf}. In this paper, for simplify we choose to
consider the ``periodic case" but we could treat the ``non-periodic
case" by using the arguments of \cite{GT_inf}. For the sake of
completeness, here we will briefly sketch the two-tier energy
method. Basing on the natural energy evolution and making full use
of the structure of the problem \eqref{geometric}, we may prove the
energy estimate, where $\k\ls  \mathcal{E}_{N+2}^\sigma$,
 \begin{equation}\label{pp}
\mathcal{E}_{2N}^\sigma(t)+\int_0^t \mathcal{D}_{2N}^\sigma \lesssim
 \mathcal{E}_{2N}^\sigma(0)  +\k\f
+ \int_0^t
 \sqrt{\mathcal{D}_{2N}^\sigma\mathcal{K}\mathcal{F}_{2N}}+\mathcal{K}\mathcal{F}_{2N}.
\end{equation}
The derivation of \eqref{pp} is divided into two steps. First to use the energy evolution, we can only consider the ``horizontal" energy and dissipation with localization, say, $\seb{n}^\sigma$ and $\sdb{n}$ (cf. Section \ref{sss} for the definitions), and we may prove
\begin{equation} \label{q112}
 \seb{2N}^\sigma(t) + \int_0^t\sdb{2N}
\lesssim  \se{2N}^\sigma(0) +
(\mathcal{E}_{2N}^\sigma(t))^{ {3}/{2}}
+ \int_0^t  \sqrt{\mathcal{E}_{2N}^\sigma }
\mathcal{D}_{2N}^\sigma+
\sqrt{\mathcal{D}_{2N}^\sigma\mathcal{K}\mathcal{F}_{2N}}+\varepsilon\mathcal{D}_{2N}^\sigma
 .
\end{equation}
Second we use the structure of \eqref{geometric} to show the comparison estimates that
\begin{equation}\label{q012}
 {\mathcal{E}}_{2N}^\sigma
\lesssim \bar{\mathcal{E}}_{2N}^\sigma + \mathcal{K}
\mathcal{F}_{2N}, \text{ and }  {\mathcal{D}}_{2N}^\sigma \lesssim
\bar{\mathcal{D}}_{2N} + \mathcal{K} \mathcal{F}_{2N}.
\end{equation}
It is easy to show that ${\mathcal{E}}_{2N}^\sigma$ is comparable to $\seb{2N}^\sigma$ as stated in \eqref{q012} by applying the Stokes elliptic theory of Lemma \ref{i_linear_elliptic}.  However, due to the lack of  $\eta$ terms in $\bar{\mathcal{D}}_{2N}$, we can not compare the dissipations by using this technique. To overcome this difficulty, Guo and Tice \cite{GT_per,GT_inf} instead use the structure of the equations as well as the equation for the vorticity (from which the pressure and $\eta$ can effectively be eliminated) to derive various estimates for $u$ and $\nabla p$ without reference to $\eta$. Then a bootstrapping procedure is employed to obtain estimates for $\eta$ and $p$ (not just its gradient). In this paper, we use an alternative procedure which seems much more simpler.  More precisely, since
$\bar{\mathcal{D}}_{2N}$ controls horizontal derivatives, we can use it to gain the regularity of $u$ on $\Sigma$.  The idea is then to apply the Stokes elliptic regularity theory of Lemma \ref{i_linear_elliptic2} to deduce the desired estimates of $u$.  Then we get the
desired estimates of $p$ and $\eta$  by using the equations directly. Then \eqref{pp} follows by \eqref{q112} and \eqref{q012}.

In order to close the estimates \eqref{pp}, we appeal to the control of $\f$. Note that for $\sigma>0$, we can bound by $\mathcal{F}_{2N}\le \sigma^{-2}\mathcal{D}_{2N}^\sigma$. Hence, for each fixed $\sigma>0$, from \eqref{pp} we may prove the global well-posedness and exponential decay of \eqref{geometric} for the initial data sufficiently small with respect to $\sigma$ (cf. Theorem \ref{globall}). However, to prove the zero surface tension limit, we need to derive the $\sigma$-independent estimates for the solutions. The only way to estimate $\f$ then is through the kinematic transport equation for $\eta$, and we may derive
\begin{equation} \label{pp11}
   \f(t) \ls
\f(0) +   t \int_0^t \sd{2N}^\sigma.
\end{equation}
This estimate allows $\f$ to grow linearly in time. So to balance such growth of $\f$, we must derive a strong decay of $\k\ls \se{N+2}$. This is accomplished by the following differential inequality
\begin{equation}  \label{n+223}
\frac{d}{dt} \mathcal{E}_{N+2}^\sigma +  \mathcal{D}_{N+2}^\sigma\le
0.
\end{equation}
Indeed, we interpolate to have ${\mathcal{E}}_{N+2}^\sigma\lesssim({\mathcal{D}}_{N+2}^\sigma)^{(4N-8)/(4N-7)}({\mathcal{E}}_{2N}^\sigma)^{1/(4N-7)}$, so we can deduce from \eqref{n+223} that
\begin{equation}\label{pp22}
(1+t)^{4N-8} \mathcal{E}_{N+2}^\sigma(t)\lesssim
\mathcal{E}_{2N}^\sigma(0)+ \mathcal{F}_{2N}(0).
\end{equation}
Then \eqref{pp}, \eqref{pp11} and \eqref{pp22} imply the desired bound $\g^\sigma(t)\ls \g^\sigma(0)$. This $\sigma$-independent estimates will then allow us to pass to the limit as $\sigma\rightarrow0$ in \eqref{geometric} on the whole interval $[0,\infty)$. Hence we can complete the proof of Theorem \ref{gwp}.

\subsection{Notation convention}\label{nota}
We shall also employ the notation convention from \cite{GT_lwp,GT_per,GT_inf}.

We write $H^k(\Omega)$ with $k\ge 0$ and $H^s(\Sigma)$ with $s \in \Rn{}$ for the usual Sobolev spaces. We also typically write $H^0 = L^2$.  To avoid notational clutter, we will avoid writing $H^k(\Omega)$ or $H^k(\Sigma)$ in our norms and typically write only $\norm{\cdot}_{k}$.  Since we will do this for functions defined on both $\Omega$ and $\Sigma$, this presents some ambiguity.  We avoid this by adopting two conventions.  First, we assume that functions have natural spaces on which they ``live.''  For example, the functions $u,$ $p$, and $\bar{\eta}$ live on $\Omega$, while $\eta$ itself lives on $\Sigma$.  As we proceed in our analysis, we will introduce various auxiliary functions; the spaces they live on will always be clear from the context.  Second, whenever the norm of a function is computed on a space different from the one in which it lives, we will explicitly write the space.  This typically arises when computing norms of traces onto $\Sigma$ of functions that live on $\Omega$. We use $\norm{\cdot}_{L^pX}$ to denote the norm of the space $L^p(0,T;X)$.

We write $\mathbb{N} = \{ 0,1,2,\dotsc\}$ for the collection of non-negative integers.  When using space-time differential multi-indices, we will write $\mathbb{N}^{1+m} = \{ \alpha = (\alpha_0,\alpha_1,\dotsc,\alpha_m) \}$ to emphasize that the $0-$index term is related to temporal derivatives.  For just spatial derivatives we write $\mathbb{N}^m$.  For $\alpha \in \mathbb{N}^{1+m}$ we write $\pa = \dt^{\alpha_0} \p_1^{\alpha_1}\cdots \p_m^{\alpha_m}.$ We define the parabolic counting of such multi-indices by writing $\abs{\alpha} = 2 \alpha_0 + \alpha_1 + \cdots + \alpha_m.$  We will write $\nab_\ast f$ for the horizontal gradient of $f$, i.e. $\nab_\ast f = \p_1 f e_1 + \p_2 f e_2$, while $\nab f$ will denote the usual full gradient.

For a given norm $\norm{\cdot}$ and  integers $k\ge m\ge 0$, we
introduce the following notation for sums of spatial derivatives:
\begin{equation}
 \norm{{\nab_{\ast}}_m^k f}^2 := \sum_{\substack{\alpha \in \mathbb{N}^2 \\ m\le \abs{ \alpha}\le k} } \norm{\pa  f}^2 \text{ and }
\norm{\nab_m^k f}^2 := \sum_{\substack{\alpha \in \mathbb{N}^{3} \\  m\le \abs{\alpha}\le k} } \norm{\pa  f}^2.
\end{equation}
For space-time derivatives we add bars to our notation:
\begin{equation}
 \norm{\bar{\nab}_{\ast m}^{\ \, k}  f}^2 := \sum_{\substack{\alpha \in \mathbb{N}^{1+2} \\  m\le \abs{\alpha}\le k} } \norm{\pa  f}^2 \text{ and }
\norm{\bar{\nab}_m^k f}^2 := \sum_{\substack{\alpha \in \mathbb{N}^{1+3} \\ m\le  \abs{\alpha}\le k} } \norm{\pa  f}^2.
\end{equation}
We allow for composition of derivatives in this counting scheme in a natural way; for example,
\begin{equation}
 \norm{\nab_\ast\bar{\nab}_{\ast m}^{\ \, k} f}^2 =  \norm{\bar{\nab}_{\ast m}^{\ \, k} \nab_\ast f}^2 = \norm{\bar{\nab}_{\ast m+1}^{\ \, k+1}  f}^2.
\end{equation}

Throughout the paper we assume that $N\ge 3$ is an integer. We will
employ the Einstein convention of summing over repeated indices.
Throughout the paper $C>0$ will denote a generic constant that does
not depend on the data, the surface tension coefficient $\sigma$ and
the artificial viscosity coefficient $\kappa$, but can depend on the
other parameters of the problem, $N\ge 3$ and $\Omega$.  We refer to
such constants as ``universal''. We also use $P$ to denote for a
universal positive polynomial  with $ P(0)=0$. Such constants and
polynomials are allowed to change from line to line. When they
depend on a quantity $z$ we will write $C_z$ and $P_z$ to indicate
this, or we will explicitly point out along the context.  We will
employ the notation $a \lesssim b$ to mean that $a \le C b$ for a
universal constant $C>0$.

\section{Preliminaries}\label{sec pre}

\subsection{Analytic tools}

\subsubsection{Poisson integral }
For $\Sigma = \Rn{2}$, we define the Poisson integral in $\Rn{2} \times (-\infty,0)$ by
\begin{equation}\label{poisson_def_inf}
 \mathcal{P}f(x',x_3) = \int_{\Rn{2}} \hat{f}(\xi) e^{2\pi \abs{\xi}x_3} e^{2\pi i x' \cdot \xi} d\xi,
\end{equation}
where we have written $\hat{f}(\xi)$ the Fourier transform of $f$ in $\Rn{2}$.
For $\Sigma =(L_1 \mathbb{T}) \times ( L_2 \mathbb{T})$, we define the Poisson integral in $(L_1 \mathbb{T}) \times ( L_2 \mathbb{T})\times (-\infty,0)$ by
\begin{equation}\label{poisson_def_per}
\mathcal{P} f(x) = \sum_{n \in   (L_1^{-1} \mathbb{Z}) \times (L_2^{-1} \mathbb{Z}) }  e^{2\pi i n \cdot x'} e^{2\pi \abs{n}x_3} \hat{f}(n),
\end{equation}
where for $n \in   (L_1^{-1} \mathbb{Z}) \times (L_2^{-1} \mathbb{Z})$ we have written
\begin{equation}
 \hat{f}(n) = \int_\Sigma f(x')  \frac{e^{-2\pi i n \cdot x'}}{L_1 L_2} dx'.
\end{equation}
It is well known that $\mathcal{P}: H^{s}(\Sigma) \rightarrow H^{s+1/2}(\Omega_-)$ for $\Omega_-=\Sigma\times(-\infty,0)$ is a bounded linear operator for $s>0$. However, if restricted to the domain $\Omega$, we have the following improvements.

\begin{lem}\label{p_poisson}
Let $\mathcal{P} f$ be the Poisson integral of a function $f$ that is either in $\dot{H}^{q}(\Sigma)$ or $\dot{H}^{q-1/2}(\Sigma)$ for $q \in \mathbb{N}$, where $\dot{H}^s$ is the usual homogeneous Sobolev space of order $s$.  Then
\begin{equation}
 \ns{\nab^q \mathcal{P}f }_{0} \ls \norm{f}_{\dot{H}^{q-1/2}(\Sigma)}^2 \text{ and }  \ns{\nab^q \mathcal{P}f }_{0} \ls \norm{f}_{\dot{H}^{q}(\Sigma)}^2.
\end{equation}
\end{lem}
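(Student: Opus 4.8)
The plan is to reduce everything to a one-dimensional computation in the vertical variable after taking the Fourier transform in the horizontal directions. I will treat the non-periodic case $\Sigma = \Rn{2}$; the periodic case is identical with integrals in $\xi$ replaced by sums over the dual lattice and $\abs{\xi}$ replaced by $\abs{n}$. Recall that $\mathcal{P}f(x',x_3) = \int_{\Rn{2}} \hat f(\xi) e^{2\pi\abs{\xi}x_3} e^{2\pi i x'\cdot\xi}\,d\xi$, and that differentiating under the integral sign replaces $\p_{x_j}$ ($j=1,2$) by multiplication by $2\pi i\xi_j$ and $\p_{x_3}$ by multiplication by $2\pi\abs{\xi}$. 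Hence for a spatial multi-index $\alpha\in\mathbb{N}^3$ with $\abs{\alpha}=q$, the Fourier transform in $x'$ of $\p^\alpha\mathcal{P}f(\cdot,x_3)$ is $(2\pi i\xi_1)^{\alpha_1}(2\pi i\xi_2)^{\alpha_2}(2\pi\abs{\xi})^{\alpha_3}e^{2\pi\abs{\xi}x_3}\hat f(\xi)$, whose modulus is bounded by $C_q\abs{\xi}^q e^{2\pi\abs{\xi}x_3}\abs{\hat f(\xi)}$.

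First I would apply Parseval in $x'$ for each fixed $x_3\in(-b(x'),0)$ and then integrate in $x_3$. Since $\Omega\subset\Sigma\times(-b_+,0)$ is contained between the graphs $x_3=-b$ and $x_3=0$ with $b$ bounded, it suffices to bound the integral over the larger slab $\Sigma\times(-b_+,0)$, where $b_+:=\sup_\Sigma b<\infty$. This gives
\begin{equation}
 \ns{\nab^q\mathcal{P}f}_0 \ls \int_{-b_+}^0\int_{\Rn{2}} \abs{\xi}^{2q} e^{4\pi\abs{\xi}x_3}\abs{\hat f(\xi)}^2\,d\xi\,dx_3 = \int_{\Rn{2}} \abs{\xi}^{2q}\abs{\hat f(\xi)}^2\left(\int_{-b_+}^0 e^{4\pi\abs{\xi}x_3}\,dx_3\right)d\xi.
\end{equation}
The inner integral equals $\frac{1}{4\pi\abs{\xi}}\left(1-e^{-4\pi\abs{\xi}b_+}\right)$, which is bounded above by $\min\{b_+,\,\tfrac{1}{4\pi\abs{\xi}}\}$; in particular it is $\le \tfrac{1}{4\pi\abs{\xi}}$ for all $\xi\neq 0$. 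Substituting the bound $\tfrac{1}{4\pi\abs{\xi}}$ yields $\ns{\nab^q\mathcal{P}f}_0 \ls \int_{\Rn{2}} \abs{\xi}^{2q-1}\abs{\hat f(\xi)}^2\,d\xi = \norm{f}_{\dot H^{q-1/2}(\Sigma)}^2$, the first claimed estimate. For the second, I simply use the cruder bound $\int_{-b_+}^0 e^{4\pi\abs{\xi}x_3}\,dx_3 \le b_+$, which gives $\ns{\nab^q\mathcal{P}f}_0 \ls \int_{\Rn{2}} \abs{\xi}^{2q}\abs{\hat f(\xi)}^2\,d\xi = \norm{f}_{\dot H^q(\Sigma)}^2$. (Alternatively, the second estimate follows from the first by interpolation, but the direct argument is immediate.)

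I do not expect a serious obstacle here; the only points requiring a modicum of care are (i) justifying differentiation under the integral and the use of Parseval slice-by-slice, which is standard once $f$ is assumed in the relevant homogeneous Sobolev space so that $\abs{\xi}^q\hat f\in L^2$, and (ii) in the periodic case, noting that the $n=0$ frequency is harmless: the estimates involve only $\abs{n}^{2q}$ or $\abs{n}^{2q-1}$ against $\abs{\hat f(n)}^2$, and for $q\ge 1$ (or for $q=0$ with the homogeneous norm only seeing nonzero frequencies) the zero mode contributes nothing to $\nab^q\mathcal{P}f$ since $\mathcal{P}$ of a constant is that constant and its gradient vanishes. The key mechanism throughout is the elementary inequality $\int_{-b_+}^0 e^{4\pi\abs{\xi}x_3}\,dx_3 \le \min\{b_+, (4\pi\abs{\xi})^{-1}\}$, which is precisely what converts the exponential decay of the Poisson kernel away from the boundary into the half-derivative gain over a \emph{bounded} slab.
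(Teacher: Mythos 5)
Your proof is correct and is essentially the argument behind the result the paper cites (Lemmas A.7 and A.9 of Guo--Tice): horizontal Fourier transform/Parseval slice-by-slice, followed by the elementary bound $\int_{-b_+}^0 e^{4\pi\abs{\xi}x_3}\,dx_3 \le \min\{b_+,(4\pi\abs{\xi})^{-1}\}$, which yields the $\dot H^{q-1/2}$ gain and the $\dot H^{q}$ bound respectively. The only rough edge is your parenthetical on the periodic zero mode, which as phrased covers only $q\ge 1$ (for $q=0$ the $\dot H^{-1/2}$ bound requires the zero frequency to be absent, consistent with the homogeneous-norm convention), but this matches the convention implicit in the cited statement and does not affect the lemma's use in the paper.
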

\begin{proof}
For the proof, we refer to Lemma A.7 of \cite{GT_lwp} for the case $\Sigma=\Rn{2}$ and Lemma A.9 of \cite{GT_lwp} for the case $\Sigma=(L_1 \mathbb{T}) \times ( L_2 \mathbb{T})$.
\end{proof}

\subsubsection{Some inequalities}
We need some estimates of the product of functions in Sobolev spaces.

\begin{lem}\label{i_sobolev_product_1}
Let the domain of function spaces be either $\Sigma$ or $\Omega$.
 Let $0\le r \le s_1 \le s_2$ be such that  $s_2 >r+ n/2$, then the following hold  when the right hand side is finite:
\begin{equation}\label{i_s_p_02}
 \norm{fg}_{ H^r} \lesssim \norm{f}_{ H^{s_1}} \norm{g}_{ H^{s_2}}
\end{equation}
and
\begin{equation}\label{i_s_p_03}
 \norm{fg}_{H^{-s_1}} \ls \norm{f}_{H^{-r}} \norm{g}_{H^{s_2}}.
\end{equation}
\end{lem}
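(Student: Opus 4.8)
The plan is to prove the two product estimates by reducing them to the standard bilinear estimate on the whole space and its dual. The key point is that both $\Sigma$ (a plane or a torus) and $\Omega$ (a slab over such a base) are smooth bounded-geometry domains on which there exist bounded linear extension operators $E\colon H^s \to H^s(\mathbb{R}^N)$ (with $N=2$ for $\Sigma$ and $N=3$ for $\Omega$) that are simultaneously bounded for the relevant range of exponents, so that all computations can be transferred to $\mathbb{R}^N$ where Fourier analysis is available.

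First I would establish \eqref{i_s_p_02}. After extending $f$ and $g$ to $\mathbb{R}^N$, it suffices to prove $\norm{FG}_{H^r(\mathbb{R}^N)} \lesssim \norm{F}_{H^{s_1}(\mathbb{R}^N)}\norm{G}_{H^{s_2}(\mathbb{R}^N)}$ whenever $0 \le r \le s_1 \le s_2$ and $s_2 > r + N/2$ (then restrict back to the domain, using that restriction is bounded and $FG$ restricts to $fg$). This is the classical product/multiplication estimate: writing $\br{\xi}^r \widehat{FG}(\xi) = \int \br{\xi}^r \hat F(\xi-\zeta)\hat G(\zeta)\,d\zeta$ and using $\br{\xi}^r \lesssim \br{\xi-\zeta}^r + \br{\zeta}^r$, one splits the integral into two pieces; in the first piece the $G$-factor carries no derivatives and one uses $H^{s_2}(\mathbb{R}^N) \hookrightarrow L^\infty$ (valid since $s_2 > N/2$) together with Young/Cauchy--Schwarz, while in the second piece the $F$-factor carries no derivatives and one uses $r \le s_1$ so that $\br{\zeta}^r \hat G(\zeta)$ lies in $L^2$ after absorbing $\br{\xi}^{r-s_1}$... more cleanly, the standard way is to estimate $\norm{FG}_{H^r}$ by a Coifman--Meyer / paraproduct decomposition or simply by the elementary lemma that $H^{s_1} \cdot H^{s_2} \hookrightarrow H^r$ under exactly these hypotheses (see, e.g., the references in \cite{GT_lwp}). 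I would just cite this as standard and give the one-line Fourier splitting argument.

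Next, \eqref{i_s_p_03} follows by duality from \eqref{i_s_p_02}. For $\phi \in H^{s_1}$, one has $\abs{\br{fg,\phi}} = \abs{\br{f, g\phi}} \le \norm{f}_{H^{-r}}\norm{g\phi}_{H^r}$, and then apply \eqref{i_s_p_02} in the form $\norm{g\phi}_{H^r} \lesssim \norm{\phi}_{H^{s_1}}\norm{g}_{H^{s_2}}$ (the hypotheses $0 \le r \le s_1 \le s_2$ and $s_2 > r + n/2$ are exactly what \eqref{i_s_p_02} requires, with the roles of $f$ and $g$ there played by $\phi$ and $g$). Taking the supremum over $\norm{\phi}_{H^{s_1}} \le 1$ gives $\norm{fg}_{H^{-s_1}} \lesssim \norm{f}_{H^{-r}}\norm{g}_{H^{s_2}}$. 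A small technical point here is that one must check the pairing $\br{f,g\phi}$ makes sense and that $(H^{s_1})^* = H^{-s_1}$, $(H^r)^* = H^{-r}$ with the obvious duality brackets on the domains in question; on $\Sigma$ this is standard, and on $\Omega$ one works with the $L^2$-based duality, noting $g\phi \in H^r \subset H^0 = L^2$ so everything is legitimate, and a density argument (Schwartz functions, resp. smooth functions, are dense) justifies the formal manipulation.

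The main obstacle, such as it is, is bookkeeping rather than substance: making sure the extension/restriction operators behave well simultaneously at the positive exponents $s_1, s_2, r$ and the negative exponents $-r, -s_1$, and that the duality is set up consistently on both $\Sigma$ and $\Omega$ so that the single proof covers both cases as the lemma claims. Once the problem is transferred to $\mathbb{R}^N$ via extension, the positive-order estimate is the textbook bilinear estimate and the negative-order estimate is its formal dual, so no genuinely new difficulty arises. I would therefore keep the proof short, citing the standard bilinear estimate on $\mathbb{R}^N$ and the standard extension theorem, and spelling out only the duality computation for \eqref{i_s_p_03}.
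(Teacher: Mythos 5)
Your proof is correct and takes essentially the same route as the paper, which simply cites Lemma A.1 of \cite{GT_lwp}: that argument is precisely the standard product estimate for \eqref{i_s_p_02} (via extension to $\Rn{N}$ and the Fourier splitting you sketch, the hypotheses $0\le r\le s_1\le s_2$, $s_2>r+n/2$ being exactly what makes the two convolution pieces close) together with the duality computation you spell out for \eqref{i_s_p_03}. The technical caveats you flag (simultaneous boundedness of the extension/restriction operators at all the exponents involved, and defining the negative-order norms on $\Omega$ and $\Sigma$ by duality with a density argument) are the right ones and pose no genuine obstacle.
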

\begin{proof}
See Lemma A.1 of \cite{GT_lwp}.
\end{proof}
\begin{remark}
We will use a slight variant from \eqref{i_s_p_03} when the domain is $\Omega$: for $s>5/2$,
\begin{equation}\label{i_s_p_04}
 \norm{fg}_{(\H1)^\ast} \ls \norm{f}_{(\H1)^\ast} \norm{g}_{H^s(\Omega)},
\end{equation}
where the space $\H1$ is defined in \eqref{function_spaces}.
\end{remark}

We then record some Poincar\'{e}-type inequalities for the domain $\Omega$.
\begin{lem}\label{poincare_b}
Let $f\in H^1(\Omega)$, then
\begin{equation}\label{poincare_b1}
 \norm{f}_{L^2(\Omega)}  \lesssim \norm{f}_{L^2(\Sigma)}  +  \norm{\partial_3f}_{L^2(\Omega)} .
 \end{equation}
Let $f\in H^1(\Omega)$ be so that $f=0$ on $\Sigma_b$, then
\begin{equation}\label{poincare_b2}
\norm{f}_{L^2(\Omega)} \lesssim \norm{f}_{H^1(\Omega)}\lesssim \norm{\nabla f}_{L^2(\Omega)}.
 \end{equation}

\end{lem}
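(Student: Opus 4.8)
The statement to prove is Lemma~\ref{poincare_b}, the two Poincar\'e-type inequalities on $\Omega$.

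=== PROOF PROPOSAL ===

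\textbf{Proof proposal.} The plan is to prove both inequalities by reducing to one-dimensional estimates along vertical fibers and integrating in the horizontal variables, exploiting that $\Omega$ is the region between the fixed graph $x_3=-b(x_1,x_2)$ and $x_3=0$ with $b\ge b_->0$.

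For \eqref{poincare_b1}, first I would establish it for $f$ smooth and then pass to the general case by density. Fix $(x_1,x_2)\in\Sigma$ and write, for $-b(x_1,x_2)<x_3<0$,
\begin{equation}
 f(x_1,x_2,x_3) = f(x_1,x_2,0) - \int_{x_3}^0 \partial_3 f(x_1,x_2,s)\,ds.
\end{equation}
Squaring, using $(a+b)^2\le 2a^2+2b^2$, and applying Cauchy--Schwarz to the integral term (the fiber has length at most $b(x_1,x_2)\le \|b\|_{L^\infty}$, which is finite in both the periodic and non-periodic cases by the standing assumption on $b$), one gets a pointwise bound
\begin{equation}
 \abs{f(x_1,x_2,x_3)}^2 \lesssim \abs{f(x_1,x_2,0)}^2 + \int_{-b(x_1,x_2)}^0 \abs{\partial_3 f(x_1,x_2,s)}^2\,ds.
\end{equation}
Integrating this in $x_3$ over the fiber (another factor of at most $\|b\|_{L^\infty}$) and then in $(x_1,x_2)$ over $\Sigma$ yields \eqref{poincare_b1}, with the trace norm $\norm{f}_{L^2(\Sigma)}$ appearing from the boundary term. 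A density argument (smooth functions are dense in $H^1(\Omega)$ and the trace operator $H^1(\Omega)\to L^2(\Sigma)$ is continuous) extends it to all $f\in H^1(\Omega)$.

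For \eqref{poincare_b2}, the first inequality $\norm{f}_{L^2(\Omega)}\le\norm{f}_{H^1(\Omega)}$ is trivial. For the second, the argument is the same as above but now integrating up from the bottom boundary $\Sigma_b$ where $f=0$: writing $f(x_1,x_2,x_3)=\int_{-b(x_1,x_2)}^{x_3}\partial_3 f(x_1,x_2,s)\,ds$, Cauchy--Schwarz and integration give $\norm{f}_{L^2(\Omega)}\lesssim\norm{\partial_3 f}_{L^2(\Omega)}\le\norm{\nabla f}_{L^2(\Omega)}$; adding $\norm{\nabla f}_{L^2(\Omega)}$ to both sides controls the full $H^1$ norm. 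Again one works first with functions smooth up to the boundary and vanishing near $\Sigma_b$, then uses density in the subspace $\{f\in H^1(\Omega): f|_{\Sigma_b}=0\}$.

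The only mild subtlety — hardly an obstacle — is making sure the fibers have uniformly bounded length so the constants are universal; this is guaranteed by $0<b_-\le b$ with $b\in C^\infty(\Sigma)$ bounded (in the periodic case automatically, in the non-periodic case because $b\to b_\infty$ at infinity). Everything else is the classical one-dimensional fundamental-theorem-of-calculus argument, so I expect no real difficulty; the proof is essentially a citation-level adaptation of the standard Poincar\'e inequality to this slab geometry, and indeed these estimates appear in \cite{GT_lwp}.
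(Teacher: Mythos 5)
Your proof is correct: the fiber-wise fundamental-theorem-of-calculus argument with Cauchy--Schwarz, the uniform bound on the fiber length coming from $0<b_-\le b\le \norm{b}_{L^\infty}$, and the density/trace continuity step are exactly what is needed for both \eqref{poincare_b1} and \eqref{poincare_b2}. The paper itself gives no argument but simply cites Lemmas A.12 and A.14 of \cite{GT_lwp}, and your slab-geometry Poincar\'e argument is precisely the standard proof underlying that citation, so there is nothing to flag.
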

\begin{proof}
See Lemmas A.12 and A.14 of \cite{GT_lwp}.
\end{proof}

We will need the following version of Korn's inequality.
\begin{lem}\label{i_korn}
It holds that $\norm{u}_{1} \ls \norm{\sg u}_{0}$ for all $u \in H^1(\Omega;\Rn{3})$ so that $u=0$ on $\Sigma_b$.
\end{lem}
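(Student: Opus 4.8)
\textbf{Proof plan for Korn's inequality (Lemma \ref{i_korn}).}
The statement is the standard second Korn inequality on $\Omega$ for vector fields vanishing on the rigid portion $\Sigma_b$ of the boundary, with $\sg u$ the full symmetric gradient $(\sg u)_{ij}=\p_i u_j+\p_j u_i$. The plan is to reduce the full first-order norm $\norm{u}_1$ to $\norm{\sg u}_0$ in two moves: first control $\norm{\nab u}_0$ by $\norm{\sg u}_0$ (the genuine Korn inequality), then absorb $\norm{u}_0$ using the Poincar\'e inequality \eqref{poincare_b2} from Lemma \ref{poincare_b}, which is available precisely because $u=0$ on $\Sigma_b$. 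Thus the crux is the estimate $\norm{\nab u}_0 \ls \norm{\sg u}_0$.

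For the gradient bound I would use the classical pointwise identity, valid for smooth $u$ and then extended by density,
\begin{equation}
2\intrn{\abs{\nab u}^2} = \intrn{\abs{\sg u}^2} + 2\intrn{(\diverge u)^2} - 2\int_{\p\Omega} \text{(boundary terms)},
\end{equation}
more precisely the identity $\sum_{i,j}(\p_i u_j)(\p_j u_i) = (\diverge u)^2 + \diverge(\,\cdot\,)$ obtained by integrating by parts twice, so that $\norm{\sg u}_0^2 = 2\norm{\nab u}_0^2 + 2\norm{\diverge u}_0^2 - 2\norm{\nab u}_0^2 + (\text{boundary})$... — let me instead state the route cleanly: integrating $\tfrac12\abs{\sg u}^2 = \abs{\nab u}^2 + \sum_{i,j}(\p_i u_j)(\p_j u_i)$ over $\Omega$ and integrating by parts twice in the cross term gives $\int_\Omega \sum_{i,j}(\p_i u_j)(\p_j u_i) = \int_\Omega (\diverge u)^2 + \int_{\p\Omega}\big[(u\cdot\nab u)\cdot\nu - (\diverge u)(u\cdot\nu)\big]$. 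Hence
\begin{equation}
\tfrac12\norm{\sg u}_0^2 = \norm{\nab u}_0^2 + \norm{\diverge u}_0^2 + \int_{\p\Omega}\big[(u\cdot\nab)u\cdot\nu - (\diverge u)(u\cdot\nu)\big].
\end{equation}
On $\Sigma_b$ the boundary integrand vanishes identically since $u=0$ there, so only the integral over $\Sigma$ remains. This immediately yields $\norm{\nab u}_0^2 \le \tfrac12\norm{\sg u}_0^2 + \abs{\int_\Sigma \cdots}$, and since $\Sigma$ is flat the remaining boundary term can be handled by a trace estimate and a standard compactness/contradiction argument (or, in the half-space-type geometry here, by noting the boundary term actually has a favorable sign or can be absorbed). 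The cleanest self-contained route, which I would adopt, is the contradiction argument: suppose the inequality fails, take a sequence $u_k$ with $\norm{u_k}_1=1$ and $\norm{\sg u_k}_0\to 0$; by Rellich a subsequence converges strongly in $L^2$ to some $u$ with $\sg u=0$, so $u$ is an infinitesimal rigid motion $u(x)=a+Bx$ with $B$ antisymmetric; the condition $u=0$ on the non-degenerate piece $\Sigma_b$ (which is not contained in any affine line, given the geometry of $b$) forces $a=0$, $B=0$, hence $u=0$, contradicting $\norm{u_k}_1=1$ once one upgrades the convergence to $H^1$ via $\norm{\nab(u_k-u_\ell)}_0 \ls \norm{\sg(u_k-u_\ell)}_0 + \norm{u_k-u_\ell}_0$ (itself a consequence of the identity above plus the trace bound).

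The main obstacle is making the contradiction argument rigorous on the possibly unbounded domain $\Omega$ (the non-periodic case $\Sigma=\Rn2$), where Rellich compactness is not available globally. In that case I would instead rely on the identity-based estimate directly: the boundary term over $\Sigma$ is controlled via the trace inequality $\norm{u}_{L^2(\Sigma)}^2 \ls \norm{u}_{L^2(\Omega)}\norm{u}_{H^1(\Omega)}$ combined with \eqref{poincare_b1}, and one arranges the constants so that a small multiple of $\norm{\nab u}_0^2$ is absorbed into the left side while $\norm{u}_0^2$ is then removed via the Poincar\'e inequality \eqref{poincare_b2}; alternatively one cites that for this layer-type domain with $u|_{\Sigma_b}=0$ the boundary term on $\Sigma$ actually drops after using $\diverge$-free-type manipulations, but since Lemma \ref{i_korn} is stated for general $u\in H^1$ I would keep the trace-plus-Poincar\'e absorption. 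Either way, once $\norm{\nab u}_0 \ls \norm{\sg u}_0$ is established, $\norm{u}_1 \ls \norm{\nab u}_0 \ls \norm{\sg u}_0$ follows at once from \eqref{poincare_b2}, completing the proof.
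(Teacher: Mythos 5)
The paper does not actually prove this lemma: its entire proof is the citation ``See Lemma 2.3 of \cite{beale_2}'', where Korn's inequality is established for exactly this layer-type domain with $u=0$ on $\Sigma_b$. Your proposal attempts a self-contained proof, and it has a genuine gap at the crucial step, namely the bound $\norm{\nab u}_0\ls\norm{\sg u}_0$ (or even the weaker second Korn inequality $\norm{\nab u}_0\ls\norm{\sg u}_0+\norm{u}_0$). Your integration-by-parts identity is correct, the boundary term does vanish on $\Sigma_b$, and on the flat top boundary the normal derivatives cancel, so after a tangential integration by parts the surviving term is $-2\int_\Sigma(\diverge_\ast u_\ast)\,u_3$ with $u_\ast=(u_1,u_2)$. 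But this term still carries one tangential derivative of $u$ on $\Sigma$, so the trace inequality you invoke, $\norm{u}_{L^2(\Sigma)}^2\ls\norm{u}_{L^2(\Omega)}\norm{u}_{H^1(\Omega)}$, is not the relevant bound: it controls only the trace of $u$, not of $\nab u$. The natural duality estimate gives $\abs{\int_\Sigma(\diverge_\ast u_\ast)u_3}\le\norm{u_\ast}_{H^{1/2}(\Sigma)}\norm{u_3}_{H^{1/2}(\Sigma)}\ls\norm{u}_{H^1(\Omega)}^2$ with a fixed, non-small constant, so there is no small parameter with which to absorb a multiple of $\norm{\nab u}_0^2$; and if one instead converts the boundary integral into a volume integral with a vertical cutoff $\theta$, the uncontrolled leftover is an order-one multiple of $\int_\Omega\theta\,\abs{\p_3u_\alpha-\p_\alpha u_3}^2$, i.e.\ precisely the antisymmetric (vorticity) part of $\nab u$ near $\Sigma$ that Korn's inequality is supposed to control. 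So the ``identity plus trace estimate plus absorption'' mechanism does not close; this is exactly why Korn with Dirichlet data on only part of the boundary is a nontrivial theorem.

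This gap propagates into both of your cases. In the periodic case the compactness/rigid-motion argument is fine in outline, but the upgrade from $L^2$ to $H^1$ convergence uses $\norm{\nab(u_k-u_\ell)}_0\ls\norm{\sg(u_k-u_\ell)}_0+\norm{u_k-u_\ell}_0$, which is the classical second Korn inequality; you claim to derive it from the identity-plus-trace argument just criticized, whereas you would need to quote it (for bounded Lipschitz domains it is standard, via Ne\v{c}as' negative-norm lemma or extension arguments). In the non-periodic case $\Sigma=\Rn{2}$ your fallback is exactly the invalid absorption, so that case is not handled at all; a correct self-contained treatment would need a genuinely different mechanism, e.g.\ partitioning $\Omega$ into uniformly Lipschitz vertical columns on each of which the bounded-domain Korn inequality with Dirichlet data on the bottom piece holds with a uniform constant (uniformity coming from $b\to b_\infty$), then summing, or horizontal Fourier analysis on a flat strip plus a perturbation argument for the variable bottom. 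Given that the paper itself simply cites Beale's Lemma 2.3, the cleanest fix is to do the same, or to import the second Korn inequality as a black box and then run your rigid-motion argument only in the periodic case while treating $\Sigma=\Rn{2}$ by localization.
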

\begin{proof}
See Lemma 2.3 of \cite{beale_2}.
\end{proof}

\subsubsection{Continuity and temporal derivatives}

In our local well-posedness arguments it is crucial to use the following interpolation result, which affords us control of the $L^\infty H^k$ norm of a function $f$, given that  we control  $f$ in  $L^2 H^{k+m}$ and $\dt f$ in $L^2 H^{k-m}$.

\begin{lem}\label{l_sobolev_infinity}
Let the domain of function spaces be either $\Sigma$ or $\Omega$. Suppose that $f \in L^2([0,T]; H^{s_1} )$ and $\dt f \in L^2([0,T]; H^{s_2} )$ for $s_1 \ge s_2 $.  Let $s = (s_1+s_2)/2$.  Then  $f \in C^0([0,T]; H^{s} )$ (after possibly being redefined on a set of measure $0$), and
\begin{equation}\label{l_sobi_01}
 \ns{f}_{L^\infty H^{s}} \le  \ns{f(0)}_{ H^{s}}+  \ns{ f}_{L^2 H^{s_1}} + \ns{\dt f}_{L^2 H^{s_2}} .
\end{equation}
\end{lem}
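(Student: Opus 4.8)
The plan is to prove this as a standard abstract interpolation/trace statement for the Hilbert-space-valued Sobolev space $W^{1,2}(0,T;X)$ applied to a suitable scale of spaces. First I would reduce to the case of a single regularity by writing $s=(s_1+s_2)/2$ and $m=(s_1-s_2)/2\ge0$, so the hypotheses read $f\in L^2([0,T];H^{s+m})$ and $\dt f\in L^2([0,T];H^{s-m})$; the claim is then continuity into $H^s$ together with the quantitative bound \eqref{l_sobi_01}. The natural framework is the Gelfand-type triple $H^{s+m}\hookrightarrow H^s\hookrightarrow H^{s-m}$ (with $H^s$ identified with its dual in the appropriate sense via the $H^{s-m}$--$H^{s+m}$ pairing), for which the classical Lions--Magenes result gives $W^{1,2}$ regularity in time landing in the ``middle'' space continuously.

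The key computational step is the energy identity
\begin{equation}
 \ns{f(t)}_{H^s} - \ns{f(0)}_{H^s} = 2\int_0^t \br{\dt f(r), f(r)}\,dr,
\end{equation}
where $\br{\cdot,\cdot}$ is the duality pairing between $H^{s-m}$ and $H^{s+m}$. This is first justified for smooth functions (e.g. by mollifying in time, or by expanding in an orthonormal basis of eigenfunctions of a suitable elliptic operator on $\Sigma$ or $\Omega$ so that the $H^s$ norm becomes a weighted $\ell^2$ norm of Fourier/spectral coefficients and the identity is the elementary one-variable fact $\tfrac{d}{dt}\abs{c(t)}^2 = 2\,\mathrm{Re}\,\overline{c(t)}c'(t)$ summed against the weights), and then extended to all $f$ in the stated class by density of smooth functions in $L^2H^{s+m}\cap W^{1,2}H^{s-m}$. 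Once the identity holds, Cauchy--Schwarz on the right-hand side gives $\abs{\br{\dt f(r),f(r)}} \le \norm{\dt f(r)}_{H^{s-m}}\norm{f(r)}_{H^{s+m}} \le \tfrac12(\ns{\dt f(r)}_{H^{s-m}} + \ns{f(r)}_{H^{s+m}})$, and integrating over $[0,T]$ and taking the supremum over $t$ yields exactly
\begin{equation}
 \ns{f}_{L^\infty H^s} \le \ns{f(0)}_{H^s} + \ns{f}_{L^2 H^{s+m}} + \ns{\dt f}_{L^2 H^{s-m}},
\end{equation}
which is \eqref{l_sobi_01}. Continuity of $t\mapsto f(t)\in H^s$ then follows because the right-hand side of the energy identity is an absolutely continuous function of $t$, so $t\mapsto \ns{f(t)}_{H^s}$ is continuous, while $t\mapsto f(t)$ is already weakly continuous into $H^s$ (being in $W^{1,2}$ into the larger space $H^{s-m}$ and bounded into $H^s$); weak continuity plus norm continuity in a Hilbert space gives strong continuity.

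The only mildly delicate point—hence the main obstacle—is setting up the spectral/density apparatus so that the energy identity is rigorous for noninteger and possibly negative $s-m$: one must choose a self-adjoint operator (for instance $(1-\Delta_\ast)$ on $\Sigma$, or $(1-\Delta)$ with suitable boundary conditions on $\Omega$) whose fractional powers define the scale $H^t$ up to equivalent norms, and verify that the smooth, finitely-supported-in-spectrum functions are dense in the intersection space. This is entirely standard—it is the Lions--Magenes trace theorem—so I would cite it or indicate the spectral proof briefly rather than belabor it. For the domain $\Omega$ the same argument applies verbatim with the corresponding operator, which is why the lemma is stated for both $\Sigma$ and $\Omega$ simultaneously.
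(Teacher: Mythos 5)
Your proposal is correct and follows essentially the same route as the paper's source for this lemma (the paper simply cites Lemma A.4 of \cite{GT_lwp}): a spectral/Fourier characterization of the $H^s$ scale, the energy identity $\ns{f(t)}_{H^s}-\ns{f(0)}_{H^s}=2\int_0^t\br{\dt f,f}$ justified by density, Cauchy--Schwarz splitting the weight as $(s_1+s_2)/2$, and weak continuity plus norm continuity to upgrade to strong continuity in $H^s$. No gap; the delicate points you flag (fractional/negative indices on $\Omega$, density in the intersection space) are exactly the standard Lions--Magenes apparatus the reference relies on.
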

\begin{proof}
See the proof of Lemma A.4 of \cite{GT_lwp}.
\end{proof}

\subsubsection{Extension results}

In our Local well-posedness arguments we need to be able to take the
initial data $\dt^j u(0)$, $j=0,\dotsc,2N$ and extend it to a
function $u^0$ satisfying certain estimates.

\begin{lem}\label{l_sobolev_extension}
Suppose that $\dt^j u(0) \in H^{4N-2j}(\Omega)$ for $j=0,\dotsc,2N$.  Then there exists an extension $u^0$, achieving the initial data,  so that
\begin{equation}
\dt^j u^0\in L^2([0,\infty);H^{4N-2j+1}(\Omega)) \cap L^\infty([0,\infty); H^{4N-2j}(\Omega))
\end{equation}
for $j=0,\dotsc,2N$.  Moreover,
\begin{equation}\label{kk11}
 \mathfrak{K}_{2N}(u^0):=\sum_{j=0}^{2N} \ns{\dt^j u^0}_{L^2 H^{4N-2j +1}}
+ \ns{\dt^j u^0}_{L^\infty H^{4N-2j }}\ls \sum_{j=0}^{2N}   \ns{\dt^j u(0)}_{  H^{4N-2j }}.
\end{equation}
\end{lem}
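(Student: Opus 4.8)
The plan is to build the extension one temporal layer at a time using a standard trace‑and‑extend argument, then sum the resulting estimates. First I would recall that, by the standard Sobolev extension theorem in the spatial variable, for each fixed $j \in \{0,\dotsc,2N\}$ the datum $\dt^j u(0) \in H^{4N-2j}(\Omega)$ admits, for any target regularity, a time‑profile extension: one seeks a function $v_j(x,t)$ with $v_j(\cdot,0) = \dt^j u(0)$, $v_j \in L^2([0,\infty);H^{4N-2j+1}(\Omega)) \cap L^\infty([0,\infty);H^{4N-2j}(\Omega))$, and with norms controlled by $\norm{\dt^j u(0)}_{H^{4N-2j}}$. The cleanest way to produce such $v_j$ is to take the (spatially) Fourier‑ or resolvent‑based heat‑type extension $v_j(\cdot,t) := e^{t\Delta'} \dt^j u(0)$ for a suitable realization $\Delta'$ of the Laplacian on $\Omega$ (with, say, the natural boundary conditions so that the semigroup is analytic), cut off smoothly in time to live on $[0,\infty)$; the gain of half a derivative in the $L^2_t$ norm and the non‑gain in $L^\infty_t$ are exactly the classical parabolic trace estimates. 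One must then assemble the $v_j$ into a single $u^0$ whose $j$‑th time derivative at $t=0$ equals $\dt^j u(0)$; this is done by the usual Borel/Seeley‑type construction, i.e.\ set $u^0(x,t) := \sum_{j=0}^{2N} \chi(t) \frac{t^j}{j!} v_j(x,t)$ for a fixed cutoff $\chi \in C_c^\infty$ with $\chi \equiv 1$ near $0$, and check that $\dt^k u^0(0) = \dt^k u(0)$ for $k=0,\dotsc,2N$ by Leibniz's rule (the contributions of $\dt^\ell v_j$ at $t=0$ for $\ell \ge 1$ are harmless provided one first replaces $v_j$ by a correction that kills its lower‑order time derivatives at $0$, or one simply absorbs them recursively).

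Next I would verify the regularity and the estimate \eqref{kk11}. For the term $\chi(t)\frac{t^j}{j!}v_j$, differentiating $k$ times in $t$ produces, via Leibniz, a finite sum of terms of the form (smooth bounded function of $t$, compactly supported) times $t^{m} \dt^{\ell} v_j$ with $\ell \le k$. Since each extra time derivative of the heat extension $v_j = e^{t\Delta'}\dt^j u(0)$ costs two spatial derivatives, $\dt^\ell v_j$ lies in $L^2_t H^{4N-2j+1-2\ell}$ and $L^\infty_t H^{4N-2j-2\ell}$ with the right norm control; after summing over $j$ and $k$ one gets precisely $\mathfrak K_{2N}(u^0) \lesssim \sum_{j=0}^{2N}\norm{\dt^j u(0)}_{H^{4N-2j}}^2$, which is \eqref{kk11}. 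The Poincaré inequality (Lemma~\ref{poincare_b}) is not needed here; only the mapping properties of the heat semigroup on $\Omega$ and the Sobolev extension/trace theory are. I should note that this lemma does not assert any compatibility or boundary conditions on $u^0$, so no delicate boundary‑condition bookkeeping enters — the construction is purely an interpolation/extension fact.

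The main obstacle is really just organizational rather than conceptual: one must make sure the "Seeley trick" of matching all $2N+1$ initial time derivatives is compatible with the regularity budget, i.e.\ that the lower layers (large $j$, hence few spatial derivatives available) are not forced to absorb too many time derivatives from the Leibniz expansion. This is handled by the observation that the term $t^j v_j/j!$ contributes to $\dt^k u^0(0)$ only through $\dt^{k-j}v_j$ with $k-j \ge 0$, so layer $j$ is differentiated at most $k-j$ times when producing the $k$‑th derivative, and since $k \le 2N$ one always stays within $H^{4N-2j-2(k-j)} = H^{4N-2k}\subset H^0$; thus every term is well defined and the bookkeeping closes. A secondary technical point is choosing the realization of the spatial operator generating the extension semigroup so that it acts boundedly between the relevant $H^s(\Omega)$ scales; any standard choice (e.g.\ the Dirichlet Laplacian composed with a fixed bounded extension operator $E\colon H^s(\Omega)\to H^s(\Rn{3})$ and then restriction) works, and this is where one invokes the classical $\Rn{n}$ heat‑semigroup trace estimate $\norm{e^{t\Delta}g}_{L^2_t \dot H^{s+1}} \lesssim \norm{g}_{\dot H^s}$. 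For the detailed computation we refer to the analogous argument in \cite{GT_lwp}.
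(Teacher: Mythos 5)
Your overall strategy --- extend each datum spatially to $\Rn{3}$ by a bounded extension operator, apply the heat semigroup, and assemble the layers with a Seeley-type sum $\sum_j \chi(t)\,t^j v_j/j!$ together with a recursive correction so that the time derivatives at $t=0$ match --- is a legitimate route and is in the same spirit as the construction the paper relies on (the paper itself only cites Lemma A.5 of \cite{GT_lwp}). The data-matching part is fine: the corrections $w_k=\dt^k u(0)-\sum_{j<k}\binom{k}{j}(\Delta')^{k-j}\dt^j u(0)$ remain in $H^{4N-2k}$, exactly as your last paragraph observes. The genuine gap is in the verification of the norm bound \eqref{kk11} for the layers with $j>k$. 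Your bookkeeping only controls the values of $\dt^k u^0$ \emph{at} $t=0$ (where layer $j$ is indeed hit by at most $k-j\ge 0$ time derivatives); it says nothing about the $L^\infty_t H^{4N-2k}$ and $L^2_t H^{4N-2k+1}$ bounds for $t>0$, where every layer contributes. Concretely, take $k=0$: your $u^0$ contains the term $\chi(t)\,\frac{t^{2N}}{(2N)!}\,e^{t\Delta'}\dt^{2N}u(0)$ with $\dt^{2N}u(0)\in H^0$ only. The estimates you invoke (``each extra time derivative costs two spatial derivatives'' plus the unweighted trace estimate $\norm{e^{t\Delta}g}_{L^2_t\dot H^{s+1}}\ls\norm{g}_{\dot H^s}$) place this term only in $L^\infty_t H^0$, far from the required $L^\infty_t H^{4N}\cap L^2_t H^{4N+1}$; multiplying by $t^{2N}$ does not by itself add spatial regularity. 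More generally, in the Leibniz expansion of $\dt^k\bigl[\chi\, t^j v_j/j!\bigr]$ with $j>k$, the surviving terms have spatial regularity $H^{4N-2j-2c}$, strictly below the needed $H^{4N-2k}$, so ``summing over $j$ and $k$'' does not yield \eqref{kk11} from what you stated.

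The construction can be repaired, but only by invoking the analytic-semigroup smoothing estimates with time weights, e.g. $\norm{t^m e^{t\Delta}g}_{L^\infty_t H^{s+2m}}+\norm{t^m e^{t\Delta}g}_{L^2_t H^{s+2m+1}}\ls \norm{g}_{H^s}$, and then checking in each Leibniz term that the available power $t^{\,j-b}$ dominates the smoothing singularity $t^{-(j-a-b)}$ (it does, since $a\ge 0$ and $t$ is bounded on $\supp\chi$). These weighted bounds are precisely the analytic crux of the lemma, and without stating and using them your argument does not close. A minor slip: the $L^2_t$ gain of the heat semigroup is a full spatial derivative, not ``half a derivative''; your final displayed estimate has it right.
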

\begin{proof}
See the proof of Lemma A.5 of \cite{GT_lwp}.
\end{proof}

A similar extension result holds for $\dt^j \eta(0)$, $j=0,\dotsc,2N$.
\begin{lem}\label{2_sobolev_extension}
Suppose that $\dt^j \eta(0) \in H^{4N-2j+1}(\Sigma)$ for $j=0,\dotsc,2N$.  Then there exists an extension $\eta^0$, achieving the initial data,  so that
\begin{equation}
\dt^j \eta^0\in L^2([0,\infty);H^{4N-2j+2}(\Sigma)) \cap L^\infty([0,\infty); H^{4N-2j+1}(\Sigma))
\end{equation}
for $j=0,\dotsc,2N$. Moreover,
\begin{equation}\label{kk12}
  \mathfrak{K}_{2N}(\eta^0):=\sum_{j=0}^{2N} \ns{\dt^j \eta^0}_{L^2 H^{4N-2j +2}}
+ \ns{\dt^j \eta^0}_{L^\infty H^{4N-2j+1}}\ls \sum_{j=0}^{2N}   \ns{\dt^j \eta(0)}_{  H^{4N-2j+1}}.
\end{equation}
\end{lem}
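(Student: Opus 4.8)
The statement to prove is Lemma 1.13 (\texttt{2\_sobolev\_extension}), the extension result for $\dt^j\eta(0)$, $j=0,\dots,2N$, mirroring Lemma 1.12 for $u$.

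\textbf{Plan.} The proof follows the same template as Lemma A.5 of \cite{GT_lwp} (our Lemma \ref{l_sobolev_extension}), so the natural strategy is to reduce to a one-dimensional (in time) extension problem and build $\eta^0$ mode-by-mode. First I would set $k_j := 4N-2j+1$ for $j=0,\dots,2N$, so that the prescribed data $\dt^j\eta(0) =: \eta_j \in H^{k_j}(\Sigma)$ satisfy a decreasing ladder of spatial regularities $k_0 > k_1 > \dots > k_{2N}$ with $k_j - k_{j+1} = 2$, exactly matching the parabolic scaling that makes Lemma \ref{l_sobolev_infinity} applicable afterwards. The task is to produce a single function $\eta^0(t)$ on $[0,\infty)$ with $\dt^j\eta^0(0) = \eta_j$ and the asserted space-time bounds.

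\textbf{Key steps.} (i) \emph{Frequency-space reduction.} Using the Fourier representation on $\Sigma$ (the transform $\hat f(\xi)$ for $\Sigma=\Rn2$, or $\hat f(n)$ for the torus, as recalled in Section \ref{nota} and around \eqref{poisson_def_inf}--\eqref{poisson_def_per}), write $\eta^0$ through its Fourier coefficients and reduce the Sobolev norms $\ns{\dt^j\eta^0}_{H^{m}}$ to weighted $L^2$ integrals $\int (1+|\xi|^2)^{m}|\widehat{\dt^j\eta^0}(\xi,t)|^2\,d\xi$. (ii) \emph{Scalar extension with a cutoff.} For each frequency $\xi$, one needs a scalar function $t\mapsto g_\xi(t)$ on $[0,\infty)$ with $g_\xi^{(j)}(0)=\widehat{\eta_j}(\xi)$ and good decay; the standard device is $g_\xi(t) = \chi(\langle\xi\rangle^2 t)\sum_{j=0}^{2N}\widehat{\eta_j}(\xi)\,t^j/j!$ where $\chi$ is a smooth compactly supported cutoff equal to $1$ near $0$, and $\langle\xi\rangle = (1+|\xi|^2)^{1/2}$; the parabolic rescaling $\langle\xi\rangle^2 t$ is what produces the correct gain of two spatial derivatives per time derivative. (iii) \emph{Norm estimates.} Differentiate $g_\xi$ in $t$, use the support of $\chi$ (which restricts $t\lesssim \langle\xi\rangle^{-2}$), and integrate in $t$ and $\xi$ to bound $\ns{\dt^j\eta^0}_{L^2 H^{4N-2j+2}}$ and $\ns{\dt^j\eta^0}_{L^\infty H^{4N-2j+1}}$ by $\sum_j\ns{\eta_j}_{H^{4N-2j+1}}$; the power of $t$ in each monomial combines with the cutoff restriction to yield exactly the claimed indices. (iv) Sum over $j$ to get \eqref{kk12} and verify $\dt^j\eta^0(0)=\eta_j$ directly from $g_\xi^{(j)}(0) = \widehat{\eta_j}(\xi)$. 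For the periodic case one replaces the integral over $\xi\in\Rn2$ by the sum over $n\in(L_1^{-1}\mathbb{Z})\times(L_2^{-1}\mathbb{Z})$; the argument is otherwise identical.

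\textbf{Main obstacle.} There is no serious conceptual obstacle: the only point requiring care is the bookkeeping in step (iii), namely checking that after differentiating the product $\chi(\langle\xi\rangle^2 t)\,t^j/j!$ the worst term still obeys the bound once integrated against $(1+|\xi|^2)^{4N-2j+2}$, i.e.\ matching the parabolic weights so that the half-integer shift in the target spaces comes out right and no loss of regularity occurs. Since this is entirely parallel to the proof of Lemma A.5 of \cite{GT_lwp} cited for Lemma \ref{l_sobolev_extension}, with the Sobolev indices on $\Sigma$ shifted up by one relative to those on $\Omega$, it suffices to invoke that argument verbatim.

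\begin{proof}
This is proved exactly as Lemma \ref{l_sobolev_extension}: one works on the Fourier side of $\Sigma$, defines $\eta^0$ by prescribing its Fourier coefficients via $\widehat{\eta^0}(\xi,t) = \chi(\langle\xi\rangle^2 t)\sum_{j=0}^{2N}\widehat{\dt^j\eta(0)}(\xi)\,t^j/j!$ (and the analogous sum over $n$ in the periodic case), where $\langle\xi\rangle^2 = 1+|\xi|^2$ and $\chi \in C_c^\infty(\Rn{})$ satisfies $\chi \equiv 1$ near $0$, and then estimates the space-time norms using the parabolic rescaling in the cutoff. We refer to the proof of Lemma A.5 of \cite{GT_lwp}, with all Sobolev indices shifted by the appropriate amount.
\end{proof}
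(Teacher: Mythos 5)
Your proposal is correct and follows essentially the same route as the paper, which simply proves this lemma "the same as" the extension lemma for $u$, i.e., by the argument of Lemma A.5 of \cite{GT_lwp}; your Fourier-side construction with the parabolically rescaled cutoff and Taylor polynomial in time is exactly that argument, with the indices shifted for the surface data, and your bookkeeping of the weights (the support restriction $t\lesssim\langle\xi\rangle^{-2}$ supplying the extra half power of regularity in the $L^2$-in-time norm) checks out.
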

\begin{proof}
The proof is same as Lemma \ref{l_sobolev_extension}.
\end{proof}

\subsection{Time-dependent functional setting}

\subsubsection{Time-dependent function spaces}
We begin with introducing some function spaces. Define
\begin{equation}\label{function_spaces}
\begin{split}
 \H1 &:= \{ u \in H^1(\Omega) \;\vert\;  u\vert_{\Sigma_b}=0\}, \\
{^0}H^1(\Omega) &:= \{u\in H^1(\Omega) \;\vert\; u\vert_{\Sigma}=0\}, \text{ and } \\
\Hsig &:= \{ u \in \H1 \;\vert\; \diverge{u}=0 \},
\end{split}
\end{equation}
with the obvious restriction that the last space is for vector-valued functions only.

For our time-dependent function spaces we will consider $\eta$ as given with $\a$, $J$, etc determined by $\eta$ via \eqref{ABJ_def}.  We define a time-dependent inner-product on  $L^2=H^0$ by introducing
\begin{equation}
 \iph{u}{v}{0} := \int_\Omega  ( u \cdot v)  J(t)
\end{equation}
with corresponding norm $\hn{u}{0} := \sqrt{\iph{u}{u}{0}}$.  Then we write
$\h^0(t) := \{ \hn{u}{0} < \infty \}$.  Similarly, we  define a time-dependent inner-product on $\H1$ according to
\begin{equation}
 \iph{u}{v}{1} :=  \int_\Omega \left(\sg_{\a(t)} u : \sg_{\a(t)} v \right)  J(t),
\end{equation}
and we define the corresponding norm by $\hn{u}{1} = \sqrt{\iph{u}{u}{1}}$.  Then we define
\begin{equation}\label{H1_def}
 \h^1(t) := \{ u \;\vert\;  \hn{u}{1} < \infty ,  u\vert_{\Sigma_b}=0\} \text{ and }\x(t) := \{ u \in \h^1(t) \;\vert\; \diverge_{\a(t)}{u}=0\}.
\end{equation}
We will also need the orthogonal decomposition $\h^0(t) = \y(t) \oplus \y(t)^\bot,$ where
\begin{equation}\label{l_Y_space_def}
 \y(t)^\bot := \{ \nab_{\a(t)} \varphi \;\vert\; \varphi \in {^0}H^1(\Omega)  \}.
\end{equation}
In our use of these norms and spaces, we will often drop the $(t)$ when there is no potential for confusion.
Finally, for $T>0$ and $k=0,1$, we define inner-products on $L^2([0,T];H^k(\Omega))$ by
\begin{equation}\label{HkT_def}
 \ip{u}{v}_{\h^k_T} := \int_0^T \iph{u(t)}{v(t)}{k} dt.
\end{equation}
Write $\norm{u}_{\h^k_T}$ for the corresponding norms and $\h^k_T$ for the corresponding spaces.  We define the subspace of $\diva$-free vector fields as
\begin{equation}\label{X_def}
 \x_T := \{ u \in \h^1_T \;\vert\; \diverge_{\a(t)}{u(t)} =0 \text{ for a.e. } t\in[0,T]\}.
\end{equation}

Under a smallness assumption on $\eta$, we can have that the spaces $\h^k(t)$ and $\h^k_T$ have the same topology as $H^k$ and $L^2 H^k$, respectively.

\begin{lem}\label{l_norm_equivalence}
There exists a universal $\ep_0 > 0$ so that if
$
\sup_{0\le t \le T} \norm{\eta(t)}_{3} < \ep_0,$
then
\begin{equation}\label{l_norm_e_01}
\frac{1}{\sqrt{2}} \norm{u}_{k} \le \hn{u}{k} \le \sqrt{2} \norm{u}_{k}
\end{equation}
for $k=0,1$ and for all $t \in [0,T]$.  As a consequence, for $k=0,1$,
\begin{equation}\label{l_norm_e_02}
\frac{1}{\sqrt{2}} \norm{u}_{L^2 H^k} \le \norm{u}_{\h^k_T} \le \sqrt{2} \norm{u}_{L^2 H^k}.
\end{equation}
\end{lem}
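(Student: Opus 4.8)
\textbf{Proof plan for Lemma~\ref{l_norm_equivalence}.}
The plan is to establish \eqref{l_norm_e_01} first at a fixed time $t$ and then integrate in time to obtain \eqref{l_norm_e_02}. Recall that $\h^0(t)$ carries the inner product $\iph{u}{v}{0}=\int_\Omega u\cdot v\, J(t)$ and $\h^1(t)$ the inner product $\iph{u}{v}{1}=\int_\Omega \sg_\a u:\sg_\a u\, J(t)$, with $\a$, $J$ built from $\bar\eta=\ap\eta$ via \eqref{ABJ_def}. The key observation is that all of $A,B,J-1,K-1$ are, up to harmless algebraic combinations, products and smooth functions of $\bar\eta$ and its first derivatives, so they are controlled by $\norm{\bar\eta}_{H^2(\Omega)}$, which in turn is controlled by $\norm{\eta}_{3/2}\le\norm{\eta}_3$ through the Poisson estimate of Lemma~\ref{p_poisson} (the case $q=1$, giving $\norm{\nab\bar\eta}_0\ls\norm{\eta}_{1/2}$, together with an $L^\infty$ bound via Sobolev embedding on $\Omega\subset\Rn3$).

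For $k=0$: write $\hn{u}{0}^2-\norm{u}_0^2=\int_\Omega |u|^2(J-1)$, so $\big|\hn{u}{0}^2-\norm{u}_0^2\big|\le \norm{J-1}_{L^\infty(\Omega)}\norm{u}_0^2$. Since $J-1=\bar\eta/b+\p_3\bar\eta\,\tilde b$ and $b\ge b_->0$ with $b\in C^\infty$, we get $\norm{J-1}_{L^\infty}\ls \norm{\bar\eta}_{L^\infty}+\norm{\nab\bar\eta}_{L^\infty}\ls\norm{\bar\eta}_{H^3(\Omega)}\ls\norm{\eta}_{3}$ (using $H^2(\Omega)\hookrightarrow L^\infty$ and Lemma~\ref{p_poisson}). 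Choosing $\ep_0$ small enough that this quantity is $\le 1/2$ gives $\tfrac12\norm{u}_0^2\le\hn{u}{0}^2\le\tfrac32\norm{u}_0^2$, which implies \eqref{l_norm_e_01} for $k=0$ after shrinking the constants to $1/\sqrt2,\sqrt2$ (indeed $\tfrac32<2$ and $\tfrac12>\tfrac12$, so this is immediate). For $k=1$: by Korn's inequality (Lemma~\ref{i_korn}) and \eqref{poincare_b2}, $\norm{u}_1$ is equivalent to $\norm{\sg u}_0$ on $\H1$, so it suffices to compare $\hn{u}{1}$ with $\norm{\sg u}_0$. Expand $\sg_\a u=\sg u + (\sg_\a-\sg)u$, where the entries of $(\sg_\a-\sg)u$ are linear combinations of $A K,BK,K-1$ times first derivatives of $u$; hence $\norm{(\sg_\a-\sg)u}_0\ls(\norm{AK}_{L^\infty}+\norm{BK}_{L^\infty}+\norm{K-1}_{L^\infty})\norm{\nab u}_0\ls\norm{\eta}_3\,\norm{\nab u}_0$. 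Combining with $\norm{J-1}_{L^\infty}\ls\norm{\eta}_3$, a triangle-inequality argument shows that for $\ep_0$ small, $\tfrac12\norm{\sg u}_0^2\le\hn{u}{1}^2\le 2\norm{\sg u}_0^2$; feeding this through Korn and Poincaré adjusts the universal constants, and again one can arrange $1/\sqrt2$ and $\sqrt2$ by taking $\ep_0$ smaller still.

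Finally, \eqref{l_norm_e_02} follows by squaring \eqref{l_norm_e_01}, noting the hypothesis $\sup_{0\le t\le T}\norm{\eta(t)}_3<\ep_0$ makes the pointwise bound uniform in $t$, and integrating over $[0,T]$: $\norm{u}_{\h^k_T}^2=\int_0^T\hn{u(t)}{k}^2\,dt$ lies between $\tfrac12\int_0^T\norm{u(t)}_k^2\,dt$ and $2\int_0^T\norm{u(t)}_k^2\,dt$, i.e. between $\tfrac12\norm{u}_{L^2H^k}^2$ and $2\norm{u}_{L^2H^k}^2$. The main obstacle, though it is essentially bookkeeping, is the $k=1$ case: one must be careful that Korn's inequality only provides a one-sided bound $\norm{u}_1\ls\norm{\sg u}_0$ while the reverse $\norm{\sg u}_0\le\sqrt2\norm{u}_1$ is trivial, so the equivalence of $\hn{u}{1}$ with $\norm{u}_1$ genuinely relies on chaining Korn, Poincaré, and the perturbative estimate for $\sg_\a-\sg$ with compatible constants, and then choosing $\ep_0$ at the end to absorb all the accumulated multiplicative factors into the clean constants $1/\sqrt2$ and $\sqrt2$.
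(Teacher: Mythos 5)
Your overall strategy is the standard one (and is essentially what the cited Lemma 2.1 of \cite{GT_lwp} does, the paper itself offering only that citation): control $J-1$, $AK$, $BK$, $K-1$ in $L^\infty(\Omega)$ by $\norm{\bar\eta}_{H^3(\Omega)}\ls\norm{\eta}_{5/2}\le\norm{\eta}_3$ via Lemma \ref{p_poisson} and Sobolev embedding, then treat $\hn{u}{k}$ as a small perturbation of the flat-interface norm. The $k=0$ case is complete and correct: $|\hn{u}{0}^2-\ns{u}_0|\le\norm{J-1}_{L^\infty}\ns{u}_0$ and smallness of $\ep_0$ gives the factors $1/\sqrt2,\sqrt2$ exactly as you say. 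The integration-in-time step for \eqref{l_norm_e_02} is also fine.

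The gap is in your endgame for $k=1$. What your perturbative estimate actually delivers is $\tfrac{1}{\sqrt2}\norm{\sg u}_0\le\hn{u}{1}\le\sqrt2\,\norm{\sg u}_0$ for $\ep_0$ small. Passing from $\norm{\sg u}_0$ to the full $H^1$ norm costs fixed universal constants: Korn's inequality $\norm{u}_1\ls\norm{\sg u}_0$ comes with a constant that is a property of $\Omega$, not of $\eta$, and in the other direction one only has $\norm{\sg u}_0\le 2\norm{\nabla u}_0\le 2\norm{u}_1$ (with the factor $2$ attained when $\nabla u$ is symmetric), not $\norm{\sg u}_0\le\sqrt2\norm{u}_1$. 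None of these constants shrink as $\ep_0\to0$, so your final claim that one can ``arrange $1/\sqrt2$ and $\sqrt2$ by taking $\ep_0$ smaller still'' is false: shrinking $\ep_0$ only sharpens the comparison between $\hn{u}{1}$ and $\norm{\sg u}_0$, never between $\norm{\sg u}_0$ and $\norm{u}_1$. What your argument genuinely proves for $k=1$ is equivalence of $\hn{u}{1}$ and $\norm{u}_1$ with universal (but not the advertised) constants — which is all that is ever used downstream — so you should either state the $k=1$ conclusion with generic universal constants, or be explicit that the clean constants are only claimed relative to $\norm{\sg u}_0$.
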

\begin{proof}
See Lemma 2.1 of \cite{GT_lwp}.
\end{proof}

\begin{remark}  Throughout the rest of this paper, we will assume the assumption of Lemma \ref{l_norm_equivalence} is satisfied so that \eqref{l_norm_e_01}--\eqref{l_norm_e_02} hold.
\end{remark}

The following lemma provides the $H^{-1/2}$ boundary estimates of $u$ knowing $u,\diva u \in \h^0(t)$.

\begin{lem}\label{l_boundary_dual_estimate}
If $u \in \h^0(t)$ and $\diva u \in \h^0(t)$, then
\begin{equation}
 \snormspace{u \cdot \n}{-1/2}{\Sigma} +  \snormspace{u \cdot \nu}{-1/2}{\Sigma_b} \ls \hn{u}{0} + \hn{\diva{u}}{0}.
\end{equation}
\end{lem}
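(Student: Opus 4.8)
The plan is to prove the boundary estimate by a duality argument, testing $u \cdot \n$ against arbitrary test functions on $\Sigma$ that we extend into $\Omega$ and integrate by parts using the $\mathcal{A}$-divergence theorem. The key algebraic fact is that $u \cdot \n = u \cdot \mathcal{A} e_3 \cdot (\text{something})$; more precisely, since $\mathcal{A} = (\nabla\Phi^{-1})^T$ and $\n = J \mathcal{A} e_3$ (one checks this from the explicit forms in \eqref{A_def}, as $J\mathcal{A}e_3 = J(-AK,-BK,K)^T = (-A,-B,1)^T$, which matches the definition of $\n$ up to recalling that on $\Sigma$ one has $A = \p_1\bar\eta|_{\Sigma} = \p_1\eta$ since $\bar\eta|_{\Sigma}=\eta$, and similarly for $B$). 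Hence $\int_\Sigma (u\cdot\n)\varphi = \int_\Sigma (u \cdot \mathcal{A}e_3)(J\varphi)$, and the normal component with the Jacobian weight is exactly what the divergence theorem for $\diva$ produces on the boundary.

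Concretely, first I would fix $\varphi \in H^{1/2}(\Sigma)$ and let $\bar\varphi$ be a bounded extension of $\varphi$ to $H^1(\Omega)$ with $\bar\varphi|_{\Sigma_b}=0$ and $\norm{\bar\varphi}_{H^1(\Omega)} \lesssim \norm{\varphi}_{H^{1/2}(\Sigma)}$. Then I would apply the divergence identity $\int_\Omega \diva(u \bar\varphi\, \text{-type quantity}) J = \int_{\partial\Omega}(\cdots)$; the clean way is to use that for a vector field $X$ and scalar $\psi$, $\int_\Omega (\diva X)\psi J + \int_\Omega X \cdot \naba\psi\, J = \int_\Sigma (X\cdot\n)\psi$ (no contribution from $\Sigma_b$ since $\bar\varphi$ vanishes there, recalling $\nu$ and $\n$ agree there up to sign/normalization and $b$ is independent of $x_3$-extension). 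Taking $X = u$ and $\psi = \bar\varphi$ gives
\begin{equation}
 \int_\Sigma (u\cdot\n)\bar\varphi = \int_\Omega (\diva u)\bar\varphi\, J + \int_\Omega u \cdot \naba \bar\varphi\, J,
\end{equation}
and each term on the right is bounded by $(\hn{\diva u}{0} + \hn{u}{0})\norm{\bar\varphi}_{H^1(\Omega)} \lesssim (\hn{\diva u}{0}+\hn{u}{0})\norm{\varphi}_{H^{1/2}(\Sigma)}$, using the norm equivalence of Lemma \ref{l_norm_equivalence} and boundedness of $\naba$ on $H^1$ under the smallness of $\eta$. Taking the supremum over $\norm{\varphi}_{H^{1/2}(\Sigma)}\le 1$ yields $\snormspace{u\cdot\n}{-1/2}{\Sigma} \lesssim \hn{u}{0}+\hn{\diva u}{0}$. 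The estimate for $\snormspace{u\cdot\nu}{-1/2}{\Sigma_b}$ is identical, now extending $\varphi$ from $\Sigma_b$ to vanish on $\Sigma$.

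The main obstacle is really a bookkeeping one rather than a conceptual one: one must verify the geometric identity $\n = J\mathcal{A}e_3$ on $\Sigma$ and confirm that $\nu$ on $\Sigma_b$ is proportional to $\mathcal{A}e_3$ there as well (so that $u\cdot\nu$ genuinely appears as the boundary term of $\diva$), and one must justify the divergence identity for the weighted field — this is valid because $\partial_j(J\mathcal{A}_{ij}) = 0$ (the Piola/Jacobi identity for the cofactor matrix of $\nabla\Phi$), which makes $\diva$ with the $J$-weight a genuine divergence in the flat coordinates, namely $\int_\Omega (\diva X) J = \int_\Omega \partial_j(J\mathcal{A}_{ij}X_i) = \int_{\partial\Omega}(J\mathcal{A}_{ij}\nu^{flat}_j)X_i$. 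Once that identity is in hand the rest is a routine application of trace/extension bounds and Cauchy–Schwarz. I would also remark that this is precisely the argument used in \cite{GT_lwp} for the analogous estimate, so one may alternatively simply cite it.
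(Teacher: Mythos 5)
Your argument is the standard duality/normal-trace argument and is in substance the same proof as the one in Lemma 3.3 of \cite{GT_lwp}, which this paper simply cites; the mechanism (Piola identity $\p_j(J\a_{ij})=0$, so that $J\diva X=\p_j(J\a_{ij}X_i)$ is a genuine divergence, followed by testing against a bounded $H^1$ extension of $\varphi$ that vanishes on the opposite boundary component) is exactly right, and the identity $\n=J\a e_3$ on $\Sigma$ checks out from \eqref{A_def}--\eqref{ABJ_def}.

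One correction to your ``bookkeeping'' remarks: on $\Sigma_b$ the boundary weight produced by the divergence theorem is \emph{not} $\a e_3$ (nor anything proportional to it), since the outward flat normal of $\Sigma_b=\{x_3=-b(x_1,x_2)\}$ is $\nu\propto -(\nab_\ast b,1)$, not $e_3$. The quantity that appears is $J\a\nu$, and what you must verify is $J\a\nu=\nu$ on $\Sigma_b$; this follows from Nanson's formula for the transformation of oriented surface elements together with the fact that $\Phi\vert_{\Sigma_b}=Id_{\Sigma_b}$, so the cofactor matrix fixes the conormal there. A direct check of ``$\nu\propto\a e_3$ on $\Sigma_b$'' would fail (at $x_3=-b$ one has $\a e_3\propto(-\bar\eta\,\p_1 b/b,-\bar\eta\,\p_2 b/b,1)$, which is parallel to $\nu$ only if $\bar\eta=-b$). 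With that identification fixed, the rest of your proof — the extension bound, the norm equivalence of Lemma \ref{l_norm_equivalence}, and the duality with $H^{1/2}$ — goes through as written, modulo the usual density argument needed to define $u\cdot\n$ for $u$ merely in $\h^0$ with $\diva u\in\h^0$.
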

\begin{proof}
See Lemma 3.3 of of \cite{GT_lwp}.
\end{proof}

The following lemma records the differentiability of norms in the time-dependent spaces.

\begin{lem}\label{l_x_time_diff}
Suppose that $u \in \h^1_T$, $\dt u \in (\h^1_T)^*$.  Then the mapping $t \mapsto \norm{u(t)}_{\h^0(t)}^2$ is absolutely continuous, and
\begin{equation}\label{l_x_t_d_01}
 \frac{d}{dt}  \norm{u(t)}_{\h^0}^2 = 2 \br{\dt u(t),u(t)}_{(\h^1)^*} + \int_\Omega \abs{u(t)}^2 \dt J(t)
\end{equation}
for a.e. $t \in [0,T]$.  Moreover, $u \in C^0([0,T];H^0(\Omega))$.  If $v \in \h^1_T$, $\dt v \in (\h^1_T)^*$ as well, then
\begin{equation}\label{l_x_t_d_02}
 \frac{d}{dt} \iph{u(t)}{v(t)}{0} =  \br{\dt u(t),v(t)}_{(\h^1)^*} +  \br{\dt v(t),u(t)}_{(\h^1)^*} + \int_\Omega u(t) \cdot v(t) \dt J(t).
\end{equation}
A similar result holds for $u \in \x_T$ with $\dt u \in (\x_T)^*$.
\end{lem}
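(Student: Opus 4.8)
The plan is to reduce the statement to the classical Lions--Magenes theorem on absolute continuity of $t\mapsto\norm{w(t)}_{L^2}^2$ for $w\in L^2([0,T];H^1)$ with $\dt w\in L^2([0,T];(H^1)^*)$ (or its analogue when the pairing is over the divergence-free subspace), and then carry the time-dependent weight $J(t)$ through the computation. First I would invoke Lemma \ref{l_norm_equivalence}, which is in force by the standing assumption, so that $\h^1_T$ and $(\h^1_T)^*$ are topologically the same as $L^2 H^1$ and $L^2 (\H1)^*$; in particular $u\in L^2([0,T];\H1)$ and $\dt u\in L^2([0,T];(\H1)^*)$, so the standard theory gives $u\in C^0([0,T];L^2(\Omega))$ and the product rule $\frac{d}{dt}\norm{u(t)}_{L^2}^2 = 2\br{\dt u(t),u(t)}_{(\H1)^*}$ for a.e.\ $t$, together with the polarized identity $\frac{d}{dt}(u(t),v(t))_{L^2} = \br{\dt u,v} + \br{\dt v,u}$.

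Next I would handle the weight. Since $\eta$ (hence $J$, via \eqref{ABJ_def}) is a fixed given function with enough regularity that $J$ and $\dt J$ are, say, bounded on $\Omega\times[0,T]$ and $J$ is bounded below, multiplication by $J(t,\cdot)$ is a bounded, boundedly invertible operator on $L^2(\Omega)$ depending continuously (indeed Lipschitz) on $t$. Writing
\begin{equation}
\iph{u(t)}{v(t)}{0} = \int_\Omega u(t)\cdot v(t)\, J(t),
\end{equation}
I would approximate $u$ by smooth (in time) functions $u_\ep$ with $u_\ep\to u$ in $\h^1_T$ and $\dt u_\ep\to\dt u$ in $(\h^1_T)^*$; for these the identity \eqref{l_x_t_d_02} is an elementary computation with the Leibniz rule, the three terms on the right being $\int_\Omega \dt u_\ep\cdot v_\ep J$, $\int_\Omega u_\ep\cdot\dt v_\ep J$, and $\int_\Omega u_\ep\cdot v_\ep\, \dt J$. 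Passing $\ep\to0$: the last term converges because $u_\ep\cdot v_\ep\to u\cdot v$ in $L^1(\Omega\times[0,T])$ against the bounded factor $\dt J$; the first two converge because, after absorbing $J$ into the test function, $\br{\dt u_\ep, Jv_\ep}_{(\H1)^*}\to\br{\dt u, Jv}_{(\H1)^*}$ — here one uses that $v\mapsto Jv$ maps $\h^1$ boundedly to $\H1$ so that $\dt u$ pairs with it, which is exactly the content of identifying the $\h^1$-dual pairing with the weighted $L^2$ pairing. Specializing $v=u$ and using symmetry gives \eqref{l_x_t_d_01} with the factor $2$ and the single leftover term $\int_\Omega |u|^2\dt J$; absolute continuity of $t\mapsto\hn{u(t)}{0}^2$ is then immediate since its derivative is in $L^1(0,T)$, and $u\in C^0([0,T];L^2)$ transfers from the unweighted statement via the norm equivalence.

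The step I expect to require the most care is the justification that $\dt u\in(\h^1_T)^*$ may be paired with $J(t)v(t)$ and $J(t)u(t)$ rather than merely with $v(t),u(t)$ — i.e.\ that the weighted dual pairing behaves as the Leibniz rule demands. The cleanest way is to note $J(t,\cdot)\in H^s(\Omega)$ for $s$ large (coming from the a priori regularity of $\eta$), so by the product estimate \eqref{i_s_p_04} the map $w\mapsto Jw$ is bounded on $\H1$ and on $(\H1)^*$, with operator norm controlled by $\norm{J}_{H^s}$, and it is Lipschitz in $t$ in these norms since $\dt J$ has the same regularity; this makes $t\mapsto J(t)u(t)$ an admissible curve in $\h^1_T$ whenever $u$ is, and the approximation argument above closes. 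Finally, the same proof applies verbatim with $\H1$ replaced by $\Hsig$ and $\h^1$ by $\x$, since multiplication by $J$ does not preserve $\diva$-free fields but the pairing $\br{\dt u, \cdot}_{(\x^1)^*}$ is by definition taken over $\x$, and the Leibniz computation only ever sees test functions already in $\x_T$; one simply notes that for $u\in\x_T$ the identity \eqref{l_x_t_d_01} is unchanged because $\dt J$ does not interact with the divergence constraint. This gives the asserted result for $u\in\x_T$ with $\dt u\in(\x_T)^*$.
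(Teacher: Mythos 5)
Your treatment of the $\h^1_T$ case is essentially the standard Lions--Magenes argument transported through the weight $J$, and under the standing assumption of Lemma \ref{l_norm_equivalence} (plus enough regularity of $\eta$ and $\dt\eta$ to make $J$, $J^{-1}$, $\dt J$ bounded multipliers) the mollify--Leibniz--pass-to-the-limit scheme does yield \eqref{l_x_t_d_01}--\eqref{l_x_t_d_02}; this is in the same spirit as the cited proof (Lemma 2.4 of \cite{GT_lwp}), which the present paper does not reproduce.

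The genuine gap is in your last two sentences, i.e.\ the case $u \in \x_T$, $\dt u \in (\x_T)^*$ — which is in fact the case the paper actually needs (it is what makes the initial condition in \eqref{l_weak_solution_pressureless} meaningful). Your claim that ``the same proof applies verbatim'' does not hold: since $\dt u$ is only a functional on $\x_T$, every curve you pair it with in the approximation argument must itself satisfy the \emph{time-dependent} constraint $\diverge_{\a(t)} w(t) = 0$. Plain temporal mollification of $u$ destroys this constraint (because $\a$ depends on $t$), so the quantities $\br{\dt u, u_\ep}_{(\x_T)^*}$ you need in the limit passage are simply undefined, and the Leibniz computation for the approximants cannot even be set up inside $\x_T$. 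The standard repair — and the device the cited proof relies on — is Proposition \ref{l_M_iso}: write $u(t) = M(t) w(t)$ with $w \in L^2([0,T];\Hsig)$, mollify $w$ in time (which preserves the \emph{fixed} constraint $\diverge w = 0$), and map back with $\mathcal{M}_t$, using the boundedness of $M$, $M^{-1}$ and $\dt M$ to control both the convergence $M w_\ep \to u$ in $\x_T$ and the convergence of the time derivatives in $(\x_T)^*$. Your own (correct) observation that multiplication by $J$ does not preserve $\diva$-free fields is exactly the symptom of this difficulty; it is not a reason the issue can be ignored, but the reason the $\mathcal{M}_t$ transference is needed. With that step inserted, the rest of your argument goes through.
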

\begin{proof}
See Lemma 2.4 of \cite{GT_lwp}.
\end{proof}

It is more important to show that the space $\Hsig$ is related to the space $\x(t)$.  To this end, we define the matrix
\begin{equation}\label{l_M_def}
 M := M(t) = K \nab \Phi =
\begin{pmatrix}
K & 0 & 0 \\
0 & K & 0 \\
AK & BK & 1
\end{pmatrix}.
\end{equation}
Note that $M$ is invertible, and $M^{-1} = J \a^T$. The matrix $M(t)$ induces a linear operator $\mathcal{M}_t: u \mapsto \mathcal{M}_t(u) = M(t) u$, the properties of which are recorded in the following.

\begin{prop}\label{l_M_iso}
For each $t \in[0,T]$, $\mathcal{M}_t$ is a bounded, linear isomorphism:
from $H^k(\Omega)$ to $H^k(\Omega)$ for $k=0,1,2$;  from $L^2(\Omega)$ to $\h^0(t)$;  from $\H1$ to $\h^1(t)$; and  from $\Hsig$ to $\x(t)$.  In each case the norms of the operators $\mathcal{M}_t, \mathcal{M}_t^{-1}$ are bounded by a constant times $1 + \norm{\eta(t)}_{9/2}$.

Moreover,  the mapping $\mathcal{M}$ given by $\mathcal{M}u(t) := \mathcal{M}_t u(t)$ is a bounded, linear isomorphism: from $L^2([0,T];H^k(\Omega))$ to $L^2([0,T];H^k(\Omega))$ for $k=0,1,2$; from $L^2([0,T];H^0(\Omega))$ to $\h^0_T$;  from $L^2([0,T];\H1)$ to $\h^1_T$; and from $L^2([0,T];\Hsig)$ to $\x_T$.  In each case, the norms of the operators $\mathcal{M}$ and  $\mathcal{M}^{-1}$ are bounded by a constant times the sum $1+\sup_{0\le t \le T} \norm{\eta(t)}_{9/2}$.
\end{prop}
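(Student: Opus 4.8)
The plan is to verify each mapping property directly from the explicit formula \eqref{l_M_def} for $M(t)$, reducing everything to (i) the algebraic identities $M^{-1} = J\a^T$ and $\diverge(M^{-1}u) = J\, \diva u$ (up to a harmless factor), which relate $\diva$-free fields to ordinary divergence-free fields, and (ii) product estimates in Sobolev spaces to control multiplication by the entries $K$, $AK$, $BK$ of $M$ and those of $M^{-1}$. First I would record that the entries of $M$ and $M^{-1}$ are built from $A,B,J,K$, hence ultimately from $\bar\eta$ and $b$; using \eqref{ABJ_def}, Lemma \ref{p_poisson} for the harmonic extension, and the assumed smoothness of $b$, one gets that these entries lie in $H^{s}(\Omega)$ for $s$ slightly below the regularity of $\eta$ (the loss of $1/2$ is exactly why the operator norm is controlled by $1+\norm{\eta(t)}_{9/2}$: for $k=2$ one multiplies an $H^2$ function by an entry, and Lemma \ref{i_sobolev_product_1} requires the multiplier in $H^{s_2}$ with $s_2 > 2 + 3/2 = 7/2$, so one needs $\nab^2$ of the entry, i.e. essentially $\nab^3\bar\eta \sim \norm{\eta}_{7/2}$ in $H^0$, while the trace terms in $A,B$ involving $x_3\bar\eta/b^2$ force the slightly larger $9/2$).

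Second, I would establish the boundedness $\mathcal{M}_t: H^k(\Omega)\to H^k(\Omega)$ for $k=0,1,2$: this is immediate for $k=0$ (the entries are bounded once $\norm{\eta}_3$ is small, by Sobolev embedding), and for $k=1,2$ it follows from the Leibniz rule together with the product estimate \eqref{i_s_p_02}, each derivative either hitting $u$ (harmless) or an entry of $M$ (absorbed by the $H^{s_2}$ norm of the entry, bounded by $1+\norm{\eta}_{9/2}$). The same argument applied to $M^{-1} = J\a^T$, whose entries are $J$, $1$, $-AKJ = -A$, $-BKJ = -B$, gives the boundedness of $\mathcal{M}_t^{-1}$, hence $\mathcal{M}_t$ is an isomorphism on each $H^k$. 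The mapping into $\h^0(t)$ follows from the $k=0$ bound combined with Lemma \ref{l_norm_equivalence} (which makes $\h^0$ topologically $L^2$), and similarly the mapping $\H1\to\h^1(t)$ follows from the $k=1$ bound, the equivalence \eqref{l_norm_e_01}, and the observation that $\mathcal{M}_t$ preserves the boundary condition $u|_{\Sigma_b}=0$ since $\Phi|_{\Sigma_b}=\mathrm{Id}$ forces $M|_{\Sigma_b}$ to be invertible with no tangential coupling — more simply, $u|_{\Sigma_b}=0 \iff (Mu)|_{\Sigma_b}=0$ because $M$ is pointwise invertible.

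Third, for the crucial isomorphism $\Hsig \to \x(t)$ I would use the change-of-variables identity: if $v = M^{-1} u = J\a^T u$ then a direct computation using $\p_j(\a_{ij}J) = 0$ (the Piola identity, which holds because $\a$ comes from a gradient map) gives $\diverge v = J\,\diva u$ pointwise, so $\diverge v = 0 \iff \diva u = 0$. Hence $\mathcal{M}_t$ maps $\{u\in\H1: \diverge u = 0\} = \Hsig$ onto $\{u\in\h^1(t): \diva u = 0\} = \x(t)$ bijectively, with the norm bounds inherited from the $\H1\to\h^1(t)$ case. Finally, the time-dependent statements for $\mathcal{M}$ on $L^2([0,T];\cdot)$ and the spaces $\h^k_T$, $\x_T$ follow by integrating the pointwise-in-$t$ estimates over $[0,T]$ and taking $\sup_{0\le t\le T}\norm{\eta(t)}_{9/2}$ out of the integral, again using Lemma \ref{l_norm_equivalence} in the form \eqref{l_norm_e_02}; measurability in $t$ is routine since $M(t)$ depends continuously on $\eta(t)$. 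The main obstacle is bookkeeping the exact Sobolev index at which the product estimates are applied so as to land on the stated $1+\norm{\eta}_{9/2}$ rather than something larger — in particular handling the $x_3\bar\eta/b^2$ terms in $A,B$, which need the full strength of Lemma \ref{p_poisson} together with the smoothness of $b$ to avoid an extra half-derivative loss; the Piola identity for the $\diva$-free statement is the other point requiring a short but genuine computation rather than a soft argument.
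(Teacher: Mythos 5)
Your proposal is correct and follows essentially the same route as the paper, which records exactly the key identity $\diverge(M^{-1}v)=J\,\diva v$ (the display \eqref{l_div_preserve}, i.e. your Piola-identity computation) and defers the remaining boundedness/Sobolev-multiplier details to Proposition 2.5 of \cite{GT_lwp}, where they are carried out just as you outline. One minor bookkeeping remark: the exponent $9/2$ comes simply from needing the entries $A,B,K$ (which contain one derivative of $\bar\eta$) as multipliers in $H^{s_2}$ with $s_2>7/2$, so $\norm{\nab\bar\eta}_{4}\ls\norm{\eta}_{9/2}$ via Lemma \ref{p_poisson}; it is not the $x_3\bar\eta/b^2$ terms (with $b$ smooth) that force the extra half derivative.
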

\begin{proof}
 We note that to show that $\mathcal{M}_t$ map $\diverge$-free vector fields to $\diverge_\a$-free vector fields, the following will be used: since $J\neq 0$,
\begin{equation}\label{l_div_preserve}
 p = \diva v \Leftrightarrow
 Jp = J \diva v   = \p_j ( J \a_{ij} v_i ) = \p_j (J \a^T v)_j = \p_j( M^{-1} v)_j = \diverge( M^{-1} v ).
\end{equation}
See Proposition 2.5 of \cite{GT_lwp} for the proof of this proposition.
\end{proof}

\subsubsection{Pressure as a Lagrange multiplier}
The following proposition allows us to introduce the pressure as a Lagrange multiplier.
\begin{prop}\label{l_pressure_decomp}
If $\Lambda_t \in (\h^1(t))^*$ is such that $\Lambda_t(v) = 0$ for all $v \in \x(t)$, then there exists a unique $p(t) \in \h^0(t)$ so that
\begin{equation}
 \iph{p(t)}{\diva v}{0} = \Lambda_t(v) \text{ for all } v\in \h^1(t)
\end{equation}
and $\hn{p(t)}{0} \ls (1+\norm{\eta(t)}_{9/2}) \norm{\Lambda_t}_{(\h^1(t))^*}$.

If $\Lambda \in (\h^1_T)^*$ is such that $\Lambda(v) = 0 $ for all $v \in \x_T$, then there exists a unique $p \in \h^0_T$ so that
\begin{equation}
 \ip{p}{\diva v}_{\h^0_T} = \Lambda(v) \text{ for all } v\in \h^1_T
\end{equation}
and $\norm{p}_{\h^0_T} \ls (1+\sup_{0\le t\le T} \norm{\eta(t)}_{9/2}) \norm{\Lambda}_{(\h^1_T)^*}$.
\end{prop}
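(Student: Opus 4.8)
The plan is to deduce this from the standard abstract theorem on existence of a pressure multiplier (the De Rham / Ne\v{c}as-type lemma) applied in the time-dependent setting, using the isomorphism $\mathcal{M}$ of Proposition \ref{l_M_iso} to transfer everything to the fixed, $\eta$-independent spaces where the classical result is available. First I would handle the pointwise-in-time statement. Given $\Lambda_t \in (\h^1(t))^*$ vanishing on $\x(t)$, pull back via $\mathcal{M}_t$ to define $\tilde\Lambda_t \in (\H1)^*$ by $\tilde\Lambda_t(w) := \Lambda_t(\mathcal{M}_t w)$; since $\mathcal{M}_t$ maps $\Hsig$ onto $\x(t)$ (Proposition \ref{l_M_iso}), $\tilde\Lambda_t$ vanishes on $\Hsig$, i.e. on all divergence-free $H^1$ vector fields with zero trace on $\Sigma_b$. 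The classical result (essentially that the range of $\diverge: \H1 \to L^2(\Omega)$ is closed with kernel $\Hsig$, cf. the argument in \cite{GT_lwp}) then yields a unique $\tilde p(t) \in L^2(\Omega)$ with $\int_\Omega \tilde p\, \diverge w = \tilde\Lambda_t(w)$ for all $w \in \H1$ and $\norm{\tilde p(t)}_0 \ls \norm{\tilde\Lambda_t}_{(\H1)^*}$.

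Next I would translate this back. Set $p(t) := K(t)\tilde p(t)$ (or whatever scaling makes the divergence identity match); the key computational point is the identity \eqref{l_div_preserve}, namely $J \diva v = \diverge(M^{-1} v)$, so that for $v \in \h^1(t)$ and $w := \mathcal{M}_t^{-1} v = M^{-1} v \in \H1$ one has $\diverge w = J \diva v$. Unwinding the definitions of the $\h^0(t)$ inner product (which carries the weight $J$) then gives $\iph{p(t)}{\diva v}{0} = \int_\Omega p(t) (\diva v) J = \int_\Omega \tilde p(t)\, \diverge w = \tilde\Lambda_t(w) = \Lambda_t(v)$, as desired. Uniqueness of $p(t)$ in $\h^0(t)$ follows from uniqueness of $\tilde p(t)$ together with the isomorphism property. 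The norm bound comes by chaining: $\hn{p(t)}{0} \ls \norm{\tilde p(t)}_0 \ls \norm{\tilde\Lambda_t}_{(\H1)^*} \ls \norm{\mathcal{M}_t}_{op} \norm{\Lambda_t}_{(\h^1(t))^*} \ls (1+\norm{\eta(t)}_{9/2}) \norm{\Lambda_t}_{(\h^1(t))^*}$, using Lemma \ref{l_norm_equivalence} for the equivalence of $\hn{\cdot}{0}$ with $\norm{\cdot}_0$ and Proposition \ref{l_M_iso} for the operator norm.

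For the time-integrated statement, the cleanest route is to note that $\mathcal{M}: L^2([0,T];\Hsig) \to \x_T$ and $L^2([0,T];\H1) \to \h^1_T$ are isomorphisms (Proposition \ref{l_M_iso}), so the same pull-back produces $\tilde\Lambda \in (L^2([0,T];\H1))^*$ vanishing on $L^2([0,T];\Hsig)$. One then applies the pointwise construction for a.e. $t$ and must check measurability of $t \mapsto \tilde p(t)$: this follows because the pressure is obtained by a bounded linear (hence continuous, and measurable-in-$t$) solution operator applied to $\tilde\Lambda$, and the $L^2_t$ bound $\norm{\tilde p}_{L^2 L^2}^2 = \int_0^T \norm{\tilde p(t)}_0^2 \, dt \ls \int_0^T \norm{\tilde\Lambda_t}_{(\H1)^*}^2 \, dt = \norm{\tilde\Lambda}^2_{(L^2 \H1)^*}$ shows $\tilde p \in h^0_T$. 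Transferring back exactly as before, with the a.e.-in-$t$ identity $\ip{p}{\diva v}_{\h^0_T} = \int_0^T \iph{p(t)}{\diva v(t)}{0}\, dt = \int_0^T \Lambda_t(v(t))\, dt = \Lambda(v)$, gives the result with the claimed bound in terms of $1 + \sup_{0\le t\le T}\norm{\eta(t)}_{9/2}$.

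\textbf{Main obstacle.} The routine parts (the abstract De Rham lemma, the norm equivalences) are cited or standard; the one genuinely delicate point is the measurability and the clean $L^2$-in-time assembly of $p(t)$ — i.e. verifying that the pointwise pressure construction can be made to depend measurably on $t$ so that $p$ genuinely lies in $\h^0_T$ with the stated norm. I expect this is best finessed by invoking the abstract result directly on the Bochner space $L^2([0,T];\H1)$ rather than integrating the pointwise statement, since the De Rham argument via the closed-range theorem works verbatim in that Hilbert-space setting; then no separate measurability check is needed. The bookkeeping with the Jacobian weight $J$ in the $\h^0$ inner product via \eqref{l_div_preserve} is the only other thing to get right, but it is a direct calculation.
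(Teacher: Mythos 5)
Your proposal is correct and follows essentially the same route as the paper's proof, which simply invokes Proposition 2.9 of \cite{GT_lwp}: pull the functional back through $\mathcal{M}_t$ (resp. $\mathcal{M}$ on the Bochner space for the time-integrated case), apply the classical divergence/Lagrange-multiplier lemma in the flat spaces $\H1$, $\Hsig$, and transfer back using \eqref{l_div_preserve} together with the operator-norm bounds of Proposition \ref{l_M_iso} and Lemma \ref{l_norm_equivalence}. One small correction: no rescaling is needed — since $\diverge(M^{-1}v) = J\,\diverge_{\mathcal{A}} v$ and the $\h^0(t)$ pairing already carries the weight $J$, the right choice is $p(t) = \tilde p(t)$ itself (your tentative $p = K\tilde p$ would break the displayed chain of equalities, though your hedge and the computation that follows already identify the correct choice).
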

\begin{proof}
See Proposition 2.9 of \cite{GT_lwp}.
\end{proof}

\subsection{Elliptic estimates}

\subsubsection{Two classical Stokes problems}

We recall the elliptic estimates for two classical Stokes problems with different boundary conditions.
\begin{lem}\label{i_linear_elliptic}
Let $r \ge 2$.  Suppose that $f\in H^{r-2}(\Omega), h\in H^{r-1}(\Omega)$ and $\psi\in H^{r-3/2}(\Sigma)$. Then there exists unique $u\in H^r(\Omega), p\in H^{r-1}(\Omega)$  solving the problem
\begin{equation}\label{stokes1}
 \begin{cases}
  -\Delta u + \nab p =f  &\text{in }\Omega \\
  \diverge{u} = h &\text{in }\Omega \\
  (pI - \sg (u) )e_3 = \psi &\text{on }\Sigma \\
  u =0 &\text{on }\Sigma_b.
 \end{cases}
\end{equation}
 Moreover,
\begin{equation}
 \snorm{u}{r}^2 + \snorm{p}{r-1}^2 \ls \snorm{f}{r-2}^2 + \snorm{h}{r-1}^2 + \snorm{\psi}{r-3/2}^2.
\end{equation}
\end{lem}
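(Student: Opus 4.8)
The plan is to establish the estimate for the Stokes system \eqref{stokes1} by a standard duality-plus-regularity argument, following the classical treatment of the stationary Stokes problem with a stress boundary condition (e.g. as in \cite{beale_2} or \cite{GT_lwp}). First I would reduce to the case $h=0$: given $h\in H^{r-1}(\Omega)$, one can solve $\diverge w = h$ in $\Omega$ with $w\in H^r(\Omega)$, $w=0$ on $\Sigma_b$, and $\snorm{w}{r}\ls \snorm{h}{r-1}$ (a Bogovskii-type construction, bearing in mind that the compatibility condition $\int_\Omega h = \int_\Sigma w\cdot e_3$ can be arranged by adjusting the trace). Replacing $u$ by $u-w$ changes $f$ by $\Delta w$ and $\psi$ by $\sg(w)e_3$, both controlled by $\snorm{h}{r-1}$, so it suffices to treat the divergence-free case.

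Next, with $h=0$, I would set up the weak formulation on the space $\Hsig$: find $u\in\Hsig$ such that $\tfrac12\int_\Omega \sg(u):\sg(v) = \br{f,v} + \br{\psi,v}_\Sigma$ for all $v\in\Hsig$. Existence and uniqueness of a weak solution follow from Lax–Milgram, using Korn's inequality (Lemma \ref{i_korn}) for coercivity, and the pressure $p\in L^2(\Omega)$ is recovered as a Lagrange multiplier via de Rham's theorem (the variational analogue of Proposition \ref{l_pressure_decomp}). This gives the base estimate $\snorm{u}{1}+\snorm{p}{0}\ls \snorm{f}{-1}+\snorm{\psi}{-1/2}$, which is the $r=2$... — more precisely one needs $r=2$ regularity, so this is only the preliminary a priori bound.

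The heart of the matter is the elliptic regularity bootstrap: upgrading the weak solution to $u\in H^r(\Omega)$, $p\in H^{r-1}(\Omega)$ with the stated estimate. I would do this by the usual localization: interior estimates via difference quotients reduce to the constant-coefficient Stokes system on $\Rn{3}$ (solved by Fourier analysis), boundary estimates near $\Sigma_b$ use the Dirichlet condition and are again classical, and the estimates near the upper boundary $\Sigma$ — where the Neumann-type stress condition $(pI-\sg(u))e_3=\psi$ lives — are handled by flattening (here $\Sigma$ is already flat) and taking horizontal difference quotients, which are tangential and hence preserve the boundary condition; the normal derivatives are then recovered from the equations themselves, reading $\p_3^2 u_3$ and $\p_3 p$ off the divergence constraint and the third momentum equation. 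Iterating this scheme raises the regularity index by one at each stage up to the given $r$. The main obstacle is precisely the boundary regularity at $\Sigma$ under the stress condition: one must check the Agmon–Douglis–Nirenberg (Lopatinski–Shapiro) complementing condition for the Stokes operator with this boundary operator, and carefully track that the difference-quotient argument closes without losing the pressure's regularity (the pressure sits at $H^{r-1}$, one derivative below $u$, and its normal trace is implicitly fixed by $\psi$). Since this is entirely classical, I would expect the paper to simply cite it — likely Lemma 3.3 of \cite{beale_2} or the corresponding statement in \cite{GT_lwp} — rather than reproduce the bootstrap in full.
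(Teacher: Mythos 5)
Your outline is the standard argument and is consistent with the paper, which gives no proof at all and simply writes ``See \cite{beale_1}'' (Beale's 1981 paper, not \cite{beale_2}) for this classical Stokes estimate. Your reduction to the divergence-free case, the Lax--Milgram/Korn weak formulation with the pressure recovered as a Lagrange multiplier, and the tangential difference-quotient bootstrap at the flat upper boundary (with normal derivatives read off from the equations) is exactly the classical route behind that citation.
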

\begin{proof}
 See \cite{beale_1}.
\end{proof}

\begin{lem}\label{i_linear_elliptic2}
Let $r \ge 2$.  Suppose that $f\in H^{r-2}(\Omega), h\in H^{r-1}(\Omega), \varphi\in H^{r-1/2}(\Sigma)$ and that $h,\varphi$ satisfy the compatibility condition
\begin{equation}
\int_\Omega h =\int_{\Sigma} \varphi \cdot e_3.
\end{equation}
Then there exists unique $u\in H^r(\Omega), p\in H^{r-1}(\Omega)$ (up to constants) solving the problem
\begin{equation}\label{stokes2}
 \begin{cases}
  -\Delta u + \nab p =f  &\text{in }\Omega \\
  \diverge{u} = h &\text{in }\Omega \\
u= \varphi &\text{on }\Sigma \\
  u =0 &\text{on }\Sigma_b.
 \end{cases}
\end{equation}
 Moreover,
\begin{equation}
 \snorm{u}{r}^2 + \snorm{\nab p}{r-2}^2 \ls \snorm{f}{r-2}^2 + \snorm{h}{r-1}^2 + \snorm{\varphi}{r-1/2}^2.
\end{equation}
\end{lem}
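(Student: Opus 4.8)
The plan is to reduce the second Stokes problem \eqref{stokes2} with Dirichlet data $\varphi$ on $\Sigma$ to the first Stokes problem \eqref{stokes1} with stress data on $\Sigma$, so that Lemma~\ref{i_linear_elliptic} can be invoked directly. First I would lift the boundary data: choose a vector field $w \in H^r(\Omega)$ with $w = \varphi$ on $\Sigma$, $w = 0$ on $\Sigma_b$, and $\diverge w = h$ in $\Omega$, with the estimate $\snorm{w}{r} \ls \snorm{h}{r-1} + \snorm{\varphi}{r-1/2}$. Such a $w$ exists precisely because of the stated compatibility condition $\int_\Omega h = \int_\Sigma \varphi\cdot e_3$: one first solves a divergence equation (Bogovskii-type operator) to meet the divergence constraint after subtracting a harmonic-type extension of $\varphi$, or equivalently one solves an auxiliary Stokes or Poisson problem; in any case this is standard and the compatibility condition is exactly what makes the divergence equation solvable on $\Omega$ with zero flux.

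Next I would set $v = u - w$ and look for $v \in H^r(\Omega)$, $q = p \in H^{r-1}(\Omega)$ solving
\begin{equation*}
\begin{cases}
-\Delta v + \nab q = f + \Delta w & \text{in }\Omega \\
\diverge v = 0 & \text{in }\Omega \\
v = 0 & \text{on }\Sigma \\
v = 0 & \text{on }\Sigma_b.
\end{cases}
\end{equation*}
This is a Stokes problem with homogeneous Dirichlet data on the entire boundary $\p\Omega = \Sigma \cup \Sigma_b$ and right-hand side $f + \Delta w \in H^{r-2}(\Omega)$. Existence and uniqueness of $v \in H_0^1(\Omega)$ with pressure $q \in L^2(\Omega)/\mathbb{R}$ is the classical Stokes theory, and the $H^r \times H^{r-1}/\mathbb{R}$ regularity for $r \ge 2$ on the smooth domain $\Omega$ is the standard Cattabriga--Solonnikov estimate
\begin{equation*}
\snorm{v}{r}^2 + \snorm{\nab q}{r-2}^2 \ls \snorm{f + \Delta w}{r-2}^2 \ls \snorm{f}{r-2}^2 + \snorm{w}{r}^2.
\end{equation*}
Undoing the substitution, $u = v + w$ and $p = q$ solve \eqref{stokes2}, uniquely up to an additive constant in $p$ (uniqueness of $v$ is genuine because of the full Dirichlet condition, while $q$ is only determined modulo constants), and combining the two estimates with the bound on $w$ gives exactly $\snorm{u}{r}^2 + \snorm{\nab p}{r-2}^2 \ls \snorm{f}{r-2}^2 + \snorm{h}{r-1}^2 + \snorm{\varphi}{r-1/2}^2$.

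The main obstacle is really just the construction of the lift $w$ with the prescribed divergence: one must be careful that the divergence equation $\diverge w = h$ with the inhomogeneous boundary trace $\varphi$ is solvable on the bounded-in-$x_3$ but possibly unbounded-in-$(x_1,x_2)$ domain $\Omega$, and that one gets the claimed Sobolev bounds; the compatibility condition is the obstruction-removal for the constant mode. In the periodic case this is routine; in the non-periodic case $\Sigma = \Rn 2$ one works with the homogeneous divergence operator and the estimate holds without further zero-average assumptions since no compact-support normalization is forced. The remaining ingredients — the full-Dirichlet Stokes regularity and the algebra — are entirely classical and I would simply cite the standard references (e.g. the same sources underlying Lemma~\ref{i_linear_elliptic}, together with \cite{beale_1}). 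One should also note that since only $\nab p$ (not $p$ itself) appears on the left, no global integrability of $p$ is needed, which is why the statement correctly asserts uniqueness of $p$ only up to constants.
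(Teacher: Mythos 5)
Your proposal is correct in outline, but it is worth knowing that the paper does not prove this lemma at all: its ``proof'' is a one-line citation to Ladyzhenskaya's book \cite{L}, where the Dirichlet problem for the Stokes system with inhomogeneous divergence is treated directly. Your route --- lift the data by a field $w\in H^r(\Omega)$ with $w=\varphi$ on $\Sigma$, $w=0$ on $\Sigma_b$, $\diverge w = h$, then solve the homogeneous full-Dirichlet Stokes problem for $v=u-w$ by Cattabriga--Solonnikov regularity --- is the standard self-contained reduction, and the compatibility condition $\int_\Omega h=\int_\Sigma \varphi\cdot e_3$ is indeed exactly the solvability condition for the divergence equation after subtracting a trace extension of $\varphi$. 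Two points deserve more care than your sketch gives them. First, the divergence lift: the Bogovskii operator gives the $H^r$ bound cleanly on bounded star-shaped (or Lipschitz) domains for data vanishing at the boundary, and for $h-\diverge w_1\in H^{r-1}$ not vanishing on $\partial\Omega$, and on the layer domain $\Omega$ (horizontally periodic, or infinite with $\Sigma=\Rn{2}$), you need either the higher-regularity divergence theory on such domains or, more simply, the full-Dirichlet Stokes solvability with inhomogeneous divergence itself --- which is precisely the content of the classical theorem you are trying to avoid quoting, so the reduction partly re-derives what \cite{L} already supplies. Second, the ``Cattabriga--Solonnikov estimate'' is classical for bounded smooth domains; for the unbounded slab-like $\Omega$ one must either localize and sum the elliptic estimates (uniformly, using $0<b_-\le b$ and $b\to b_\infty$) or again cite Ladyzhenskaya/Solonnikov for layer domains, and in that case the integrals in the compatibility condition need the interpretation you gesture at. So the argument is sound, but honestly written up it collapses back to citing the same classical sources the paper cites; what your version buys is an explicit explanation of why the compatibility condition appears and why only $\nab p$, with $p$ unique up to constants, is controlled.
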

\begin{proof}
 See \cite{L}.
\end{proof}

\subsubsection{The $\a-$Stokes problems}

In our local well-posedness argument, we will need the regularity of the solutions to the following $\a-$Stokes problem
\begin{equation}\label{l_linear_elliptic}
 \begin{cases}
 \diva S_\a(p,u) = F^1 & \text{in }\Omega \\
\diva{u}=F^2 & \text{in }\Omega\\
\Sa(p,u) \n = F^3 & \text{on }\Sigma \\
u =0 & \text{on }\Sigma_b.
 \end{cases}
\end{equation}
We shall use the regularity of \eqref{l_linear_elliptic} for each fixed $t$ in the context of the time-dependent problem, and hence we will temporarily ignore the time dependence of $\eta,$ $\mathcal{A},$ $\mathcal{N}$, etc, to view \eqref{l_linear_elliptic} as a stationary problem.
\begin{prop}\label{l_stokes_regularity}
Let $k\ge 4$ be an integer and suppose that $\eta \in H^{k+1/2}(\Sigma)$.  There exists a universal $\ep_0 >0$ so that if $\ns{\eta}_{k-1/2} \le \ep_0$, then solutions to \eqref{l_linear_elliptic} satisfy
\begin{equation}\label{l_s_reg_0}
  \norm{u}_{r} + \norm{p}_{r-1} \ls \norm{F^1}_{r-2} + \norm{F^2}_{r-1} + \norm{F^3}_{r-3/2}
\end{equation}
for $r=2,\dotsc,k$, whenever the right side is finite.

In the case $r=k+1$,  solutions to \eqref{l_linear_elliptic} satisfy
\begin{equation}\label{l_s_reg_01}
\begin{split}
\norm{u}_{k+1} + \norm{p}_{k} \ls & \norm{F^1}_{k-1} + \norm{F^2 }_{k} + \norm{F^3}_{k-1/2}   \\&
+  \norm{\eta}_{k+1/2} \left( \norm{F^1}_{2} + \norm{F^2 }_{3} + \norm{F^3}_{5/2}  \right).
\end{split}
\end{equation}

\end{prop}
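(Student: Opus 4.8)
The plan is to reduce the $\a$--Stokes problem \eqref{l_linear_elliptic} to the constant-coefficient Stokes problem \eqref{stokes1} treated in Lemma \ref{i_linear_elliptic} by writing $\a = I + (\a - I)$ and moving all the $(\a-I)$-terms to the right-hand side as ``error'' forcing terms. Concretely, expanding $\diva \Sa(p,u) = \diverge(pI - \sg u) + \mathcal{R}^1(\eta,u,p)$, $\diva u = \diverge u + \mathcal{R}^2(\eta, u)$, and $\Sa(p,u)\n = (pI - \sg u)e_3 + \mathcal{R}^3(\eta,u,p)$, where each $\mathcal{R}^i$ is a finite sum of terms each carrying at least one factor built from $\a - I$ (equivalently from $\nab\bar\eta$, $A$, $B$, $K-1$, etc., which by Lemma \ref{p_poisson} and \eqref{ABJ_def} are controlled by norms of $\eta$). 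Then $(u,p)$ solves \eqref{stokes1} with data $f = F^1 - \mathcal{R}^1$, $h = F^2 - \mathcal{R}^2$, $\psi = F^3 - \mathcal{R}^3$, and Lemma \ref{i_linear_elliptic} gives $\norm{u}_r + \norm{p}_{r-1} \ls \norm{F^1 - \mathcal{R}^1}_{r-2} + \norm{F^2 - \mathcal{R}^2}_{r-1} + \norm{F^3 - \mathcal{R}^3}_{r-3/2}$.

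For the range $r = 2,\dotsc,k$, I would estimate the $\mathcal{R}^i$ using the product estimate \eqref{i_s_p_02} of Lemma \ref{i_sobolev_product_1}: each error term is a product of a ``coefficient'' factor (lower order, controlled by $\norm{\eta}_{k-1/2}$ via Lemma \ref{p_poisson}, and in particular bounded by $\sqrt{\ep_0}$ under the smallness hypothesis) and a factor that is one of $\nab^2 u$, $\nab p$, $\nab u$, or a trace thereof. Thus $\norm{\mathcal{R}^1}_{r-2} + \norm{\mathcal{R}^2}_{r-1} + \norm{\mathcal{R}^3}_{r-3/2} \ls \sqrt{\ep_0}\,(\norm{u}_r + \norm{p}_{r-1})$ plus possibly lower-order terms in $u,p$ that can be absorbed by interpolation; choosing $\ep_0$ small enough lets us absorb these into the left-hand side, yielding \eqref{l_s_reg_0}. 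One does this inductively in $r$ so that the lower-order terms are already controlled by the previous step. Existence and uniqueness of $(u,p)$ for \eqref{l_linear_elliptic} follow by a standard fixed-point / continuity argument on the map $(u,p) \mapsto$ (solution of \eqref{stokes1} with data $F^i - \mathcal{R}^i(u,p)$), using the same smallness of $\ep_0$; alternatively one cites the corresponding existence result from \cite{GT_lwp}.

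The endpoint case $r = k+1$ is the main obstacle, because there the coefficient factors sit at their top regularity $\norm{\eta}_{k+1/2}$ (or $\norm{\nab\bar\eta}_{k}$), and one cannot afford to put the coefficient in $H^{s_1}$ with $s_1$ large in \eqref{i_s_p_02}. The remedy is to split each error term $\mathcal{R}^i$ into a ``low'' part, where derivatives fall on $u$ or $p$ and the coefficient is estimated in a fixed low norm (giving a term $\ls \norm{\eta}_{k-1/2}(\norm{u}_{k+1} + \norm{p}_k) \ls \sqrt{\ep_0}(\norm{u}_{k+1}+\norm{p}_k)$, absorbable), and a ``high'' part, where the top derivatives fall on the coefficient so the coefficient appears in $H^{k}$ (controlled by $\norm{\eta}_{k+1/2}$ via Lemma \ref{p_poisson}) while $u,p$ and their traces only need low-order norms $\norm{u}_3$, $\norm{p}_2$, etc. Using \eqref{i_s_p_02} with the roles of the two factors chosen appropriately in each case, and then the already-established estimate \eqref{l_s_reg_0} with $r = 3$ (or $r = 2$) to bound $\norm{u}_3 + \norm{p}_2 \ls \norm{F^1}_2 + \norm{F^2}_3 + \norm{F^3}_{5/2}$, the high part contributes exactly $\norm{\eta}_{k+1/2}(\norm{F^1}_2 + \norm{F^2}_3 + \norm{F^3}_{5/2})$. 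Collecting the low part, the absorbable part, and the high part gives \eqref{l_s_reg_01}. The bookkeeping — enumerating the finitely many error terms coming from $\a - I$ and checking in each case that either the coefficient stays low-order or $(u,p)$ stays low-order — is the delicate part, but it is routine once the low/high splitting is set up; one may also simply invoke the corresponding statement in \cite{GT_lwp} and indicate the modifications needed for the slightly different regularity indices here.
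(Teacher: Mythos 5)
Your proposal is correct and follows essentially the same route as the paper: the paper's proof simply views \eqref{l_linear_elliptic} as a perturbation of the classical Stokes problem \eqref{stokes1} under the smallness of $\ns{\eta}_{k-1/2}$ and defers the details to Proposition 3.7 of \cite{GT_lwp}, which is precisely your decomposition $\a = I + (\a - I)$ with absorption of the error terms and the low/high splitting at $r=k+1$ producing the $\norm{\eta}_{k+1/2}$-weighted low-norm term. No gaps of substance; your treatment of the endpoint case matches the mechanism behind \eqref{l_s_reg_01}.
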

\begin{proof}
Under the smallness assumption of $\eta$, we can solve \eqref{l_linear_elliptic} by viewing it as a perturbation of the classical Stokes problem  \eqref{stokes1}.  See Proposition 3.7 of \cite{GT_lwp} for the detailed proof.
\end{proof}

\subsubsection{The $\a-$Poisson problem}

We now consider the scalar elliptic problem
\begin{equation}\label{l_linear_elliptic_p}
 \begin{cases}
  \da p = f^1 & \text{in } \Omega \\
  p = f^2 & \text{on }\Sigma \\
   \naba p \cdot \nu = f^3 & \text{on } \Sigma_b,
 \end{cases}
\end{equation}
where $\nu$ is the outward-pointing normal on $\Sigma_b$.

We first consider the weak formulation of \eqref{l_linear_elliptic_p}.  For this, we define a scalar $\h^1$ in a natural way through the norm
\begin{equation}
 \hn{f}{1}^2 = \int_\Omega J \abs{\naba f}^2.
\end{equation}
As in Lemma \ref{l_norm_equivalence}, this scalar norm generates the same topology as the usual scalar $H^1$ norm.

For the weak formulation we suppose $f^1 \in ({^0}H^1(\Omega))^*$, $f^2 \in H^{1/2}(\Sigma)$, and $f^3 \in H^{-1/2}(\Sigma_b)$.  Let $\bar{p}\in H^1(\Omega)$ be an extension of $f^2$ so that $ \bar{p}=0$ near $\Sigma_b$.  We switch the unknown  to $q = p - \bar{p}$ and then define a weak formulation of \eqref{l_linear_elliptic_p} by finding a $q \in {^0}H^1(\Omega)$ so that
\begin{equation}\label{l_a_poisson_weak}
\iph{q}{\varphi}{1} = -\iph{\bar{p}}{\varphi}{1} -\br{f^1,\varphi}_{*} + \br{f^3,\varphi}_{-1/2} \text{ for all } \varphi \in {^0}H^1(\Omega),
\end{equation}
where  $\br{\cdot,\cdot}_{*}$ is the dual pairing with ${^0}H^1(\Omega)$ and $\br{\cdot,\cdot}_{-1/2}$ is the dual pairing with $H^{1/2}(\Sigma_b)$.  The existence and uniqueness of a solution to \eqref{l_a_poisson_weak} follow from standard arguments, and the resulting $p =q + \bar{p} \in H^1(\Omega)$ satisfies
\begin{equation}\label{l_a_poisson_weak_estimate}
 \hn{p}{1}^2 \ls  \norm{f^1}_{({^0}H^1(\Omega))^*}^2 + \snormspace{f^2}{1/2}{\Sigma}^2 +  \snormspace{f^3}{-1/2}{\Sigma_b}^2  .
\end{equation}

In the event of determining the initial pressure the action of $f^1\in
({}^0H^1(\Omega))^\ast$ will be given in a more specific fashion by
\begin{equation}
 \br{f^1,\varphi}_* = \iph{g_0}{\varphi}{0} + \iph{G}{\naba \varphi}{0}
\end{equation}
for $ g_0, G \in H^0(\Omega )$ with $\norm{g_0}_{0}^2 + \norm{G}_{0}^2 = \norm{f^1}_{({^0}H^1(\Omega))^*}^2$.
Then \eqref{l_a_poisson_weak} may be rewritten as
\begin{equation}\label{l_a_poisson_weak_2}
\iph{\naba p+G}{\naba \varphi}{0} = - \iph{g_0}{\varphi}{0} + \br{f^3,\varphi}_{-1/2} \text{ for all } \varphi \in {^0}H^1(\Omega).
\end{equation}
This serves as motivation for us to say that $p$ is a weak solution to the PDE
\begin{equation}\label{l_a_poisson_div}
 \begin{cases}
  \diva(\naba p +G) = g_0 \in H^0(\Omega)  \\
  p = f^2 \in H^{1/2}(\Sigma)  \\
   (\naba p +G)\cdot \nu = f^3 \in H^{-1/2}(\Sigma_b).
 \end{cases}
\end{equation}

Now we record the analogue of Proposition \ref{l_stokes_regularity} for the problem \eqref{l_linear_elliptic_p}.
\begin{prop}\label{l_a_poisson_regularity}
Let $k\ge 4$ be an integer and suppose that $\eta \in H^{k+1/2}(\Sigma)$.  There exists a universal $\ep_0 >0$ so that if $\ns{\eta}_{k-1/2} \le \ep_0$, then solutions to \eqref{l_linear_elliptic_p} satisfy
\begin{equation}\label{l_ap_reg_0}
   \norm{p}_{r} \ls \norm{f^1}_{r-2} + \norm{f^2}_{r-1/2} + \norm{f^3}_{r-3/2}
\end{equation}
for $r=2,\dotsc,k$, whenever the right side is finite.
\end{prop}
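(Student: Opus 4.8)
The plan is to treat \eqref{l_linear_elliptic_p} as a perturbation of the classical mixed Dirichlet--Neumann Poisson problem on the fixed smooth domain $\Omega$, and to bootstrap the regularity by induction on $r$, in exact parallel with the proof of Proposition \ref{l_stokes_regularity}. Write $\a = I + \widetilde{\a}$ and $J = 1+\widetilde{J}$; by \eqref{A_def}--\eqref{ABJ_def} the entries of $\widetilde{\a}$, $\widetilde{J}$ are smooth functions of $\bar\eta$, $\nab\bar\eta$ and the fixed bottom $b$ that vanish when $\bar\eta\equiv 0$. Using the divergence form of $\da$ together with the Piola-type identity $\p_j(J\a_{ij})=0$, the equation $\da p=f^1$ can be recast as $\Delta p = f^1 + (\Delta-\da)p =: g$ in $\Omega$, with $p = f^2$ on $\Sigma$ and $\p_\nu p = f^3 - (\naba-\nab)p\cdot\nu =: h$ on $\Sigma_b$; here the interior perturbation is schematically of the form $\widetilde{J}f^1 + \nab\widetilde{\a}\ast\nab p + \widetilde{\a}\ast\nab^2 p$ (a contraction of coefficients built from $\bar\eta$ with derivatives of $p$), and the boundary perturbation of the form $\widetilde{\a}\ast\nab p$ on the fixed surface $\Sigma_b$. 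By the well-known mapping property of $\mathcal{P}$ together with Lemma \ref{p_poisson} applied to $\bar\eta = \mathcal{P}\eta$, every Sobolev norm of $\widetilde{\a}$ and $\nab\widetilde{\a}$ on $\Omega$ that we shall use is controlled by a corresponding norm of $\eta$ on $\Sigma$; in particular, once $\ns{\eta}_{k-1/2}\le\ep_0$ with $\ep_0$ small, $\widetilde{\a}$ is small in $H^{k-1}(\Omega)$, hence in $W^{1,\infty}(\Omega)$ and in all the intermediate norms occurring below (the relevant Sobolev embeddings being available since $k\ge 4$).

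First I would record the classical estimate: for $r\ge 2$, any solution of $-\Delta p = g$ in $\Omega$, $p = f^2$ on $\Sigma$, $\p_\nu p = h$ on $\Sigma_b$ obeys $\norm{p}_r \ls \norm{g}_{r-2} + \norm{f^2}_{r-1/2} + \norm{h}_{r-3/2}$, the lower-order term being absorbed via the Dirichlet condition on $\Sigma$ and the Poincar\'e inequality \eqref{poincare_b1} (in the non-periodic case this is the usual uniform slab estimate, cf. \cite{GT_lwp,GT_inf}). For the base case $r=2$ I would start from the weak bound \eqref{l_a_poisson_weak_estimate} together with Lemma \ref{l_norm_equivalence} (and the scalar version of the norm equivalence), giving $\norm{p}_1\ls\norm{f^1}_{0}+\norm{f^2}_{1/2}+\norm{f^3}_{-1/2}$, then apply the classical estimate with $\norm{g}_0 \le \norm{f^1}_0 + \norm{(\Delta-\da)p}_0$ and $\norm{(\Delta-\da)p}_0 \ls \norm{\widetilde{\a}}_{W^{1,\infty}}\norm{p}_2$, absorbing the small multiple of $\norm{p}_2$ on the left. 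For $r=3,\dots,k$ I would argue by induction: assuming the asserted bound holds at order $r-1$, the classical estimate yields $\norm{p}_r \ls \norm{f^1}_{r-2} + \norm{f^2}_{r-1/2} + \norm{f^3}_{r-3/2} + \norm{(\Delta-\da)p}_{r-2} + \norm{(\naba-\nab)p\cdot\nu}_{r-3/2}$, so it remains only to estimate the two perturbation terms.

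The crux is $\norm{(\Delta-\da)p}_{r-2}$. Expanding the products and distributing the $r-2$ derivatives: the terms in which no derivative falls on $\widetilde{\a}$ are bounded by $\norm{\widetilde{\a}}_{L^\infty}\norm{p}_r$, a \emph{small} coefficient times the top-order norm, and are absorbed on the left; the terms in which at least one derivative falls on $\widetilde{\a}$ trade a derivative on $p$ for a derivative on $\widetilde{\a}$ and, by the product estimates of Lemma \ref{i_sobolev_product_1} and Sobolev embedding, are bounded by $C\,\norm{\widetilde{\a}}_{H^{k-1}}\norm{p}_{r-1}\ls\ep_0^{1/2}\norm{p}_{r-1}$, controlled by the induction hypothesis; in the extreme case where all $r-2$ derivatives fall on $\widetilde{\a}$, the remaining factor of $p$ carries at most two derivatives and is placed in $L^\infty$ or $L^6$ while $\widetilde{\a}$ retains (thanks to the half-derivative gain of the Poisson extension in Lemma \ref{p_poisson}) enough regularity to pair with it, again producing a small multiple of $\norm{p}_{r-1}$. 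The boundary term $\norm{(\naba-\nab)p\cdot\nu}_{r-3/2}$ is handled in the same way after passing to traces on the fixed smooth surface $\Sigma_b$. Absorbing the small top-order contributions and invoking the inductive bound closes the estimate at order $r$. \textbf{The main obstacle} is the derivative bookkeeping in this last step: one must check that every product term is genuinely either a small coefficient times $\norm{p}_r$ or a bounded (indeed small) coefficient times $\norm{p}_{\le r-1}$, and in particular that the ``all derivatives on $\widetilde{\a}$'' contribution never requires more than $\ns{\eta}_{k-1/2}$-smallness of $\eta$ — which is precisely why the statement stops at $r\le k$, since $r=k+1$ would force the extra $\norm{\eta}_{k+1/2}$ factor already visible in \eqref{l_s_reg_01}.
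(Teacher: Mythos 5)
Your proposal is correct and follows essentially the same route as the paper: the paper's proof simply asserts that the argument is the same as for Proposition \ref{l_stokes_regularity}, i.e. one views the $\a$-Poisson problem as a small perturbation of the classical mixed Dirichlet--Neumann problem on the fixed domain (as in Proposition 3.7 of \cite{GT_lwp}), using the smallness of $\ns{\eta}_{k-1/2}$, Lemma \ref{p_poisson} and the product estimates to absorb the perturbation terms, which is precisely your bootstrap.
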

\begin{proof}
 The proof is similar as Proposition \ref{l_stokes_regularity}.
\end{proof}

\subsection{Transport estimate}
Consider the equation
\begin{equation}\label{i_transport_eqn}
\begin{cases}
 \dt \eta + u \cdot \nab_\ast \eta = g & \text{in } \Sigma  \\
 \eta\mid_{ t=0}  = \eta_0.
\end{cases}
\end{equation}
 We have the following estimate of the transport of regularity for solutions to \eqref{i_transport_eqn}.

\begin{lem}\label{i_sobolev_transport}
 Let $\eta$ be a solution to \eqref{i_transport_eqn}.  Then there is a universal constant $C>0$ so that for any $0 \le s <2$
\begin{equation}
 \sup_{0\le r \le t} \snorm{\eta(r)}{s} \le \exp\left(C \int_0^t \snorm{\nab_\ast u(r)}{3/2} dr \right) \left( \snorm{\eta_0}{s} + \int_0^t \snorm{g(r)}{s}dr  \right).
\end{equation}
\end{lem}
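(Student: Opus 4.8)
The plan is to prove the transport estimate \eqref{i_transport_eqn} by the standard energy method in Sobolev spaces, following the classical approach for transport equations with the Sobolev product estimates of Lemma \ref{i_sobolev_product_1} as the main technical input. Since $0 \le s < 2$, the embedding $H^{3/2}(\Sigma) \hookrightarrow L^\infty(\Sigma)$ (with $n=2$, one has $H^{3/2} \hookrightarrow C^0$) is available, and this is precisely why the threshold $s < 2$ appears: for $s$ in this range the commutator and product terms can be controlled by $\norm{\nab_\ast u}_{3/2}$ together with $\norm{\eta}_{s}$. First I would reduce to estimating $\frac{d}{dt}\norm{\eta(t)}_s^2$, so that Gr\"onwall's inequality yields the exponential factor; the $\int_0^t \norm{g(r)}_s\,dr$ term then enters linearly through Minkowski/Duhamel.

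The key steps, in order, would be as follows. \textbf{Step 1: Reduction to a fractional energy identity.} Apply the fractional Laplacian $\Lambda^s = (1-\Delta_\ast)^{s/2}$ (or work with the Fourier multiplier directly on $\Sigma = \Rn{2}$ or the torus) to the equation, multiply by $\Lambda^s \eta$, and integrate over $\Sigma$ to obtain
\begin{equation}\label{eq_transport_energy}
\frac{1}{2}\frac{d}{dt}\norm{\eta(t)}_s^2 = -\int_\Sigma \Lambda^s(u\cdot\nab_\ast \eta)\,\Lambda^s \eta + \int_\Sigma \Lambda^s g\,\Lambda^s\eta.
\end{equation}
\textbf{Step 2: Commutator estimate.} Write $\Lambda^s(u\cdot\nab_\ast\eta) = u\cdot\nab_\ast(\Lambda^s\eta) + [\Lambda^s, u\cdot\nab_\ast]\eta$. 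For the first piece, integrate by parts to transfer a derivative and bound $\int_\Sigma (u\cdot\nab_\ast\Lambda^s\eta)\Lambda^s\eta = -\frac12\int_\Sigma (\diverge_\ast u)\abs{\Lambda^s\eta}^2 \lesssim \norm{\nab_\ast u}_{3/2}\norm{\eta}_s^2$, using $H^{3/2}\hookrightarrow L^\infty$. For the commutator, invoke a Kato--Ponce / Coifman--Meyer-type commutator bound $\norm{[\Lambda^s, u\cdot\nab_\ast]\eta}_0 \lesssim \norm{\nab_\ast u}_{\infty}\norm{\eta}_s + \norm{u}_{H^{s+1}\cap\dots}\dots$; since the paper only gives us the elementary product estimate \eqref{i_s_p_02}, I would instead estimate the commutator term by splitting $u\cdot\nab_\ast\eta$ and using \eqref{i_s_p_02} with suitable $s_1,s_2$: for $0\le s<2$ and $n=2$ one can take $\norm{u\cdot\nab_\ast\eta}_s \lesssim \norm{u}_{s+1}\dots$ — but as $\norm{u}_{s+1}$ is not controlled, the correct route is to bound the commutator directly via duality against $\Lambda^s\eta$ and distribute derivatives so that only $\norm{\nab_\ast u}_{3/2}$ and $\norm{\eta}_s$ appear. \textbf{Step 3: Source and Gr\"onwall.} Bound $\abs{\int_\Sigma \Lambda^s g\,\Lambda^s\eta}\le \norm{g}_s\norm{\eta}_s$, combine with Steps 1--2 to get $\frac{d}{dt}\norm{\eta(t)}_s \lesssim \norm{\nab_\ast u(t)}_{3/2}\norm{\eta(t)}_s + \norm{g(t)}_s$, and integrate via Gr\"onwall to conclude. \textbf{Step 4:} A density/approximation argument to justify the formal computations for merely Sobolev-regular solutions, plus taking the supremum over $r\in[0,t]$.

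The main obstacle I expect is \textbf{Step 2}, the commutator estimate, carried out with only the crude product inequality \eqref{i_s_p_02} at hand rather than a sharp Kato--Ponce estimate. The delicate point is that $u$ is only controlled in a norm commensurate with $\nab_\ast u \in H^{3/2}$, i.e.\ essentially $u \in H^{5/2}$, while $\eta$ lives in $H^s$ with $s$ possibly close to $2$; one must arrange the commutator so that all the high derivatives land on $\eta$ (giving $\norm{\eta}_s$) and only low-order derivatives (at most $3/2$ spatial derivatives, since $[\Lambda^s, u\cdot\nab_\ast]$ is order $s$ and one order is ``used up'' by the commutator structure) fall on $u$. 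This is exactly where the constraint $s<2$ is sharp: it guarantees $s \le 3/2 + 1/2 + (2/2)^-$ type bookkeeping works and that $H^{3/2}(\Sigma)$ embeds into $L^\infty$, both of which fail at $s=2$. Everything else — the integration by parts, the treatment of $g$, and the Gr\"onwall step — is routine. I would also remark that this lemma is standard (see, e.g., the transport estimates in Beale--Solonnikov theory and in \cite{GT_lwp}), so the write-up can be brief, perhaps even citing the analogous statement.
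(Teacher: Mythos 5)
Your overall strategy (fractional energy identity, commutator splitting, Gr\"onwall) is the standard way such transport estimates are \emph{proved}, but as written your Step 2 contains a genuine gap, and it is precisely the step you flag as the obstacle. After writing $\Lambda^s(u\cdot\nab_\ast\eta)=u\cdot\nab_\ast\Lambda^s\eta+[\Lambda^s,u\cdot\nab_\ast]\eta$, the classical Kato--Ponce commutator bound yields
\begin{equation*}
\norm{[\Lambda^s,u]\cdot\nab_\ast\eta}_{L^2}\ls \norm{\nab u}_{L^\infty}\norm{\Lambda^{s-1}\nab_\ast\eta}_{L^2}+\norm{\Lambda^s u}_{L^2}\norm{\nab_\ast\eta}_{L^\infty},
\end{equation*}
and the second term is fatal: on a two-dimensional $\Sigma$ one has $\norm{\nab_\ast\eta}_{L^\infty}\ls\snorm{\eta}{s}$ only for $s>2$, which is exactly outside the range $0\le s<2$ of the lemma. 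The refined commutator estimate that \emph{is} true in this range, namely $\norm{[\Lambda^s,u\cdot\nab_\ast]\eta}_{0}\ls \snorm{\nab_\ast u}{3/2}\snorm{\eta}{s}$ for $0\le s<1+n/2=2$, requires a Littlewood--Paley/Bony paraproduct decomposition in which the high frequencies of $u$ are paired with low frequencies of $\eta$ and vice versa; it cannot be obtained from the crude product estimate \eqref{i_s_p_02}, nor from the ``distribute derivatives by duality'' manoeuvre you gesture at, which is left entirely unspecified. Since this commutator bound is the whole analytic content of the lemma, the proposal as it stands does not constitute a proof.

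That said, your closing remark is the right instinct and is essentially what the paper does: the proof in the paper is a one-line citation of Proposition 2.1 of Danchin \cite{danchin} (transport estimates in Besov spaces for almost-Lipschitz velocity fields), applied with $p=p_2=2$, $N=2$, $\sigma=s$, together with the embedding $H^{3/2}(\Sigma)\hookrightarrow B^1_{2,\infty}\cap L^\infty$. Danchin's proposition packages exactly the paraproduct commutator machinery you are missing, and its hypothesis $\sigma<1+N/p_2$ is the source of the threshold $s<2$ (not, as you suggest, the embedding $H^{3/2}\hookrightarrow L^\infty$, which is needed only to place $\nab_\ast u$ in the correct multiplier space). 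If you want a self-contained argument rather than the citation, you must either import a sharp commutator lemma of Kato--Ponce/Danchin type as a black box or carry out the Littlewood--Paley decomposition explicitly; otherwise, replace Steps 1--2 by the reference.
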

\begin{proof}
Use $p=p_2 = 2$, $N=2$, and $\sigma=s$ in Proposition 2.1 of \cite{danchin} along with the embedding    $H^{3/2}  \hookrightarrow B^1_{2,\infty}\cap L^\infty  .$
\end{proof}

\section{Local well-posedness of the $\kappa$-problem}\label{sec kk}

The aim of this section is to solve the following regularized $\kappa$-problem, where $\kappa>0$ is the artificial viscosity coefficient,
\begin{equation}\label{kgeometric}
 \begin{cases}
  \dt u - \dt \bar{\eta} \tilde{b} K \p_3 u + u \cdot \naba u -\da u + \naba p     =0 & \text{in } \Omega \\
 \diva u = 0 & \text{in }\Omega \\
 S_\a(p,u) \n =  \eta\n -\sigma H\n & \text{on } \Sigma \\
 \dt \eta -\kappa\Delta_\ast\eta=\kappa\Psi+ u \cdot \n & \text{on } \Sigma \\
 u = 0 & \text{on } \Sigma_b \\
 (u , \eta)\mid_{t=0} = (u_0,\eta_0).
 \end{cases}
\end{equation}
Here $\Psi$ is the compensator determined by the initial data, and
we postpone its construction in Section \ref{sec compensator}. We
will temporally ignore the value of $\sigma$ since it plays no role
in this section.

\subsection{The linear $\mathcal{A}$--Navier-Stokes problem}

In this subsection we will consider the following linear problem for $(u,p)$, where $\eta$ (and hence $\a, \n$, etc), $F^1$ and $F^3$ are given,
\begin{equation}\label{l_linear_forced}
 \begin{cases}
\dt u - \da u + \naba p = F^1 & \text{in }\Omega \\
\diva{u}=0 & \text{in }\Omega\\
\Sa(p,u) \n = F^3 & \text{on }\Sigma\\
u =0 & \text{on }\Sigma_b\\
u\mid_{t=0}=u_0.
 \end{cases}
\end{equation}
Since we are considering the same problem as Section 4 of \cite{GT_lwp}, most of the arguments will be almost same as there. However, we will relax the assumptions on $\eta$; as we will see, such relaxation will be crucial for our local-wellposedness of the $\kappa$-problem \eqref{kgeometric} (and the original problem \eqref{geometric}) with the ``semi-small" initial data.
For the sake of completeness, we will repeat main arguments from \cite{GT_lwp}; along the context we will point out the differences and the necessary modifications due to our weaker assumption on $\eta$.

Although our aim is to construct strong solutions to \eqref{l_linear_forced} with high regularity, we need the definition of a weak solution to \eqref{l_linear_forced}. Assuming the existence of a smooth solution to \eqref{l_linear_forced} and taking the $L^2L^2$ inner product with $J v$ for  $v \in \h^1_T$, we obtain
\begin{equation}\label{l_weak_motivation}
 \ip{\dt u}{v}_{\h^0_T} + \hal \ip{u}{v}_{\h^1_T} - \ip{p}{\diva v}_{\h^0_T}= \ip{F^1}{v}_{\h^0_T} - \ip{F^3}{v}_{0,\Sigma,T},
\end{equation}
where $ \ip{\cdot}{\cdot}_{0,\Sigma,T}$ is the $L^2L^2$ inner product in $\Sigma\times(0,T)$. If we were to restrict test functions to $v \in \x_T$ (defined by \eqref{X_def}), then the term $\ip{p}{\diva v}_{\h^0_T}$ would vanish above, and we would be left with a ``pressureless'' formulation of the problem involving only the velocity field.  This leads us to define a weak formulation without the pressure as follows. Suppose that
\begin{equation}\label{l_weak_data_assumptions}
 \bar{F} \in (\x_T)^* \text{ and } u_0 \in \y(0),
\end{equation}
where $\y(0)$ is defined by \eqref{l_Y_space_def}. Then our definition of a weak solution requires that $u$ satisfies
\begin{equation}\label{l_weak_solution_pressureless}
 \begin{cases}
  u \in \x_T, \dt u \in (\x_T)^*,   \\
  \br{\dt u,\psi}_{*}  + \hal \ip{u}{\psi}_{\h^1_T}  = \br{\bar{F},\psi}_{*}   \text{ for every }\psi \in \x_T, \\
u(0) = u_0,
 \end{cases}
\end{equation}
where $\br{\cdot,\cdot}_{*}$ is the dual pairing with $\x_T$.  Note that the third condition in \eqref{l_weak_solution_pressureless} makes sense in light of Lemma \ref{l_x_time_diff}.  Our formulation requires only that $u \in \x_T$, which means that $\bar{F} \in (\x_T)^*$ is natural.  Within the context of problem \eqref{l_linear_forced}, $\bar{F}$ is most naturally of the form appearing on the right side of \eqref{l_weak_motivation}; if $\bar{F}$ admits a representation of this form, we may say that a solution to \eqref{l_weak_solution_pressureless} is a weak solution of \eqref{l_linear_forced}.
We will not need to directly construct weak solutions to \eqref{l_weak_solution_pressureless}.  Rather, weak solutions will arise as a byproduct of our construction of strong solutions to \eqref{l_linear_forced}.

Now we turn to the construction of strong solutions to \eqref{l_linear_forced}. We will first construct the strong solutions with lower regularity by assuming that
\begin{equation}\label{l_F_assumptions}
\begin{split}
&F^1 \in L^2([0,T]; H^1(\Omega)) \cap C^0([0,T];H^0(\Omega)), \\
&F^3 \in L^2([0,T]; H^{3/2}(\Sigma)) \cap C^0([0,T];H^{1/2}(\Sigma)), \\
&\dt(F^1 - F^3) \in (\x_T)^*,\text{ and }u_0 \in H^2(\Omega) \cap \x(0).
\end{split}
\end{equation}

The solution that we construct will satisfy \eqref{l_linear_forced} in the strong sense, but we will also show that $D_t u$ satisfies an equation of the form \eqref{l_linear_forced} in the weak sense of \eqref{l_weak_solution_pressureless}.  Here we define
\begin{equation}\label{l_Dt_def}
 D_t u := \dt u - R u \text{ for } R:= \dt M M^{-1}
\end{equation}
with $M$ the matrix defined by \eqref{l_M_def}. We employ the operator $D_t$ because it preserves the $\diva-$free condition.  Before turning to the result, we define the quantities
\begin{equation}\label{l_K_def}
 \mathcal{K}(\eta) : = \sup_{0 \le t \le T} \left( \norm{\eta}_{9/2}^2 + \norm{\dt \eta}_{7/2}^2 + \norm{\dt^2 \eta}_{5/2}^2 \right)\text{ and }\mathcal{L}(\eta) : = \sup_{0 \le t \le T}\norm{\eta}_{7/2}^2.
\end{equation}

\begin{thm}\label{l_strong_solution}
Suppose that $F^1,F^3,u_0$ satisfy \eqref{l_F_assumptions}, and that $u_0$, $F^3(0)$ satisfy the compatibility condition
\begin{equation}\label{l_ss_01}
 \Pi_0 \left( F^3(0) + \sg_{\a_0} u_0 \n_0 \right) =0,
\end{equation}
where $\Pi_0$ is the orthogonal projection onto the tangent space of the surface $\{x_3 = \eta_0\}$ defined according to
\begin{equation}\label{l_Pi0_def}
 \Pi_0 v = v - (v\cdot \n_0) \n_0 \abs{\n_0}^{-2}.
\end{equation}
Further suppose that $\mathcal{K}(\eta)<\infty$, and that $\mathcal{L}(\eta)$ is less than the smaller of $\ep_0$ from Lemma \ref{l_norm_equivalence} and  Proposition \ref{l_stokes_regularity}.  Then there exists a unique strong solution $(u,p)$ to \eqref{l_linear_forced} so that
\begin{equation}\label{l_ss_06}
\begin{split}
& u  \in \x_T \cap C ([0,T]; H^2(\Omega)  ) \cap L^2([0,T]; H^3(\Omega)  ), \\
& \dt u  \in C ([0,T]; H^0(\Omega) ) \cap L^2([0,T]; H^1(\Omega)  ), D_t u \in \x_T ,\dt^2 u \in (\x_T)^*, \\
&  p \in C ([0,T]; H^1(\Omega) ) \cap L^2([0,T]; H^2(\Omega) ).
\end{split}
\end{equation}
The solution satisfies the estimate
\begin{equation}\label{l_ss_02}
\begin{split}
   &\norm{u}_{L^\infty H^2}^2 + \norm{u}_{L^2 H^3}^2 + \norm{\dt u}_{L^\infty H^0}^2 + \norm{\dt u}_{L^2 H^1}^2  + \norm{\dt^2 u}_{(\x_T)^*}^2 + \norm{p}_{L^\infty H^1}^2  + \norm{p}_{L^2 H^2}^2
\\&\quad\le   P(1+ \mathcal{K}(\eta)) \exp\left( P(1+ \mathcal{K}(\eta)) T \right)
\left( \norm{u_0}_{2}^2 + \norm{F^1(0)}_{0}^2 + \norm{F^3(0)}_{ 1/2}^2 \right. \\
&\qquad \left.     +\norm{F^1}_{L^2H^1}^2   +  \norm{F^3}_{L^2H^{3/2}}^2 + \norm{\dt (F^1 - F^3)}_{(\x_T)^*}^2
  \right).
  \end{split}
\end{equation}
The initial pressure, $p(0)\in H^1(\Omega)$, is determined in terms of $u_0, F^1(0), F^3(0)$ as the weak solution to
\begin{equation}\label{l_ss_04}
 \begin{cases}
  \diverge_{\a_0}(\nab_{\a_0} p(0) - F^1(0)) = -\diverge_{\a_0} (R_0 u_0 ) \in H^0(\Omega) \\
  p(0) = ( F^3(0)  + \sg_{\a_0} u_0  \n_0 )\cdot \n_0 \abs{\n_0}^{-2} \in H^{1/2}(\Sigma) \\
  (\nab_{\a_0} p(0) - F^1(0)) \cdot \nu = \Delta_{\a_0} u_0 \cdot \nu \in H^{-1/2}(\Sigma_b)
 \end{cases}
\end{equation}
in the sense of \eqref{l_a_poisson_div}. Also, $D_t u(0) = \dt u(0) - R_0 u_0$ satisfies
\begin{equation}\label{l_ss_05}
 D_t u(0) = \Delta_{\a_0} u_0 - \nab_{\a_0} p(0) + F^1(0) - R_0 u_0 \in \y(0).
\end{equation}

Moreover, $D_t u$ satisfies
\begin{equation}\label{l_ss_03}
  \begin{cases}
\dt (D_t u) - \da (D_t u) + \naba (\dt p) = D_t F^1 + G^1 & \text{in }\Omega \\
\diva(D_t u)=0 & \text{in }\Omega\\
\Sa(\dt p, D_t u) \n = \dt F^3 + G^3 & \text{on }\Sigma \\
D_t u =0 & \text{on }\Sigma_b,
 \end{cases}
\end{equation}
in the weak sense of \eqref{l_weak_solution_pressureless}, where $G^1,G^3$ are defined by
\begin{equation}
\begin{split}
  G^1  =  & -(R + \dt J K) \da u - \dt R u + (\dt J K + R + R^T) \naba p \\&+
\diva( \sg_{\a} (Ru) - R \sg_{\a}u + \sg_{\dt \a} u ),
 \\
 G^3 = &\sg_{\a}(R u) \n - (p I - \sg_{\a} u) \dt \n + \sg_{\dt \a} u \n.
 \end{split}
\end{equation}
Here the inclusions \eqref{l_ss_06} guarantee that $G^1$ and $G^3$ satisfy the same inclusions as $F^1, F^3$ listed in \eqref{l_F_assumptions}, whereas \eqref{l_ss_04} guarantees that the initial data $D_t u(0) \in \y(0)$.
\end{thm}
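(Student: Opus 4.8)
The plan is to reproduce, with a sharpened accounting of constants, the three-step scheme of Guo and Tice \cite{GT_lwp}: first construct a weak solution to the pressureless formulation \eqref{l_weak_solution_pressureless}; then bootstrap to a strong solution by applying the same weak theory to the $\diva$-preserving quantity $D_t u$; and finally recover the full spatial regularity of $u$ and the pressure $p$ from the $\a$-Stokes elliptic theory. The one genuinely new constraint is that the smallness of $\eta$ may be invoked \emph{only} through $\mathcal{L}(\eta)$ --- that is, only to apply the norm equivalence of Lemma \ref{l_norm_equivalence} and the elliptic regularity of Proposition \ref{l_stokes_regularity} --- whereas $\mathcal{K}(\eta)$, merely assumed finite, is allowed to enter the prefactor $P(1+\mathcal{K}(\eta))$ and the Gronwall exponent; keeping this separation is what will make Theorem \ref{lwp} reachable for ``semi-small'' data.

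\emph{Step 1 (weak solutions).} Using the isomorphism $\mathcal{M}_t\colon \Hsig \to \x(t)$ of Proposition \ref{l_M_iso}, I push a fixed countable Hilbert basis $\{w_k\}$ of $\Hsig$ forward to the time-dependent spanning family $\{\mathcal{M}_t w_k\}$ of $\x(t)$, and seek the Galerkin approximation $u^m(t)=\sum_{k\le m}c^m_k(t)\,\mathcal{M}_t w_k$ solving the truncation of \eqref{l_weak_solution_pressureless}; this is a linear ODE system solvable on all of $[0,T]$. Testing with $u^m$ itself, differentiating $\norm{u^m(t)}_{\h^0}^2$ by \eqref{l_x_t_d_01}, and absorbing the term $\int_\Omega\abs{u^m}^2\dt J$ (bounded by $\mathcal{K}(\eta)^{1/2}\norm{u^m}_0^2$) yields a bound uniform in $m$; here the forcing data is the natural representative $\bar F\in(\x_T)^*$ of the right side of \eqref{l_weak_motivation}, whose norm is controlled by $\norm{F^1}_{L^2H^0}$ and $\norm{F^3}_{L^2 H^{-1/2}(\Sigma)}$ via the trace bound of Lemma \ref{l_boundary_dual_estimate}. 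Passing to a weak limit and identifying the initial value through Lemma \ref{l_x_time_diff} produces $u\in\x_T$ with $\dt u\in(\x_T)^*$ satisfying \eqref{l_weak_solution_pressureless}; the pressure $p$ is then obtained as the Lagrange multiplier via Proposition \ref{l_pressure_decomp}, and uniqueness follows from the same energy estimate applied to the difference of two solutions.

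\emph{Step 2 (bootstrapping through $D_t u$).} The key structural fact is that $D_t=\dt-R$ with $R=\dt M M^{-1}$ carries $\diva$-free fields to $\diva$-free fields --- this is the whole purpose of introducing $M$ --- so that $D_t u$ formally solves \eqref{l_ss_03} with interior forcing $D_t F^1+G^1$ and boundary forcing $\dt F^3+G^3$. To legitimise this in the weak sense of \eqref{l_weak_solution_pressureless}, the hypothesis $\dt(F^1-F^3)\in(\x_T)^*$ is exactly what gives the right side a representative in $(\x_T)^*$; the extra terms $G^1$ and $G^3$ --- which carry $\da u$, $\naba p$, $\sg_\a u$ and lower-order companions --- are read off by integrating by parts against test functions in $\x_T$, using only $u\in\x_T$ and $p\in\h^0_T$ from Step 1. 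For Step 1 to apply to $D_t u$ one needs $D_t u(0)\in\y(0)$, and this is precisely where the compatibility condition \eqref{l_ss_01} enters: one first builds $p(0)\in H^1(\Omega)$ as the weak solution of the $\a$-Poisson problem \eqref{l_ss_04} --- obtained by taking $\diva$ of the momentum equation at $t=0$ in $\Omega$, the $\n_0$-component of $\Sa(p,u)\n=F^3$ on $\Sigma$ (which, combined with \eqref{l_ss_01}, encodes the full stress boundary condition at $t=0$), and the bottom condition on $\Sigma_b$ --- using Proposition \ref{l_a_poisson_regularity}; then the identity \eqref{l_ss_05}, $D_t u(0)=\Delta_{\a_0}u_0-\nab_{\a_0}p(0)+F^1(0)-R_0u_0$, places $D_t u(0)$ in $\y(0)$. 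Applying Step 1 to \eqref{l_ss_03} now yields $D_t u\in\x_T$ and $\dt^2 u\in(\x_T)^*$, hence $\dt u\in L^\infty H^0\cap L^2H^1$.

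\emph{Step 3 (elliptic regularity and closing the estimates).} With $\dt u$ in hand I freeze time and rewrite the momentum equation as the stationary $\a$-Stokes system \eqref{l_linear_elliptic} with $F^1-\dt u$ in the role of $F^1$ and $F^2=0$; Proposition \ref{l_stokes_regularity}, whose smallness hypothesis is exactly $\mathcal{L}(\eta)<\ep_0$, gives $u\in L^2H^3$, $p\in L^2H^2$ at the level $r=3$, and $u\in L^\infty H^2$, $p\in L^\infty H^1$ at the level $r=2$ after inserting the $L^\infty H^0$ bound on $\dt u=D_t u+Ru$. Combining the energy inequalities of Steps 1--2 with these elliptic bounds and running Gronwall's inequality then delivers \eqref{l_ss_02} with precisely the stated prefactor $P(1+\mathcal{K}(\eta))\exp(P(1+\mathcal{K}(\eta))T)$, while the continuity-in-time assertions in \eqref{l_ss_06} follow from Lemmas \ref{l_sobolev_infinity} and \ref{l_x_time_diff}. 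I expect the main obstacle to be the apparent circularity of Step 2: $G^1$ and $G^3$ carry $\da u$ and $\naba p$, which is exactly the regularity one is trying to produce, so the argument must be closed first entirely at the $\x_T$ level --- where $G^1,G^3$ are interpreted as elements of $(\x_T)^*$ demanding only $u\in\x_T$, $p\in\h^0_T$ --- and only then upgraded by the elliptic theory; a secondary but pervasive point is to verify at every step that the smallness of $\eta$ is needed only in the form $\mathcal{L}(\eta)$, so that the constants come out with the claimed structure.
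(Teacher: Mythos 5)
Your proposal follows essentially the same route as the paper, which itself only sketches the argument by deferring to Theorem 4.3 of \cite{GT_lwp}: a time-dependent Galerkin construction of the pressureless weak solution, pressure recovered as a Lagrange multiplier via Proposition \ref{l_pressure_decomp}, bootstrapping through $D_t u$ with $p(0)$ and $D_t u(0)\in\y(0)$ built from the compatibility condition, and spatial regularity from the $\a$-Stokes theory of Proposition \ref{l_stokes_regularity}. You also correctly identify the paper's one genuine modification of \cite{GT_lwp} --- invoking smallness only through $\mathcal{L}(\eta)$ while letting the merely finite $\mathcal{K}(\eta)$ enter the prefactor $P(1+\mathcal{K}(\eta))$ --- so the proposal is consistent with the paper's proof.
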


\begin{proof}
The theorem is almost same as Theorem 4.3 of \cite{GT_lwp}, and we shall first give a brief sketch of the proof and then explain the differences and the necessary modifications in our situation.

The result will be established by first solving the pressureless problem in the weak sense of \eqref{l_weak_solution_pressureless} via a time-dependent Galerkin method where the desired countable basis of $\x_T$ may be constructed with aid of Proposition \ref{l_M_iso}. We then use Proposition \ref{l_pressure_decomp} to introduce the pressure as a Lagrange multiplier, which gives a weak solution to \eqref{l_linear_forced}. We then apply Proposition \ref{l_stokes_regularity} to improve the regularity of the solution so that the weak solution is indeed a strong solution.

Along the proof the only difference in the assumptions of the two theorems is that Theorem 4.3 of \cite{GT_lwp} requires that $\mathcal{K}(\eta)$ is sufficiently small, and our theorem only requires that part of $\mathcal{K}(\eta)$, $\mathcal{L}(\eta) $, is sufficiently small but $\mathcal{K}(\eta)$ can be arbitrarily large. Note that the smallness of $\mathcal{L}(\eta) $ is sufficient to guarantee the applications of the lemmas and propositions used in the proof of Theorem 4.3 of \cite{GT_lwp}. So the proof of Theorem 4.3 of \cite{GT_lwp} works in our present case, and the only difference resulting in the conclusions of the two theorems is that the polynomial $ P(1+ \mathcal{K}(\eta))$ appears in \eqref{l_ss_02}; in Theorem 4.3 of \cite{GT_lwp} since $\mathcal{K}(\eta)$ is small, such polynomial is replaced there accordingly by $1+ \mathcal{K}(\eta)$ as $ P(1+ \mathcal{K}(\eta))\ls 1+ \mathcal{K}(\eta)$.
\end{proof}

Now we investigate the higher regularity of the strong solution obtained in Theorem \ref{l_strong_solution}.
In order to define the forcing terms and initial data for the problem that results from temporally differentiating \eqref{l_linear_forced} several times, we define some mappings.
Given $F^1,F^3,v,q$ we define the mappings for forcing terms
\begin{equation}\label{l_G_def}
\begin{array}{ll}
 \dis \mathfrak{G}^1(v,q)  =   -(R + \dt J K) \da v - \dt R v + (\dt J K + R + R^T) \naba q& \\\dis
\qquad\qquad\ \ + \diva( \sg_{\a} (R v) - R \sg_{\a} v + \sg_{\dt \a} v )  &\text{on } \Omega,\\\dis
 \mathfrak{G}^3(v,q) =  \sg_{\a}(R v) \n - (q I - \sg_{\a} v) \dt \n + \sg_{\dt \a} v \n&\text{on } \Sigma,
\end{array}
\end{equation}
and the mappings for the initial data
\begin{equation}\label{l_f_def}
\begin{array}{ll}
\dis\mathfrak{G}^0(F^1,v,q) = \Delta_{\a} v - \nab_{\a} q + F^1 - R v &\text{on } \Omega, \\\dis
 \mathfrak{f}^1(F^1,v) = \diverge_{\a} (F^1 - R v )&\text{on } \Omega, \\\dis
 \mathfrak{f}^2(F^3,v) = ( F^3  + \sg_{\a} v  \n) \cdot  \n \abs{\n}^{-2}&\text{on } \Sigma, \\\dis
 \mathfrak{f}^3(F^1,v) = (F^1+ \Delta_{\a} v) \cdot \nu&\text{on } \Sigma_b.
\end{array}
\end{equation}
In the definitions of $\mathfrak{G}^i$ and $\mathfrak{f}^i$ we assume that $\a,\n,R$, etc are evaluated at the same $t$ as $F^1,F^3, v,q$.  These mappings allow us to define the forcing terms as follows. We write $F^{1,0} = F^1$ and $F^{3,0} = F^3$, and then we recursively define that for $j=1,\dotsc,2N$,
\begin{equation}\label{l_Fj_def}
\begin{array}{ll}
\dis F^{1,j}  := D_t F^{1,j-1} + \mathfrak{G}^1(D_t^{j-1} u,\dt^{j-1} p) = D_t^j F^1 + \sum_{\ell=0}^{j-1} D_t^\ell \mathfrak{G}^1(D_t^{j-\ell-1} u, \dt^{j-\ell-1} p)\text{ on } \Omega,
\\ \dis
 F^{3,j}   := \dt F^{3,j-1} + \mathfrak{G}^3(D_t^{j-1} u,\dt^{j-1} p) = \dt^j F^3 + \sum_{\ell=0}^{j-1} \dt^\ell \mathfrak{G}^3(D_t^{j-\ell-1} u \dt^{j-\ell-1} p)\quad\,\,\,\,\text{on } \Sigma.
\end{array}
\end{equation}
These are the forcing terms that appear when we apply $j$ temporal derivatives to \eqref{l_linear_forced} (see \eqref{l_lwp_01}). In order to estimate these forcing terms, we define
\begin{equation}\label{l_Ffrak_def}
\begin{split}
&\mathfrak{F} (F^1,F^3) := \sum_{j=0}^{2N-1} \ns{\dt^j F^1}_{L^2 H^{4N-2j -1}}+\ns{\dt^{2N} F^1}_{L^2 (\H1)^\ast} +  \sum_{j=0}^{2N } \ns{\dt^j F^3}_{L^2 H^{4N-2j -1/2}}  \\
& \qquad\qquad\quad\ + \sum_{j=0}^{2N-1} \ns{\dt^j F^1}_{L^\infty H^{4N-2j -2}} + \ns{\dt^j F^3}_{L^\infty H^{4N-2j -3/2}}, \\
&\mathfrak{F}_0(F^1,F^3) :=  \sum_{j=0}^{2N-1} \ns{\dt^j F^1(0)}_{4N-2j -2} + \ns{\dt^j F^3(0)}_{4N-2j -3/2}.
\end{split}
\end{equation}
Lemma \ref{l_sobolev_infinity} implies that if $\mathfrak{F}(F^1,F^3) < \infty$, then for $j=0,\dotsc,2N-1$,
\begin{equation}
 \dt^j F^1 \in C ([0,T];H^{4N-2j-2}(\Omega)) \text{ and } \dt^j F^3 \in C ([0,T];H^{4N-2j-3/2}(\Sigma)).
\end{equation}
The same lemma also implies that the sum of the $L^\infty H^{k}$ norms in the definition of $\mathfrak{F}(F^1,F^3)$ can be bounded by $\mathfrak{F}_0(F^1,F^3)$ plus the sum of the $L^2 H^{k}$ norms. For $\eta$ we define
\begin{equation}\label{l_Kfrak_def}
\begin{split}
&\mathfrak{F}  (\eta)    := \ns{ \eta}_{L^2 H^{4N+1/2}}+\sum_{j=1}^{2N+1} \ns{\dt^j \eta}_{L^2 H^{4N-2j+3/2}}
+\ns{ \eta}_{L^\infty H^{4N }}+ \sum_{j=1}^{2N} \ns{\dt^j \eta}_{L^\infty H^{4N-2j+1/2}},
\\&\mathfrak{F}_0 (\eta):=  \ns{ \eta(0)}_{4N }+\sum_{j=1}^{2N} \ns{\dt^j \eta(0)}_{4N-2j+1/2}.
\end{split}
\end{equation}
Again, if $\mathfrak{F} (\eta) <\infty$, then  $ \eta \in C ([0,T];H^{4N }(\Sigma))$ and $\dt^j \eta \in C ([0,T];H^{4N-2j+1/2}(\Sigma))$  for $j=1,\dotsc,2N$. We also define
\begin{equation}\label{1_L0frak_def}
\mathfrak{L}(\eta):= \sup_{0\le t\le T}\norm{\eta}_{H^{4N-1/2}}^2\text{ and }\mathfrak{L}_0(\eta):= \norm{\eta_0}_{ {4N-1/2}}^2.
\end{equation}
Note that $\mathcal{K}(\eta) \le  \mathfrak{F} (\eta)$ and $\mathcal{L}(\eta) \le  \mathfrak{L}(\eta)$, where $\mathcal{K}(\eta)$ and $\mathcal{L}(\eta)$ are defined by \eqref{l_K_def}.

We now record an estimate of $F^{i,j}$ in terms of $\mathfrak{F}(F^1,F^3), \mathfrak{F}(\eta)$ and certain norms of $u,p$.

\begin{lem}\label{l_iteration_estimates_1}
For $m =1,\dotsc,2N-1$ and $j=1,\dotsc,m$,
\begin{equation}\label{l_ie1_01}
\begin{split}
 &\ns{F^{1,j}}_{L^2 H^{2m-2j+1}}  +  \ns{F^{3,j}}_{L^2 H^{2m-2j+3/2}} \le  P(1+ \mathfrak{F}(\eta)) \bigg(  \mathfrak{F}(F^1,F^3)
\\ &\left.\, \,
+ \sum_{\ell=0}^{j-1} \ns{\dt^\ell u}_{L^2 H^{2m-2j+3}}+  \ns{\dt^\ell u}_{L^\infty H^{2m-2j+2}} + \ns{\dt^\ell p}_{L^2 H^{2m-2j+2}} + \ns{\dt^\ell p}_{L^\infty H^{2m-2j+1}}
   \right),
   \end{split}
\end{equation}
\begin{equation}\label{l_ie1_02}
\begin{split}
 \ns{F^{1,j}}_{L^\infty H^{2m-2j}}  + & \ns{F^{3,j}}_{L^\infty H^{2m-2j+1/2}} \le  P(1+ \mathfrak{F}(\eta)) \bigg(  \mathfrak{F}(F^1,F^3)    \\&\qquad\qquad\qquad
\left. +\sum_{\ell=0}^{j-1}  \ns{\dt^\ell u}_{L^\infty H^{2m-2j+2}}+\ns{\dt^\ell p}_{L^\infty H^{2m-2j+1}}    \right),
   \end{split}
\end{equation}
and
\begin{equation}\label{l_ie1_03}
\begin{split}
 &\ns{\dt (F^{1,m} -F^{3,m})  }_{(\x_T)^*}
 \le  P(1+ \mathfrak{F}(\eta)) \bigg(  \mathfrak{F}(F^1,F^3) +\ns{\dt^m u}_{L^2 H^{2}}+ \ns{\dt^{m} p}_{L^2 H^1}        \\
&\qquad\qquad\qquad\qquad\quad\ \,\left.  + \sum_{\ell=0}^{m-1}\ns{\dt^\ell u}_{L^\infty H^{2}} + \ns{\dt^\ell u}_{L^2 H^{3}}+ \ns{\dt^\ell p}_{L^\infty H^{1}} +  \ns{\dt^\ell p}_{L^2 H^{2}}
\right).
   \end{split}
\end{equation}

Similarly, for  $j = 1,\dotsc, 2N-1$,
\begin{equation}\label{l_ie1_04}
\begin{split}
&\ns{F^{1,j}(0)}_{4N-2j-2} + \ns{F^{3,j}(0)}_{4N-2j-3/2}
\\  &\quad\le  P(1+ \mathfrak{F}_0 (\eta)) \left( \mathfrak{F}_0(F^1,F^3) + \sum_{\ell=0}^{j-1}
\ns{\dt^\ell u(0)}_{4N-2\ell} + \ns{\dt^\ell p(0)}_{4N-2\ell-1} \right).
   \end{split}
\end{equation}

\end{lem}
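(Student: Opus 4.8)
The estimates \eqref{l_ie1_01}--\eqref{l_ie1_04} are all of the same nature: they bound the forcing terms $F^{1,j}$, $F^{3,j}$ (and the difference $\dt(F^{1,m}-F^{3,m})$ appearing in the compatibility-type quantity for the differentiated problem) in terms of the data norms $\mathfrak{F}(F^1,F^3)$, the geometric quantity $\mathfrak{F}(\eta)$, and lower-order norms of the already-constructed $u,p$. The plan is to unwind the recursive definitions \eqref{l_Fj_def} and estimate each term. Writing out
\[
F^{1,j} = D_t^j F^1 + \sum_{\ell=0}^{j-1} D_t^\ell \mathfrak{G}^1(D_t^{j-\ell-1} u, \dt^{j-\ell-1}p),
\]
the first term is controlled directly by $\mathfrak{F}(F^1,F^3)$ after noting that $D_t = \dt - R\cdot$ with $R = \dt M M^{-1}$, so each application of $D_t$ costs at most one power of the coefficients $A,B,J,K$ and their time derivatives, all of which are controlled by $\mathfrak{F}(\eta)$ via \eqref{ABJ_def} and Lemma \ref{p_poisson} (the harmonic extension bounds). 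For the sum, one expands $D_t^\ell \mathfrak{G}^1$ using the Leibniz rule: each term is a product of (a) coefficients built from $\a$, $J$, $R$, $\dt\a$, $\dt J$, etc.\ and their space-time derivatives, and (b) one of $D_t^a u$, $\dt^b p$ with $a \le j-1$, $b \le j-1$, together with one spatial derivative (since $\mathfrak{G}^1$ contains $\da$, $\naba$, $\diva(\sg_\a \cdot)$ which are second/first order in space). Matching the parabolic scaling, each such product lies in $L^2 H^{2m-2j+1}$ (resp.\ $L^\infty H^{2m-2j}$), and is bounded using the Sobolev product estimate Lemma \ref{i_sobolev_product_1} — putting the low-regularity factor (the $u$ or $p$ factor, at its appropriate order) in the target space and the high-regularity coefficient factor in $H^{s_2}$ with $s_2 > r + n/2$, which is where $\mathfrak{F}(\eta)$ enters through the polynomial $P(1+\mathfrak{F}(\eta))$. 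The same scheme handles $F^{3,j}$, with the trace theorem $H^k(\Omega) \hookrightarrow H^{k-1/2}(\Sigma)$ converting $\Omega$-regularity of $u,p$ into the $\Sigma$-norms appearing in \eqref{l_ie1_01}--\eqref{l_ie1_02}, and $\dt\n$ in $\mathfrak{G}^3$ contributing a factor $\nab_\ast\dt\eta$ absorbed into $\mathfrak{F}(\eta)$.

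For \eqref{l_ie1_03}, the term $\dt(F^{1,m}-F^{3,m})$ is estimated in the weak norm $(\x_T)^*$: here the point is that one extra temporal derivative would push the regularity below $L^2 L^2$, so instead one integrates by parts against test functions $\psi \in \x_T$, moving the time derivative onto the structure and using $\diva \Sa(p,u) = \naba p - \da u$ to recognize the combination $F^{1,m}-F^{3,m}$ as (boundary plus interior) traces of a divergence-form expression, exactly as in the passage from \eqref{l_weak_motivation} to \eqref{l_weak_solution_pressureless}. The duality estimate \eqref{i_s_p_04} for products in $(\H1)^\ast$ and the boundary-dual estimate Lemma \ref{l_boundary_dual_estimate} then give the bound in terms of $\ns{\dt^m u}_{L^2 H^2}$, $\ns{\dt^m p}_{L^2 H^1}$ and lower-order norms. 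Finally \eqref{l_ie1_04} is simply the $t=0$ specialization: evaluate the expansion of $F^{i,j}$ at $t=0$ and apply the same product estimates with the time-integrated norms replaced by the pointwise $H^{4N-2\ell}$ norms of $\dt^\ell u(0)$, $\dt^\ell p(0)$, with $\mathfrak{F}_0(\eta)$ in place of $\mathfrak{F}(\eta)$.

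The main obstacle — really the only nontrivial bookkeeping — is the combinatorics of the Leibniz expansion of $D_t^\ell \mathfrak{G}^i$: one must verify that in \emph{every} resulting product, after assigning the parabolic weight $|\alpha| = 2\alpha_0 + |\alpha'|$ to each derivative, the low-regularity factor never needs more than $2m-2j+3$ spatial derivatives (for $u$) or $2m-2j+2$ (for $p$) at temporal order $\le j-1$ — i.e.\ that the recursion does not leak regularity — while the coefficient factor always retains enough regularity ($> r + 3/2$) for Lemma \ref{i_sobolev_product_1} to apply. This is a purely mechanical check, entirely parallel to the corresponding estimates in \cite{GT_lwp}; the only modification in our setting is that $\mathcal{K}(\eta)$ is no longer assumed small, so wherever \cite{GT_lwp} would produce a factor $1 + \mathcal{K}(\eta)$ we instead retain the full polynomial $P(1+\mathfrak{F}(\eta))$, using $\mathcal{K}(\eta) \le \mathfrak{F}(\eta)$. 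I would therefore prove the lemma by induction on $j$, at each step invoking the inclusions already established for $D_t^{j-1}u$, $\dt^{j-1}p$ and the product/trace estimates above, and simply citing the $t$-dependence of the coefficients as controlled by $\mathfrak{F}(\eta)$.
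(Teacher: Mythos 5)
Your plan is correct and follows essentially the same route as the paper: Leibniz expansion of the recursion \eqref{l_Fj_def}, Sobolev product estimates with the $\bar\eta$-coefficient factors controlled by $P(1+\mathfrak{F}(\eta))$ through Lemma \ref{p_poisson} and trace embeddings, and, for \eqref{l_ie1_03}, a duality argument against $\psi\in\x_T$ that pairs the top-order interior divergence-form term with its boundary counterpart (so that only $\dt^{2N}\eta\in L^2H^{3/2}$ and $\dt^{2N+1}\eta\in L^2H^{-1/2}$ from $\mathfrak{F}(\eta)$ are needed), which is exactly the paper's key structural step. The only caveat is that the check is not \emph{entirely} parallel to \cite{GT_lwp} as you suggest: the weaker norms in $\mathfrak{F}(\eta)$ are precisely why \eqref{l_ie1_03} requires the integration-by-parts/divergence-structure maneuver you describe, rather than the direct coefficient bounds.
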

\begin{proof}
The lemma follows from simple but lengthy computations, invoking standard arguments. Although our lemma is similar as Lemma 4.5 of \cite{GT_lwp}, we need to point out the differences and present the necessary modifications of the proof there.
The first point is that we assume $\mathfrak{F}(\eta)$ to be finite but not small, so the polynomial $P$ appears in our estimates. The second point is that some norms in our $\mathfrak{F}(\eta)$ are weaker than those of $\mathfrak{K}(\eta)$ in Lemma 4.5 of \cite{GT_lwp}, but a careful check that we shall explain below would reveal that they are sufficient to guarantee the validity of these estimates. Hence, our estimate is a refined version of Lemma 4.5 in \cite{GT_lwp} in some sense.

We use the definition of $F^{1,j}, F^{3,j}$ given by \eqref{l_Fj_def} and expand all terms using the Leibniz rule and the definition of $D_t$ (defined by \eqref{l_Dt_def}) to rewrite $F^{i,j}$ as a sum of products of two terms: one involving products of various derivatives of $\bar{\eta}$, and one linear in  derivatives of $u$, $p$, $F^1$, or $F^3$. We only focus on the $\bar{\eta}$ part to see that it can be bounded by our polynomial, while the other part follows in the same way as Lemma 4.5 of \cite{GT_lwp}. Note that the highest derivatives of $\bar\eta$ appearing in $F^{1,j}$ are $\dt^j\nab^3\bar{\eta}$ and $\dt^{j+1}\nab \bar{\eta}$, while the one appearing in $F^{3,j}$ is $\dt^j\nab^2\bar{\eta}$. To prove \eqref{l_ie1_01}, by Lemma \ref{p_poisson} and the trace embeddings, we have that for $j=1,\dots,2N-1$,
\begin{equation}\label{obser}
\begin{split}
&\ns{\dt^j\nab^3\bar{\eta}}_{L^2 H^{4N-2j-1}}\ls \ns{\dt^j  \eta }_{L^2 H^{4N-2j+3/2}}\le \mathfrak{F}(\eta) ,   \\
&\ns{\dt^{j+1}\nab \bar{\eta}}_{L^2 H^{4N-2j-1}}\ls \ns{\dt^{j+1}  \eta }_{L^2 H^{4N-2j-1/2}}=\ns{\dt^{j+1}  \eta }_{L^2 H^{4N-2(j+1)+3/2}}\le \mathfrak{F}(\eta),  \text{ and }  \\
&\ns{\dt^j\nab^2\bar{\eta}}_{L^2 H^{4N-2j-1/2}(\Sigma)}\ls \ns{\dt^j  \nab^2\bar{\eta} }_{L^2 H^{4N-2j }}
\ls \ns{\dt^j    {\eta} }_{L^2 H^{4N-2j+3/2 }}\le \mathfrak{F}(\eta).
\end{split}
\end{equation}
With \eqref{obser} in hand, we may conclude \eqref{l_ie1_01} as in Lemma 4.5 of \cite{GT_lwp} by estimating the resulting products by using Lemma \ref{i_sobolev_product_1} in conjunction with the usual Sobolev embeddings, trace embeddings and Lemma \ref{p_poisson}.  The estimates \eqref{l_ie1_02} and \eqref{l_ie1_04} follow from similar arguments.

To prove \eqref{l_ie1_03},
we note that the highest derivatives of $\eta$ are $\dt^{2N}\nab^3\bar{\eta}$ and $\dt^{2N+1}\nab \bar{\eta}$ appearing in $\dt F^{1,m}$ and  $\dt^{2N}\nab^2\bar{\eta}$ appearing in $\dt F^{3,m}$ when $m=2N-1$. We remark that the arguments for (4.78) in Lemma 4.5 of \cite{GT_lwp} or the way of \eqref{obser} can not be applied to estimate \eqref{l_ie1_03} due to our weaker $\mathfrak{F}(\eta)$. We must resort to use the structure of the estimated terms. First, to control $\dt^{2N+1}\nab \bar{\eta}$ in $\Omega$, we have that for all $\varphi\in \H1 $,
\begin{equation}
\br{\dt^{2N+1}\nab \bar{\eta},\varphi}_{*}  =\int_\Sigma\dt^{2N+1}   {\eta} \varphi-\int_\Omega\dt^{2N+1} \bar{\eta} \nab \varphi.
\end{equation}
Then by the duality, the trace embedding and Lemma \ref{p_poisson}, we obtain
\begin{equation}\label{obser1}
\ns{\dt^{2N+1}\nab \bar{\eta}}_{L^2 (\H1)^*}\ls \ns{\dt^{2N+1}   {\eta}}_{L^2 H^{-1/2}}+\ns{\dt^{2N+1}   \bar{\eta}}_{L^2 H^{0}}\ls \ns{\dt^{2N+1}   {\eta}}_{L^2 H^{-1/2}}\le \mathfrak{F}(\eta).
\end{equation}
On the other hand, we have to treat together the terms in the $\dt (F^{1,2N-1} -F^{3,2N-1})$ that leads to $\dt^{2N}\nab^3\bar{\eta}$ in $\Omega$ and $\dt^{2N}\nab^2\bar{\eta}$ on $\Sigma$. In view of \eqref{l_Fj_def} for $j=2N-1$, these terms are $\diva( \sg_{\a} (\dt^{2N-1} R u)) $ in $\Omega$ and $ -\sg_{\a} (\dt^{2N-1}R u) \n $ on $\Sigma$, respectively. We need to estimate $ \ns{  \diva( \sg_{\a} ( \dt^{2N-1} R u)) - \sg_{\a} (\dt^{2N-1} R u) \n }_{(\x_T)^*}$. Indeed, for $\psi\in \x_T$, we observe that
\begin{equation}
\br{  \diva( \sg_{\a} ( \dt^{2N-1} R u)) - \sg_{\a} (\dt^{2N-1} R u) \n  ,\psi}_{*}  =\ip{\dt^{2N-1} R u}{\psi}_{\h^1_T}.
\end{equation}
Then by the duality we have
\begin{equation} \label{obser22}
 \ns{ \diva( \sg_{\a} ( \dt^{2N-1} R u)) - \sg_{\a} (\dt^{2N-1} R u) \n  }_{(\x_T)^*} \ls\ns{\dt^{2N-1} R u}_{L^2H^1}.
\end{equation}
To estimate the right hand side of \eqref{obser22}, we may use the bound, by Lemma \ref{p_poisson},
\begin{equation}\label{obser33}
 \ns{\dt^{2N }\nab\bar{\eta}}_{L^2H^1}\ls  \ns{\dt^{2N } {\eta}}_{L^2H^{3/2}}\le \mathfrak{F}(\eta).
\end{equation}
 Then with \eqref{obser1}, \eqref{obser22} and \eqref{obser33} in hand, we can conclude \eqref{l_ie1_03} as that for \eqref{l_ie1_01}.
\end{proof}

Next we record an estimates for the difference between $\dt v$ and $D_t v$ for a general $v$.
\begin{lem}\label{l_iteration_estimate_3}
If $k = 0,\dotsc,4N-1$ and $v$ is sufficiently regular, then
\begin{equation}\label{l_ie3_01}
\ns{\dt v - D_t v}_{L^2 H^k} \le   P(1+ \mathfrak{F}(\eta))  \ns{v}_{L^2 H^k},
\end{equation}
and if $k=0,\dotsc,4N-2$, then
\begin{equation}\label{l_ie3_06}
\ns{\dt v - D_t v}_{L^\infty H^{k}} \le   P(1+ \mathfrak{F}(\eta))  \ns{v}_{L^\infty H^{k}}.
\end{equation}

If $m=1,\dotsc,2N-1$, $j=1,\dotsc,m$,  and $v$ is sufficiently regular, then
\begin{equation}\label{l_ie3_02}
 \ns{\dt^j v - D_t^j v}_{L^2 H^{2m-2j+3}}  \le   P(1+ \mathfrak{F}(\eta)) \sum_{\ell=0}^{j-1}\left( \ns{\dt^\ell v}_{L^2 H^{2m-2j+3}}  + \ns{\dt^\ell v}_{L^\infty H^{2m-2j+2}} \right),
\end{equation}
\begin{equation}\label{l_ie3_05}
 \ns{\dt^j v - D_t^j v}_{L^\infty H^{2m-2j+2}}  \le   P(1+ \mathfrak{F}(\eta)) \sum_{\ell=0}^{j-1} \ns{\dt^\ell v}_{L^\infty H^{2m-2j+2}},
\end{equation}
and
\begin{equation}\label{l_ie3_03}
\begin{split}
& \ns{\dt D_t^m v - \dt^{m+1} v}_{L^2 H^1} + \ns{\dt^2 D_t^{m} v - \dt^{m+2} v}_{(\x_T)^*} \\
&\quad\le   P(1+ \mathfrak{F}(\eta))  \left( \sum_{\ell=0}^{m} \ns{\dt^\ell v}_{L^2 H^{1}} + \ns{\dt^\ell v}_{L^\infty H^{2}}  + \ns{\dt^{m+1} v}_{(\x_T)^*}  \right).
\end{split}
\end{equation}

Also, if $j=0,\dotsc,2N$, and $v$ is sufficiently regular, then
\begin{equation}\label{l_ie3_04}
 \ns{\dt^j v(0) - D_t^j v(0)}_{4N-2j} \le   P(1+ \mathfrak{F}_0 (\eta))\sum_{\ell=0}^{j-1} \ns{\dt^\ell v(0)}_{4N-2\ell}.
\end{equation}
\end{lem}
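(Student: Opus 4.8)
The plan is to start from the algebraic identity $\dt v - D_t v = Rv$ with $R = \dt M M^{-1}$ (see \eqref{l_Dt_def}). Reading off \eqref{l_M_def}, the relation $M^{-1} = J\a^T$ stated just after it, and \eqref{ABJ_def}, the entries of $R$ are finite sums of products of factors drawn from $\bar\eta,\nab\bar\eta,\dt\bar\eta,\dt\nab\bar\eta$, powers of $J^{-1}$, and the fixed smooth functions $b,\tilde b$ and their derivatives; crucially each $\bar\eta$-factor carries at most one spatial derivative, and each summand is linear in the factors that carry a $\dt$. Iterating $D_t^{j}=(\dt-R)D_t^{j-1}$ and expanding with the Leibniz rule, $D_t^j v-\dt^j v$ is then a finite sum of terms $c\,\dt^\ell v$ with $0\le\ell\le j-1$, where each coefficient $c$ is a product of factors of the form $\dt^{a}\nab\bar\eta$ or $\dt^{a}\bar\eta$, powers of $J^{-1}$, and fixed smooth functions, the total number of $\dt$'s landing on the $\bar\eta$-factors being at most $j-\ell$. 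This reduces all of \eqref{l_ie3_01}--\eqref{l_ie3_04} to: (a) bounding each such $c$ in the $L^\infty_t H^s_x(\Omega)$ or $L^2_t H^s_x(\Omega)$ norm dictated by the parabolic counting by $P(1+\mathfrak{F}(\eta))$ (resp.\ by $P(1+\mathfrak{F}_0(\eta))$ at $t=0$); and (b) multiplying $c$ against $\dt^\ell v$ by product estimates.

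For (a) I would argue exactly as in \eqref{obser}: Lemma \ref{p_poisson} applied to $\bar\eta=\mathcal P\eta$ and to $\dt^a\bar\eta=\mathcal P\dt^a\eta$, combined with the trace and Sobolev embeddings, converts the norms contained in $\mathfrak{F}(\eta)$ into control of $\bar\eta$ and its time derivatives on $\Omega$; the powers of $J^{-1}$ are handled by the usual Neumann-series argument using that $J-1$ is small in $L^\infty$, a consequence of the standing smallness of $\eta$ ($\mathcal{L}(\eta)\le\mathfrak{L}(\eta)$). Since $H^s(\Omega)$ is a multiplicative algebra in the relevant range $s>3/2$, every $c$ occurring at a given order is thereby bounded by $P(1+\mathfrak{F}(\eta))$, the lower-order $\dt$-factors enjoying strictly better bounds. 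With this in hand, (b), and hence \eqref{l_ie3_01}, \eqref{l_ie3_06}, \eqref{l_ie3_02}, \eqref{l_ie3_05}, the $L^2H^1$ half of \eqref{l_ie3_03}, and \eqref{l_ie3_04}, follows from the Leibniz rule together with Lemma \ref{i_sobolev_product_1} (and routine H\"older--Gagliardo--Nirenberg interpolation for the intermediate terms), distributing spatial derivatives so that $v$, resp.\ $\dt^\ell v$, keeps exactly the norm asserted on the right-hand side; the $L^\infty_t$ statements and the $t=0$ statement are proved identically, replacing $L^2_tH^\bullet$ by $L^\infty_tH^\bullet$ or by the value at $t=0$.

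The main obstacle is the $(\x_T)^*$ summand of \eqref{l_ie3_03}, namely $\ns{\dt^2 D_t^m v-\dt^{m+2}v}_{(\x_T)^*}$ at $m=2N-1$. Here two phenomena escape the scheme above: first, $R$ itself multiplies $\dt^{m+1}v$, which is only in $(\x_T)^*$ — this is exactly why $\ns{\dt^{m+1}v}_{(\x_T)^*}$ sits on the right-hand side, and it is controlled by the boundedness of multiplication by the smooth matrix $R$ on $\x_T$ and hence on its dual; second, some coefficients $c$ carry more temporal derivatives of $\bar\eta$ than can be placed in $L^2H^0(\Omega)$, concretely factors of the shape $\dt^{2N+1}\nab\bar\eta$ against $v$. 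These last I would estimate by duality precisely as in \eqref{obser1}: for $\psi\in\x_T$, write $\br{(\dt^{2N+1}\nab\bar\eta)\,v,\psi}_{*}$, integrate by parts to move the spatial derivative off $\bar\eta$ onto $v\psi$, and then bound $\dt^{2N+1}\bar\eta$ in $L^2H^0(\Omega)$ and $\dt^{2N+1}\eta$ in $L^2H^{-1/2}(\Sigma)$ via Lemma \ref{p_poisson} (both are inside $\mathfrak{F}(\eta)$, the latter as its $j=2N+1$ term), $v$ being absorbed by its smoothness and $\psi$ by its $\h^1_T$-norm. Identifying exactly which summands require this duality step, and verifying that the remaining ones never demand a coefficient beyond the regularity $\mathfrak{F}(\eta)$ supplies, is the only delicate point; the rest is bookkeeping.
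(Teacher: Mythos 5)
Your overall strategy matches the route the paper intends: the paper omits this proof with a pointer to the proof of Lemma \ref{l_iteration_estimates_1}, and your plan (expand $D_t^j v-\dt^j v$ into coefficients built from $R=\dt M M^{-1}$ times lower temporal derivatives of $v$, bound the coefficients via Lemma \ref{p_poisson}, the product estimates of Lemma \ref{i_sobolev_product_1} and the smallness of $J-1$, and treat the over-differentiated factors $\dt^{2N+1}\bar\eta$, $\dt^{2N+1}\nab\bar\eta$ by the duality device of \eqref{obser1}) is exactly that template. There is, however, one genuine gap: your treatment of the term proportional to $R\,\dt^{m+1}v$ in the $(\x_T)^*$ half of \eqref{l_ie3_03}. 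Multiplication by $R$ (or $R^T$) is \emph{not} bounded on $\x_T$: for $\psi\in\x_T$ the product $R^T\psi$ still vanishes on $\Sigma_b$, but $\diverge_{\a}(R^T\psi)\neq 0$ in general, so $R^T\psi\notin\x_T$ and the argument ``bounded on $\x_T$, hence on its dual'' does not apply. Worse, the inequality $\norm{Rf}_{(\x_T)^*}\ls P(1+\mathfrak{F}(\eta))\norm{f}_{(\x_T)^*}$ is simply false for general functions $f$: take $f=\naba\varphi$ with $\varphi\in{^0}H^1(\Omega)$; then for every $\psi\in\x_T$ the ($J$-weighted) pairing $\int_\Omega J\,\naba\varphi\cdot\psi$ vanishes after integrating by parts, using the Piola identity $\p_j(J\a_{ij})=0$, $\diva\psi=0$, $\varphi\vert_\Sigma=0$ and $\psi\vert_{\Sigma_b}=0$ (this is precisely the statement $\naba\varphi\in\y(t)^\bot$ in \eqref{l_Y_space_def}), so $f$ is the zero functional on $\x_T$, while $R\naba\varphi$ pairs nontrivially with $\x_T$ in general.

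So this one term needs a different argument. Two fixes consistent with how the estimate is actually consumed in the paper: (i) bound $R\,\dt^{m+1}v$ through the embedding of $\h^0_T$ into $(\x_T)^*$, i.e. by $\norm{R}_{L^\infty L^\infty}\norm{\dt^{m+1}v}_{L^2 H^0}$; this uses a slightly stronger norm of $\dt^{m+1}v$ than the one displayed in \eqref{l_ie3_03}, but it is exactly what is available where the lemma is applied (in \eqref{sss3} and Proposition \ref{k11} the quantity $\ns{\dt^{2N}u}_{(\x_T)^*}$ is in any case replaced by $T\ns{\dt^{2N}u}_{L^\infty H^0}$); or (ii) do not isolate $R\,\dt^{m+1}v$ at all, but keep it in a combination whose pairing with $\psi\in\x_T$ can be rewritten exploiting structure, in the spirit of \eqref{obser22}, e.g. using $R=\dt M\,M^{-1}$ and the isomorphism of Proposition \ref{l_M_iso} to trade $\dt^{m+1}v$ for $D_t$-type quantities that do respect the $\diva$-free constraint. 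Apart from this point, your coefficient bookkeeping and the duality step for $\dt^{2N+1}\nab\bar\eta$ tested against $v\psi$ (interior term via $\dt^{2N+1}\bar\eta\in L^2H^0$ and boundary term via $\dt^{2N+1}\eta\in L^2H^{-1/2}(\Sigma)$, both inside $\mathfrak{F}(\eta)$) are in line with the paper's argument.
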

\begin{proof}
The proof is similar to that of Lemma \ref{l_iteration_estimates_1}, and is thus omitted.
\end{proof}

\begin{lem}\label{l_iteration_estimates_2}
Suppose that $v,$ $q,$ $G^1,$ $G^3$ are evaluated at $t=0$ and are sufficiently regular.  For $j=0,\dotsc,2N-1$, we have
\begin{equation}\label{l_ie2_02}
\begin{split}
 &\ns{\mathfrak{G}^0(G^1,v,q)}_{4N-2j-2} \\
&\quad\le  P(1+ \ns{\eta(0)}_{4N } + \ns{\dt \eta(0)}_{4N-3/2}) \left(  \ns{v}_{4N-2j} + \ns{q}_{4N-2j-1} + \ns{G^1}_{4N-2j-2}  \right).
\end{split}
\end{equation}
If $j=0,\dotsc,2N-2$, then
\begin{equation}\label{l_ie2_01}
\begin{split}
 &\ns{\mathfrak{f}^1(G^1,v)}_{4N-2j-3} + \ns{\mathfrak{f}^2(G^3,v)}_{4N-2j-3/2} + \ns{\mathfrak{f}^3(G^1,v)}_{4N-2j-5/2} \\
&\quad\le  P(1+ \ns{\eta(0)}_{4N } + \ns{\dt \eta(0)}_{4N-3/2})\left( \ns{G^1}_{4N-2j-2} + \ns{G^3}_{4N-2j-3/2}  + \ns{v}_{4N-2j}\right).
\end{split}
\end{equation}
For $j=2N-1$, if  $\diverge_{\a(0)} v=0$ in $\Omega$, then
\begin{equation}\label{l_ie2_03}
 \ns{\mathfrak{f}^2(G^3,v)}_{1/2} + \ns{\mathfrak{f}^3(G^1,v)}_{-1/2}
\le  P(1+ \ns{\eta(0)}_{4N} )\left( \ns{G^1}_{2} + \ns{ G^3}_{1/2}  + \ns{v}_{2}\right).
\end{equation}
\end{lem}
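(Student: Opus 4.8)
The plan is to treat the three estimates in turn, in each case expanding the defining expressions \eqref{l_f_def} by the Leibniz rule into a finite sum of products, one factor involving derivatives of $\bar\eta$ (via $\mathcal A$, $\n$, $R$, $J$, $K$ and the like) and one factor linear in derivatives of $v$, $q$, $G^1$ or $G^3$, and then applying the product estimate \eqref{i_s_p_02} of Lemma \ref{i_sobolev_product_1} together with the Poisson estimates of Lemma \ref{p_poisson}, the trace embeddings, and the usual Sobolev embeddings. Since $\bar\eta=\mathcal P\eta$, all coefficient factors are controlled by $\norm{\eta(0)}_{4N}$ and $\norm{\dt\eta(0)}_{4N-3/2}$ through \eqref{ABJ_def} and the formula $R=\dt MM^{-1}$; the factor $R$ is the only place where a time derivative of $\eta$ enters at time $t=0$, which is why the polynomial depends on $\norm{\dt\eta(0)}_{4N-3/2}$ (note $\dt M$ involves $\dt\bar\eta$, whose top derivative on $\Omega$ is $\dt\nab^2\bar\eta\sim\dt\eta$ in $H^{4N-3/2}$ by Lemma \ref{p_poisson}). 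These are exactly the computations carried out in the corresponding lemma of \cite{GT_lwp}, so after setting up the bookkeeping one may invoke those arguments essentially verbatim.

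For \eqref{l_ie2_02}: in $\mathfrak G^0(G^1,v,q)=\Delta_{\a}v-\nab_{\a}q+G^1-Rv$ the term $G^1$ is carried directly, $\Delta_\a v$ and $\nab_\a q$ cost one factor of $\a$ (hence $\bar\eta$, controlled in $H^{4N-1/2}$ and thus far above the $n/2$ threshold for the range $4N-2j-2$ with $j\le 2N-1$), and $Rv$ costs a factor of $R$. Applying \eqref{i_s_p_02} with the high-regularity slot going on the $\bar\eta$-factor gives \eqref{l_ie2_02}. For \eqref{l_ie2_01}: $\mathfrak f^1=\diverge_\a(G^1-Rv)$ loses one derivative off $G^1$ and $v$ and costs $\a,R$; $\mathfrak f^2=(G^3+\sg_\a v\,\n)\cdot\n\abs{\n}^{-2}$ is an algebraic combination of boundary traces of $G^3$, of $\sg_\a v$ (costing $\a$ and one derivative of $v$), and of smooth functions of $\n$ (hence of $\nab_\ast\eta(0)$, controlled in $H^{4N-1}(\Sigma)$), so the trace inequality plus \eqref{i_s_p_02} on $\Sigma$ yields the $H^{4N-2j-3/2}$ bound; $\mathfrak f^3=(G^1+\Delta_\a v)\cdot\nu$ is the trace of an $\Omega$-quantity onto $\Sigma_b$, where $b$ is smooth, so the same scheme applies in $H^{4N-2j-5/2}$. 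The restriction $j\le 2N-2$ here ensures every target order stays $\ge 1/2$ so the trace and product estimates are applicable with room to spare.

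The genuinely delicate case is \eqref{l_ie2_03}, the endpoint $j=2N-1$, where the target norms drop to $H^{1/2}(\Sigma)$ and $H^{-1/2}(\Sigma_b)$ — below the regularity at which the naive product/trace estimates close, since $\mathfrak f^3(G^1,v)=(G^1+\Delta_\a v)\cdot\nu$ involves $\Delta_\a v$, i.e.\ two derivatives of the $H^2$ function $v$, which is only $H^0(\Omega)$ and has no trace in the usual sense. This is precisely where the hypothesis $\diverge_{\a(0)}v=0$ must be used: one rewrites $\Delta_\a v=\diva\sg_\a v - \nab_\a(\diva v)=\diva\sg_\a v$, so $\mathfrak f^3$ together with $\mathfrak f^2$ assemble into the normal trace of a divergence-form quantity, and one estimates the pair $\diva(\sg_\a v)$ on $\Sigma_b$ against $H^{-1/2}$ and $\sg_\a v\cdot\n$ on $\Sigma$ against $H^{1/2}$ in duality, exactly as in the $(\x_T)^*$-estimate \eqref{obser22}: for $\psi\in{}_0H^1(\Omega)$ (or its divergence-free analogue), integration by parts gives $\br{\diva(\sg_{\a}v),\psi}=-\int_\Omega\sg_\a v:\nab\psi$ and the boundary term $\int_\Sigma(\sg_\a v\,\n)\cdot\psi$, so the combined functional is bounded by $\norm{\sg_\a v}_0\norm{\psi}_1\ls P(1+\norm{\eta(0)}_{4N})\norm{v}_1\norm{\psi}_1$; using $\diva v=0$ to control $\norm v_1$ by $\norm v_2$ (trivially) or directly by Korn, Lemma \ref{i_korn}, gives the stated bound with only $\norm{\eta(0)}_{4N}$ on the right (no $\dt\eta(0)$, since $R$ does not appear in $\mathfrak f^2,\mathfrak f^3$). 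The $G^1$ and $G^3$ contributions to $\mathfrak f^3$ and $\mathfrak f^2$ are handled as before, now at the bottom-regularity level, using \eqref{i_s_p_03} on $\Omega$ and its boundary analogue. This endpoint argument is the main obstacle; everything else is the routine Leibniz-and-product bookkeeping inherited from \cite{GT_lwp}, and I would simply say so and refer the reader there for those parts.
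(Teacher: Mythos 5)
Your treatment of \eqref{l_ie2_02} and \eqref{l_ie2_01} is fine and is essentially the paper's route: expand $\mathfrak{G}^0,\mathfrak{f}^1,\mathfrak{f}^2,\mathfrak{f}^3$ by Leibniz, put the high-regularity slot of Lemma \ref{i_sobolev_product_1} on the $\bar\eta$-factors (controlled via Lemma \ref{p_poisson}), and note that $\dt\eta(0)$ enters only through $R$; this is exactly the ``argue as in Lemma \ref{l_iteration_estimates_1}'' part. The problem is the endpoint case \eqref{l_ie2_03}, which you correctly single out as the delicate step but then handle with an argument that does not prove the stated bound. You rewrite $\Delta_\a v=\diva\sg_\a v$ and integrate by parts against vector test functions $\psi$, obtaining a functional bounded by $\norm{\sg_\a v}_0\norm{\psi}_1$. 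This estimates a combined $(\H1)^*$-type quantity of the form ``$\diva(\sg_\a v)$ minus a boundary traction,'' as in \eqref{obser22}, and it produces the traction $(\sg_\a v)\n$ on the boundary, not the scalar normal trace $(\Delta_\a v)\cdot\nu$ on $\Sigma_b$ that $\mathfrak{f}^3$ actually is. Worse, if your $\psi$ lie in $\H1$ (vanishing on $\Sigma_b$), the $\Sigma_b$ boundary term is invisible, so no information about $\mathfrak{f}^3$ on $\Sigma_b$ can be extracted at all; and the claimed control by $\norm{v}_1$ alone cannot be right, since $\Delta_\a v$ must at least lie in $L^2(\Omega)$ (two derivatives of $v$) for its normal trace to be defined in $H^{-1/2}(\Sigma_b)$ --- consistently, the lemma's right-hand side carries $\ns{v}_2$, not $\ns{v}_1$. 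Finally, the lemma (and its use in constructing $\dt^{2N-1}p(0)$ via \eqref{l_a_poisson_div}, where $\mathfrak{f}^2$ is Dirichlet data on $\Sigma$ and $\mathfrak{f}^3$ is Neumann-type data on $\Sigma_b$, paired with scalar $\varphi\in{}^0H^1(\Omega)$ vanishing on $\Sigma$, cf. \eqref{l_a_poisson_weak_2}) requires the two norms separately; a bound on a lumped functional against divergence-free vector test fields is not a substitute.

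The correct mechanism, and the one the paper uses, is the normal-trace estimate of Lemma \ref{l_boundary_dual_estimate} applied to the vector field $X=\Delta_{\a(0)}v$: since $\diverge_{\a(0)}v=0$ one has $\diverge_{\a(0)}(\Delta_{\a(0)}v)=\Delta_{\a(0)}(\diverge_{\a(0)}v)=0$, hence
\begin{equation*}
\ns{\Delta_{\a(0)}v\cdot\nu}_{H^{-1/2}(\Sigma_b)}\ls \hn{\Delta_{\a(0)}v}{0}^2\le P\big(1+\ns{\eta(0)}_{4N}\big)\ns{v}_{2},
\end{equation*}
and the $G^1\cdot\nu$ contribution is handled by the trace of $G^1\in H^2$. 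The hypothesis $\diverge_{\a(0)}v=0$ is used precisely to make $\Delta_{\a(0)}v$ an $\a$-divergence-free $L^2$ field, not to reduce $\norm{v}_2$ to $\norm{v}_1$ (Korn, Lemma \ref{i_korn}, plays no role here). Also note that $\mathfrak{f}^2$ at $j=2N-1$ needs no divergence-free hypothesis and no duality: it is a plain trace-plus-product estimate in $H^{1/2}(\Sigma)$ costing $\ns{G^3}_{1/2}+\ns{v}_2$. As written, your proof of the endpoint estimate --- the only genuinely nontrivial assertion of the lemma --- has a gap.
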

\begin{proof}
 The proof is similar to that of Lemma \ref{l_iteration_estimates_1}. We note that to derive the $\mathfrak{f}^3$ estimate of \eqref{l_ie2_03}, we may use the bound $\ns{\Delta_{\a(0)} v \cdot \nu }_{H^{-1/2}(\Sigma_b)} \ls \ns{\Delta_{\a(0)} v}_{0}$ provided by Lemma \ref{l_boundary_dual_estimate} since $\diverge_{\a(0)} \Delta_{\a(0)} v=0$, which is implied by that $\diverge_{\a(0)} v=0$.
\end{proof}

Now we assume that $u_0 \in H^{4N}(\Omega)$, $\eta_0 \in
H^{4N}(\Sigma)$, $\mathfrak{F}_0(F^1,F^3) < \infty$ and
$\mathfrak{F}_0 (\eta)<\infty$, and that $ \mathfrak{L}_0(\eta)\le
1$ (as defined in \eqref{1_L0frak_def}) is sufficiently small for
the hypothesis of Proposition \ref{l_a_poisson_regularity} to hold
when $k=4N$.  Note, though, that we do not need $\ns{\eta_0}_{4N}$
and $\ns{u_0}_{4N}$ to be small. We will iteratively construct the
initial data $D_t^j u(0)$ for $j=0,\dotsc,2N$ and $\partial_t^j
p(0)$ for $j=0,\dotsc,2N-1$. To do so, we will first construct all
but the highest order data,  and then we will state some
compatibility conditions for the data.  These are necessary to
construct $D_t^{2N} u(0)$ and $\dt^{2N-1}p(0)$, and to construct
high-regularity solutions in Theorem \ref{l_linear_wp}.

We now turn to the construction of  $D_t^j u(0)$ for $j=0,\dotsc,2N-1$ and $\dt^j p(0)$ for $j=0,\dotsc,2N-2$. For $j=0$ we write $F^{1,0}(0) = F^{1}(0) \in H^{4N-2}$, $F^{3,0}(0) = F^3(0) \in H^{4N-3/2}$, and $D_t^0 u(0) = u_0 \in H^{4N}.$  Suppose now that $F^{1,\ell} \in H^{4N - 2\ell-2}$, $F^{3,\ell} \in H^{4N-2\ell-3/2}$, and $D_t^\ell u(0) \in H^{4N-2\ell}$ are given for $0\le \ell \le j \in [0,2N-2]$; we will define $\dt^j p(0) \in H^{4N-2j-1}$ as well as $D_t^{j+1}u(0) \in H^{4N-2j-2}$, $F^{1,j+1}(0) \in H^{4N-2j-4}$, and $F^{3,j+1}(0) \in H^{4N-2j-7/2}$, which allows us to define all of said data via iteration.  By virtue of estimate \eqref{l_ie2_01}, we know that $f^1 = \mathfrak{f}^1(F^{1,j}(0),D_t^j u(0)) \in H^{4N-2j-3},$ $f^2 = \mathfrak{f}^2(F^{3,j}(0),D_t^j u(0)) \in H^{4N-2j-3/2},$ and $f^3 = \mathfrak{f}^3(F^{1,j}(0),D_t^j u(0)) \in H^{4N-2j-5/2}$.  This allows us to define $\dt^j p (0)$ as the solution to \eqref{l_linear_elliptic_p} with this choice of $f^1, f^2, f^3$, and then Proposition \ref{l_a_poisson_regularity} with $k = 4N$ and $r=4N-2j-1 < k$ implies that $\dt^j p(0) \in H^{4N-2j-1}$.  Now the estimates \eqref{l_ie1_04}, \eqref{l_ie3_04}, and \eqref{l_ie2_02} allow us to define
\begin{equation}
 \begin{split}
D_t^{j+1} u(0) &:= \mathfrak{G}^0(F^{1,j}(0), D_t^j u(0),\dt^j p(0)) \in H^{4N-2j-2},  \\
F^{1,j+1}(0) &:= D_t F^{1,j}(0) + \mathfrak{G}^1(D_t^j u(0) ,\dt^j p(0)) \in H^{4N-2j-4}, \text{ and }\\
F^{3,j+1}(0) &:= \dt F^{3,j}(0) + \mathfrak{G}^3(D_t^j u(0) , \dt^j p(0)) \in H^{4N-2j-7/2}.
 \end{split}
\end{equation}
Hence, we have iteratively constructed all of the desired data except for $D_t^{2N} u(0)$ and $\dt^{2N-1}p(0)$.

By construction, the initial data $D_t^j u(0)$ and $\dt^j p(0)$ are determined in terms of $u_0$ as well as  $\dt^\ell F^1(0)$ and $\dt^\ell F^3(0)$ for $\ell = 0,\dotsc,2N-1$.  In order to use these in Theorem \ref{l_strong_solution} and to construct $D_t^{2N} u(0)$ and $\dt^{2N-1}p(0)$, we must enforce compatibility conditions for $j=0,\dotsc,2N-1$.  For such $j$, we say that the $j^{th}$ compatibility condition is satisfied if
\begin{equation}\label{l_comp_cond}
 \begin{cases}
  D_t^j u(0) \in \x(0) \cap H^2(\Omega) \\
  \Pi_0( F^{3,j}(0) + \sg_{\a_0} D_t^j u(0) \n_0)=0 .
 \end{cases}
\end{equation}
Note that the construction of $D_t^j u(0)$ and $\dt^j p(0)$ ensures that $D_t^j u(0) \in H^2(\Omega)$ and that $\diverge_{\a_0}(D_t^j u(0))=0$, so the condition $D_t^j u(0) \in \x(0) \cap H^2(\Omega)$ may be reduced to the condition $D_t^j u(0) \vert_{\Sigma_b} =0$.

It remains only to define $\dt^{2N-1} p(0)\in H^1$ and $D_t^{2N} u(0) \in H^0$.  According to the $j=2N-1$ compatibility condition \eqref{l_comp_cond}, $\diverge_{\a_0} D_t^{2N-1} u(0)=0$, which means that we can use the estimate  \eqref{l_ie2_03} to see that $f^2 = \mathfrak{f}^2(F^{3,2N-1}(0),D_t^{2N-1} u(0)) \in H^{1/2}$ and $f^3 = \mathfrak{f}^3(F^{1,2N-1}(0),D_t^{2N-1} u(0)) \in H^{-1/2}$.  We also see from \eqref{l_comp_cond} that if we define $g_0 = -\diverge_{\a_0} ( R_0 D_t^{2N-1} u(0))$ then $g_0 \in H^0$.  Then, owing to the fact that $G = -F^{1,2N-1} \in H^0$, we can  define $\dt^{2N-1} p(0) \in H^1$ as a weak solution to \eqref{l_linear_elliptic_p} in the sense of \eqref{l_a_poisson_div}. Then by the estimate \eqref{l_ie2_02} we define
\begin{equation}
D_t^{2N} u(0) = \mathfrak{G}^0( F^{1,2N-1}(0), D_t^{2N-1} u(0), \dt^{2N-1} p(0)) \in H^0.
\end{equation}
In fact, the construction of $\dt^{2N-1} p(0)$ guarantees that $D_t^{2N} u(0) \in \y(0)$.  In addition to providing the above inclusions, the bounds \eqref{l_ie1_04}, \eqref{l_ie2_01}, \eqref{l_ie2_02} also imply the estimate
\begin{equation}\label{l_init_data_estimate}
 \sum_{j=0}^{2N} \ns{ D_t^j u(0) }_{4N-2j} + \sum_{j=0}^{2N-1} \ns{ \dt^j p(0) }_{4N-2j-1}
   \le  P(1 + \mathfrak{F}_0(\eta)) \left(  \ns{u_0}_{4N} + \mathfrak{F}_0(F^1,F^3) \right).
\end{equation}
Note that, owing to \eqref{l_ie3_04},  \eqref{l_init_data_estimate} also holds with $\dt^j u(0)$ replacing $D_t^j u(0)$ on the left.

To state our result on higher regularity of solutions to the problem \eqref{l_linear_forced},
we define
\begin{equation}\label{l_DEfrak_def}
\begin{split}
  \mathfrak{K}(u,p)   :=& \sum_{j=0}^{2N} \ns{\dt^j u}_{L^2 H^{4N-2j +1}} + \ns{\dt^{2N+1} u}_{(\x_T)^*} + \sum_{j=0}^{2N-1}\ns{\dt^j p}_{ L^2 H^{4N-2j}}\\
 & +\sum_{j=0}^{2N} \ns{\dt^j u}_{L^\infty H^{4N-2j }} + \sum_{j=0}^{2N-1}\ns{\dt^j p}_{L^\infty H^{4N-2j-1}} .
\end{split}
\end{equation}

\begin{thm}\label{l_linear_wp}
Suppose that $u_0 \in H^{4N}(\Omega)$, $\eta_0 \in H^{4N}(\Sigma)$,
$\mathfrak{F}(F^1,F^3) < \infty$, $\mathfrak{F}(\eta)<\infty$ and
that $ \mathfrak{L}(\eta)$ (as defined in \eqref{1_L0frak_def}) is
less than the smaller of $\ep_0$ from Theorem
\ref{l_strong_solution} and Proposition
\ref{l_a_poisson_regularity}. Let $D_t^j u(0) \in H^{4N-2j}(\Omega)$
and $\dt^j p(0) \in H^{4N-2j-1}$ for $j=0,\dotsc,2N-1$ along with
$D_t^{2N} u(0) \in \y(0)$ all be determined as above in terms of
$u_0$ and $\dt^j F^1(0)$, $\dt^j F^3(0)$ for $j=0,\dotsc,2N-1$.
Suppose that for $j=0,\dotsc,2N-1$, the initial data satisfy the
$j^{th}$ compatibility condition \eqref{l_comp_cond}.

There exists a  $\bar{T} =\bar{T}(\mathfrak{F}(\eta))>0$ so that if $0 < T \le \bar{T}$, then there exists a unique strong solution $(u,p)$ to \eqref{l_linear_forced} on $[0,T]$ so that
\begin{equation}\label{l_lwp_00}
\begin{split}
 &\dt^j u \in C ([0,T]; H^{4N-2j}(\Omega)) \cap L^2([0,T];H^{4N-2j+1}(\Omega)) \text{ for } j=0,\dotsc,2N, \dt^{2N+1} u  \in (\x_T)^*,\\
 &\dt^j p \in C ([0,T]; H^{4N-2j-1}(\Omega)) \cap L^2([0,T];H^{4N-2j}(\Omega)) \text{ for } j=0,\dotsc,2N-1.
\end{split}
\end{equation}
The pair $(D_t^j u, \dt^j p)$ satisfies the PDE
\begin{equation}\label{l_lwp_01}
\begin{cases}
\dt (D_t^j u) - \da (D_t^j u) + \naba (\dt^j p) = F^{1,j} & \text{in }\Omega \\
\diva(D_t^j u)=0 & \text{in }\Omega\\
\Sa(\dt^j p, D_t^j u) \n =  F^{3,j} & \text{on }\Sigma \\
D_t^j u =0 & \text{on }\Sigma_b
\end{cases}
\end{equation}
in the strong sense with initial data $(D_t^j u(0), \dt^j p(0))$ for $j=0,\dotsc,2N-1$, and in the weak sense of \eqref{l_weak_solution_pressureless} with initial data $D_t^{2N} u(0) \in \y(0)$ for $j=2N$.  Here the vectors $F^{1,j}$ and $F^{3,j}$ are as defined by \eqref{l_Fj_def}.  Moreover, the solution satisfies the estimate
\begin{equation}\label{l_lwp_02}
 \mathfrak{K}(u,p) \le   P(1+\mathfrak{F}_0 (\eta) + \mathfrak{F}(\eta)) \exp\left( P(1+ \mathfrak{F}(\eta)) T \right) \left( \ns{u_0}_{4N} + \mathfrak{F}_0(F^1,F^3) + \mathfrak{F}(F^1,F^3) \right).
\end{equation}
\end{thm}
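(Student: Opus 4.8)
The plan is to bootstrap Theorem \ref{l_strong_solution} through the hierarchy of time-differentiated problems, one temporal order at a time for $j=0,1,\dots,2N$, upgrading spatial regularity at each level by the $\a$-Stokes elliptic theory of Proposition \ref{l_stokes_regularity}, and then closing the resulting finite chain of estimates by choosing $\bar T=\bar T(\mathfrak{F}(\eta))$ small. First I would apply Theorem \ref{l_strong_solution} with data $(F^1,F^3,u_0)$: its hypotheses are met, since $\mathfrak{F}(F^1,F^3)<\infty$ and $\mathfrak{F}(\eta)<\infty$ give $F^1,F^3$ the inclusions \eqref{l_F_assumptions} and $\mathcal{K}(\eta)\le\mathfrak{F}(\eta)<\infty$, while $\mathcal{L}(\eta)\le\mathfrak{L}(\eta)$ is small, and the compatibility condition \eqref{l_ss_01} is precisely the $j=0$ case of \eqref{l_comp_cond}. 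This produces $(u,p)$ with the regularity \eqref{l_ss_06} together with the fact that $D_t u$ solves \eqref{l_ss_03}, which is exactly the level-$1$ problem \eqref{l_lwp_01} with forcing $(F^{1,1},F^{3,1})$ of \eqref{l_Fj_def}, in the weak sense of \eqref{l_weak_solution_pressureless}. By the inclusions recorded at the end of Theorem \ref{l_strong_solution}, $(F^{1,1},F^{3,1})$ again satisfy \eqref{l_F_assumptions}, and the constructed datum $D_t u(0)\in\y(0)\cap H^2(\Omega)$ obeys the $j=1$ compatibility condition; so Theorem \ref{l_strong_solution} applies once more, and by uniqueness of weak solutions the strong solution it yields must coincide with $D_t u$, endowing $D_t u$ with the regularity \eqref{l_ss_06}. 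Iterating $2N$ times, using at each step the constructed initial data $D_t^j u(0),\dt^j p(0)$ and the compatibility conditions \eqref{l_comp_cond}, I obtain for $j=0,\dots,2N-1$ a strong solution $(D_t^j u,\dt^j p)$ to \eqref{l_lwp_01} and for $j=2N$ a weak solution with $D_t^{2N}u\in\x_T$, $\dt(D_t^{2N}u)\in(\x_T)^*$; uniqueness of $(u,p)$ follows from the uniqueness clause of Theorem \ref{l_strong_solution} and Proposition \ref{l_pressure_decomp}.

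Next, for each fixed $t$ and each $j\le 2N-1$, equation \eqref{l_lwp_01} is a stationary $\a$-Stokes system of the type \eqref{l_linear_elliptic}, so (since $\mathfrak{L}(\eta)$ is small) Proposition \ref{l_stokes_regularity}, via \eqref{l_s_reg_0} and, at the top spatial order, \eqref{l_s_reg_01}, upgrades the $H^2/H^3$-in-space control of \eqref{l_ss_06} to $\dt^j u\in L^2H^{4N-2j+1}$, $\dt^j p\in L^2H^{4N-2j}$, once $F^{1,j}\in L^2H^{4N-2j-1}$ and $F^{3,j}\in L^2H^{4N-2j-1/2}$ are known; these last bounds are supplied by Lemma \ref{l_iteration_estimates_1} in terms of $\mathfrak{F}(F^1,F^3)$, a polynomial $P(1+\mathfrak{F}(\eta))$, and lower-order norms of $u,p$ already controlled, while the initial-data pieces come from \eqref{l_init_data_estimate} and Lemmas \ref{l_iteration_estimates_1}--\ref{l_iteration_estimates_2}. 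The $L^\infty$-in-time bounds then follow from the interpolation Lemma \ref{l_sobolev_infinity} applied to $\dt^j u$ and $\dt^j p$, whose time derivatives involve $D_t^{j+1}u$ (controlled at level $j+1$) plus lower-order remainders handled by Lemma \ref{l_iteration_estimate_3}; the top term $\ns{\dt^{2N+1}u}_{(\x_T)^*}$ is read off from $\dt(D_t^{2N}u)\in(\x_T)^*$ and the dual-space bound in \eqref{l_ie3_03}. Converting every $D_t^j$ into $\dt^j$ by Lemma \ref{l_iteration_estimate_3} and inserting \eqref{l_ss_02} at each level, I arrive at a finite system bounding the level-$j$ pieces of $\mathfrak{K}(u,p)$ by $P(1+\mathfrak{F}_0(\eta)+\mathfrak{F}(\eta))\exp(P(1+\mathfrak{F}(\eta))T)\bigl(\ns{u_0}_{4N}+\mathfrak{F}_0(F^1,F^3)+\mathfrak{F}(F^1,F^3)\bigr)$ plus pieces of strictly lower level plus same-level remainders of low spatial order, such as $\ns{\dt^j u}_{L^2H^2}$ and $\ns{\dt^j p}_{L^2H^1}$ coming through \eqref{l_ie1_03}.

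To close the estimate I would note that $\ns{\dt^j u}_{L^2H^2}\le T\ns{\dt^j u}_{L^\infty H^2}$ and, by \eqref{l_ie3_05}, $\ns{\dt^j u}_{L^\infty H^2}\le 2\ns{D_t^j u}_{L^\infty H^2}+P(1+\mathfrak{F}(\eta))\sum_{\ell<j}\ns{\dt^\ell u}_{L^\infty H^2}$, with $\ns{D_t^j u}_{L^\infty H^2}$ sitting on the left of \eqref{l_ss_02} at level $j$; similarly $\ns{\dt^j p}_{L^2H^1}\le T\ns{\dt^j p}_{L^\infty H^1}$, which is likewise on the left at level $j$. Choosing $\bar T=\bar T(\mathfrak{F}(\eta))$ so small that the coefficient $T\,P(1+\mathfrak{F}(\eta))\exp(P(1+\mathfrak{F}(\eta))T)$ is at most $\tfrac12$ on $[0,\bar T]$ lets me absorb these same-level remainders. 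An induction on $j$ from $0$ to $2N$ — at step $j$ all strictly lower-level quantities are already bounded and the same-level remainder is absorbed — then yields $\mathfrak{K}(u,p)\le P(1+\mathfrak{F}_0(\eta)+\mathfrak{F}(\eta))\exp(P(1+\mathfrak{F}(\eta))T)\bigl(\ns{u_0}_{4N}+\mathfrak{F}_0(F^1,F^3)+\mathfrak{F}(F^1,F^3)\bigr)$, which is \eqref{l_lwp_02}; the continuity-in-time inclusions \eqref{l_lwp_00} then follow either from Lemma \ref{l_sobolev_infinity} or directly from the $C^0$ assertions of Theorem \ref{l_strong_solution} applied at each level.

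The hard part will be the bookkeeping of this coupled hierarchy: the $L^\infty_tH$ bound at level $j$ requires an $L^2_tH$ bound at level $j+1$ through Lemma \ref{l_sobolev_infinity}, whereas the forcing $F^{i,j}$ brings in $L^\infty_tH$ norms of all levels below $j$. One must therefore order the estimates carefully — establishing all the $L^2_t$ bounds ascending in $j$ first, then returning for the $L^\infty_t$ bounds — and exploit the smallness of $\bar T$ to break the residual same-level circularity, all while keeping every dependence on $\mathfrak{F}(\eta)$ confined to a universal polynomial times an exponential in $T$. Verifying at each step that the constructed data meet the compatibility hypothesis \eqref{l_ss_01} of Theorem \ref{l_strong_solution} (via \eqref{l_comp_cond}) and that Lemma \ref{l_iteration_estimates_1} delivers the forcing in the precise spaces the elliptic theory demands is the remaining, purely technical, content, already carried out in the refined forms of these lemmas above.
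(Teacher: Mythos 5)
Your overall strategy is exactly the paper's (which itself defers to Theorem 4.8 of \cite{GT_lwp}): iterate Theorem \ref{l_strong_solution} through the $D_t^j$ hierarchy using the compatibility conditions \eqref{l_comp_cond} and the constructed data with \eqref{l_init_data_estimate}, identify the weak solution produced at each stage with $D_t^j u$ by uniqueness, bootstrap spatial regularity with Proposition \ref{l_stokes_regularity}, control the forcings $F^{1,j},F^{3,j}$ by Lemma \ref{l_iteration_estimates_1} and the discrepancy between $\dt^j$ and $D_t^j$ by Lemma \ref{l_iteration_estimate_3}, and finally choose $\bar T=\bar T(\mathfrak{F}(\eta))$ small to absorb the remainders and obtain \eqref{l_lwp_02}. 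All the ingredients you cite are the ones the paper uses.

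The one step that would fail as you describe it is your proposed ordering for closing the coupled hierarchy, namely ``all the $L^2_t$ bounds ascending in $j$ first, then the $L^\infty_t$ bounds.'' To put $F^{1,j}$ in $L^2H^{4N-2j-1}$ (the case $m=2N-1$ of \eqref{l_ie1_01}) you need, among other things, $\ns{\dt^{j-1}u}_{L^\infty H^{4N-2j}}$ and $\ns{\dt^{j-1}p}_{L^\infty H^{4N-2j-1}}$, i.e.\ $L^\infty$-in-time control of level $j-1$ only two spatial derivatives below its top order. Under your ordering this is not yet available: the base bounds from Theorem \ref{l_strong_solution} only give $L^\infty H^2$/$L^\infty H^1$, and recovering it by Lemma \ref{l_sobolev_infinity} would require an $L^2$ bound on $\dt^{j}u$ at order $4N-2j-3$, which for $j<2N-3$ exceeds the base $H^3$ regularity and is precisely what you are in the middle of proving — a circularity. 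This is why Lemma \ref{l_iteration_estimates_1} is indexed by the pair $(m,j)$ with $j\le m$: the correct bookkeeping (as in Theorem 4.8 of \cite{GT_lwp}) is a single finite induction on the regularity parameter $m=1,\dotsc,2N-1$, advancing at step $m$, for all $j\le m$ simultaneously, both the $L^2H^{2m-2j+3}$ and the $L^\infty H^{2m-2j+2}$ bounds for $\dt^j u$ together with the corresponding pressure bounds, so that the right sides of \eqref{l_ie1_01}--\eqref{l_ie1_03} only invoke quantities already secured at step $m-1$. With that reorganization, your elliptic bootstrap, your use of \eqref{l_ie3_01}--\eqref{l_ie3_03}, and your small-$\bar T$ absorption of the same-level low-order remainders close the estimate exactly as in the paper.
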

\begin{proof}
The proof is same to that of Theorem 4.8 of \cite{GT_lwp}.  We will only provide a brief sketch of the idea of the proof.  For full details we refer to \cite{GT_lwp}. The estimate \eqref{l_init_data_estimate} gives control of the initial data, the estimates \eqref{l_ie1_01}--\eqref{l_ie1_03} in Lemma \ref{l_iteration_estimates_1} provide control of the forcing terms $F^{1,j},F^{3,j}$ in terms of $F^1,F^3,\eta,u,p$, and the estimates \eqref{l_ie3_01}--\eqref{l_ie3_03} in Lemma \ref{l_iteration_estimate_3} provide control of the difference between $\dt v$ and $D_t v$. Note that the smallness of $\mathfrak{L}(\eta) $ is sufficient to guarantee the applications of the lemmas and propositions used in the proof of Theorem 4.8 of \cite{GT_lwp} so that the proof of Theorem 4.8 of \cite{GT_lwp} works in our present case. These and the  $j^{th}$ compatibility condition \eqref{l_comp_cond} for $j=0,\dotsc,2N-1$ then allow us to iteratively apply Theorem \ref{l_strong_solution} and Proposition \ref{l_stokes_regularity} to find that $(D_t^ju,\partial_t^jp)$ solve \eqref{l_lwp_01} and satisfy the estimates \eqref{l_lwp_02}.
\end{proof}

\subsection{The linear regularized surface $\kappa$-problem }
In this subsection
we will consider the following regularized surface problem with the artificial viscosity term and the compensator:
\begin{equation}\label{l_k transport_equation}
\begin{cases}
\dt \eta  -\kappa \Delta_\ast \eta  =  \kappa\Psi+ F^4\ \text{ in } \Sigma
\\\eta\mid_{t=0}=\eta_0,
\end{cases}
\end{equation}
where $\kappa>0$ is the artificial viscosity parameter and the function $\Psi$ is the so-called compensator that will be determined below. We define
\begin{equation}\label{l_Kfrak_u2n_def1}
\begin{split}
 &\mathfrak{F}(F^4): = \sum_{j=0}^{2N} \ns{\dt^j F^4}_{L^2 H^{4N-2j}}+\sum_{j=0}^{2N-1} \ns{\dt^j F^4}_{L^\infty H^{4N-2j}},
\\&\mathfrak{F}_0(F^4): = \sum_{j=0}^{2N-1} \ns{\dt^j F^4(0)}_{ {4N-2j-1}}
 .
 \end{split}
\end{equation}
Again, Lemma \ref{l_sobolev_infinity} implies that if $ \mathfrak{F}(F^4)<\infty$, then $\dt^j F^4  \in C ([0,T];H^{4N-2j-1}(\Sigma))$ for $j=0,\dots,2N-1$.
To state our result on the solutions to the problem \eqref{l_k transport_equation}, we define
\begin{equation}\label{l_Ketafrak_def}
 \mathfrak{K} (\eta):=\sum_{j=0}^{2N+1} \ns{\dt^j \eta}_{L^2 H^{4N-2j+2}}+\sum_{j=0}^{2N}\ns{\dt^j \eta}_{L^\infty H^{4N-2j +1}} .
\end{equation}

Now we turn to the construction of the compensator function $\Psi$. In order not to break the compatibility conditions required for the initial data $(u_0,\eta_0)$ of the nonlinear system \eqref{geometric} (i.e. \eqref{kgeometric} with $\kappa=0$), we need to require that the initial conditions for \eqref{l_k transport_equation} are same as the one with $\kappa=0$. This is the reason that we introduce
$\Psi$ here. Formally, we use the equation \eqref{l_k transport_equation} with $k=0$ to construct the initial data $\partial_t^j\eta(0)$, and then choose the appropriate function $\Psi$ so that the equation \eqref{l_k transport_equation} with $k>0$ takes the same initial conditions as \eqref{l_k transport_equation} with $k=0$.
We construct $\Psi$ as follows.  We assume that  $ \eta_0\in H^{4N+1}(\Sigma)$, and we construct the initial data $\partial_t^j\eta(0)$   for $j=1,\dots,2N$ as $\partial_t^{j }\eta(0):= \partial_t^{j-1}F^4(0)$.  Now we define $\eta^0$ to be the extension produced by Lemma \ref{2_sobolev_extension} that achieves the initial data $\dt^j\eta(0)$ for $j=0,\dots,2N$. If we define $\Psi=-\Delta_\ast\eta^0$, then $\dt^j\Psi(0)=-\Delta_\ast\dt^j\eta(0)$. This guarantees that we essentially add nothing in \eqref{l_k transport_equation} at the initial time $t = 0$.   Moreover, we have
\begin{equation}\label{compensator}
\begin{split}
 \sum_{j=0}^{2N} \ns{\dt^j \Psi}_{L^2 H^{4N-2j  }}   +  \sum_{j=0}^{2N}\ns{\dt^j \Psi}_{L^\infty H^{4N-2j -1}}
 &\ls \sum_{j=0}^{2N}\ns{ \dt^j\eta(0) }_{ {4N-2j +1}}
  \\&= \ns{\eta_0}_{4N+1}+\mathfrak{F}_0(F^4).
  \end{split}
\end{equation}
Thanks to this compensator, the constructed data $\dt^j\eta(0)$ for $j=1,\dots,2N$ can serve as the initial data for \eqref{l_k transport_equation} for all $\kappa$.

\begin{thm}\label{l_transport_theorem}
Suppose that $\eta_0\in H^{4N+1}(\Sigma)$ and that $\dt^j\eta(0)$ for $j=1,\dots,2N$ are determined as above. Then the problem  \eqref{l_k transport_equation} admits a unique solution $\eta$ that achieves the initial data $\dt^j\eta(0)$ for $j=0,\dots,2N$ and satisfies
\begin{equation}\label{K eta es}
 \mathfrak{K} (\eta)\le C_\kappa\left( \ns{\eta_0}_{4N+1}+\mathfrak{F}_0(F^4)+  \mathfrak{F}(F^4)\right) .
 \end{equation}
\end{thm}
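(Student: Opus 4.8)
The problem \eqref{l_k transport_equation} is a linear, constant-coefficient parabolic equation on $\Sigma$, so the plan is to solve it by standard heat-equation theory and then run parabolic energy estimates, keeping careful track of the dependence of all constants on $\kappa$. First I would establish existence and uniqueness of a solution with $\eta \in C^0([0,T];H^{4N+1}(\Sigma))$. Since $\kappa\Delta_\ast$ generates an analytic semigroup $\{e^{\kappa t\Delta_\ast}\}_{t\ge0}$ on each $H^s(\Sigma)$ (explicitly a Fourier multiplier, both in the periodic and non-periodic cases), Duhamel's formula
\[
\eta(t) = e^{\kappa t\Delta_\ast}\eta_0 + \int_0^t e^{\kappa(t-s)\Delta_\ast}\bigl(\kappa\Psi(s)+F^4(s)\bigr)\,ds
\]
defines a solution once $\eta_0\in H^{4N+1}(\Sigma)$ and $\kappa\Psi+F^4\in L^2([0,T];H^{4N}(\Sigma))$, the latter following from $\mathfrak{F}(F^4)<\infty$ and the compensator bound \eqref{compensator}; uniqueness is immediate from linearity and the $L^2$ energy identity. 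Equivalently one could build $\eta$ by a Galerkin/frequency-truncation scheme adapted to $\Delta_\ast$, in the spirit of the Stokes construction above.

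Next I would treat the time derivatives and verify that $\eta$ achieves the constructed data $\dt^j\eta(0)$, $j=0,\dots,2N$. Differentiating \eqref{l_k transport_equation} $j$ times, $w_j:=\dt^j\eta$ solves the same heat equation with source $g_j:=\dt^j(\kappa\Psi+F^4)$; evaluating the differentiated equation at $t=0$ and using $\dt^j\Psi(0)=-\Delta_\ast\dt^j\eta(0)$ (from $\Psi=-\Delta_\ast\eta^0$, with $\eta^0$ the extension of Lemma \ref{2_sobolev_extension}) collapses the forced relation to $\dt^{j+1}\eta(0)=\dt^jF^4(0)$, which is exactly how the data was defined. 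Hence I would define each $w_j$ by the Duhamel formula with source $g_j$ and initial value $\dt^j\eta(0)$, and a bootstrap argument (integrating $w_j$ in time and invoking uniqueness) identifies $w_j=\dt^j\eta$ and shows $\dt^j\eta$ solves the $j$-times-differentiated equation with the asserted initial data; the required time continuity is furnished by Lemma \ref{l_sobolev_infinity}.

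The heart of the argument is the energy estimate for each $w_j=\dt^j\eta$. Applying $(1-\Delta_\ast)^{(4N-2j+1)/2}$, testing against $(1-\Delta_\ast)^{(4N-2j+1)/2}w_j$, using $\norm{\nab_\ast w_j}_{4N-2j+1}^2\gtrsim\norm{w_j}_{4N-2j+2}^2-\norm{w_j}_{4N-2j+1}^2$ and Cauchy--Schwarz on $g_j$ gives
\[
\frac{d}{dt}\ns{w_j}_{4N-2j+1}+\kappa\ns{w_j}_{4N-2j+2}\ls\kappa\ns{w_j}_{4N-2j+1}+\frac1\kappa\ns{g_j}_{4N-2j},
\]
and Gronwall yields
\[
\sup_{0\le t\le T}\ns{\dt^j\eta}_{4N-2j+1}+\kappa\int_0^T\ns{\dt^j\eta}_{4N-2j+2}\,dt\ls e^{C\kappa T}\Bigl(\ns{\dt^j\eta(0)}_{4N-2j+1}+\frac1\kappa\int_0^T\ns{g_j}_{4N-2j}\,dt\Bigr).
\]
Here $\ns{\dt^j\eta(0)}_{4N-2j+1}$ is bounded by $\ns{\eta_0}_{4N+1}$ when $j=0$ and by $\mathfrak{F}_0(F^4)$ when $j\ge1$, since $\dt^j\eta(0)=\dt^{j-1}F^4(0)$ and $4N-2j+1=4N-2(j-1)-1$, while $\int_0^T\ns{g_j}_{4N-2j}\,dt\ls\kappa^2\ns{\dt^j\Psi}_{L^2H^{4N-2j}}+\ns{\dt^jF^4}_{L^2H^{4N-2j}}$ is controlled by \eqref{compensator} and $\mathfrak{F}(F^4)$. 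Summing over $j=0,\dots,2N$ and reading $\dt^{2N+1}\eta=\kappa\Delta_\ast\dt^{2N}\eta+\kappa\dt^{2N}\Psi+\dt^{2N}F^4$ off the equation to bound $\ns{\dt^{2N+1}\eta}_{L^2H^0}$ produces \eqref{K eta es}, with a constant $C_\kappa$ that degenerates as $\kappa\to0$ (through both the $1/\kappa$ factor and $e^{C\kappa T}$). I expect no genuine analytic obstacle here — the equation is the flat heat equation with smooth forcing — so the only real care is bookkeeping: matching the Sobolev indices so that each $\dt^j\eta$ receives exactly the two-derivative parabolic gain recorded in \eqref{l_Ketafrak_def}, and keeping the $\kappa$-dependence explicit throughout.
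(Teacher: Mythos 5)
Your argument is correct, and its skeleton (parabolic energy estimate with the $1/\kappa$ from Cauchy's inequality, the compensator bound \eqref{compensator}, and the identity $\dt^{j}\eta(0)=\dt^{j-1}F^4(0)$ matching the Sobolev indices of $\mathfrak{F}_0(F^4)$) is the same as the paper's, but you obtain the time-differentiated bounds by a different mechanism. The paper performs a \emph{single} energy estimate, on $\eta$ itself at the level $H^{4N+1}$ with dissipation $\kappa\ns{\eta}_{4N+2}$, and then gets every bound $\ns{\dt^j\eta}_{L^2H^{4N-2j+2}}$, $j=1,\dots,2N+1$, purely algebraically from the equation $\dt^j\eta=\kappa\Delta_\ast\dt^{j-1}\eta+\kappa\dt^{j-1}\Psi+\dt^{j-1}F^4$ (each time derivative traded for two spatial derivatives) together with a simple induction, finally recovering the $L^\infty H^{4N-2j+1}$ norms via the interpolation Lemma \ref{l_sobolev_infinity}. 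You instead run a separate Gronwall-type energy estimate on each $w_j=\dt^j\eta$, $j=0,\dots,2N$, and only read the top derivative $j=2N+1$ off the equation; this buys you the $L^\infty$-in-time bounds directly and avoids the induction and Lemma \ref{l_sobolev_infinity}, at the cost of having to justify that each $\dt^j\eta$ solves the differentiated problem with the constructed data (your Duhamel-plus-uniqueness bootstrap, using $\dt^j\Psi(0)=-\Delta_\ast\dt^j\eta(0)$, does this correctly) and of running $2N+1$ estimates instead of one. One small bookkeeping point: your Gronwall factor $e^{C\kappa T}$ makes the constant depend on $T$ as well as $\kappa$; this is harmless here since the theorem is only invoked on intervals with $T\le 1$ (so it can be absorbed into $C_\kappa$), but it is worth stating, since the paper's version of the estimate avoids the Gronwall factor by keeping the dissipation in the form $\kappa\ns{\nab_\ast\eta}_{4N+1}$.
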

\begin{proof}
The solvability of \eqref{l_k transport_equation} is classical, and we only focus on the derivation of the estimate \eqref{K eta es}.
The standard energy estimates on \eqref{l_k transport_equation} together with Cauchy's inequality yields
\begin{equation}
 \frac{d}{dt}\ns{  \eta}_{ 4N   +1 }+\kappa  \ns{\nab_\ast \eta}_{ 4N +1}
 \ls     \kappa\ns{ \Psi}_{ {4N }} +  \frac{1}{\kappa }\ns{   F^4}_{  {4N  }}  .
\end{equation}
Integrating the above in time, by \eqref{compensator} we obtain
\begin{equation}\label{eta es 0}
\begin{split}
 \ns{  \eta}_{L^\infty H^{ 4N +1} }+\kappa  \ns{  \eta}_{ L^2 H^{4N  +2 }}
& \ls  \ns{ \eta(0)}_{ { 4N +1} } +\kappa\ns{  \Psi}_{ L^2H^{4N   }} +  \frac{1}{\kappa }\ns{  F^4}_{  L^2H^{4N   }}
\\&\ls \ns{\eta_0}_{4N+1}+\mathfrak{F}_0(F^4)+\frac{1}{\kappa }\mathfrak{F}(F^4).
 \end{split}
 \end{equation}
On the other hand, we directly use the equation \eqref{l_k transport_equation} together with \eqref{compensator} to have that for $j=1,\dots,2N+1$
\begin{equation}\label{eta es12 2}
\begin{split}
   \ns{ \dt^j \eta}_{ L^2 H^{4N -2j+2 }}
 &\le     \kappa^2\ns{ \dt^{j-1}  \eta}_{ L^2 H^{4N -2j+4 }} +  \kappa^2\ns{\dt^{j-1} \Psi}_{ L^2 H^{4N -2j+2 }}+   \ns{\dt^{j-1} F^4}_{ L^2 H^{4N -2j+2 }}
 \\&\ls \kappa^2\ns{ \dt^{j-1} \eta}_{ L^2 H^{4N -2(j-1)+2}}+\ns{\eta_0}_{4N+1}+\mathfrak{F}_0(F^4)+ \mathfrak{F}(F^4) .
 \end{split}
\end{equation}
A simple induction on \eqref{eta es12 2}, together with \eqref{eta
es 0}, yields
\begin{equation}\label{eta es 2}
\begin{split}
  \sum_{j=1}^{2N+1}\ns{\dt^j \eta}_{L^2 H^{ 4N -2j +2} }
 & \ls\kappa^2\ns{  \eta}_{ L^2 H^{4N +2 }}+\ns{\eta_0}_{4N+1}+\mathfrak{F}_0(F^4)+ \mathfrak{F}(F^4)
 \\& \ls\ns{\eta_0}_{4N+1}+\mathfrak{F}_0(F^4)+ \mathfrak{F}(F^4).
\end{split}
\end{equation}
This and \eqref{eta es 0} yield \eqref{K eta es} by further applying Lemma \ref{l_sobolev_infinity} to bound the sum of the $L^\infty H^k$ norms in the definition of $\mathfrak{K} (\eta)$ by $\ns{\eta_0}_{4N+1}+\mathfrak{F}_0(F^4)$ plus the sum of the $L^2 H^{k}$ norms.
\end{proof}

\subsection{Preliminaries for the nonlinear $\kappa$-problem}

We will eventually use our linear theories for the problems \eqref{l_linear_forced} and \eqref{l_k transport_equation}  in order to solve the nonlinear problem \eqref{kgeometric}.  To do so, we define forcing terms $F^1,F^3$ to be used in the linear theory of \eqref{l_linear_forced} and $F^4$ to be used in the linear theory of \eqref{l_k transport_equation} that match the terms in \eqref{kgeometric}.  That is,  given $u,\eta$, we define
\begin{equation}\label{l_F_forcing_def}
 F^1(u,\eta)= \dt \bar{\eta} \tilde{b} K \p_3 u - u \cdot \naba u,\
  F^3( \eta)  =  \eta \n-\sigma H \n, \text{ and }F^4( u,\eta)  = u\cdot\n .
\end{equation}
where $\a,\n,K,H$ are determined as usual by $\eta$.

\subsubsection{Data estimates}\label{l_data_section}

In the construction of the initial data performed in the previous two subsections, when constructing $\dt^j u(0)$ for $j=1,\dots,2N$ it was assumed that $\dt^j \eta(0)$ for $j=0,\dotsc,2N$ and $\dt^j F^1(0)$, $\dt^j F^3(0)$ for $j=0,\dotsc,2N-1$  were all known; when constructing  $\dt^j \eta(0)$ for $j=1,\dots,2N$ it was assumed that $\dt^j F^4(0)$ for $j=0,\dotsc,2N-1$ were known. Since for the full nonlinear problem the functions $u,p,\eta$ are unknown and their evolutions are coupled to each other through $F^1(u,\eta)$, $F^3(\eta)$ and $F^4(u,\eta)$, we must revise the construction of the data to include this coupling, assuming only that $u_0$ and $\eta_0$ are given.  This will also reveal the compatibility conditions that must be satisfied by $u_0$ and $\eta_0$ in order to solve the nonlinear problem \eqref{kgeometric} (and \eqref{geometric}). To this end we define
\begin{equation}\label{l_EF0_def}
 \tilde{\mathcal{E}}_0 := \ns{u_0}_{4N} + \ns{\eta_0}_{4N+1}, \text{ and }  \mathfrak{L}_0 :=  \ns{\eta_0}_{4N-1/2}.
\end{equation}
For our estimates we must also introduce
\begin{equation}\label{e0up}
\mathfrak{E}_0(u,p) : = \sum_{j=0}^{2N} \ns{\dt^j u(0)}_{4N-2j} + \sum_{j=0}^{2N-1} \ns{\dt^j p(0)}_{4N-2j-1}
\end{equation}
and
\begin{equation}
\mathfrak{E}_0(\eta):= \sum_{j=0}^{2N}\ns{\dt^j \eta(0)}_{ 4N-2j +1 }.
\end{equation}

We will also need a more exact enumeration of the terms in
$\mathfrak{E}_0(u,p)$, $\mathfrak{E}_0(\eta)$,
$\mathfrak{F}_0(F^1,F^3)$, and $\mathfrak{F}_0(F^4)$ (as defined in
\eqref{l_Ffrak_def} and \eqref{l_Kfrak_u2n_def1}, respectively).
For $j=0,\dotsc,2N-1$ we define
\begin{equation}\label{l_F0_prec_def}
\mathfrak{F}_0^j(F^1(u,\eta),F^3(\eta)) := \sum_{\ell=0}^j \ns{\dt^\ell F^1(0)}_{4N-2\ell -2} + \ns{\dt^\ell F^3(0) }_{4N-2\ell -3/2},
 \end{equation}
 \begin{equation}
\mathfrak{F}^j_0(F^4(u,\eta)): =  \ns{\dt^j F^4(0)}_{ {4N-2j-1}},
 \end{equation}
and for $j=0,\dotsc,2N $ we define
\begin{equation}\label{l_E0_prec_eta_def}
 \mathfrak{E}_0^j(\eta) :=   \sum_{\ell=0}^j \ns{\dt^\ell \eta(0)}_{4N-2\ell +1}
\end{equation}
and
\begin{equation}\label{l_E0_prec_up_def}
 \mathfrak{E}_0^j(u,p) := \sum_{\ell=0}^j \ns{\dt^\ell u(0)}_{4N-2\ell} + \sum_{\ell=0}^{j-1} \ns{\dt^\ell p(0)}_{4N-2\ell-1}.
\end{equation}
Here in \eqref{l_E0_prec_up_def} for $j=0$ we mean $\mathfrak{E}_0^0(u,p) := \ns{u_0}_{4N}$.

We record the estimates of these quantities that are useful in dealing with the initial data.

\begin{lem}\label{l_full_iteration_estimates}

For $j=0,\dotsc,2N-1$, it holds that
\begin{equation}\label{l_fie_01}
 \mathfrak{F}_0^j(F^1(u,\eta),F^3(\eta)) \le P_j( \mathfrak{E}_0^{j+1}(\eta),\mathfrak{E}_0^j(u,p) ).
\end{equation}

For $j=1,\dotsc,2N-1$ let $F^{1,j}(0)$ and $F^{3,j}(0)$ be determined by \eqref{l_Fj_def} and \eqref{l_F_forcing_def}, using $\dt^\ell \eta(0)$, $\dt^\ell u(0)$, and $\dt^\ell p(0)$ for appropriate values of $\ell$.  Then
\begin{equation}\label{l_fie_02}
 \ns{F^{1,j}(0)}_{4N-2j-2} + \ns{F^{3,j}(0)}_{4N-2j-3/2} \le P_j( \mathfrak{E}_0^{j+1}(\eta),\mathfrak{E}_0^j(u,p) ).
\end{equation}

For $j=0,\dotsc,2N$ it holds that
\begin{equation}\label{l_fie_03}
 \ns{\dt^j u(0) - D_t^j u(0) }_{4N-2j} \le P_j( \mathfrak{E}_0^{j}(\eta),\mathfrak{E}_0^j(u,p) ).
\end{equation}

For $j=0,\dotsc,2N-1$ it holds that
\begin{equation}\label{l_fie_04}
\mathfrak{F}_0^j(F^4(u,\eta))\le P_j( \mathfrak{E}_0^{j}(\eta),\mathfrak{E}_0^j(u,p) ).
\end{equation}
Here $P_j(\cdot,\cdot)$ a polynomial so that $P_j(0,0)=0$.
\end{lem}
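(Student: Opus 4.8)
The plan is to prove all four estimates by the same routine mechanism already used in the proofs of Lemmas \ref{l_iteration_estimates_1} and \ref{l_iteration_estimate_3}: expand each quantity via the Leibniz rule together with the definitions of $\mathcal{A}$, $\n$, $J$, $K$ from \eqref{ABJ_def}, of $M$ from \eqref{l_M_def}, and of $R=\dt M M^{-1}$ and $D_t$ from \eqref{l_Dt_def}; rewrite it as a finite sum of terms, each a product of (powers of) derivatives of $\bar\eta=\mathcal{P}\eta$ or of $\eta$ with exactly one factor that is a derivative of $u$ or $p$; and estimate each product by the Sobolev product inequality of Lemma \ref{i_sobolev_product_1} (using the algebra property of $H^s(\Sigma)$, $s>1$, for the few terms sitting at critical regularity), by the usual Sobolev and trace embeddings, and by Lemma \ref{p_poisson} to convert norms of $\bar\eta$ on $\Omega$ into norms of $\eta$ on $\Sigma$. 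Since no geometric coefficient involves $\eta$ at a regularity beyond what $\mathfrak{E}_0$ controls, the only real content is the bookkeeping of derivative counts, so that the right-hand sides involve only the ``precise'' quantities $\mathfrak{E}_0^{j}$, $\mathfrak{E}_0^{j+1}$ rather than the full data.

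For \eqref{l_fie_01} I would apply $\dt^\ell$, $0\le\ell\le j$, to $F^1(u,\eta)=\dt\bar\eta\,\tilde b K\p_3 u-u\cdot\naba u$ and to $F^3(\eta)=\eta\n-\sigma H\n$ from \eqref{l_F_forcing_def} and track top-order terms. In $\dt^\ell F^1$ the most dangerous factors involving $\bar\eta$ are $\dt^{\ell}\nab\bar\eta$ and $\dt^{\ell+1}\bar\eta$ (the latter arising when all $\ell$ time derivatives land on the factor $\dt\bar\eta$); by Lemma \ref{p_poisson} and the trace embedding these are controlled in $H^{4N-2\ell-2}(\Omega)$ by $\ns{\dt^\ell\eta(0)}_{4N-2\ell-1/2}$ and $\ns{\dt^{\ell+1}\eta(0)}_{4N-2\ell-3/2}$, which are dominated by the $H^{4N-2\ell+1}$ and $H^{4N-2(\ell+1)+1}$ summands of $\mathfrak{E}_0^{j+1}(\eta)$ (this ``$\ell+1$'' is exactly why the $\eta$-index is $j+1$ here). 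In $\dt^\ell F^3$ the top $\eta$-factor is $\dt^\ell\nab_\ast^2\eta$, controlled in $H^{4N-2\ell-3/2}(\Sigma)$ by $\ns{\dt^\ell\eta(0)}_{4N-2\ell+1/2}\le\mathfrak{E}_0^j(\eta)$; all other $\eta$-factors are of strictly lower order, and the velocity derivatives that occur are $\dt^{\ell'}u$ with $\ell'\le\ell\le j$, absorbed by $\mathfrak{E}_0^j(u,p)$ via \eqref{l_E0_prec_up_def}. Collecting via the product estimate gives \eqref{l_fie_01}. The bound \eqref{l_fie_04} is the same argument, only simpler, for $F^4(u,\eta)=u\cdot\n$: since $\n=(-\p_1\eta,-\p_2\eta,1)$ carries $\eta$ but not $\dt\bar\eta$, the term $\dt^j F^4(0)=\sum_\ell\binom{j}{\ell}\dt^\ell u(0)\cdot\dt^{j-\ell}\n(0)$ involves $\eta$ only through $\dt^{\ell''}\nab_\ast\eta(0)$ with $\ell''\le j$, controlled in $H^{4N-2j-1}(\Sigma)$ by $\ns{\dt^{\ell''}\eta(0)}_{4N-2\ell''}\le\mathfrak{E}_0^j(\eta)$, so no ``$j+1$'' appears, and the $u$-traces again lie in $\mathfrak{E}_0^j(u,p)$.

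For \eqref{l_fie_02} I would expand $F^{1,j}(0)$ and $F^{3,j}(0)$ using the recursion \eqref{l_Fj_def}, which exhibits each as a finite sum of terms involving either $\dt^\ell F^1(0)$, $\dt^\ell F^3(0)$ with $\ell\le j$, or $\dt^\ell u(0)$, $\dt^\ell p(0)$ with $\ell\le j-1$, multiplied by products of derivatives of $\bar\eta$ of total time-order at most $j+1$ (coming from the intrinsic $\dt\bar\eta$ factor in $F^1$ and from the operators $R$, $\mathfrak{G}^1$, $\mathfrak{G}^3$ of \eqref{l_Dt_def}, \eqref{l_G_def}). This is precisely the expansion underlying estimate \eqref{l_ie1_04} of Lemma \ref{l_iteration_estimates_1}, only now tracked locally in $j$: rerunning that computation with the indices restricted as above yields $\ns{F^{1,j}(0)}_{4N-2j-2}+\ns{F^{3,j}(0)}_{4N-2j-3/2}\le P_j\big(\mathfrak{E}_0^{j+1}(\eta)\big)\big(\mathfrak{F}_0^j(F^1(u,\eta),F^3(\eta))+\mathfrak{E}_0^j(u,p)\big)$, and one then invokes \eqref{l_fie_01} to replace $\mathfrak{F}_0^j(F^1(u,\eta),F^3(\eta))$ by $P_j(\mathfrak{E}_0^{j+1}(\eta),\mathfrak{E}_0^j(u,p))$, giving \eqref{l_fie_02}. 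Finally, \eqref{l_fie_03} follows directly from estimate \eqref{l_ie3_04} of Lemma \ref{l_iteration_estimate_3} applied with $v=u$: it gives $\ns{\dt^j u(0)-D_t^j u(0)}_{4N-2j}\le P\big(1+\mathfrak{F}_0(\eta)\big)\sum_{\ell=0}^{j-1}\ns{\dt^\ell u(0)}_{4N-2\ell}$, and since $D_t^j u$ is built only from $R=\dt M M^{-1}$ — whose $m$-th time derivative reaches $\bar\eta$ only up to $\dt^{m+1}\bar\eta$ — the prefactor in fact depends only on time derivatives of $\eta$ of order $\le j$, hence on $\mathfrak{E}_0^j(\eta)$, while $\sum_{\ell<j}\ns{\dt^\ell u(0)}_{4N-2\ell}\le\mathfrak{E}_0^j(u,p)$ by \eqref{l_E0_prec_up_def}.

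The only point requiring real care — and essentially the sole non-mechanical step — is the bookkeeping of the half-derivative losses produced by Lemma \ref{p_poisson} and the trace embeddings: for every term one must verify that the $H^s(\Sigma)$-norm of the relevant $\dt^\ell\eta(0)$ that occurs has $s\le 4N-2\ell+1$ (respectively $s\le 4N-2(j+1)+1$ for the single $\dt^{j+1}\eta(0)$-term created by the $\dt\bar\eta$ factor in $F^1$), so that it is dominated by the matching summand of $\mathfrak{E}_0^{j+1}(\eta)$ or $\mathfrak{E}_0^j(\eta)$. This closes because the target regularities for the $t=0$ data ($H^{4N-2j-2}$, $H^{4N-2j-3/2}$, $H^{4N-2j-1}$) are a full order below those handled in Lemma \ref{l_iteration_estimates_1}, leaving exactly the needed slack. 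Everything else is routine multilinear estimation, which I would not carry out in detail but rather defer to the analogous computations already performed in the proofs of Lemmas \ref{l_iteration_estimates_1} and \ref{l_iteration_estimate_3}.
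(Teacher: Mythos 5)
Your proposal is correct and follows essentially the same route as the paper, whose entire proof is to argue as in Lemma \ref{l_iteration_estimates_1} and omit the details; you simply carry out that Leibniz-expansion, product-estimate, and Poisson-extension bookkeeping explicitly, with the correct tracking of why the $\eta$-index is $j+1$ in \eqref{l_fie_01}--\eqref{l_fie_02} (the $\dt\bar{\eta}$ factor in $F^1$ and the $\dt R$ terms in $\mathfrak{G}^1$) and only $j$ in \eqref{l_fie_03}--\eqref{l_fie_04}. The minor imprecision in the exact Sobolev indices you quote for $\dt^\ell\nab\bar{\eta}$ and $\dt^{\ell+1}\bar{\eta}$ is harmless, since your stated norms are still dominated by the corresponding summands of $\mathfrak{E}_0^{j+1}(\eta)$.
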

\begin{proof}
These bounds may be derived by arguing as in the proof of Lemma \ref{l_iteration_estimates_1}.  As such, we again omit the details.
\end{proof}

This lemma allows us to modify the construction presented in the previous two subsections to construct all of the initial data $\dt^j u(0)$, $\dt^j \eta(0)$ for $j=0,\dotsc,2N$ and $\dt^j p(0)$ for $j=0,\dotsc,2N-1$.  Along the way we will also derive estimates of $\mathfrak{E}_0(u,p) + \mathfrak{E}_0(\eta)$ in terms of $\tilde{\mathcal{E}}_0$ and determine the compatibility conditions for $u_0,\eta_0$ necessary for existence of solutions to \eqref{kgeometric} (and \eqref{geometric}).

We assume that $u_0, \eta_0$ satisfy $\tilde{\mathcal{E}}_0 <\infty$ and that $ \mathfrak{L}_0\le 1$ is sufficiently small for the hypothesis of Proposition  \ref{l_a_poisson_regularity} to hold when $k=4N$.  As before, we will iteratively construct the initial data, but this time we will use the estimates in Lemma \ref{l_full_iteration_estimates}.  Define $\dt \eta(0): =F^4(0)=u_0 \cdot \n_0$, where $u_0 \in H^{4N-1/2}(\Sigma)$ when traced onto $\Sigma$.  Then
$\ns{\dt \eta(0)}_{4N-1/2} \ls \tilde{\mathcal{E}}_0(1+\tilde{\mathcal{E}}_0)$, and hence in particular that $\mathfrak{E}_0^0(u,p) + \mathfrak{E}_0^1(\eta) \le P (\tilde{\mathcal{E}}_0)$.  We may use this bound in \eqref{l_fie_01} with $j=0$ to find
\begin{equation}\label{l_dc_0}
\mathfrak{F}_0^0(F^1(u,\eta),F^3( \eta))  \le P_0(\mathfrak{E}_0^0(u,p) , \mathfrak{E}_0^1(\eta) )\le  P(\tilde{\mathcal{E}}_0).
\end{equation}

Suppose now that $j \in [0,2N-2]$ and that $\dt^\ell u(0)$ are known for $\ell=0,\dotsc,j$, $\dt^\ell \eta(0)$ are known for $\ell = 0,\dotsc,j+1$, and $\dt^\ell p(0)$ are known for $\ell = 0,\dotsc,j-1$ (with the understanding that nothing is known of $p(0)$ when $j=0$), and that
\begin{equation}\label{l_dc_1}
 \mathfrak{E}_0^{j}(u,p) + \mathfrak{E}_0^{j+1}(\eta) + \mathfrak{F}_0^{j}(F^1(u,\eta),F^3(u,\eta))  \le  P (\tilde{\mathcal{E}}_0).
\end{equation}
According to the estimates \eqref{l_fie_02} and \eqref{l_fie_03}, we then know that
\begin{equation}\label{l_dc_2}
 \ns{F^{1,j}(0)}_{4N-2j-2} + \ns{F^{3,j}(0)}_{4N-2j-3/2} + \ns{D_t^j u(0)}_{4N-2j} \le   P (\tilde{\mathcal{E}}_0).
\end{equation}
By virtue of estimates \eqref{l_ie2_01} and \eqref{l_dc_1}, we know that
\begin{equation}\label{l_dc_2_2}
\begin{split}
& \ns{\mathfrak{f}^1(F^{1,j}(0),D_t^j u(0))}_{4N-2j-3} + \ns{\mathfrak{f}^2(F^{3,j}(0),D_t^j u(0))}_{4N-2j-3/2} \\
&\quad+ \ns{\mathfrak{f}^3(F^{1,j}(0),D_t^j u(0))}_{4N-2j-5/2} \le  P (\tilde{\mathcal{E}}_0).
\end{split}
\end{equation}
This allows us to define $\dt^j p (0)$ as the solution to \eqref{l_linear_elliptic_p} with  $f^1, f^2, f^3$ given by $\mathfrak{f}^1$, $\mathfrak{f}^2$, $\mathfrak{f}^3$.  Then Proposition \ref{l_a_poisson_regularity} with $k = 4N$ and $r=4N-2j-1 < k$ implies that
\begin{equation}\label{l_dc_3}
 \ns{\dt^j p(0)}_{4N-2j-1} \le    P (\tilde{\mathcal{E}}_0).
\end{equation}
Now the estimates \eqref{l_ie2_02}, \eqref{l_dc_1}, and \eqref{l_dc_2} allow us to define
\begin{equation}
D_t^{j+1} u(0) := \mathfrak{G}^0(F^{1,j}(0), D_t^j u(0),\dt^j p(0)) \in H^{4N-2j-2},
\end{equation}
and owing to \eqref{l_fie_03}, we have the estimate
\begin{equation}\label{l_dc_4}
\ns{\dt^{j+1} u(0)}_{4N-2(j+1)} \le  P (\tilde{\mathcal{E}}_0).
\end{equation}
Now we define
\begin{equation}
 \partial_t^{j+2}\eta(0):=\dt^{j+1}F^4(0)=  \sum_{\ell=0}^{j+1} C_{j+1}^\ell \dt^{\ell} u(0)\cdot \dt^{j+1-\ell} \n(0) .
\end{equation}
The estimate \eqref{l_fie_04} together with \eqref{l_dc_1} and \eqref{l_dc_4} then imply that
\begin{equation}\label{l_dc_5}
\ns{\dt^{j+2} \eta(0)}_{4N-2(j+2)+1} \le P(\tilde{\mathcal{E}}_0).
\end{equation}
We may combine \eqref{l_dc_1} with \eqref{l_dc_3}--\eqref{l_dc_5} to deduce that
\begin{equation}\label{l_dc_6}
  \mathfrak{E}_0^{j+1}(u,p) + \mathfrak{E}_0^{j+2}(\eta)   \le P(\tilde{\mathcal{E}}_0),
\end{equation}
but then \eqref{l_fie_01} implies that $\mathfrak{F}_0^{j+1}(F^1(u,\eta),F^3(u,\eta))  \le P(\tilde{\mathcal{E}}_0)$ as well, and we deduce that the bound \eqref{l_dc_1} also holds with $j$ replaced by $j+1$.

Using the above analysis, we may iterate from $j=0,\dotsc,2N-2$ to deduce that
\begin{equation}\label{l_dc_7}
  \mathfrak{E}_0^{2N-1}(u,p) + \mathfrak{E}_0^{2N}(\eta) + \mathfrak{F}_0^{2N-1}(F^1(u,\eta),F^3(u,\eta))  \le P(\tilde{\mathcal{E}}_0).
\end{equation}
After this iteration, it remains only to define $\dt^{2N-1} p(0)$ and  $D_t^{2N} u(0)$.  In order to do this, we must first impose the compatibility conditions on $u_0$ and $\eta_0$.  These are the same as in \eqref{l_comp_cond}, but because now the temporal derivatives of $\eta$ have been constructed as well, we restate them in a slightly different way.  Let $\dt^j u(0)$, $F^{1,j}(0)$, $F^{3,j}(0)$ for $j=0,\dots,2N-1$, $\dt^j \eta(0)$ for $j=0,\dotsc,2N$, and $\dt^j p(0)$ for $j=0,\dotsc,2N-2$ be constructed in terms of $\eta_0, u_0$ as above.  Let $\Pi_0$ be the projection defined in terms of $\eta_0$ as in \eqref{l_Pi0_def} and $D_t$ be the operator defined by \eqref{l_Dt_def}.  We say that $u_0,\eta_0$ satisfy the $(2N)^{th}$ order compatibility conditions if
\begin{equation}\label{l_comp_cond_2N}
\begin{cases}
 \diverge_{\a_0} (D_t^j u(0)) =0 & \text{in }\Omega \\
 D_t^j u(0) =0 & \text{on } \Sigma_b \\
 \Pi_0 (F^{3,j}(0) + \sg_{\a_0} D_t^j u(0) \n_0 ) = 0 & \text{on } \Sigma
\end{cases}
\end{equation}
for $j=0,\dotsc,2N-1$.  Note that if $u_0,\eta_0$ satisfy \eqref{l_comp_cond_2N}, then the $j^{th}$ order compatibility condition \eqref{l_comp_cond} is satisfied for $j=0,\dotsc,2N-1$.

Now we define $\dt^{2N-1} p(0)$ and  $D_t^{2N} u(0)$. We use the compatibility conditions \eqref{l_comp_cond_2N} and argue as above and in the derivation of \eqref{l_ie2_03} in Lemma \ref{l_iteration_estimates_2} to estimate
\begin{equation}\label{l_dc_10}
\ns{\mathfrak{f}^2(F^{3,2N-1}(0),D_t^{2N-1} u(0))}_{1/2}
+ \ns{\mathfrak{f}^3(F^{1,2N-1}(0),D_t^{2N-1} u(0))}_{-1/2} \le P(\tilde{\mathcal{E}}_0)
\end{equation}
and
\begin{equation}\label{l_dc_11}
 \ns{F^{1,2N-1}(0)}_{0} + \ns{\diverge_{\a_0}(R_0 D_t^{2N-1} u(0))  }_{0} \le P(\tilde{\mathcal{E}}_0).
\end{equation}
We then  define $\dt^{2N-1} p(0) \in H^1$ as a weak solution to \eqref{l_linear_elliptic_p} in the sense of \eqref{l_a_poisson_div} with this choice of $f^2=\mathfrak{f}^2$,  $f^3= \mathfrak{f}^3,$ $g_0=-\diverge_{\a_0}(R_0 D_t^{2N-1} u(0))$, and $G=-F^{1,2N-1}(0)$.  The estimate \eqref{l_a_poisson_weak_estimate}, when combined with \eqref{l_dc_10}--\eqref{l_dc_11}, allows us to deduce that
\begin{equation}\label{l_dc_8}
 \ns{\dt^{2N-1} p(0)}_{1} \le   P(\tilde{\mathcal{E}}_0).
\end{equation}
Then by \eqref{l_ie2_02} we set $D_t^{2N} u(0) = \mathfrak{G}^0(F^{1,2N-1}(0), D_t^{2N-1} u(0),\dt^{2N-1} p(0)) \in H^0$.  In fact, the construction of $\dt^{2N-1} p(0)$ guarantees that $D_t^{2N} u(0) \in \y(0)$.  Arguing as before, we  also have
\begin{equation}\label{l_dc_9}
\ns{\dt^{2N} u(0)}_{0} \le P(\tilde{\mathcal{E}}_0).
\end{equation}
This completes the construction of the initial data, but we will record a form of the estimates \eqref{l_dc_7}, \eqref{l_dc_8}--\eqref{l_dc_9} in the following proposition.

\begin{prop}\label{l_data_norm_comparison}
Suppose that $u_0, \eta_0$ satisfy $\tilde{\mathcal{E}}_0<\infty$ and that $ \mathfrak{L}_0\le \varepsilon_0$ is sufficiently small for the hypothesis of Proposition  \ref{l_a_poisson_regularity} to hold when $k=4N$.  Let $\dt^j u(0)$, $\dt^j \eta(0)$ for $j=0,\dotsc,2N$ and $\dt^j p(0)$ for $j=0,\dotsc,2N-1$ be given as above.  Then
\begin{equation}\label{l_dnc_0}
\tilde{\mathcal{E}}_0\le \mathfrak{E}_0(u,p) + \mathfrak{E}_0(\eta) \le  P (\tilde{\mathcal{E}}_0).
\end{equation}
\end{prop}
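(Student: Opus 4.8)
The plan is to assemble the estimates produced by the iterative construction of the initial data carried out in the paragraphs immediately preceding the statement. The lower bound $\tilde{\mathcal{E}}_0\le\mathfrak{E}_0(u,p)+\mathfrak{E}_0(\eta)$ requires no work: by definition $\tilde{\mathcal{E}}_0=\ns{u_0}_{4N}+\ns{\eta_0}_{4N+1}$, the term $\ns{u_0}_{4N}$ is exactly the $\ell=0$ summand of $\mathfrak{E}_0(u,p)$, and $\ns{\eta_0}_{4N+1}=\ns{\eta(0)}_{4N+1}$ is exactly the $\ell=0$ summand of $\mathfrak{E}_0(\eta)$; since every summand in these two quantities is nonnegative, the inequality is immediate.

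For the upper bound, the first step is to run the induction on $j$ described above. The base case uses $\dt\eta(0):=F^4(0)=u_0\cdot\n_0$ and the trace estimate $\ns{\dt\eta(0)}_{4N-1/2}\ls\tilde{\mathcal{E}}_0(1+\tilde{\mathcal{E}}_0)$ to get $\mathfrak{E}_0^0(u,p)+\mathfrak{E}_0^1(\eta)\le P(\tilde{\mathcal{E}}_0)$ and then, via \eqref{l_fie_01} with $j=0$, $\mathfrak{F}_0^0(F^1(u,\eta),F^3(\eta))\le P(\tilde{\mathcal{E}}_0)$. For the inductive step, over $j=0,\dotsc,2N-2$, I would assume the hypothesis \eqref{l_dc_1} and then, in order: apply \eqref{l_fie_02} and \eqref{l_fie_03} to bound $F^{1,j}(0)$, $F^{3,j}(0)$ and $D_t^j u(0)$; apply \eqref{l_ie2_01} to bound the data $\mathfrak{f}^1$, $\mathfrak{f}^2$, $\mathfrak{f}^3$ feeding the $\mathcal{A}$-Poisson problem \eqref{l_linear_elliptic_p}; invoke Proposition \ref{l_a_poisson_regularity} with $k=4N$ and $r=4N-2j-1<k$ (legitimate since $\mathfrak{L}_0\le\ep_0$ is taken small) to bound $\ns{\dt^j p(0)}_{4N-2j-1}$; apply \eqref{l_ie2_02} to define and bound $D_t^{j+1}u(0)$, then \eqref{l_fie_03} to pass to $\dt^{j+1}u(0)$; use $\dt^{j+2}\eta(0):=\dt^{j+1}F^4(0)$ with \eqref{l_fie_04} to bound $\dt^{j+2}\eta(0)$; and finally close the induction by invoking \eqref{l_fie_01} again to recover the bound on $\mathfrak{F}_0^{j+1}(F^1(u,\eta),F^3(\eta))$. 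Iterating from $j=0$ to $2N-2$ then produces \eqref{l_dc_7}.

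The remaining step is to handle the top-order data $\dt^{2N-1}p(0)$ and $D_t^{2N}u(0)$, which is where the $(2N)^{th}$ compatibility conditions \eqref{l_comp_cond_2N} enter. Because these force $\diverge_{\a_0}(D_t^{2N-1}u(0))=0$ and $D_t^{2N-1}u(0)=0$ on $\Sigma_b$, I may use \eqref{l_ie2_03} rather than \eqref{l_ie2_01} to bound $\mathfrak{f}^2(F^{3,2N-1}(0),\cdot)$ in $H^{1/2}$ and $\mathfrak{f}^3(F^{1,2N-1}(0),\cdot)$ in $H^{-1/2}$, and likewise to bound $\ns{F^{1,2N-1}(0)}_0+\ns{\diverge_{\a_0}(R_0 D_t^{2N-1}u(0))}_0$, i.e. \eqref{l_dc_10}--\eqref{l_dc_11}. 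Solving \eqref{l_linear_elliptic_p} for $\dt^{2N-1}p(0)$ in the weak sense of \eqref{l_a_poisson_div} and applying the weak estimate \eqref{l_a_poisson_weak_estimate} yields \eqref{l_dc_8}, and then setting $D_t^{2N}u(0):=\mathfrak{G}^0(F^{1,2N-1}(0),D_t^{2N-1}u(0),\dt^{2N-1}p(0))$ and using \eqref{l_ie2_02} together with \eqref{l_fie_03} for $j=2N$ gives \eqref{l_dc_9}. Adding \eqref{l_dc_7}, \eqref{l_dc_8} and \eqref{l_dc_9} produces the upper bound $\mathfrak{E}_0(u,p)+\mathfrak{E}_0(\eta)\le P(\tilde{\mathcal{E}}_0)$.

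I expect the main obstacle to be organizational rather than analytic: one must keep precise track of which Sobolev exponent is attached to each temporal order throughout the induction, and must verify that the compatibility conditions \eqref{l_comp_cond_2N} are exactly strong enough to justify the top-order Poisson solve (the $H^{-1/2}(\Sigma_b)$ estimate of $\mathfrak{f}^3$ via Lemma \ref{l_boundary_dual_estimate} relies on $\diverge_{\a_0}(D_t^{2N-1}u(0))=0$, equivalently $\diverge_{\a_0}\Delta_{\a_0}D_t^{2N-1}u(0)=0$). Since all the individual inequalities invoked above are furnished by Lemmas \ref{l_iteration_estimates_1}--\ref{l_iteration_estimates_2}, Lemma \ref{l_full_iteration_estimates}, and Proposition \ref{l_a_poisson_regularity}, once the bookkeeping is arranged the proof is a direct assembly.
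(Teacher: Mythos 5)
Your proposal is correct and follows essentially the same route as the paper: the paper's proof is simply to sum the bounds \eqref{l_dc_7} and \eqref{l_dc_8}--\eqref{l_dc_9} obtained from the iterative construction of the data in Section \ref{l_data_section}, which is exactly the induction (base case via $\dt\eta(0)=u_0\cdot\n_0$ and \eqref{l_fie_01}, inductive step via \eqref{l_fie_02}--\eqref{l_fie_04}, \eqref{l_ie2_01}--\eqref{l_ie2_02} and Proposition \ref{l_a_poisson_regularity}, top-order step via \eqref{l_comp_cond_2N}, \eqref{l_ie2_03} and the weak Poisson estimate \eqref{l_a_poisson_weak_estimate}) that you reproduce, together with the trivial lower bound. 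No gaps; only a minor quibble that $\diverge_{\a_0}\Delta_{\a_0}D_t^{2N-1}u(0)=0$ is implied by (not equivalent to) $\diverge_{\a_0}D_t^{2N-1}u(0)=0$, which is the direction actually used.
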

\begin{proof}
The proposition directly follows by summing \eqref{l_dc_7} and \eqref{l_dc_8}--\eqref{l_dc_9}.
\end{proof}
\begin{remark}\label{iii}
Along the arguments above, it is easy to see that $ \eta_0\in H^{4N+1/2}(\Sigma)$ replacing $ \eta_0\in H^{4N+1}(\Sigma)$ is still sufficient to guarantee the construction and that the constructed initial data can enjoy the following improvement, one may also refer to Section 5.2 of \cite{GT_lwp},
\begin{equation}\label{sflja}
\mathfrak{E}_0(u,p)+\ns{\eta(0)}_{4N+1/2} +   \sum_{j=1}^{2N+1} \ns{\dt^j \eta(0)}_{4N-2j+3/2}\le P \left(\ns{u_0}_{4N}+\ns{\eta_0}_{4N+1/2}\right).
 \end{equation}
 Note that we have also included the case $j=2N+1$ in \eqref{sflja}.
Indeed, using the geometric fact: $\dt(u\cdot \n)= D_t u\cdot \n$ on
$\Sigma$, by Lemma \ref{l_boundary_dual_estimate} and since $
D_t^{2N}u(0)\in \mathcal{Y}(0)$, we have
\begin{equation}
\begin{split}
 \ns{\dt^{2N+1}\eta(0)}_{-1/2}&=\snormspace{D_t^{2N}u(0) \cdot \n_0}{-1/2}{\Sigma} \ls \hn{D_t^{2N}u(0)}{0}
 \\&\le P\left(\ns{u_0}_{4N}+\ns{\eta_0}_{4N+1/2}\right).
 \end{split}
\end{equation}
\end{remark}

\subsubsection{Construction of compensator function}\label{sec compensator}

Let $\dt^j \eta(0)$ for $j=0,\dotsc,2N$ be determined as in Section \ref{l_data_section}.
Now we define $\Psi$ to be the extension produced by Lemma \ref{2_sobolev_extension} that achieves the initial data $-\Delta_\ast \dt^j\eta(0)$ for $j=0,\dots,2N$. Then we essentially add nothing on the fourth equation of \eqref{kgeometric} at the initial time. Moreover, by Lemma \ref{2_sobolev_extension} and \eqref{l_dnc_0}, we have
\begin{equation}
 \sum_{j=0}^{2N} \ns{\dt^j \Psi}_{L^2 H^{4N-2j }}   +  \sum_{j=0}^{2N}\ns{\dt^j \Psi}_{L^\infty H^{4N-2j -1}}
 \ls \sum_{j=0}^{2N}\ns{ \dt^j\eta(0) }_{ {4N-2j +1}}
  =  \mathfrak{E}_0(\eta)\le  P (\tilde{\mathcal{E}}_0).
\end{equation}
Thanks to this compensator, the initial data $(u_0,\eta_0)$ and those initial data actually constructed for the problem \eqref{geometric} can serve as the initial data for the problem \eqref{kgeometric}.

\subsubsection{Forcing estimates}
We will now estimate various norms of $F^1(u,\eta)$, $F^3(\eta)$ and
$F^4(u,\eta)$ that are contained in $\mathfrak{F}(F^1,F^3)$ and
$\mathfrak{F}(F^4)$ (as defined in \eqref{l_Ffrak_def} and
\eqref{l_Kfrak_u2n_def1}, respectively) in terms of the certain
norms of $u$ and $\eta$. It is a very key point to notice that the
$\eta$ and $u$ appearing in $\mathfrak{K}(\eta)$ and
$\mathfrak{K}(u,p)$ enjoy the higher regularities than that required
in estimating the norms of the $F^i$ terms. To state the estimates,
we define
\begin{equation}\label{l_Kfrak_u2n_def}
 \mathfrak{F}_{2N}(u) = \sum_{j=0}^{2N} \ns{\dt^j u}_{L^2 H^{4N-2j +3/4}}
+ \sum_{j=0}^{2N-1}\ns{\dt^j u}_{L^\infty H^{4N-2j-1/4}}
\end{equation}
and
\begin{equation}\label{l_Kfrak_eta2n_def}
 \mathfrak{F}_{2N} (\eta)    := \sum_{j=0}^{2N} \ns{\dt^j \eta}_{L^2 H^{4N-2j+1}}
+ \sum_{j=0}^{2N-1} \ns{\dt^j \eta}_{L^\infty H^{4N-2j}} .
\end{equation}
We shall recall the definitions of $\mathfrak{K}_{2N}(u)$, $\mathfrak{K}_{2N}(\eta)$, $\mathfrak{K}(u,p)$, $\mathfrak{F}(\eta)$, and $\mathfrak{K}(\eta)$ from \eqref{kk11}, \eqref{kk12}, \eqref{l_Kfrak_def}, \eqref{l_DEfrak_def} and \eqref{l_Ketafrak_def}, respectively.
Note that $\mathfrak{F}_{2N}(u)\le \mathfrak{K}_{2N}(u)\le \mathfrak{K}(u,0)$ and $\mathfrak{F}_{2N}(\eta)\le \mathfrak{K}_{2N}(\eta)\le \mathfrak{K}(\eta), \mathfrak{F}(\eta)\le \mathfrak{K}(\eta)$.
\begin{lem}\label{l_Ffrak_bound}
Suppose that $  \mathfrak{K} (\eta)<\infty$ and $\mathfrak{F}_{2N}(u)<\infty$. Then
 \begin{equation}\label{f bound}
 \mathfrak{F}(F^1(u,\eta),F^3(\eta)) \le  P(1+ \mathfrak{F}(\eta))\left(   \mathfrak{K} (\eta) +(\mathfrak{F}_{2N}(u))^2\right)
\end{equation}
and
 \begin{equation}\label{f2 bound}
 \mathfrak{F}(F^4(u,\eta) ) \le  P(1+ \mathfrak{F}_{2N}(\eta))\mathfrak{F}_{2N}(u).
\end{equation}
\end{lem}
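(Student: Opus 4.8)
The plan is to expand the forcing terms $F^1(u,\eta)$, $F^3(\eta)$ and $F^4(u,\eta)$ according to their definitions in \eqref{l_F_forcing_def}, apply the temporal derivatives $\dt^j$ via the Leibniz rule, and then estimate each resulting product term using the Sobolev product inequality of Lemma \ref{i_sobolev_product_1}, together with the Poisson extension estimates of Lemma \ref{p_poisson} and the usual trace and Sobolev embeddings. Since every term in $F^1,F^3,F^4$ is at worst quadratic in $(u,\eta,\bar\eta)$ — linear in derivatives of $u$ and with the remaining factors built from $\bar\eta$ (equivalently $\eta$ via $\ap$) and the coefficients $A,B,J,K,\tilde b$ from \eqref{ABJ_def} — the polynomial structure $P(1+\mathfrak{F}(\eta))$ (resp. $P(1+\mathfrak{F}_{2N}(\eta))$) on the outside and the linear-or-quadratic dependence on $u$ on the inside will emerge naturally from this bookkeeping.

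First I would treat $F^3(\eta) = \eta\n - \sigma H\n$. Since $\sigma$ plays no role in this section we can absorb it; the point is that $H$ involves $\nab_\ast^2\eta$ divided by $\sqrt{1+|\nab_\ast\eta|^2}$, while $\n=(-\p_1\eta,-\p_2\eta,1)$. Applying $\dt^j$ for $j=0,\dots,2N$ and expanding, the highest-order term is $\dt^j\nab_\ast^2\eta$ (resp. $\dt^j\nab_\ast\eta$), which must be controlled in $L^2H^{4N-2j-1/2}(\Sigma)$ and $L^\infty H^{4N-2j-3/2}(\Sigma)$; these are dominated by $\ns{\dt^j\eta}_{L^2H^{4N-2j+1}}$ and $\ns{\dt^j\eta}_{L^\infty H^{4N-2j}}$, i.e.\ by $\mathfrak{K}(\eta)$ (cf.\ the definition \eqref{l_Ketafrak_def}). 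The lower-order factors are estimated by $\mathfrak{F}(\eta)$-controlled quantities via Lemma \ref{i_sobolev_product_1}, and the nonlinear denominators (analytic functions of $\nab_\ast\eta$ vanishing to zeroth order in the obvious sense) contribute only to the polynomial $P(1+\mathfrak{F}(\eta))$; composition estimates in Sobolev spaces are standard here. This yields the $\mathfrak{K}(\eta)$ contribution to \eqref{f bound}.

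Next, for $F^1(u,\eta) = \dt\bar\eta\,\tilde b K\p_3 u - u\cdot\naba u$, each term is linear in first derivatives of $u$ with coefficients built from $\bar\eta$ (hence controlled using Lemma \ref{p_poisson} to pass from $\bar\eta$-norms to $\eta$-norms) and from $\a$ (rational in $J=1/K$). Applying $\dt^j$ and distributing, one factor carries up to $4N-2j$ spatial derivatives of $u$ in the relevant $L^2$- and $L^\infty$-based norms — precisely the regularity packaged in $\mathfrak{F}_{2N}(u)$ — and since there are two $u$-factors in the second term we pick up $(\mathfrak{F}_{2N}(u))^2$; the $\eta$-factors (at most $\dt^{j+1}\nab\bar\eta\sim\dt^{j+1}\eta$, which is covered by $\mathfrak{F}(\eta)$ or $\mathfrak{K}(\eta)$) go into the polynomial prefactor. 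Crucially, as the text emphasizes, the $u$ appearing here is only needed at the $\mathfrak{F}_{2N}(u)$ level of regularity, which is strictly weaker than $\mathfrak{K}(u,p)$; this is the source of the smallness-in-$T$ gain later. The $\dt^{2N}$-term must be checked to land in $L^2(\H1)^\ast$ rather than $L^2H^{4N-1}$, exactly as in \eqref{l_Ffrak_def}; this follows by the duality trick already used in \eqref{obser1}. Finally $F^4(u,\eta)=u\cdot\n$ is the simplest: $\dt^j(u\cdot\n)$ is bilinear in $u$ and $\nab_\ast\eta$ (plus the constant normal component), traced onto $\Sigma$; Lemma \ref{i_sobolev_product_1} on $\Sigma$ together with trace embeddings gives \eqref{f2 bound} with the linear dependence on $\mathfrak{F}_{2N}(u)$ and polynomial dependence $P(1+\mathfrak{F}_{2N}(\eta))$ on $\eta$.

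The main obstacle is the careful matching of derivative counts at the top order: one must verify that, after applying $\dt^j$ and the Leibniz rule, the highest-order factor always lands in a space controlled by the $\mathfrak{K}$-quantities while all remaining factors have enough room (by $s_2 > r+n/2$ in Lemma \ref{i_sobolev_product_1}, with $n=2$ on $\Sigma$ and $n=3$ on $\Omega$) to be absorbed into $P(1+\mathfrak{F}(\eta))$ or $P(1+\mathfrak{F}_{2N}(\eta))$ — in particular keeping track of the half-integer losses from the trace theorem and from the $\Sigma$-to-$\Omega$ Poisson extension, and handling the $\dt^{2N}F^1$ term in the dual space $(\H1)^\ast$ separately. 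These are lengthy but routine computations of exactly the type carried out for Lemma \ref{l_iteration_estimates_1}, so I would only indicate the top-order terms explicitly and refer to that lemma (and to the corresponding estimates in \cite{GT_lwp}) for the remaining bookkeeping.
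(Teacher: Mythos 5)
Your outline is essentially the paper's own proof: expand $F^1,F^3,F^4$, apply the Leibniz rule, and estimate products as in Lemma \ref{l_iteration_estimates_1} via Lemma \ref{i_sobolev_product_1}, Lemma \ref{p_poisson} and trace embeddings, isolating the two delicate pieces, $\ns{\dt^{2N}F^1}_{L^2(\H1)^\ast}$ and the mean-curvature contribution to $F^3$, and treating $F^4$ as a simpler bilinear trace estimate. Two points in your bookkeeping need correcting, and they sit exactly at the places the paper flags as requiring care.

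First, your exponents for the curvature term are half a derivative short: controlling $\dt^j\nab_\ast^2\eta$ in $L^2H^{4N-2j-1/2}(\Sigma)$ (resp.\ $L^\infty H^{4N-2j-3/2}(\Sigma)$) requires $\dt^j\eta\in L^2H^{4N-2j+3/2}$ (resp.\ $L^\infty H^{4N-2j+1/2}$), not the norms $L^2H^{4N-2j+1}$ and $L^\infty H^{4N-2j}$ that you wrote. The norms you quoted are precisely those of $\mathfrak{F}_{2N}(\eta)$ in \eqref{l_Kfrak_eta2n_def}; if they sufficed, \eqref{f bound} would hold with $\mathfrak{F}_{2N}(\eta)$ in place of $\mathfrak{K}(\eta)$, i.e.\ the transport-level regularity of $\eta$ would already control the surface tension term, which is exactly the failure the $\kappa$-regularization is introduced to repair. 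The correct chain is the paper's \eqref{0909}, $\ns{\nab_\ast^2\eta}_{L^2H^{4N-1/2}}\ls\ns{\eta}_{L^2H^{4N+3/2}}\le\mathfrak{K}(\eta)$ (and its analogues with temporal derivatives), so your final appeal to $\mathfrak{K}(\eta)$ is right but must pass through these stronger norms.

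Second, in $\dt^{2N}F^1$ the duality trick of \eqref{obser1} disposes of $\dt^{2N+1}\bar\eta$, but the other top-order factor, $\dt^{2N}\nab u$, needs its own duality computation rather than being covered by the generic statement that $\mathfrak{F}_{2N}(u)$ packages the needed regularity: pairing with $\varphi\in\H1$ and integrating by parts produces a boundary term on $\Sigma$, which is handled by the trace embedding $H^{3/4}(\Omega)\hookrightarrow H^{1/4}(\Sigma)\subset H^{-1/2}(\Sigma)$, so that only $\ns{\dt^{2N}u}_{L^2H^{3/4}}\le\mathfrak{F}_{2N}(u)$ enters; this is \eqref{2nu} in the paper. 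Without that integration by parts, $\dt^{2N}u\in L^2H^{3/4}$ does not by itself place $\dt^{2N}\nab u$ in $L^2(\H1)^\ast$ with the stated bound. With these two corrections your plan coincides with the paper's argument.
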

\begin{proof}
The proof is similar to that of Lemma \ref{l_iteration_estimates_1}. We only estimate the terms $\ns{\dt^{2N} F^1}_{L^2 (\H1)^\ast}$ and $\ns{F^3}_{L^2 H^{4N-1/2}}$ in the definition of $ \mathfrak{F}(F^1,F^3)$ since they are a bit more involved and need more care, and the other estimates follow similarly and simpler.

We note that the highest derivatives appearing in $\dt^{2N} F^1(u,\eta)$ are $\dt^{2N+1} \bar{\eta}$ and $\dt^{2N }\nab u$. The $\dt^{2N+1} \bar{\eta}$ has been already controlled in \eqref{obser1}. For the $\dt^{2N }\nab u$, we observe that for $\varphi\in \H1$,
\begin{equation}
\br{ \dt^{2N}\nab u,\varphi}_{*}=\ip{\dt^{2N}  u}{\varphi}_{0,\Sigma }-\ip{ \dt^{2N}u}{\nab\varphi}_{0,\Omega }.
\end{equation}
Here $\br{\cdot,\cdot}_{*}$ is the dual paring in $\H1$, $ \ip{\cdot}{\cdot}_{0,\Sigma}$ and $ \ip{\cdot}{\cdot}_{0,\Omega}$ is the $L^2$ inner product in $\Sigma$ and $\Omega$, respectively.
By the duality and the trace embedding that $H^{3/4}(\Omega)\hookrightarrow H^{1/4}(\Sigma)\subset H^{-1/2}(\Sigma)$, we obtain
\begin{equation}\label{2nu}
\ns{\dt^{2N}\nab u}_{L^2 (\H1)^*}\ls \ns{\dt^{2N} u }_{L^2H^{3/4}}+\ns{\dt^{2N}u}_{L^2H^0}\ls  \ns{\dt^{2N}u}_{L^2H^{3/4}}\le \mathfrak{F}_{2N}(u).
\end{equation}
With \eqref{obser1} and \eqref{2nu} in hand, we may conclude as in Lemma \ref{l_iteration_estimates_1} that
 \begin{equation}
 \begin{split}
\ns{\dt^{2N} F^1}_{L^2 (\H1)^\ast}& \le  P(1+ \mathfrak{F}(\eta))\left(   \mathfrak{F} (\eta) \mathfrak{F}_{2N}(u)+(\mathfrak{F}_{2N}(u))^2\right)
\\&\le  P(1+ \mathfrak{F}(\eta))\left(   \mathfrak{F} (\eta) +(\mathfrak{F}_{2N}(u))^2\right).
\end{split}
\end{equation}

On the other hand, note that the highest derivative appearing in $F^3(\eta)$ is $\nab_\ast^2\eta$, and we have
\begin{equation}\label{0909}
 \ns{\nab_\ast^2\eta}_{L^2 H^{4N-1/2}}\ls  \ns{\eta}_{L^2 H^{4N+3/2}}\le \mathfrak{K} (\eta).
\end{equation}
We may then conclude as in Lemma \ref{l_iteration_estimates_1} that
 \begin{equation}
 \ns{F^3}_{L^2 H^{4N-1/2}} \le  P(1+ \mathfrak{F}(\eta))  \mathfrak{K} (\eta).
\end{equation}

Consequently, estimating the other terms in the similar way, we deduce \eqref{f bound}--\eqref{f2 bound}.
\end{proof}
\begin{remark}\label{weaker}
Note that at this moment there is no power of $T$ in front of the estimates \eqref{f bound}--\eqref{f2 bound}. The key point is that in the iteration argument in the next subsection we can get better estimates of $ \eta $ and $u$, say, $\mathfrak{K}(\eta)$ and $\mathfrak{K}(u,p)$.
Eventually, this will make the appearance of a power of $T$ in front of some estimates in the iteration argument, which allows us to achieve a local well-posedness without requiring all the norms of initial data be small.
\end{remark}

\subsection{Local well-posedness of the nonlinear $\kappa$-problem}

In order to construct the solution to \eqref{kgeometric}, we will pass to the limit in a sequence of approximate solutions. In this subsection we will explicitly enumerate the various generic polynomials appearing in estimates as $ P_1,\dots, P_{10}$ so that they can be referred, but they will be allowed to depend on $\kappa$. Before proceeding further, we recall from the previous subsection that
\begin{equation}\label{l_dnc_01}
\mathfrak{E}_0(u,p) + \mathfrak{E}_0(\eta) +\mathfrak{F}_0(F^1(u ,\eta),F^3( \eta ))+ \mathfrak{F}_0(F^4(u ,\eta))\le  P_1(\tilde{\mathcal{E}}_0).
\end{equation}

We now construct the sequence of approximate solutions as follows.
First, we extend the initial data $\dt^j u(0) \in H^{4N-2j}(\Omega) $ to the time-dependent functions $u^0 $ so that $\dt^j u^0(0) = \dt^j u(0)$ and extend the initial data $\dt^j \eta(0) \in H^{4N-2j+1}(\Sigma) $ to the time-dependent functions $\eta^0 $ so that $\dt^j \eta^0(0) = \dt^j \eta(0)$.  We do so by applying lemmas \ref{l_sobolev_extension}--\ref{2_sobolev_extension} respectively, and we may combine the estimates of $\mathfrak{K}_{2N}(u^0)$ and $\mathfrak{K}_{2N}(\eta^0)$ provided by lemmas \ref{l_sobolev_extension}--\ref{2_sobolev_extension} with \eqref{l_dnc_01} to bound
\begin{equation}\label{l_it_3}
\mathfrak{K}_{2N}(u^0)+\mathfrak{K}_{2N}(\eta^0) \le
P_2(\tilde{\mathcal{E}}_0).
\end{equation}
This is our start point for constructing $(u^{m},p^{m},\eta^{m})$ for $m\ge 1$.

Now assuming that $(u^m,\eta^m)$ are known and satisfy certain estimates, we then construct $(u^{m+1},p^{m+1},\eta^{m+1})$ as the solution to
\begin{equation}\label{mkgeometric}
 \begin{cases}
 \dt u^{m+1} -\Delta_{\a^{m+1}} u^{m+1} + \nab_{\a^{m+1}} p^{m+1}  = F^1(u^m,\eta^{m+1}) & \text{in } \Omega \\
 \diverge_{\a^{m+1}} u^{m+1} = 0 & \text{in }\Omega \\
 S_{\a^m}(p^{m+1},u^{m+1}) \n^{m+1} =F^3(\eta^{m+1}) & \text{on } \Sigma \\
   \dt \eta^{m+1}-\kappa\Delta_\ast\eta^{m+1} = \kappa \Psi+  F^4(u^m,\eta^{m})  &\text{on } \Sigma\\
 u^{m+1} = 0 & \text{on } \Sigma_b \\
 (u^{m+1} , \eta^{m+1})\mid_{t=0} = (u_0,\eta_0).
 \end{cases}
\end{equation}
Note that our construction is in its special way; we first use $(u^m,\eta^m)$ to construct $\eta^{m+1}$, and then we use $(u^m,\eta^{m+1})$ to construct $(u^{m+1},p^{m+1})$. We shall derive the uniform estimates for the sequence $\{(u^m,p^m,\eta^m)\}_{m=1}^\infty$, which are recorded in the following theorem.

\begin{thm}\label{l_iteration}
Assume that the initial data are given as in Section \ref{l_data_section} and satisfy the $(2N)^{th}$  compatibility conditions \eqref{l_comp_cond_2N}.  Suppose that $ \mathfrak{L}_0 \le \varepsilon_0/2$, where $\varepsilon_0$ is the smaller one from Theorem \ref{l_linear_wp} and Proposition \ref{l_data_norm_comparison}. There exist a $0<M_0<\infty$ and a $0< T_0 <1$ depending on $\tilde{\mathcal{E}}_0$ and $\kappa$ such that if $T\le T_0$, then for each $m\ge 1$ we have the estimates
\begin{equation}\label{l_it_03}
   \mathfrak{K}(u^m,p^m)+\mathfrak{K} (\eta^m)   \le M_0, \text{ and }
  \mathfrak{L}(\eta^m)  \le \varepsilon_0.
\end{equation}
\end{thm}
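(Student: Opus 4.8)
The plan is to establish the uniform bounds \eqref{l_it_03} by induction on $m$, using the linear theories of Theorem \ref{l_linear_wp} and Theorem \ref{l_transport_theorem} together with the forcing estimates of Lemma \ref{l_Ffrak_bound}. For the base case $m=1$, the data $(u^0,\eta^0)$ are the extensions satisfying \eqref{l_it_3}, so I would feed the forcing terms $F^1(u^0,\eta^1),F^3(\eta^1),F^4(u^0,\eta^0)$ built from these into the linear problems; solvability and estimate \eqref{K eta es} for $\eta^1$ and \eqref{l_lwp_02} for $(u^1,p^1)$ then yield $\mathfrak{K}(u^1,p^1)+\mathfrak{K}(\eta^1)\le P(\tilde{\mathcal{E}}_0,\kappa)$ on any interval $[0,T]$ with $T\le\bar T$. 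This fixes $M_0=2P_j(\tilde{\mathcal{E}}_0)$ (with $\kappa$-dependence allowed) for an appropriate composition of the polynomials; the smallness of $\mathfrak{L}(\eta^1)$ follows because $\mathfrak{L}(\eta^1)\le\mathfrak{L}_0+CT(\text{stuff bounded by }M_0)$, which is $\le\varepsilon_0$ once $\mathfrak{L}_0\le\varepsilon_0/2$ and $T_0$ is taken small. Note that the hypotheses $\mathfrak{L}(\eta^m)\le\varepsilon_0$ are exactly what is needed so that Theorem \ref{l_linear_wp} (hence Proposition \ref{l_stokes_regularity}, Proposition \ref{l_a_poisson_regularity}, Lemma \ref{l_norm_equivalence}) applies at each stage.

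For the inductive step, I would assume \eqref{l_it_03} holds at level $m$ and prove it at level $m+1$. The key chain is: (i) apply Lemma \ref{l_Ffrak_bound} with $\eta=\eta^m$ (resp.\ $\eta^{m+1}$ in the parts where it appears through $F^3$) and $u=u^m$ to bound $\mathfrak{F}(F^1(u^m,\eta^{m+1}),F^3(\eta^{m+1}))$ and $\mathfrak{F}(F^4(u^m,\eta^m))$ by $P(1+M_0)(M_0+M_0^2)$, using that $\mathfrak{F}_{2N}(u^m)\le\mathfrak{K}_{2N}(u^m)\le\mathfrak{K}(u^m,p^m)\le M_0$ and $\mathfrak{F}(\eta^m)\le\mathfrak{K}(\eta^m)\le M_0$, together with $\mathfrak{F}_{2N}(\eta^{m+1})$ being controlled once we close the $\eta^{m+1}$ estimate; (ii) solve the transport equation for $\eta^{m+1}$ via Theorem \ref{l_transport_theorem} to get $\mathfrak{K}(\eta^{m+1})\le C_\kappa(\tilde{\mathcal{E}}_0+\mathfrak{F}(F^4(u^m,\eta^m)))\le C_\kappa P(1+M_0)(M_0+M_0^2)$ — this is the step where $\kappa$-dependence enters crucially; (iii) with $\eta^{m+1}$ now controlled, plug $F^3(\eta^{m+1})$ (which needs $\mathfrak{K}(\eta^{m+1})$, already bounded) and $F^1(u^m,\eta^{m+1})$ into Theorem \ref{l_linear_wp} to get $\mathfrak{K}(u^{m+1},p^{m+1})\le P(1+\mathfrak{F}_0(\eta^{m+1})+\mathfrak{F}(\eta^{m+1}))\exp(P(1+\mathfrak{F}(\eta^{m+1}))T)(\tilde{\mathcal{E}}_0+\mathfrak{F}(F^1,F^3))$. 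The delicate bookkeeping is that these raw bounds are of the form $P(1+M_0)$, not $\le M_0$ — so I must \emph{first} fix $M_0$ large enough (depending on $\tilde{\mathcal{E}}_0$ and $\kappa$ but not on $m$ or $T$) that $M_0\ge 2P(\tilde{\mathcal{E}}_0,\kappa)$ for the $T=0$ values of all these polynomials, and \emph{then} choose $T_0$ small enough that the factors $\exp(P(1+M_0)T)\le 2$ and, via Remark \ref{weaker}, the forcing terms acquire a factor $T^{1/4}$ (or similar) making the excess beyond the $T=0$ contribution absorbable into the slack between $P(\tilde{\mathcal{E}}_0,\kappa)$ and $M_0=2P(\tilde{\mathcal{E}}_0,\kappa)$.

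The main obstacle I anticipate is precisely the circular dependence flagged in Remark \ref{weaker}: at first glance Lemma \ref{l_Ffrak_bound} gives no power of $T$, so a naive induction would only yield $\mathfrak{K}(u^{m+1},p^{m+1})\le P(M_0)$ with $P(M_0)$ potentially larger than $M_0$, breaking the induction. The resolution is the observation that the quantities $\mathfrak{K}(u^m,p^m)$ and $\mathfrak{K}(\eta^m)$ controlled by the induction hypothesis are \emph{stronger} norms than the $\mathfrak{F}_{2N}(u)$, $\mathfrak{F}_{2N}(\eta)$ norms that actually enter the forcing estimates through the highest-order terms — so one has extra regularity of $u^m$, $\eta^m$ in space which, interpolated against their behavior over the short time interval, produces the $L^2_T$-in-time smallness (a factor of $T$ or $T^{1/4}$) on the pieces that are not already part of the initial-data contribution $P_1(\tilde{\mathcal{E}}_0)$. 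Concretely I would split each norm $\ns{\dt^j(\cdot)}_{L^\infty H^k}\le\ns{\dt^j(\cdot)(0)}_k + T^{1/2}\ns{\dt^{j+1}(\cdot)}_{L^2 H^k}+\dots$ and route the $t=0$ parts into $P_1(\tilde{\mathcal{E}}_0)$ (fixed, $\le M_0/2$) and the rest into a $T^{1/2}P(M_0)$ term, then shrink $T_0$. A secondary technical point is verifying that the compatibility conditions \eqref{l_comp_cond} are met at each stage: since $(u^{m+1},\eta^{m+1})$ all take the \emph{same} initial data $(u_0,\eta_0)$ and hence the same constructed $\dt^j u(0),\dt^j\eta(0),\dt^j p(0)$, and since $\Psi$ was built (Section \ref{sec compensator}) precisely so that the $\kappa$-problem has the same compatibility conditions as \eqref{geometric}, the hypothesis \eqref{l_comp_cond_2N} transfers directly and Theorem \ref{l_linear_wp} applies unchanged at every $m$.
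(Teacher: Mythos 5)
Your proposal is correct and follows essentially the same route as the paper: iterate in the special order (transport for $\eta^{m+1}$ from $(u^m,\eta^m)$ via Theorem \ref{l_transport_theorem}, then Theorem \ref{l_linear_wp} for $(u^{m+1},p^{m+1})$ with $F^1(u^m,\eta^{m+1}),F^3(\eta^{m+1})$), and close the induction by exploiting that the forcing bounds of Lemma \ref{l_Ffrak_bound} only involve the weaker norms $\mathfrak{F}_{2N}(u^m),\mathfrak{F}_{2N}(\eta^m)$, which split into an initial-data part $P(\tilde{\mathcal{E}}_0)$ plus $T$- or $T^{1/4}$-weighted pieces controlled by $M_0$, exactly the mechanism of \eqref{eses2}--\eqref{eses3}. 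The only cosmetic difference is bookkeeping: the paper makes the resulting bounds on $(u^{m+1},p^{m+1},\eta^{m+1})$ entirely $M_0$-independent after shrinking $T$ and then defines $M_0$ as that bound, whereas you pre-fix $M_0=2P(\tilde{\mathcal{E}}_0,\kappa)$ and absorb the excess into the slack — the same argument with $M_0$ chosen before $T_0$ in both cases.
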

\begin{proof}
We claim that there exist a (large) $0<M _0<\infty$ and a (small) $ 0<{T}_0 <1$ so that if $T\le T_0$, then the following hold: if $(u^m,\eta^m)$ for $m\ge 0$ are known and satisfy the estimates
\begin{equation}\label{claim0}
  \mathfrak{K}_{2N}(u^m) +\mathfrak{K}_{2N}(\eta^m)  \le  M _0,
\end{equation}
then there exists a unique triple $(u^{m+1},p^{m+1},\eta^{m+1})$ that solves \eqref{mkgeometric} and obeys the estimates
\begin{equation} \label{claim123}
 \mathfrak{K}(u^{m+1},p^{m+1})+ \mathfrak{K} (\eta^{m+1})   \le M_0,  \text{ and }
  \mathfrak{L}(\eta^{m+1})  \le \varepsilon_0.
\end{equation}
We remark that in the following determination $M _0$ depends on
$\tilde{\mathcal{E}}_0$ and ${T}_0$ depends on $M_0$; so these two
constants are determined by the initial data (and $\kappa$).

First, if we take $M _0\ge  P_2(\tilde{\mathcal{E}}_0)$, then \eqref{claim0} holds for $m=0$. Now assume that \eqref{claim0} holds for $m\ge 0$, we prove that \eqref{claim123} holds for $m+1$ for appropriately chosen $M_0$ and $T_0$. We first use $(u^m,\eta^m)$ to construct the $\eta^{m+1}$ via Theorem \ref{l_transport_theorem}. Moreover, by the estimate \eqref{K eta es} of Theorem \ref{l_transport_theorem} together with the estimate \eqref{f2 bound} of Lemma \ref{l_Ffrak_bound} and \eqref{l_dnc_01}, we have
\begin{equation} \label{eses1}
\begin{split}
 \mathfrak{K} (\eta^{m+1})&\le C_\kappa\left( \ns{\eta_0}_{4N+1}+\mathfrak{F}_0(F^4)+     \mathfrak{F}(F^4(u^m,\eta^m))\right)
 \\&\le     P_3(\tilde{\mathcal{E}}_0)+  P_3(1+ \mathfrak{F}_{2N}(\eta^m))\mathfrak{F}_{2N}(u^m) .
 \end{split}
 \end{equation}
We need an appropriate estimate of $\mathfrak{F}_{2N}(\eta^m)$ and $\mathfrak{F}_{2N}(u^m)$, and the key point is to get a power of $T$ in the following estimate. First, Lemma \ref{l_sobolev_infinity} allows us to bound, by \eqref{l_dnc_01} and \eqref{claim0},
 \begin{equation} \label{eses2}
 \begin{split}
 \mathfrak{F}_{2N}(\eta^m)&\le \sum_{j=0}^{2N-1} \ns{\dt^j \eta(0)}_{ {4N-2j}} +2\sum_{j=0}^{2N} \ns{\dt^j \eta^m}_{L^2 H^{4N-2j+1}}
\\& \le  P_1(\tilde{\mathcal{E}}_0)+2T\sum_{j=0}^{2N} \ns{\dt^j \eta^m}_{L^\infty H^{4N-2j+1}}
\\& \le  P_1(\tilde{\mathcal{E}}_0)+ 2 T\mathfrak{K}_{2N}(\eta^m)\le  P_1(\tilde{\mathcal{E}}_0)+2TM _0.
\end{split}
 \end{equation}
On the other hand, using the usual Sobolev interpolation together with Lemma \ref{l_sobolev_infinity}, \eqref{l_dnc_01} and \eqref{claim0} again, we obtain
 \begin{equation}\label{eses3}
 \begin{split}
 \mathfrak{F}_{2N}(u^m) &\le \sum_{j=0}^{2N-1}\ns{\dt^j u(0)}_{ {4N-2j-1/4}}+ 2\sum_{j=0}^{2N} \ns{\dt^j u^m}_{L^2 H^{4N-2j +3/4}}
 \\& \le  P_1 (\tilde{\mathcal{E}}_0)+2 \sum_{j=0}^{2N}T^{1/4} \norm{\dt^j u^m}_{L^\infty H^{4N-2j}}^{1/2}  \norm{\dt^j u^m}_{L^2 H^{4N-2j+1}}^{3/2}
\\& \le P_1(\tilde{\mathcal{E}}_0)+ C T^{1/4}\mathfrak{K}_{2N}(u^m) \le  P_1(\tilde{\mathcal{E}}_0)+C T^{1/4}M _0.
\end{split}
 \end{equation}
If we take $T\le  \min\{\tilde{\mathcal{E}}_0/M _0,(\tilde{\mathcal{E}}_0/M _0)^4\}$, then we deduce from \eqref{eses2}--\eqref{eses3} that
 \begin{equation}\label{eses4}
 \mathfrak{F}_{2N}(\eta^m)+\mathfrak{F}_{2N}(u^m) \ls  P_1(\tilde{\mathcal{E}}_0)+\tilde{\mathcal{E}}_0\le  P_4(\tilde{\mathcal{E}}_0).
 \end{equation}
Plugging the estimate \eqref{eses4} into \eqref{eses1}, we obtain
 \begin{equation}\label{es01}
 \mathfrak{K} (\eta^{m+1})
 \le     P_3 (\tilde{\mathcal{E}}_0)+  P_3(1+  P_4(\tilde{\mathcal{E}}_0) )  P_4(\tilde{\mathcal{E}}_0)  \le  P_5(\tilde{\mathcal{E}}_0) .
 \end{equation}
This proves the bound of $\mathfrak{K} (\eta^{m+1})$. Now, by again Lemma \ref{l_sobolev_infinity} and \eqref{es01}, we have
\begin{equation}
\begin{split}
 \mathfrak{L}(\eta^{m+1})&\le   \mathfrak{L}_0 + \ns{\eta^{m+1} }_{L^2H^{4N}}+\ns{\dt\eta^{m+1} }_{L^2H^{4N-1}}
\\& \le    \frac{\varepsilon_0}{2}   +T \ns{\eta^{m+1} }_{L^\infty H^{4N}}+T \ns{\dt\eta^{m+1} }_{L^\infty H^{4N-1}}
\\& \le    \frac{\varepsilon_0}{2}+ {T \mathfrak{K} (\eta^{m+1})}  \le     \frac{\varepsilon_0}{2}  + \frac{T P_5(\tilde{\mathcal{E}}_0)}{2}
\end{split}
\end{equation}
If we restrict that $
T\le  \varepsilon_0/ P_5(\tilde{\mathcal{E}}_0),$
then we have
\begin{equation} \label{smallness}
 \mathfrak{L}(\eta^{m+1}) \le \varepsilon_0.
\end{equation}

Now the bounds \eqref{es01} and \eqref{claim0} and the smallness \eqref{smallness} allow us to use $(u^m,\eta^{m+1})$ to construct the $(u^{m+1},p^{m+1})$ via Theorem \ref{l_linear_wp}. Moreover, by the estimates \eqref{l_lwp_02} of Theorem \ref{l_linear_wp}, \eqref{l_dnc_01}, the estimate \eqref{f bound} of Lemma \ref{l_Ffrak_bound}, \eqref{es01} and \eqref{claim0}, we have
\begin{equation}
\begin{split}
&\mathfrak{K}(u^{m+1},p^{m+1}) \le    P_6(1+\mathfrak{F}_0 (\eta^{m+1}) + \mathfrak{F}(\eta^{m+1})) \exp\left( P_6(1+ \mathfrak{F}(\eta^{m+1})) T \right)
\\&\qquad\qquad\qquad\quad\ \times \left( \ns{u_0}_{4N} + \mathfrak{F}_0(F^1 ,F^3 ) + \mathfrak{F}(F^1(u^m,\eta^{m+1}),F^3( \eta^{m+1})) \right)
\\& \quad\le  P_6(1+ P_1(\tilde{\mathcal{E}}_0) + P_5(\tilde{\mathcal{E}}_0)) \exp\left( P_6(1+  P_5(\tilde{\mathcal{E}}_0)) T \right)
\\&\qquad\times
  \left(
P_1(\tilde{\mathcal{E}}_0)+ P_7(1+ \mathfrak{F}(\eta^{m+1}))\left(   \mathfrak{K} (\eta^{m+1}) +(\mathfrak{F}_{2N}(u^{m}))^2\right)\right)
\\&\quad \le  P_8(1+ \tilde{\mathcal{E}}_0 ) \exp\left( P_8(1+  \mathcal{E}_0 ) T \right)
  \left(
P_1(\tilde{\mathcal{E}}_0)+ P_7(1+  P_5(\tilde{\mathcal{E}}_0))\left(    P_5(\tilde{\mathcal{E}}_0) +( P_4(\tilde{\mathcal{E}}_0))^2\right)\right)
\\&\quad\le    P_9(\tilde{\mathcal{E}}_0)  \exp\left( P_8(1+ \tilde{\mathcal{E}}_0)T \right)
.\end{split}
\end{equation}
If we restrict that $T \le 1/ P_8(1+ \tilde{\mathcal{E}}_0)$, then
\begin{equation}\label{upup}
\mathfrak{K}(u^{m+1},p^{m+1}) \le    P_{10}(\tilde{\mathcal{E}}_0)
.
\end{equation}
W can now fix $ M_0:= P_5(\tilde{\mathcal{E}}_0)+ P_{10}(\tilde{\mathcal{E}}_0)$ and then take
\begin{equation}
T_0:=\min\left\{\frac{\tilde{\mathcal{E}}_0}{P_5(\tilde{\mathcal{E}}_0
)+ P_{10}(\tilde{\mathcal{E}}_0 )}
,\left(\frac{\tilde{\mathcal{E}}_0}{P_5(\tilde{\mathcal{E}}_0 )+
P_{10}(\tilde{\mathcal{E}}_0 )}\right)^4,
\frac{\varepsilon_0}{P_5(\tilde{\mathcal{E}}_0)} ,\frac{1}{P_8(1+
\tilde{\mathcal{E}}_0 )} \right\}.
\end{equation}
Hence if $0<T\le T_0$, then we deduce the estimates \eqref{claim123}
from \eqref{es01}, \eqref{smallness} and \eqref{upup}. We thus
conclude the claim and complete the proof of the theorem by the
induction.
\end{proof}

Now we can produce a unique local strong solution to the nonlinear $\kappa$-problem \eqref{kgeometric} with $\kappa>0$, which is the main result of this section.
\begin{thm}\label{l_knwp}
Let $\kappa>0$. Assume that $u_0\in H^{4N}(\Omega),\eta_0\in H^{4N+1}(\Sigma)$ with $\mathfrak{L}_0\le \varepsilon_0/2$ for  the universal $\varepsilon_0>0$ from Theorem \ref{l_iteration}, and that the initial data $\dt^j u(0)$, etc are as constructed in Section \ref{l_data_section} and satisfy the $(2N)^{th}$  compatibility conditions \eqref{l_comp_cond_2N}. There exists a $0< T_0^\kappa\le 1$ such that there exists a unique solution triple $(u,p,\eta)$ to the  problem \eqref{kgeometric} on the time interval $[0,T_0^\kappa]$ that  achieves the initial data  and  satisfies
\begin{equation}
 \mathfrak{K}(u,p)+\mathfrak{K}(\eta)  \le P_\kappa\left(\ns{u_0}_{4N}+\ns{\eta_0}_{4N+1}\right) \text{ and } \mathfrak{L} (\eta) \le \varepsilon_0.
\end{equation}
\end{thm}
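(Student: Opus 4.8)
\textit{Proof sketch.} The plan is to obtain $(u,p,\eta)$ as a limit of the approximate solutions $\{(u^m,p^m,\eta^m)\}_{m\ge 1}$ constructed in \eqref{mkgeometric}. Fix $M_0$ and set $T_0^\kappa:=T_0$ as in Theorem \ref{l_iteration}, so that for $0<T\le T_0^\kappa$ every member of the sequence obeys $\mathfrak{K}(u^m,p^m)+\mathfrak{K}(\eta^m)\le M_0$ and $\mathfrak{L}(\eta^m)\le\varepsilon_0$. Since these norms control each $\dt^j u^m$ and $\dt^j\eta^m$ one temporal derivative above the threshold needed for Lemma \ref{l_sobolev_infinity}, the Banach--Alaoglu theorem together with an Aubin--Lions compactness argument yields a subsequence (not relabeled) converging weakly-$\ast$ in the spaces underlying $\mathfrak{K}$, and strongly in any space one order below the top, to a triple $(u,p,\eta)$; by weak lower semicontinuity this limit satisfies $\mathfrak{K}(u,p)+\mathfrak{K}(\eta)\le M_0$ and $\mathfrak{L}(\eta)\le\varepsilon_0$.

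The crucial step is to upgrade weak convergence to strong convergence by showing $\{(u^m,p^m,\eta^m)\}$ is Cauchy in a low-regularity norm. Writing $v^m:=u^{m+1}-u^m$, $q^m:=p^{m+1}-p^m$, $\zeta^m:=\eta^{m+1}-\eta^m$ and subtracting \eqref{mkgeometric} for consecutive indices, the pair $(v^m,q^m)$ solves a linear $\mathcal{A}$--Navier--Stokes system of the type \eqref{l_linear_forced} and $\zeta^m$ a linear parabolic $\kappa$-equation of the type \eqref{l_k transport_equation}, whose forcing terms are the differences $F^i(u^m,\eta^{m+1})-F^i(u^{m-1},\eta^m)$ and $F^4(u^m,\eta^m)-F^4(u^{m-1},\eta^{m-1})$, together with commutators generated by $\mathcal{A}^{m+1}-\mathcal{A}^m$, $\n^{m+1}-\n^m$, $J^{m+1}-J^m$, etc.; crucially, the initial data vanish, $v^m(0)=0$, $\zeta^m(0)=0$, since all approximations attain $(u_0,\eta_0)$. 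Applying the basic energy estimate for the linear $\mathcal{A}$--Navier--Stokes problem (in the spirit of the proof of Theorem \ref{l_strong_solution}) at the level of $\h^0$ for $v^m$ and the standard parabolic estimate for $\zeta^m$ in $H^1(\Sigma)$, and using the uniform bound $M_0$ to estimate all coefficient and forcing differences by low-regularity norms of $(v^{m-1},q^{m-1},\zeta^{m-1})$, one arrives at an inequality of the form
\begin{equation*}
\norm{(v^m,q^m,\zeta^m)}_{\star}\le C_{\kappa,M_0}\,T^{\theta}\,\norm{(v^{m-1},q^{m-1},\zeta^{m-1})}_{\star}
\end{equation*}
for some $\theta>0$, where $\norm{\cdot}_{\star}$ denotes the relevant low-order norm on $[0,T]$ (the vanishing initial data are what allow the time-integration in the linear estimates to produce the factor $T^\theta$). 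Shrinking $T_0^\kappa$ if necessary so that $C_{\kappa,M_0}(T_0^\kappa)^{\theta}\le 1/2$ makes this a contraction, so the sequence is Cauchy and converges strongly in $\norm{\cdot}_{\star}$.

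Interpolating this strong low-regularity convergence against the uniform bound $M_0$ produces strong convergence in every norm strictly below the top; in particular $\bar\eta^{m+1}$ and all its relevant derivatives converge strongly, hence so do the coefficients $\mathcal{A}^{m+1},\n^{m+1},K^{m+1}$ and the nonlinear forcing terms $F^i$. We may therefore pass to the limit in each equation of \eqref{mkgeometric} and conclude that $(u,p,\eta)$ solves \eqref{kgeometric} on $[0,T_0^\kappa]$; the initial data are attained because every $(u^{m+1},\eta^{m+1})$ attains $(u_0,\eta_0)$ and, after interpolation, the convergence holds in $C^0$ in time. By Proposition \ref{l_data_norm_comparison} we have $M_0=P(\tilde{\mathcal{E}}_0)\le P_\kappa(\ns{u_0}_{4N}+\ns{\eta_0}_{4N+1})$, which gives the asserted estimate for $\mathfrak{K}(u,p)+\mathfrak{K}(\eta)$, while $\mathfrak{L}(\eta)\le\varepsilon_0$ was already recorded. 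Uniqueness follows from the same device: if $(u,p,\eta)$ and $(\tilde u,\tilde p,\tilde\eta)$ are two solutions on $[0,T_0^\kappa]$ with the same data and both satisfying the stated bounds, their difference satisfies the analogue of the difference system above, with $\norm{\cdot}_{\star}$ controlled by $C_\kappa T^{\theta}$ times itself, forcing them to coincide on $[0,T_0^\kappa]$ (after a final shrinking of $T_0^\kappa$, or directly via Gronwall). The main obstacle is the second step: correctly organizing the difference system so that every geometric commutator is seen to carry a genuine positive power of $T$ when bounded against $M_0$ --- it is precisely here that the extra temporal regularity encoded in $\mathfrak{K}$, together with the smallness of $T_0^\kappa$, is used.
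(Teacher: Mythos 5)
Your proposal is correct and follows essentially the same route as the paper: weak/weak-$\ast$ compactness from the uniform bounds of Theorem \ref{l_iteration} plus lower semicontinuity for the estimates, then a contraction in low-order norms for the difference system (with vanishing initial data producing the factor $T^\theta$) to upgrade to strong convergence of the whole sequence, interpolation to pass to the limit in \eqref{mkgeometric}, and the same contraction device for uniqueness. The paper merely defers the contraction step to Theorem 6.2 of \cite{GT_lwp}, which you have sketched explicitly and consistently with the iteration's structure (forcing differences $F^i(u^m,\eta^{m+1})-F^i(u^{m-1},\eta^m)$ and the commutators from $\a^{m+1}-\a^m$).
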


\begin{proof}
The uniform estimates of $\mathfrak{K}(u^{m+1},p^{m+1})+
\mathfrak{K} (\eta^{m+1})$ in \eqref{l_it_03} allows us to take weak
and weak-$\ast$ limits, up to extraction of a subsequence, so that
the sequence $(u^m,p^m,\eta^m)$ converges to a limit $(u,p,\eta)$ in
the weak sense of the norms in the definition of
$\mathfrak{K}(u,p)+\mathfrak{K}(\eta)$. If we recover the dependence
of $M_0$ and $T_0$ on the initial data and $\kappa$ as $M_0=P_\kappa
( \tilde{\mathcal{E}}_0 )=P_\kappa (\ns{u_0}_{4N}+\ns{\eta_0}_{4N+1}
)$ and $T_0=T_0^\kappa$, then we have from \eqref{l_it_03} that for
$0<T\le T_0^\kappa$, according to the weak and weak-$*$ lower
semicontinuity of the norms in
$\mathfrak{K}(u,p)+\mathfrak{K}(\eta)$,
\begin{equation}\label{kkes}
 \mathfrak{K}(u,p)+\mathfrak{K}(\eta)  \le P_\kappa\left(\ns{u_0}_{4N}+\ns{\eta_0}_{4N+1}\right).
\end{equation}
Similarly, the uniform smallness of $\mathfrak{L} (\eta^{m+1})$ in
\eqref{l_it_03} implies
\begin{equation}
\mathfrak{L} (\eta) \le \varepsilon_0.
\end{equation}

The convergence of a subsequence of $(u^m,p^m,\eta^m)$ does not
guarantee that we can simply pass to the limit in the system
\eqref{mkgeometric} in order to  produce the desired solution to
\eqref{kgeometric}. However, by restrict $T_0^\kappa$ to be smaller
if necessary, the uniform estimates \eqref{l_it_03} will allow us to
show that $(u^m,p^m,\eta^m)$ is contractive in certain norms of
lower order regularity; the proof is very similar to Theorem 6.2 of
\cite{GT_lwp} and we omit the details. This implies that the whole
sequence of $(u^m,p^m,\eta^m)$ converge strongly to the limit
$(u,p,\eta)$. This strong convergence together with the higher order
regularity weak convergence above allow us to interpolate to derive
a strong convergence result that is more than sufficient for us to
pass to the limit in the system \eqref{mkgeometric} for each $t\in
[0,T_0^\kappa]$. Then we find that the limit $(u,p,\eta)$ is a
strong solution of the $\kappa$-problem \eqref{kgeometric} on the
time interval $[0,T_0^\kappa]$ that achieves the initial data. The
uniqueness of the solution to \eqref{kgeometric} with $\kappa>0$
satisfying \eqref{kkes} follows also from the contractive argument.
\end{proof}

\section{Local in time theory}\label{sec local}

The aim of this section is that first for each fixed $\sigma>0$ we will pass to the limit as $\kappa\rightarrow0$ in the regularized $\kappa$-problem \eqref{kgeometric} to produce a local unique strong solution to the original problem \eqref{geometric}, and then we will pass to the limit as $\sigma\rightarrow0$ in \eqref{geometric} to rigorously justify the zero surface tension limit within a local time interval.

\subsection{Preliminaries}

To estimate $(\dt^ju,\dt^jp)$ for $j\ge 1$, we will recall that $(D_t^j u, \dt^j p)$ solves
\begin{equation}\label{l_lwp_0111}
\begin{cases}
\dt (D_t^j u) - \da (D_t^j u) + \naba (\dt^j p) = F^{1,j} & \text{in }\Omega \\
\diva(D_t^j u)=0 & \text{in }\Omega\\
\Sa(\dt^j p, D_t^j u) \n =  F^{3,j} & \text{on }\Sigma \\
D_t^j u =0 & \text{on }\Sigma_b
\end{cases}
\end{equation}
in the strong sense for $j=1,\dotsc,2N-1$, and in the weak sense of \eqref{l_weak_solution_pressureless} for $j=2N$.  Here the vectors $F^{1,j}$ and $F^{3,j}$ are as defined by \eqref{l_Fj_def} in which $F^1$ and $F^3$ are as defined by \eqref{l_F_forcing_def}.

We will restate the estimates of $F^{1,j}$ and $F^{3,j}$ with a slight modification from Lemma \ref{l_iteration_estimates_1}. To this end, we define
\begin{equation}\label{2l_Ffrak_def}
 \mathfrak{F}_\ast(F^1,F^3) := \sum_{j=1}^{2N-1} \ns{D_t^j F^1}_{L^2 H^{4N-2j -1}}+\ns{\dt D_t^{2N-1}F^1}_{L^2 (\H1)^\ast} +  \sum_{j=1}^{2N } \ns{\dt^j F^3}_{L^2 H^{4N-2j -1/2}}
\end{equation}
and
\begin{equation}
\begin{split}
 \mathfrak{F}(u,p) := &\ns{\dt^{2N-1} u}_{L^2 H^{2}} +\sum_{\ell=0}^{2N-2} \ns{\dt^\ell u}_{L^2 H^{4N-2\ell-1}}+ \sum_{\ell=0}^{2N-1} \ns{\dt^\ell u}_{L^\infty H^{4N-2\ell-2}}
\\ &  +
\ns{\dt^{2N-1} p}_{L^2 H^1}+\sum_{\ell=0}^{2N-2} \ns{\dt^\ell p}_{L^2 H^{4N-2\ell-2}} + \ns{\dt^\ell p}_{L^\infty H^{4N-2\ell-3}}.
\end{split}
\end{equation}

\begin{remark}
We can deduce an estimate of $\mathfrak{F}(u,p)$ in terms of $T$. Indeed, we apply Lemma \ref{l_sobolev_infinity} to obtain that for $\ell=0,\dots,2N-1$
\begin{equation}
\ns{\dt^\ell u}_{L^\infty H^{4N-2\ell-2}}\le \ns{\dt^\ell u(0)}_{ {4N-2\ell-2}}+\ns{\dt^\ell u}_{L^2 H^{4N-2\ell-2}}+\ns{\dt^{\ell+1} u}_{L^2 H^{4N-2\ell-2}},
\end{equation}
and for $\ell=0,\dots,2N-2$
\begin{equation}
\ns{\dt^\ell p}_{L^\infty H^{4N-2\ell-3}}\le \ns{\dt^\ell p(0)}_{ {4N-2\ell-3}}+\ns{\dt^\ell p}_{L^2 H^{4N-2\ell-3}}+\ns{\dt^{\ell+1} p}_{L^2 H^{4N-2\ell-3}}.
\end{equation}
Then we have
\begin{equation}\label{upast}
\begin{split}
 \mathfrak{F}(u,p)& \le \mathfrak{E}_0(u,p)+\sum_{\ell=0}^{2N-1} \ns{\dt^\ell u}_{L^2 H^{4N-2\ell}}+   \ns{\dt^\ell p}_{L^2 H^{4N-2\ell-1}}
 \\& \le \mathfrak{E}_0(u,p)+T\sum_{\ell=0}^{2N-1} \ns{\dt^\ell u}_{L^\infty H^{4N-2\ell}}+   \ns{\dt^\ell p}_{L^\infty H^{4N-2\ell-1}}
 \le \mathfrak{E}_0(u,p)+T\mathfrak{K}(u,p),
\end{split}
\end{equation}
where $\mathfrak{E}_0(u,p)$ and $\mathfrak{K}(u,p)$ are defined by \eqref{e0up} and \eqref{l_DEfrak_def}, respectively.
\end{remark}

We now record a version of estimates of $F^{1,j}$ and $F^{3,j}$, recalling $\mathfrak{F}(\eta)$ as defined in \eqref{l_Kfrak_def},

 \begin{lem}\label{fstar}
It holds that
\begin{equation}\label{2l_Fe1}
\begin{split}
  &\sum_{j=1}^{2N-1}\ns{F^{1,j}}_{L^2 H^{4N-2j-1}}  +  \ns{F^{3,j}}_{L^2 H^{4N-2j-1/2}}+\ns{\dt (F^{1,2N-1} -F^{3,2N-1})  }_{(\x_T)^*}
   \\&\quad\le  \mathfrak{F}_\ast(F^1,F^3) +  P(1+ \mathfrak{F}(\eta))
\mathfrak{F}(u,p).
\end{split}
\end{equation}
\end{lem}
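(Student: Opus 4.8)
The plan is to obtain \eqref{2l_Fe1} as a restatement of Lemma \ref{l_iteration_estimates_1} specialized to $m = 2N-1$, combined with the observation that the only norms of $F^1, F^3$ appearing in the right-hand sides of \eqref{l_ie1_01} and \eqref{l_ie1_03} are exactly those packaged into $\mathfrak{F}_\ast(F^1,F^3)$, and that the $u,p$ norms appearing there are controlled by $\mathfrak{F}(u,p)$. Concretely, I would take $m = 2N-1$ in \eqref{l_ie1_01}: this gives, for each $j = 1,\dots,2N-1$,
\begin{equation*}
\ns{F^{1,j}}_{L^2 H^{4N-2j-1}} + \ns{F^{3,j}}_{L^2 H^{4N-2j-1/2}} \le P(1+\mathfrak{F}(\eta))\Big(\mathfrak{F}(F^1,F^3) + \sum_{\ell=0}^{j-1}\big(\ns{\dt^\ell u}_{L^2 H^{4N-2j+1}} + \ns{\dt^\ell u}_{L^\infty H^{4N-2j}} + \ns{\dt^\ell p}_{L^2 H^{4N-2j}} + \ns{\dt^\ell p}_{L^\infty H^{4N-2j-1}}\big)\Big).
\end{equation*}
Since $j \le 2N-1$ and $\ell \le j-1$, each such $u,p$ norm on the right is dominated by a corresponding term of $\mathfrak{F}(u,p)$ (e.g.\ $\ns{\dt^\ell u}_{L^2 H^{4N-2j+1}} \le \ns{\dt^\ell u}_{L^2 H^{4N-2\ell-1}}$ when $j \ge \ell+1$, and similarly for the others, with the borderline case $\ell = 2N-2$, $j = 2N-1$ giving the $\ns{\dt^{2N-1}u}_{L^2 H^2}$ and $\ns{\dt^{2N-1}p}_{L^2 H^1}$ terms that are explicitly singled out in the definition of $\mathfrak{F}(u,p)$). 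The only subtlety here is that the bound is stated with $\mathfrak{F}(F^1,F^3)$, the \emph{full} forcing norm, whereas \eqref{2l_Fe1} uses the smaller $\mathfrak{F}_\ast(F^1,F^3)$; I would address this by rerunning the proof of Lemma \ref{l_iteration_estimates_1} and noting that in fact only the $D_t^j F^1$ and $\dt^j F^3$ contributions for $j \ge 1$ genuinely enter (the $j=0$ terms $F^1, F^3$ themselves being absorbed into the $\ell = 0$ summands built from $u,p$), which is precisely what \eqref{2l_Fe1} claims by using $\mathfrak{F}_\ast$ rather than $\mathfrak{F}$.

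For the dual-norm term $\ns{\dt(F^{1,2N-1} - F^{3,2N-1})}_{(\x_T)^*}$ I would invoke \eqref{l_ie1_03} with $m = 2N-1$, which yields
\begin{equation*}
\ns{\dt(F^{1,2N-1} - F^{3,2N-1})}_{(\x_T)^*} \le P(1+\mathfrak{F}(\eta))\Big(\mathfrak{F}(F^1,F^3) + \ns{\dt^{2N-1}u}_{L^2 H^2} + \ns{\dt^{2N-1}p}_{L^2 H^1} + \sum_{\ell=0}^{2N-2}\big(\ns{\dt^\ell u}_{L^\infty H^2} + \ns{\dt^\ell u}_{L^2 H^3} + \ns{\dt^\ell p}_{L^\infty H^1} + \ns{\dt^\ell p}_{L^2 H^2}\big)\Big),
\end{equation*}
and again each summand on the right is one of the terms in $\mathfrak{F}(u,p)$ (using $4N - 2\ell - 1 \ge 3$ and $4N - 2\ell - 2 \ge 2$ for $\ell \le 2N-2$, since $N \ge 3$), while the $F^1,F^3$ contribution is the part measured by $\ns{\dt D_t^{2N-1} F^1}_{L^2(\H1)^*}$ and the $\dt^j F^3$ sums in $\mathfrak{F}_\ast$. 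Summing the $j$-indexed estimates and the dual estimate, and absorbing all $u,p$ terms into $P(1+\mathfrak{F}(\eta))\,\mathfrak{F}(u,p)$ (using $P(1+\mathfrak{F}(\eta)) \ge 1$ so the polynomial factor can be pulled out front), gives \eqref{2l_Fe1}.

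The main obstacle is bookkeeping rather than analysis: one must verify carefully that every $u,p$ norm generated by the Leibniz expansion of $F^{i,j}$ and the $D_t$-versus-$\dt$ conversions in \eqref{l_Fj_def} has Sobolev index and $L^2$-versus-$L^\infty$ type matching a term already present in $\mathfrak{F}(u,p)$, paying particular attention to the top-order slots ($\dt^{2N-1}u$ in $L^2 H^2$ and $\dt^{2N-1}p$ in $L^2 H^1$) which sit at the regularity threshold and for which no margin is available, and to the fact that the dual-norm estimate \eqref{l_ie1_03} already accounts for the most delicate cancellation between $F^{1,2N-1}$ and $F^{3,2N-1}$ (the terms $\diva(\sg_\a(\dt^{2N-1}Ru))$ and $-\sg_\a(\dt^{2N-1}Ru)\n$, handled via \eqref{obser22}). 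Since the present statement merely repackages Lemma \ref{l_iteration_estimates_1}, no new estimate is needed; I would simply state that \eqref{2l_Fe1} follows from \eqref{l_ie1_01} and \eqref{l_ie1_03} with $m = 2N-1$ after identifying the right-hand terms, and refer back to the proof of Lemma \ref{l_iteration_estimates_1} for the details.
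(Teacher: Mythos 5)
Your proposal is correct and follows essentially the same route as the paper, whose proof is precisely to take $m=2N-1$ in \eqref{l_ie1_01} and \eqref{l_ie1_03} and to observe that $\mathfrak{F}(F^1,F^3)$ may be replaced by $\mathfrak{F}_\ast(F^1,F^3)$ because, by the definition \eqref{l_Fj_def}, only $D_t^jF^1$ and $\dt^jF^3$ with $j\ge1$ enter $F^{1,j}$ and $F^{3,j}$, the remaining terms being built from $u,p,\eta$ and hence absorbed into $P(1+\mathfrak{F}(\eta))\mathfrak{F}(u,p)$. The only slip is the parenthetical attributing $\ns{\dt^{2N-1}u}_{L^2H^2}$ and $\ns{\dt^{2N-1}p}_{L^2H^1}$ to the case $\ell=2N-2$, $j=2N-1$ of \eqref{l_ie1_01} (that case yields $\ns{\dt^{2N-2}u}_{L^2H^3}$, etc.); these top-order terms in fact come only from \eqref{l_ie1_03}, where you do account for them, so the argument stands.
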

\begin{proof}
The estimate \eqref{2l_Fe1} follows by taking $m=2N-1$ in \eqref{l_ie1_01} and \eqref{l_ie1_03} of Lemma \ref{l_iteration_estimates_1}; the term $\mathfrak{F}(F^1,F^3)$ therein can be replaced by $\mathfrak{F}_\ast(F^1,F^3)$ is due to the definition \eqref{l_Fj_def}.
\end{proof}

We also record here a version of estimates for the difference between $\dt v$ and $D_t v$.
\begin{lem}\label{l_iteration_estimate_66}
For $j=1,\dotsc,2N-1$,
\begin{equation}\label{sss1}
 \ns{\dt^j u - D_t^j u}_{L^2 H^{4N-2j+1}}  \le   P(1+ \mathfrak{F}(\eta))
\mathfrak{F}(u,0),
\end{equation}
\begin{equation}\label{sss2}
 \ns{\dt^j u - D_t^j u}_{L^\infty H^{4N-2j}}  \le   P(1+ \mathfrak{F}(\eta))
\mathfrak{F}(u,0),
\end{equation}
\begin{equation}\label{sss4}
\norm{\dt^{2N}u-\dt D_t^{2N-1}u}_{L^\infty H^0}^2  \le   P(1+ \mathfrak{F}(\eta)) \mathfrak{F}(u,0),
\end{equation}
and
\begin{equation}\label{sss3}
\begin{split}
& \ns{\dt D_t^{2N-1} u - \dt^{2N} u}_{L^2 H^1} + \ns{\dt^2 D_t^{2N-1} u - \dt^{2N+1} u}_{(\x_T)^*} \\
&\quad\le   P(1+ \mathfrak{F}(\eta))  \left( \mathfrak{F}(u,0)+  \ns{\dt^{2N-1} u}_{L^\infty H^{2}}  + \ns{\dt^{2N} u}_{(\x_T)^*}  \right).
\end{split}
\end{equation}
\end{lem}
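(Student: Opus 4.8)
The plan is to follow exactly the scheme used for Lemma \ref{l_iteration_estimate_3} and Lemma \ref{l_iteration_estimates_1}, adapted to the weaker control $\mathfrak{F}(\eta)<\infty$. First I would unwind the operator $D_t=\dt-R$ with $R=\dt M M^{-1}$ as in \eqref{l_Dt_def}: by induction on $j$ one writes $\dt^j u-D_t^j u$, and more generally each of the combinations appearing in \eqref{sss1}--\eqref{sss3}, as a finite sum of products, each summand being a product of several factors of the form $\dt^a(\text{entry of }R\text{ or its derivatives})$ --- which through $M=K\nab\Phi$ is a polynomial expression in the $\dt^a\nab^b\bar{\eta}$ with $b\le 1$, together with $J$, $K$ and their derivatives --- times exactly one factor $\dt^\ell u$ with $\ell$ strictly less than the number of applied $D_t$'s. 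Each such product is then estimated by the Sobolev product inequality of Lemma \ref{i_sobolev_product_1}, the usual Sobolev and trace embeddings, and the Poisson estimates of Lemma \ref{p_poisson}, distributing the available regularity so that the lowest-order factors are placed in $L^\infty$; the factors built from $\bar{\eta}$ are controlled by a generic polynomial $P(1+\mathfrak{F}(\eta))$ (after Lemma \ref{p_poisson}, $\mathfrak{F}(\eta)$ as in \eqref{l_Kfrak_def} controls precisely the derivatives of $\bar{\eta}$ that occur), and the single $u$-factor, carrying at most the indicated number of temporal derivatives, is controlled either by $\mathfrak{F}(u,0)$ or by the extra $u$-norms on the right-hand sides.

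For \eqref{sss1}, \eqref{sss2}, \eqref{sss4}, and the $L^2 H^1$ part of \eqref{sss3}, this is routine: only non-negative-order norms occur, the highest-order $\bar{\eta}$-factor appearing is $\dt^j\nab\bar{\eta}$ (respectively $\dt^{2N}\nab\bar{\eta}$ for \eqref{sss4} and the $L^2 H^1$ part of \eqref{sss3}), which by Lemma \ref{p_poisson} lies in the right space and is bounded by $\mathfrak{F}(\eta)$, while the accompanying $u$-factor and all lower-order contributions are bounded as in Lemma \ref{l_iteration_estimates_1}. The only difference from Lemma \ref{l_iteration_estimate_3} is that $\mathfrak{F}(\eta)$ is merely finite, so a polynomial $P(1+\mathfrak{F}(\eta))$ replaces a linear factor; the smallness hypothesis on $\eta$ used elsewhere plays no role here.

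The main obstacle is the $(\x_T)^*$ bound in \eqref{sss3}, i.e. the estimate of $\dt^2 D_t^{2N-1}u-\dt^{2N+1}u$. After expansion, the two worst types of terms are $(\dt^{2N+1}\nab\bar{\eta})\,u$ and the pair $\diva(\sg_\a(\dt^{2N-1}R\,u))$ together with its boundary counterpart $-\sg_\a(\dt^{2N-1}R\,u)\n$, both of which involve $\dt^{2N}\nab^2\bar{\eta}$; under $\mathfrak{F}(\eta)<\infty$ neither can be placed in any $L^2 H^k$ with $k\ge 0$. Here I would copy the duality arguments of \eqref{obser1} and \eqref{obser22}--\eqref{obser33}. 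For the first type: as in \eqref{obser1}, integrating by parts gives $\br{\dt^{2N+1}\nab\bar{\eta},\varphi}_{*}=\int_\Sigma\dt^{2N+1}\eta\,\varphi-\int_\Omega\dt^{2N+1}\bar{\eta}\,\nab\varphi$ for $\varphi\in\H1$, so $\ns{\dt^{2N+1}\nab\bar{\eta}}_{L^2 (\H1)^*}\ls\ns{\dt^{2N+1}\eta}_{L^2 H^{-1/2}}\le\mathfrak{F}(\eta)$, and the remaining $u$-factor is absorbed via the product estimate \eqref{i_s_p_04}, using $u\in L^\infty H^{4N-2}$ with $4N-2>5/2$. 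For the second type: pairing with $\psi\in\x_T$ and integrating by parts, the two terms combine into $\ip{\dt^{2N-1}R\,u}{\psi}_{\h^1_T}$ exactly as in \eqref{obser22}, reducing the estimate to $\ns{\dt^{2N-1}R\,u}_{L^2 H^1}$, which is controlled using $\ns{\dt^{2N}\nab\bar{\eta}}_{L^2 H^1}\ls\ns{\dt^{2N}\eta}_{L^2 H^{3/2}}\le\mathfrak{F}(\eta)$ from \eqref{obser33} and a suitable norm of $u$. All remaining, strictly lower-order terms are handled as in the first paragraph; the terms in which the single $u$-factor carries the top number of temporal derivatives are precisely what the quantities $\ns{\dt^{2N-1}u}_{L^\infty H^2}$ and $\ns{\dt^{2N}u}_{(\x_T)^*}$ on the right-hand side of \eqref{sss3} are there to absorb. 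Assembling these estimates yields \eqref{sss3} and completes the proof.
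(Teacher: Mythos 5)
Your argument is essentially the paper's own proof: the paper disposes of this lemma by taking $m=2N-1$ in Lemma \ref{l_iteration_estimate_3}, whose proof is exactly the scheme you describe --- expand $D_t=\dt-R$, estimate the resulting products via Lemma \ref{i_sobolev_product_1}, the trace embeddings and Lemma \ref{p_poisson} with the $\bar\eta$-factors absorbed into $P(1+\mathfrak{F}(\eta))$, and handle the dual-space terms by the duality bounds \eqref{obser1} and \eqref{obser33}, with the top temporal-derivative $u$-factors absorbed by $\ns{\dt^{2N-1}u}_{L^\infty H^2}$ and $\ns{\dt^{2N}u}_{(\x_T)^*}$. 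One small inaccuracy that does not create a gap: the pair $\diva(\sg_\a(\dt^{2N-1}R\,u))$, $-\sg_\a(\dt^{2N-1}R\,u)\n$ and the factor $\dt^{2N}\nab^2\bar\eta$ belong to the forcing estimate \eqref{l_ie1_03} (used in Lemma \ref{fstar}), not to the operator difference $\dt^2 D_t^{2N-1}u-\dt^{2N+1}u$ estimated here, whose worst $\bar\eta$-factors are only $\dt^{2N+1}\nab\bar\eta$ and $\dt^{2N}\nab\bar\eta$ since $R=\dt M M^{-1}$ carries at most one spatial derivative of $\bar\eta$ --- and those you already treat with the correct tools.
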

\begin{proof}
These estimates follow similarly as Lemma \ref{fstar}.
\end{proof}

Next, to estimate $\dt^j\eta$ for $j\ge 1$, we will recall that
\begin{equation}\label{kkk}
\dt \eta    = \kappa \Delta \eta+ \kappa\Psi+F^4  \text{ on } \Sigma.
\end{equation}

Finally, to estimate $(u,p,\eta)$ itself without temporal derivatives, we shall write the system \eqref{kgeometric} in the perturbed form:
\begin{equation}\label{perturb2}
\begin{cases}
 \partial_t u- \Delta u+\nabla p=G^1\quad&\text{in }\Omega
\\ \diverge{u}=G^2&\text{in }\Omega
\\ ( pI-\mathbb{D}(u)) e_3= (\eta  -\sigma \Delta_\ast\eta) e_3+G^3&\text{on }\Sigma
\\ \partial_t\eta-\kappa\Delta_\ast\eta-u_3=\kappa\Psi+G^4&\text{on }\Sigma
\\ u=0 &\text{on }\Sigma_b.
\end{cases}
\end{equation}
Here we have written the vector $G^1 $ for
\begin{equation}\label{G1_def}
G_i^1 =\dt \bar{\eta} \tilde{b} K \p_3 u_i-
u_j \a_{jk}\p_k u_i+  (\a_{jk}\a_{jl}-\delta_{jk}\delta_{jl})\p_{kl} u_i+\a_{jk}\p_k\a_{jl}\p_l u_i-(\a_{ij}-\delta_{ij})\p_j p,
\end{equation}
the function $G^2$ for
\begin{equation}\label{G2_def}
G^2=-(\a_{ij}-\delta_{ij})\p_j u_i,
\end{equation}
the vector $G^3$ for
\begin{equation}\label{G3_def}
\begin{split}
G^3 = & \p_1 \eta
\begin{pmatrix}
 p-\eta -2(\p_1 u_1 -AK \p_3 u_1  ) \\
 -\p_2 u_1 - \p_1 u_2  + BK \p_3 u_1 + AK \p_3 u_2 \\
 -\p_1 u_3 - K \p_3 u_1 + AK \p_3 u_3
\end{pmatrix}
\\ &+
\p_2 \eta
\begin{pmatrix}
  -\p_2 u_1 - \p_1 u_2  + BK \p_3 u_1 + AK \p_3 u_2  \\
  p-\eta -2(\p_2 u_2 -BK \p_3 u_2  )  \\
 -\p_2 u_3 - K \p_3 u_2 + BK \p_3 u_3
\end{pmatrix}
+\begin{pmatrix}
  (K-1) \p_3 u_1  +AK \p_3 u_3 \\
  (K-1) \p_3 u_2  +BK \p_3 u_3  \\
  2(K-1)\p_3 u_3
\end{pmatrix}
\\& -\sigma \diverge_\ast(((1+|\nab_\ast\eta|^2)^{-1/2}-1)\nab_\ast\eta)\n
,
\end{split}
\end{equation}
and the function $G^4$ for
\begin{equation}\label{G4_def}
 G^4 =-u\cdot\nab_\ast\eta.
\end{equation}

We want to apply the horizontal spatial derivatives $\pa$ to \eqref{perturb2} with $\al\in \mathbb{N}^2$ so that $|\al|\le 4N$. However, the lower boundary $\Sigma_b$ may not be flat, so we are not free to apply such derivatives to \eqref{perturb2}. The idea then is to localize away from the lower boundary. To this end, we introduce the cutoff function $\chi \in C^\infty_c(\mathbb{R})$ so that
\begin{equation}\label{chi_properties}
\supp \chi  \subset [-3b_-/4, 1] \text{ and }\chi(x_3) = 1 \text{ for }x_3 \in [-b_-/2, 1/2].
\end{equation}
Multiplying the equations in \eqref{perturb2} by $\chi$, we find that $( \chi u, \chi p, \eta)$ solve
\begin{equation}\label{p_localized_equations}
 \begin{cases}
  \dt (\chi  u) + \nab (\chi  p) - \Delta (\chi  u) = \chi  G^1 + H^{1 } & \text{in }\Omega \\
  \diverge(\chi  u) = \chi  G^2 + H^{2 } & \text{in }\Omega \\
  ((\chi  p) I - \sg (\chi  u) )e_3 =   (\eta  -\sigma\Delta_\ast \eta) e_3 + G^3   & \text{on }\Sigma \\
   \dt \eta -\kappa\Delta_\ast\eta- (\chi  u_3) =\kappa\Psi+ G^4  &\text{on } \Sigma \\
  \chi  u =0 & \text{on }\Sigma_b,
 \end{cases}
\end{equation}
where
\begin{equation}\label{p_H_def}
H^{1 } = \p_3 \chi  (  p e_3 - 2  \p_3 u) - \p_3^2 \chi   u
\text{ and }
 H^{2 } = \p_3 \chi   u_3.
\end{equation}
Since now $\chi u$ and $\chi p$ have the support  away from $\Sigma_b$, we could apply $\partial^\alpha$ ($\alpha \in \mathbb{N}^{2}$) to \eqref{p_localized_equations}.

\subsection{Local well-posedness of \eqref{geometric} with surface tension}
In this subsection we fix $\sigma>0$, and our aim is to pass to the limit as $\kappa\rightarrow0$ in the $\kappa$-problem \eqref{kgeometric}. We define a special quantity for this subsection as
\begin{equation}\label{dnk}
\begin{split}
  \mathfrak{K}^{\kappa}=\mathfrak{K}^{\kappa}(u,p,\eta):&
  =\sup_{0\le t\le T}\mathfrak{E}(t) +\int_0^T\mathfrak{D}^{\kappa}(t)\,dt+ \ns{\dt^{2N+1} u}_{(\x_T)^*},
 \end{split}
\end{equation}
where
\begin{equation}
\mathfrak{E}= \sum_{j=0}^{2N} \ns{\dt^j u}_{ {4N-2j }} + \sum_{j=0}^{2N-1}\ns{\dt^j p}_{ {4N-2j-1}}+ \sum_{j=0}^{2N}   \ns{\dt^j \eta}_{ {4N-2j+1}}
\end{equation}
and
\begin{equation}
\begin{split}
\mathfrak{D}^{\kappa}= &\sum_{j=0}^{2N} \ns{\dt^j u}_{ {4N-2j +1}} + \sum_{j=0}^{2N-1}\ns{\dt^j p}_{{4N-2j}}
\\&+\kappa  \ns{ \eta}_{ {4N+2}}  +  \ns{ \eta}_{ {4N+3/2}}+  \sum_{j=1}^{2N+1}   \ns{\dt^j \eta}_{ {4N-2j+2}}.
\end{split}
\end{equation}

For each $\kappa>0$, we let $(u,p,\eta)$ (dropping the
$\kappa$-dependence temporally) be the solution to the problem
\eqref{kgeometric} on $[0,T_0^\kappa]$ produced by Theorem
\ref{l_knwp}. Moreover, Theorem \ref{l_knwp} implies that for
$0<T\le T_0^\kappa$, we have $\mathfrak{K}^{\kappa}<\infty$ and the
following smallness of $\eta$:
\begin{equation}\label{smallsmall}
  \mathfrak{L}(\eta):=\sup_{0\le t\le T} \ns{ \eta(t)}_{4N-1/2}\le \varepsilon_0
\end{equation}
for the universal constant $\varepsilon_0\ll1$. Our goal is to derive the $\kappa$-independent estimates of the solution on $[0,T_0^\kappa]$, and the key step is to derive the uniform bound of $\mathfrak{K}^{\kappa}$. We will see that the smallness \eqref{smallsmall} is crucial for closing the $\kappa$-independent energy estimates since we can not get a power of $T$ in front of the estimates of some terms stemming from the mean curvature.

We recall from the construction of the initial data in Section \ref{l_data_section} that
\begin{equation}\label{2l_Fe2}
\mathfrak{E}(0)\ls \mathfrak{E}_0(u,p)+\ns{\eta(0)}_{4N+1} +   \sum_{j=1}^{2N+1} \ns{\dt^j \eta(0)}_{4N-2j+3/2}\ls\mathcal{E}_{2N}^1(0)\le P(\tilde{\mathcal{E}}_0),
 \end{equation}
where $\tilde{\mathcal{E}}_0:=\ns{u_0}_{4N}+\ns{\eta_0}_{4N+1}$, and $\mathcal{E}_{2N}^1(0)$ is as defined by \eqref{p_energy_def} with that $\sigma=1$, $n=2N$ and $t=0$. We also recall from the construction of $\Psi$ in Section \ref{sec compensator} that, by \eqref{2l_Fe2},
\begin{equation}\label{2l_Fe2123}
 \sum_{j=0}^{2N} \ns{\dt^j \Psi}_{L^2 H^{4N-2j }}   +  \sum_{j=0}^{2N}\ns{\dt^j \Psi}_{L^\infty H^{4N-2j -1}}
 \ls \sum_{j=0}^{2N}\ns{ \dt^j\eta(0) }_{ {4N-2j +1}}
  \le  P (\tilde{\mathcal{E}}_0).
\end{equation}

\subsubsection{The $\kappa$-independent energy estimates with temporal derivatives}\label{ksec}
We first record an estimate of the nonlinear terms $\mathfrak{F}_\ast(F^1,F^3)$ (defined by \eqref{2l_Ffrak_def}) and an estimate of $\mathfrak{F}(F^4)$ (as defined in \eqref{l_Kfrak_u2n_def1}) in terms of $\mathfrak{K}^{\kappa}$ (defined by \eqref{dnk}).
\begin{lem}\label{2l_Ffrak_bound}
Let $F^1(u,\eta),F^3(\eta),F^4(u,\eta)$ be defined by \eqref{l_F_forcing_def}. Then
 \begin{equation}\label{fast}
 \mathfrak{F}_\ast(F^1(u,\eta),F^3(\eta)) \le  P(\tilde{\mathcal{E}}_0)+ P( T^{1/4} \mathfrak{K}^{\kappa})
\end{equation}
and
\begin{equation}\label{fast2}
 \mathfrak{F} (F^4(u,\eta) ) \le  P(\tilde{\mathcal{E}}_0)+ P( T^{1/4} \mathfrak{K}^{\kappa}).
\end{equation}
 \end{lem}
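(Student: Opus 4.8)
The plan is to run the expansion-and-estimate scheme from the proofs of Lemma~\ref{l_iteration_estimates_1}, Lemma~\ref{l_Ffrak_bound} and Lemma~\ref{fstar}, but now trading the bare finiteness of the $\eta$- and $u$-norms used there for the sharper information packaged in $\mathfrak{K}^{\kappa}$. Concretely, using $\bar\eta=\mathcal{P}\eta$, the Leibniz rule, and the definition $D_t=\dt-R$ with $R=\dt M M^{-1}$, expand $D_t^jF^1(u,\eta)$, $\dt^jF^3(\eta)$ and $\dt^jF^4(u,\eta)$ into finite sums of products of a ``coefficient factor'' --- a smooth function of finitely many lower-order space-time derivatives of $\bar\eta$ (together with $\tilde b,1/b$) --- and a single ``top factor'' that is one higher-order space-time derivative of $u$, $p$, $\bar\eta$ or $\eta$; the list of top factors is read off exactly as in those lemmas (for $D_t^jF^1$ the highest are $\dt^{j+1}\nab\bar\eta$ and $\dt^j\nab u$; for $\dt^jF^3$ it is $\dt^j\nab_\ast^2\eta$; for $\dt^jF^4$ it is $\dt^j u|_\Sigma$ and $\dt^j\nab_\ast\eta$). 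The smallness \eqref{smallsmall} of $\mathfrak{L}(\eta)$ keeps $J,K,\a,\n$ and the denominator $\sqrt{1+|\nab_\ast\eta|^2}$ uniformly near equilibrium and makes every lower-order \emph{spatial} derivative of $\eta$, hence of $\bar\eta$ (via Lemma~\ref{p_poisson} and trace), small; the remaining lower-order factors --- lower-order derivatives of $u$ and lower-order \emph{temporal} derivatives of $\eta$ --- are not small but are controlled by $\sqrt{\mathfrak{K}^{\kappa}}$.

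The point of the estimate is the following gap: for every $j$, $\mathfrak{K}^{\kappa}$ controls $\dt^j u,\dt^jp,\dt^j\eta$ in $L^\infty_t H^{s_0}$ through $\mathfrak{E}$ and in $L^2_t H^{s_0+1}$ through $\mathfrak{D}^{\kappa}$, whereas the norms appearing in $\mathfrak{F}_\ast$ and $\mathfrak{F}(F^4)$ only require the top factor at a level $s$ with $s_0\le s<s_0+1$. Hence, with $\theta=s_0+1-s\in(0,1]$ and any top factor $w$,
\[
\norm{w}_{L^2 H^s}^2\le\int_0^T\norm{w}_{H^{s_0}}^{2\theta}\norm{w}_{H^{s_0+1}}^{2(1-\theta)}\,dt
\le T^{\theta}\left(\sup_{0\le t\le T}\norm{w}_{H^{s_0}}^2\right)^{\theta}\left(\int_0^T\norm{w}_{H^{s_0+1}}^2\,dt\right)^{1-\theta}\le T^{\theta}\,\mathfrak{K}^{\kappa},
\]
by H\"older in time; for the $D_t^jF^1$ and $\dt^jF^3$ terms one finds $\theta\ge\tfrac12$ (indeed $\theta=1$ for many of them), and for the $L^2$-part of $\mathfrak{F}(F^4)$ one finds $\theta=\tfrac12$. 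Combining this with a product estimate (Lemma~\ref{i_sobolev_product_1}, the usual Sobolev and trace embeddings, and Lemma~\ref{p_poisson}) that puts the coefficient factor in $L^\infty_t H^{s_2}$ and the top factor in the interpolated $L^2_t H^s$, and using that low-order $\eta$-factors contribute only powers of $\varepsilon_0\le1$ while lower-order temporal factors are likewise interpolated into the gap, every term is seen to be $\le P(\tilde{\mathcal{E}}_0)+P(T^{1/4}\mathfrak{K}^{\kappa})$: the $t=0$ slices produced when trading an $L^\infty_t$-norm for $L^2_t$-norms via Lemma~\ref{l_sobolev_infinity} are bounded by $P(\tilde{\mathcal{E}}_0)$ through Proposition~\ref{l_data_norm_comparison} and \eqref{2l_Fe2}, and the compensator $\Psi$, which enters whenever $\dt^j\eta$ is replaced using the surface equation $\dt\eta=\kappa\Delta_\ast\eta+\kappa\Psi+F^4$, is bounded by $P(\tilde{\mathcal{E}}_0)$ through \eqref{2l_Fe2123}. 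All bounds are independent of $\kappa$ (and of $\sigma$): the only factors of $\kappa$ arise from this replacement and either carry their own factor $\kappa\le1$ or sit against the $\kappa\ns{\eta}_{4N+2}$-term already present in $\mathfrak{D}^{\kappa}$; the surface-tension coefficient enters $F^3$ only linearly, and $\sigma\le1$.

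Two families of terms need more care and carry the worst $T$-power. First, the $(\H1)^*$-piece $\ns{\dt D_t^{2N-1}F^1}_{L^2(\H1)^*}$: its top derivatives $\dt^{2N+1}\nab\bar\eta$ and $\dt^{2N}\nab u$ cannot be estimated directly, so one pairs against test functions in $\H1$, integrates by parts to transfer a spatial derivative onto the test function, and uses the trace chain $H^{3/4}(\Omega)\hookrightarrow H^{1/4}(\Sigma)\subset H^{-1/2}(\Sigma)$ together with Lemma~\ref{p_poisson} and the product estimate \eqref{i_s_p_04}, exactly as in \eqref{obser1}, \eqref{obser22} and \eqref{2nu}; for $\dt^{2N+1}\eta$ one further uses the surface equation $\dt^{2N+1}\eta=\kappa\Delta_\ast\dt^{2N}\eta+\kappa\dt^{2N}\Psi+\dt^{2N}F^4$. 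Here the available gap is only $\tfrac14$ (interpolating $\dt^{2N}u$ between the $\mathfrak{E}$-level $L^\infty_tH^0(\Omega)$ and the $\mathfrak{D}^{\kappa}$-level $L^2_tH^1(\Omega)$), and the $\ns{\dt^{2N+1}u}_{(\x_T)^*}$-term of $\mathfrak{K}^{\kappa}$ is also at hand --- this is the source of the $T^{1/4}$ in the statement. Second, the curvature term $\sigma H\n$ in $F^3$: its leading part $\sigma\Delta_\ast\dt^j\eta$ requires $\dt^j\eta$ in $L^2_tH^{4N-2j+3/2}(\Sigma)$, a level lying strictly between the $\mathfrak{E}$-level $H^{4N-2j+1}$ and the $\mathfrak{D}^{\kappa}$-level $H^{4N-2j+2}$, so $\theta=\tfrac12$ and, since $\sigma^2\le1$, the contribution is $\lesssim T^{1/2}\mathfrak{K}^{\kappa}$; crucially $\mathfrak{F}_\ast$ contains no $j=0$ term for $F^3$, so one never needs $F^3$ at the very top level $H^{4N-1/2}$, where no such slack would exist.

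The main obstacle is exactly this bookkeeping: one must check that on \emph{every} product a strictly positive power of $T$ survives and that its exponent is at least $\tfrac14$ of the polynomial degree in $\mathfrak{K}^{\kappa}$, so that the bound can be phrased as $P(\tilde{\mathcal{E}}_0)+P(T^{1/4}\mathfrak{K}^{\kappa})$. This is tight precisely for the $(\H1)^*$-piece and the curvature piece, and it works only because $\mathfrak{E}$ and $\mathfrak{D}^{\kappa}$ were designed with exactly these regularity indices --- leaving a genuine $\tfrac14$- (resp.\ $\tfrac12$-) derivative margin above the energy level --- and because $F^1,F^3,F^4$ vanish at the equilibrium, which forces the factored structure that lets the smallness \eqref{smallsmall} and the temporal interpolation be applied to separate factors. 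Apart from these two points the estimate reduces to the same routine product/trace/Poisson computations as in Lemmas~\ref{l_iteration_estimates_1}, \ref{l_Ffrak_bound} and \ref{fstar}, which we therefore do not repeat.
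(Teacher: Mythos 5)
Your proposal is correct and follows essentially the same route as the paper: the paper simply quotes the structural bounds already proved (namely \eqref{f2 bound} and the refinement \eqref{jaja00} of \eqref{f bound}, which exploits exactly your observation that $\mathfrak{F}_\ast$ contains no $j=0$ term for $F^3$) and then inserts the powers of $T$ by interpolating the intermediate functionals $\mathfrak{F}_{2N}(u)$, $\mathfrak{F}_{2N}(\eta)$, $\mathfrak{F}(\eta)$ between the $L^\infty_t$-energy and $L^2_t$-dissipation levels of $\mathfrak{K}^\kappa$, handling $\dt^{2N+1}\eta$ via the regularized surface equation with the compensator bound \eqref{2l_Fe2123}, just as you describe. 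The only (harmless) deviations are presentational: you re-run the termwise expansion instead of citing the intermediate inequalities, and you invoke the smallness \eqref{smallsmall} for the low-order $\eta$-coefficients, which the paper does not need here since the factor $P(1+\mathfrak{F}(\eta))$ is itself absorbed after $\mathfrak{F}(\eta)$ is bounded by $P(\tilde{\mathcal{E}}_0)+P(T^{1/4}\mathfrak{K}^\kappa)$.
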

 \begin{proof}
We first prove \eqref{fast2}. We recall from the estimate \eqref{f2 bound} of Lemma \ref{l_Ffrak_bound} that
  \begin{equation}\label{jaja0}
 \mathfrak{F}(F^4(u,\eta) ) \le  P(1+ \mathfrak{F}_{2N}(\eta))\mathfrak{F}_{2N}(u),
\end{equation}
where $\mathfrak{F}_{2N}(u)$, $\mathfrak{F}_{2N}(\eta)$ are defined by \eqref{l_Kfrak_u2n_def} and \eqref{l_Kfrak_eta2n_def}, respectively.
We then estimate $\mathfrak{F}_{2N}(\eta)$ and $\mathfrak{F}_{2N}(u)$. We recall from the estimates \eqref{eses2}--\eqref{eses3} that, by the estimates \eqref{2l_Fe2} and the definition of $\mathfrak{K}^{\kappa}$ \eqref{dnk},
 \begin{equation} \label{jaja1}
 \mathfrak{F}_{2N}(\eta) \le \sum_{j=0}^{2N-1} \ns{\dt^j \eta(0)}_{ {4N-2j}}+2T\sum_{j=0}^{2N} \ns{\dt^j \eta}_{L^\infty H^{4N-2j+1}}
 \le  P(\tilde{\mathcal{E}}_0)+ 2 T \mathfrak{K}^{\kappa}
 \end{equation}
 and
  \begin{equation} \label{jaja2}
 \mathfrak{F}_{2N}(u )  \le \sum_{j=0}^{2N-1}\ns{\dt^j u(0)}_{ {4N-2j-1/4}}+C T^{1/4}\mathfrak{K}_{2N}(u) \le  P(\tilde{\mathcal{E}}_0)+ C T^{1/4} \mathfrak{K}^{\kappa}.
 \end{equation}
Plugging the estimates \eqref{jaja1}--\eqref{jaja2} into \eqref{jaja0}, then we have
  \begin{equation}
 \mathfrak{F}(F^4(u,\eta) )  \le  P(1+  P(\tilde{\mathcal{E}}_0)+ 2  T \mathfrak{K}^{\kappa}) ( P(\tilde{\mathcal{E}}_0)+ C T^{1/4} \mathfrak{K}^{\kappa})
 \le  P(\tilde{\mathcal{E}}_0)+  P( T^{1/4} \mathfrak{K}^{\kappa}).
\end{equation}
This gives \eqref{fast2}.

Now we turn to prove \eqref{fast}. We refine the estimate \eqref{f bound} from the proof of Lemma \ref{l_Ffrak_bound} as
\begin{equation}\label{jaja00}
 \mathfrak{F}_\ast(F^1(u,\eta),F^3(\eta)) \le  P(1+ \mathfrak{F}(\eta))\left(   \mathfrak{F} (\eta) +(\mathfrak{F}_{2N}(u))^2\right)
\end{equation}
Here the reason that we are able to use $\mathfrak{F}(\eta)$ (as defined in \eqref{l_Kfrak_def}) to replace $\mathfrak{K}(\eta)$ (defined by \eqref{l_Ketafrak_def}) in \eqref{f bound} is due to that there is at least one temporal derivative in the definition of $\mathfrak{F}_\ast(F^1,F^3)$; the appearance of $\mathfrak{K}(\eta)$ in \eqref{f bound} is due to the estimate \eqref{0909}.
We then appeal to an estimate of $ \mathfrak{F}(\eta)$. First we trivially have
\begin{equation}\label{l_tt_812}
 \ns{\eta}_{L^2 H^{4N+1/2}} \le  T\ns{\eta}_{L^\infty H^{4N+1/2}}  \le T {\mathfrak{K}^\kappa }.
\end{equation}
Next for $j=1,\dots,2N$, we use the Sobolev interpolation to obtain
\begin{equation}\label{keta1}
  \ns{\dt^j \eta}_{L^2 H^{4N-2j+3/2}}
   \le T^{1/2}\norm{\dt^j  \eta}_{L^\infty H^{4N-2j +1}}\norm{\dt^j  \eta}_{L^2 H^{4N-2j +2}}  \le T^{1/2} \mathfrak{K}^{\kappa} .
\end{equation}
Now for $j= 2N+1$, we must resort to that $\eta$ is the solution to the regularized surface $\kappa$-problem \eqref{l_k transport_equation}; then we have, by \eqref{keta1} and \eqref{fast2},
\begin{equation}\label{l_tt_81233}
\begin{split}
  \ns{\dt^{2N+1} \eta}_{L^2 H^{-1/2}} &\le     \kappa^2\ns{\dt^{2N} \eta}_{L^2 H^{3/ 2}} + \kappa^2\ns{\dt^{2N} \Psi}_{L^2 H^{-1/2}}  +\ns{ \dt^{2N} F^4(u,\eta)}_{L^2 H^{-1/2}}
  \\&\ls   {T}^{1/2}\mathfrak{K}^{\kappa}+ P(\tilde{\mathcal{E}}_0)+T^{1/4} \mathfrak{K}^{\kappa}+  P(  T^{1/4} \mathfrak{K}^{\kappa})
\le  P(\tilde{\mathcal{E}}_0)+ P( T^{1/4} \mathfrak{K}^{\kappa}).
  \end{split}
\end{equation}
Hence, we conclude from \eqref{l_tt_812}--\eqref{l_tt_81233}, by further applying Lemma \ref{l_sobolev_infinity},
\begin{equation}\label{jaja3}
\begin{split}
 \mathfrak{F} ( \eta  )
 &\ls \ns{ \eta(0)}_{4N }+\sum_{j=1}^{2N} \ns{\dt^j \eta(0)}_{4N-2j+1/2}+ \ns{ \eta}_{L^2 H^{4N+1/2}}+ \sum_{j=1}^{2N+1} \ns{\dt^j \eta}_{L^2 H^{4N-2j+3/2}}
 \\&\le   P(\tilde{\mathcal{E}}_0)+T {\mathfrak{K}^\kappa }+ T^{1/2} \mathfrak{K}^{\kappa}+ P(  T^{1/4} \mathfrak{K}^{\kappa})\le  P(\tilde{\mathcal{E}}_0) + P(  T^{1/4} \mathfrak{K}^{\kappa}).
 \end{split}
\end{equation}
Plugging the estimates \eqref{jaja2} and \eqref{jaja3} into \eqref{jaja00}, we deduce
\begin{equation}
\begin{split}
 \mathfrak{F}_\ast(F^1(u,\eta),F^3(\eta)) &\le  P(1+ P(\tilde{\mathcal{E}}_0)+ P(  T^{1/4} \mathfrak{K}^{\kappa})) \left( P(\tilde{\mathcal{E}}_0)+ P(  T^{1/4} \mathfrak{K}^{\kappa})+
( 2 T^{1/4} \mathfrak{K}^{\kappa})^2\right)
\\&\le   P(\tilde{\mathcal{E}}_0)+ P(  T^{1/4} \mathfrak{K}^{\kappa}).
 \end{split}
\end{equation}
This gives \eqref{fast}, and we then conclude the lemma.
\end{proof}

We shall now use the equations \eqref{l_lwp_0111} to estimate $(\dt^ju,\dt^jp)$ for $j\ge 1$.
\begin{prop}\label{k11}
Suppose that $(u,p,\eta)$ is the solution to \eqref{kgeometric}. Then
\begin{equation}\label{kestimate1}
\begin{split}
   & \sum_{j=1}^{2N} \ns{\dt^j u}_{L^2 H^{4N-2j +1}} + \ns{\dt^{2N+1} u}_{(\x_T)^*} + \sum_{j=1}^{2N-1}\ns{\dt^j p}_{ L^2 H^{4N-2j}}  \\
 &\quad+\sum_{j=1}^{2N} \ns{\dt^j u}_{L^\infty H^{4N-2j }} + \sum_{j=1}^{2N-1}\ns{\dt^j p}_{L^\infty H^{4N-2j-1}}
\\& \qquad\le  \left(  P(\tilde{\mathcal{E}}_0)  + P(  T^{1/4} \mathfrak{K}^{\kappa})
  \right)  \exp\left( \left(  P(\tilde{\mathcal{E}}_0)  + P(  T^{1/4} \mathfrak{K}^{\kappa})
  \right) T \right).
   \end{split}
\end{equation}
\end{prop}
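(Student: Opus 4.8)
The norms on the left-hand side of \eqref{kestimate1} are exactly the pieces of $\mathfrak{K}(u,p)$ (see \eqref{l_DEfrak_def}) that carry at least one temporal derivative. Since $(u,p,\eta)$ solves \eqref{kgeometric}, the pair $(D_t^j u,\dt^j p)$ solves the linear $\a$--Stokes system \eqref{l_lwp_0111} with forcing $(F^{1,j},F^{3,j})$ built from $F^1(u,\eta),F^3(\eta)$ via \eqref{l_Fj_def}, in the strong sense for $1\le j\le 2N-1$ and in the weak sense of \eqref{l_weak_solution_pressureless} for $j=2N$. The plan is to run the same energy-plus-elliptic iteration that underlies Theorem \ref{l_linear_wp}: for the top temporal level, the natural energy estimate of Theorem \ref{l_strong_solution} applied to $D_t^{2N-1}u$ in $\x_T$ yields control of $D_t^{2N}u$ in $\x_T$ and of $\ns{\dt^{2N+1}u}_{(\x_T)^*}$; then one descends in $j$, at each step upgrading the spatial regularity of $(D_t^j u,\dt^j p)$ via the $\a$--Stokes regularity of Proposition \ref{l_stokes_regularity} applied to \eqref{l_lwp_0111} (with the datum $F^1$ there taken to be $F^{1,j}-\dt D_t^j u$, the quantity $\dt D_t^j u$ being controlled at the next higher temporal level), and finally converting $D_t^j$ back to $\dt^j$ by Lemma \ref{l_iteration_estimate_66}. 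All of this is carried out in \cite{GT_lwp}, and the smallness \eqref{smallsmall} of $\mathfrak{L}(\eta)$ is enough to license every lemma and proposition used there; hence the only task is to supply the correct bounds for the initial data and the forcing.

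\textbf{The inputs.} For the data, Proposition \ref{l_data_norm_comparison} together with \eqref{2l_Fe2} and \eqref{l_dnc_01} gives $\mathfrak{E}_0(u,p)+\mathfrak{E}_0(\eta)+\mathfrak{F}_0(F^1(u,\eta),F^3(\eta))\le P(\tilde{\mathcal{E}}_0)$. For the forcing, Lemma \ref{fstar} controls the relevant norms of $F^{1,j},F^{3,j}$ for $1\le j\le 2N$ (and the $(\x_T)^*$-norm at $j=2N-1$) by $\mathfrak{F}_\ast(F^1,F^3)+P(1+\mathfrak{F}(\eta))\mathfrak{F}(u,p)$, and Lemma \ref{l_iteration_estimate_66} controls the $\dt^j-D_t^j$ discrepancies by $P(1+\mathfrak{F}(\eta))\mathfrak{F}(u,0)$ plus strictly lower-order terms. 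Now Lemma \ref{2l_Ffrak_bound} furnishes $\mathfrak{F}_\ast(F^1(u,\eta),F^3(\eta))\le P(\tilde{\mathcal{E}}_0)+P(T^{1/4}\mathfrak{K}^{\kappa})$ and, through \eqref{jaja3}, $\mathfrak{F}(\eta)\le P(\tilde{\mathcal{E}}_0)+P(T^{1/4}\mathfrak{K}^{\kappa})$, while \eqref{upast} gives $\mathfrak{F}(u,p)\le \mathfrak{E}_0(u,p)+T\mathfrak{K}(u,p)\le P(\tilde{\mathcal{E}}_0)+T\mathfrak{K}^{\kappa}$. Since $0<T<1$ one has $T\mathfrak{K}^{\kappa}\le T^{1/4}\mathfrak{K}^{\kappa}$, and using $P(a+b)\le P(a)+P(b)$ and Young's inequality repeatedly, every product of quantities of this type collapses to the form $P(\tilde{\mathcal{E}}_0)+P(T^{1/4}\mathfrak{K}^{\kappa})$.

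\textbf{Assembling and the main obstacle.} Feeding these bounds into the Theorem \ref{l_linear_wp}-type estimate, the prefactor $P(1+\mathfrak{F}_0(\eta)+\mathfrak{F}(\eta))$, the data-plus-forcing factor, and the exponential $\exp(P(1+\mathfrak{F}(\eta))T)$ become $P(\tilde{\mathcal{E}}_0)+P(T^{1/4}\mathfrak{K}^{\kappa})$ and $\exp((P(\tilde{\mathcal{E}}_0)+P(T^{1/4}\mathfrak{K}^{\kappa}))T)$, which is precisely \eqref{kestimate1}. The genuinely delicate point, rather than bookkeeping, is producing the power of $T$ in front of each occurrence of $\mathfrak{K}^{\kappa}$: this is possible only because the left-hand side of \eqref{kestimate1} involves $j\ge 1$, so every forcing contribution, and every $\eta$-term entering $\mathfrak{F}(\eta)$, carries at least one temporal derivative and can be traded for a factor $T^{1/2}$ by Lemma \ref{l_sobolev_infinity} and a Sobolev interpolation of the kind used in \eqref{keta1}, or for a factor $T^{1/4}$ in the quadratic term $(\mathfrak{F}_{2N}(u))^2$ as in \eqref{jaja2}. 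The one term for which this interpolation does not apply directly, namely $\ns{\dt^{2N+1}\eta}_{L^2 H^{-1/2}}$ inside $\mathfrak{F}(\eta)$, must instead be handled by invoking the regularized surface equation \eqref{kkk} to trade it for lower-order $\eta$-norms together with $\ns{\dt^{2N}F^4}_{L^2 H^{-1/2}}$, exactly as in \eqref{l_tt_81233}. In this way the absence of a power of $T$ in the raw forcing estimates \eqref{f bound}--\eqref{f2 bound} is recovered at the level of temporal derivatives, which is what ultimately makes the semi-small data hypothesis suffice.
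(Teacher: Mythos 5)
Your proposal is correct and follows essentially the same route as the paper: the top temporal level is handled by the energy estimate of Theorem \ref{l_strong_solution} applied to the $j=2N-1$ problem \eqref{l_lwp_0111}, the lower levels $1\le j\le 2N-2$ by a descending induction through the $\a$--Stokes regularity of Proposition \ref{l_stokes_regularity} with forcing $F^{1,j}-\dt(D_t^j u)$, the conversion $D_t^j\to\dt^j$ by Lemma \ref{l_iteration_estimate_66}, and the final collapse of all data, forcing and $\eta$-norms to the form $P(\tilde{\mathcal{E}}_0)+P(T^{1/4}\mathfrak{K}^{\kappa})$ via Lemmas \ref{fstar} and \ref{2l_Ffrak_bound} and the bounds \eqref{jaja3}, \eqref{upast}. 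Your identification of the two delicate points — extracting a power of $T$ from every term because $j\ge 1$, and handling $\ns{\dt^{2N+1}\eta}_{L^2H^{-1/2}}$ through the regularized surface equation as in \eqref{l_tt_81233} — matches the paper's treatment exactly.
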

\begin{proof}
First, taking $j=2N-1$ in \eqref{l_lwp_0111}, we then apply the estimate \eqref{l_ss_02} of Theorem \ref{l_strong_solution} to find, using the estimate \eqref{2l_Fe1} of Lemma \ref{fstar},
\begin{equation}\label{2n1}
\begin{split}
   & \norm{D_t^{2N-1}u}_{L^\infty H^2}^2+\norm{D_t^{2N-1}u}_{L^2 H^3}^2 + \norm{\dt D_t^{2N-1}u}_{L^\infty H^0}^2 + \norm{\dt D_t^{2N-1}u}_{L^2 H^1}^2
   \\&\quad + \norm{\dt^2 D_t^{2N-1}u}_{(\x_T)^*}^2 + \norm{\dt^{2N-1}p}_{L^\infty H^1}^2  + \norm{\dt^{2N-1}p}_{L^2 H^2}^2
\\&\quad\quad\le   P(1+ \mathcal{K}(\eta)) \exp\left( P(1+ \mathcal{K}(\eta)) T \right)
\left(  P(\tilde{\mathcal{E}}_0)+\ns{F^{1,2N-1}}_{L^2H^1}\right. \\
&\qquad\quad \left. +\ns{F^{3,2N-1}}_{L^2H^{3/2}}+\ns{\dt (F^{1,2N-1} -F^{3,2N-1})  }_{(\x_T)^*}
  \right)
  \\&\quad\quad\le   P(1+ \mathfrak{F}(\eta)) \exp\left( P(1+ \mathfrak{F}(\eta)) T \right)
\left(  P(\tilde{\mathcal{E}}_0) +\mathfrak{F}_\ast(F^1,F^3) +
\mathfrak{F}(u,p)
  \right):=\mathcal{Z}.
   \end{split}
\end{equation}
We will allow $\mathcal{Z}$ to change in the sense that the polynomial $P$ changes from line to line.

While for $j=1,\dots,2N-2$, we rewrite \eqref{l_lwp_0111} as
\begin{equation}\label{l_lwp_0122}
\begin{cases}
- \da (D_t^j u) + \naba (\dt^j p) =- \dt (D_t^j u) +F^{1,j} & \text{in }\Omega \\
\diva(D_t^j u)=0 & \text{in }\Omega\\
\Sa(\dt^j p, D_t^j u) \n =  F^{3,j} & \text{on }\Sigma \\
D_t^j u =0 & \text{on }\Sigma_b.
\end{cases}
\end{equation}
We apply the $\a$--Stokes elliptic regularity theory of Proposition \ref{l_stokes_regularity} with $k=4N$ and $r=4N-2j+1\le 4N-1$ to \eqref{l_lwp_0122} to obtain, using the estimate \eqref{2l_Fe1} of Lemma \ref{fstar},
\begin{equation}\label{idid}
\begin{split}
 &\ns{D_t^j u}_{L^2H^{4N-2j+1}}+\ns{ \dt^j p}_{L^2H^{4N-2j} }
 \\&\quad \ls  \ns{\dt (D_t^j u) }_{L^2H^{4N-2j-1}}
+\ns{F^{1,j}}_{L^2H^{4N-2j-1}}+\ns{F^{3,j}}_{L^2H^{4N-2j-1/2}}
\\& \quad \ls  \ns{ D_t^{j+1} u  }_{L^2H^{4N-2j-1}}+\ns{R D_t^{j } u  }_{L^2H^{4N-2j-1}}+\mathfrak{F}_\ast(F^1,F^3) +  P(1+ \mathfrak{F}(\eta))
\mathfrak{F}(u,p)
\\& \quad \ls  \ns{ D_t^{2N-1} u  }_{L^2H^{3}}+ \mathfrak{F}_\ast(F^1,F^3) +  P(1+ \mathfrak{F}(\eta))
\mathfrak{F}(u,p) .
\end{split}
\end{equation}
A simple induction on \eqref{idid} yields
\begin{equation}\label{2n12}
\begin{split}
 &\sum_{j=1}^{2N-2}\ns{D_t^j u}_{L^2H^{4N-2j+1}}+\ns{ \dt^j p}_{L^2H^{4N-2j} }
\\& \quad \ls  \ns{ D_t^{2N-1} u  }_{L^2H^{3}}
+\mathfrak{F}_\ast(F^1,F^3) +  P(1+ \mathfrak{F}(\eta))
\mathfrak{F}(u,p) .
\end{split}
\end{equation}
Hence, we combine \eqref{2n1} and \eqref{2n12} to have
\begin{equation}\label{ee1}
\begin{split}
 &\sum_{j=1}^{2N-1}\ns{D_t^j u}_{L^2H^{4N-2j+1}}+\ns{ \dt^j p}_{L^2H^{4N-2j} }+ \norm{\dt D_t^{2N-1}u}_{L^2 H^1}^2  + \norm{\dt^2 D_t^{2N-1}u}_{(\x_T)^*}^2
 \\&\quad+\norm{D_t^{2N-1}u}_{L^\infty H^2}^2+ \norm{\dt D_t^{2N-1}u}_{L^\infty H^0}^2   + \norm{\dt^{2N-1}p}_{L^\infty H^1}^2
 \le   \mathcal{Z}.
\end{split}
\end{equation}

Now we want to derive more estimates based on the estimate \eqref{ee1}. By the estimates \eqref{sss1}--\eqref{sss4} of Lemma \ref{l_iteration_estimate_66} and \eqref{ee1}, we have
\begin{equation}\label{tot1}
\begin{split}
 \sum_{j=1}^{2N-1}\ns{\dt^j u}_{L^2H^{4N-2j+1}} &\le \sum_{j=1}^{2N-1}\ns{D_t^j u}_{L^2H^{4N-2j+1}} +
 \sum_{j=1}^{2N-1}\ns{\dt^j u-D_t^j u}_{L^2H^{4N-2j+1}}
 \\&  \le \mathcal{Z}+ P(1+ \mathfrak{F}(\eta))
\mathfrak{F}(u,0)\ls \mathcal{Z}
\end{split}
\end{equation}
and
\begin{equation}\label{sssss}
\begin{split}
  \norm{\dt^{2N-1}u}_{L^\infty H^2}^2+\norm{\dt^{2N}u}_{L^\infty H^2}^2&\le  \norm{D_t^{2N-1}u}_{L^\infty H^2}^2+\norm{\dt^{2N-1}-D_t^{2N-1}u}_{L^\infty H^0}^2
  \\&\quad+\norm{ \dt D_t^{2N-1}u}_{L^\infty H^0}^2
  +\norm{\dt^{2N}u-\dt D_t^{2N-1}u}_{L^\infty H^0}^2
 \\&  \le \mathcal{Z}+ P(1+ \mathfrak{F}(\eta))
\mathfrak{F}(u,0)\ls \mathcal{Z}.
\end{split}
\end{equation}
By the estimate \eqref{sss3} of Lemma \ref{l_iteration_estimate_66}, \eqref{ee1} and \eqref{sssss}, we have
\begin{equation}
\begin{split}
& \ns{ \dt^{2N} u}_{L^2 H^1} + \ns{ \dt^{2N+1} u}_{(\x_T)^*}\le
 \ns{\dt D_t^{2N-1} u }_{L^2 H^1} + \ns{\dt^2 D_t^{2N-1} u }_{(\x_T)^*}
 \\&\qquad\qquad\qquad\qquad+\ns{\dt D_t^{2N-1} u - \dt^{2N} u}_{L^2 H^1} + \ns{\dt^2 D_t^{2N-1} u - \dt^{2N+1} u}_{(\x_T)^*} \\
&\quad\le  \mathcal{Z}+ P(1+ \mathfrak{F}(\eta))  \left( \mathfrak{F}(u,0)+  \ns{\dt^{2N-1} u}_{L^\infty H^{2}}  + T\ns{\dt^{2N} u}_{L^\infty H^0}  \right)
 \le  \mathcal{Z} .
\end{split}
\end{equation}
On the other hand, we use Lemma \ref{l_sobolev_infinity} and \eqref{ee1}--\eqref{tot1} to deduce
\begin{equation}\label{321}
\begin{split}
 &\sum_{j=1}^{2N-2}\ns{\dt^j u}_{L^\infty H^{4N-2j}}+\ns{ \dt^j p}_{L^\infty H^{4N-2j-1} }
 \\&\ \ \le \sum_{j=1}^{2N-2}\ns{\dt^j u(0)}_{{4N-2j}}+\ns{ \dt^j p(0)}_{{4N-2j-1} } +\sum_{j=1}^{2N-1}\ns{\dt^j u}_{L^2H^{4N-2j+1}}+\ns{ \dt^j p}_{L^2H^{4N-2j} }
\\&\ \ \le   P(\tilde{\mathcal{E}}_0)+\mathcal{Z}\ls \mathcal{Z}.
\end{split}
\end{equation}

We can now conclude the estimates for $(\dt^ju,\dt^jp)$ for $j\ge 1$ from \eqref{ee1}--\eqref{321} as
\begin{equation}\label{upes}
\begin{split}
   & \sum_{j=1}^{2N} \ns{\dt^j u}_{L^2 H^{4N-2j +1}} + \ns{\dt^{2N+1} u}_{(\x_T)^*} + \sum_{j=1}^{2N-1}\ns{\dt^j p}_{ L^2 H^{4N-2j}} \\
 &\quad+\sum_{j=1}^{2N} \ns{\dt^j u}_{L^\infty H^{4N-2j }} + \sum_{j=1}^{2N-1}\ns{\dt^j p}_{L^\infty H^{4N-2j-1}}
 \le   \mathcal{Z}.
   \end{split}
\end{equation}
Turning back to estimate $\mathcal{Z}$, we employ the estimates \eqref{jaja3}, \eqref{fast}, \eqref{upast} and \eqref{2l_Fe2} to deduce, since $\mathfrak{K}(u,p)\le \mathfrak{K}^\kappa$,
\begin{equation} \label{zestimate}
\begin{split}
   \mathcal{Z} &= P(1+ \mathfrak{F}(\eta)) \exp\left( P(1+ \mathfrak{F}(\eta)) T \right)
\left(  P(\tilde{\mathcal{E}}_0) +\mathfrak{F}_\ast(F^1,F^3) +
\mathfrak{F}(u,p)
  \right)
  \\&\le  P(1+  P(\tilde{\mathcal{E}}_0)+ P(  T^{1/4} \mathfrak{K}^{\kappa})) \exp\left( P(1+  P(\tilde{\mathcal{E}}_0)+ P(  T^{1/4} \mathfrak{K}^{\kappa})) T \right)
  \\&\quad\times  \left(  P(\tilde{\mathcal{E}}_0)  + P(  T^{1/4} \mathfrak{K}^{\kappa}) +
T \mathfrak{K}^{\kappa}
  \right)
  \\&\le \left(  P(\tilde{\mathcal{E}}_0)  + P(  T^{1/4} \mathfrak{K}^{\kappa})
  \right)  \exp\left( P(1+ \mathcal{E}_0+ P(  T^{1/4} \mathfrak{K}^{\kappa})) T \right)
  \\&\le \left(  P(\tilde{\mathcal{E}}_0)  + P(  T^{1/4} \mathfrak{K}^{\kappa})
  \right)  \exp\left( \left(  P(\tilde{\mathcal{E}}_0)  + P(  T^{1/4} \mathfrak{K}^{\kappa})
  \right) T \right)  .
   \end{split}
\end{equation}
Plugging the estimate \eqref{zestimate} into \eqref{upes}, we then deduce \eqref{kestimate1}.
\end{proof}

We then use the equation \eqref{kkk} to estimate $\dt^j\eta$ for $j\ge 1$.
\begin{prop}\label{k33}
Suppose that $(u,p,\eta)$ is the solution to \eqref{kgeometric}. Then
\begin{equation} \label{etaall}
  \sum_{j=1}^{2N+1}\ns{\dt^j \eta}_{L^2 H^{ 4N -2j +2} }+\sum_{j=1}^{2N}\ns{\dt^j \eta}_{L^\infty H^{ 4N -2j +1} }
 \ls P(\tilde{\mathcal{E}}_0)+ P( T^{1/4} \mathfrak{K}^{\kappa})+\kappa^2\ns{  \eta}_{ L^2 H^{4N +2 }}.
\end{equation}
\end{prop}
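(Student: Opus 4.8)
The estimate \eqref{etaall} is obtained directly from the scalar equation \eqref{kkk}, namely $\dt\eta=\kappa\Delta_\ast\eta+\kappa\Psi+F^4$ on $\Sigma$ with $F^4=F^4(u,\eta)=u\cdot\n$, by solving algebraically for $\dt^j\eta$ and running a short induction; it is essentially the argument used for \eqref{eta es12 2}--\eqref{eta es 2} in the proof of Theorem \ref{l_transport_theorem}, now read off \emph{a posteriori} for the solution of \eqref{kgeometric}. First I would apply $\dt^{j-1}$ to \eqref{kkk} and, using that $\Delta_\ast$ maps $H^{s+2}(\Sigma)$ to $H^s(\Sigma)$, conclude that for $j=1,\dotsc,2N+1$,
\[
\ns{\dt^j\eta}_{L^2H^{4N-2j+2}}\ls \kappa^2\ns{\dt^{j-1}\eta}_{L^2H^{4N-2(j-1)+2}}+\kappa^2\ns{\dt^{j-1}\Psi}_{L^2H^{4N-2(j-1)}}+\ns{\dt^{j-1}F^4}_{L^2H^{4N-2(j-1)}},
\]
where for $j=1$ the first term on the right is exactly $\kappa^2\ns{\eta}_{L^2H^{4N+2}}$.

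Next I would unroll this recursion, a finite induction on $j$ from $1$ to $2N+1$. Using $0<\kappa\le 1$ so that $\kappa^{2\ell}\le\kappa^2$, the iteration bounds $\ns{\dt^j\eta}_{L^2H^{4N-2j+2}}$ by $\kappa^2\ns{\eta}_{L^2H^{4N+2}}$ plus a sum over $\ell=0,\dotsc,2N$ of the quantities $\kappa^2\ns{\dt^\ell\Psi}_{L^2H^{4N-2\ell}}$ and $\ns{\dt^\ell F^4}_{L^2H^{4N-2\ell}}$. The $\Psi$-contributions are controlled by $P(\tilde{\mathcal{E}}_0)$ via the compensator bound \eqref{2l_Fe2123} (again using $\kappa\le1$), while the $F^4$-contributions form the $L^2$-in-time part of $\mathfrak{F}(F^4(u,\eta))$ and are thus bounded by $P(\tilde{\mathcal{E}}_0)+P(T^{1/4}\mathfrak{K}^\kappa)$ by \eqref{fast2} of Lemma \ref{2l_Ffrak_bound}. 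Summing over $j$ gives the bound on $\sum_{j=1}^{2N+1}\ns{\dt^j\eta}_{L^2H^{4N-2j+2}}$. For the $L^\infty$-in-time norms, I would apply Lemma \ref{l_sobolev_infinity} with $s_1=4N-2j+2$ and $s_2=4N-2(j+1)+2$, whose half-sum is $s=4N-2j+1$, to get $\ns{\dt^j\eta}_{L^\infty H^{4N-2j+1}}\le\ns{\dt^j\eta(0)}_{4N-2j+1}+\ns{\dt^j\eta}_{L^2H^{4N-2j+2}}+\ns{\dt^{j+1}\eta}_{L^2H^{4N-2(j+1)+2}}$ for $j=1,\dotsc,2N$; the initial-data term is absorbed into $\mathfrak{E}_0(\eta)\le P(\tilde{\mathcal{E}}_0)$ by \eqref{2l_Fe2}, and the two $L^2H$ terms are already controlled. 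This yields \eqref{etaall}.

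I do not expect a genuine obstacle here: the scheme is algebraic followed by a finite induction, and the only external inputs are \eqref{2l_Fe2123}, Lemma \ref{2l_Ffrak_bound}, and Lemma \ref{l_sobolev_infinity}. The only point deserving care is the bookkeeping of Sobolev indices down the recursion — in particular verifying that the $j=2N+1$ endpoint lands in $L^2H^0=L^2L^2$ rather than a negative-order space. This is exactly the extra half-derivative afforded by the artificial viscosity term $\kappa\Delta_\ast\eta$, and it is also the reason why $\mathfrak{F}(\eta)$ (which only reaches $H^{-1/2}$ at order $2N+1$, cf. \eqref{l_Kfrak_def}) is too weak for the present bound; here one genuinely needs to invoke the $\kappa$-equation rather than transport-type estimates.
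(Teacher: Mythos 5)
Your proof is correct and is essentially the paper's own argument: the paper simply recalls the recursion \eqref{eta es12 2}--\eqref{eta es 2} established in the proof of Theorem \ref{l_transport_theorem} (apply $\dt^{j-1}$ to \eqref{kkk}, solve algebraically, induct, then use Lemma \ref{l_sobolev_infinity} for the $L^\infty H^k$ norms), bounds the initial data and compensator contributions via \eqref{2l_Fe2}--\eqref{2l_Fe2123}, and inserts the forcing estimate \eqref{fast2}, exactly as you propose. Your additional observation that one should take $0<\kappa\le 1$ so that the unrolled powers $\kappa^{2\ell}$ are dominated by $\kappa^{2}$ with a $\kappa$-independent constant is a sensible bookkeeping point that the paper leaves implicit.
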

\begin{proof}
We recall from \eqref{eta es12 2}--\eqref{eta es 2} that, by \eqref{2l_Fe2}--\eqref{2l_Fe2123},
\begin{equation}  \label{eett}
\begin{split}
 & \sum_{j=1}^{2N+1}\ns{\dt^j \eta}_{L^2 H^{ 4N -2j +2} }+\sum_{j=1}^{2N}\ns{\dt^j \eta}_{L^\infty H^{ 4N -2j +1} }
 \\&\quad\ls\kappa^2\ns{  \eta}_{ L^2 H^{4N +2 }}+\sum_{j=0}^{2N} \ns{\dt^j \eta(0)}_{4N-2j+1} + \mathfrak{F}(F^4(u,\eta))
 \\&\quad\ls\kappa^2\ns{  \eta}_{ L^2 H^{4N +2 }}+ P(\tilde{\mathcal{E}}_0)+ \mathfrak{F}(F^4(u,\eta)).
 \end{split}
\end{equation}
Plugging the estimate \eqref{fast2} into \eqref{eett}, we obtain \eqref{etaall}.
\end{proof}

\subsubsection{The $\kappa$-independent energy estimates without temporal derivatives}\label{ksec2}

To simplify notation, we define the energy and dissipation without temporal derivatives by, where we temporally keep the dependence of $\sigma$,
\begin{equation}
  {\mathcal{E}}  =  \ns{   u  }_{4N}  +  \ns{ p}_{4N-1} +  \ns{ \eta}_{4N} + \sigma \ns{ \eta}_{4N+1}
\end{equation}
and
\begin{equation}
     \mathcal{D}
   = \ns{   u  }_{4N+1}+\ns{   p  }_{4N}  +\kappa\sigma \ns{ \eta}_{4N +2}+ \sigma^2 \ns{ \eta}_{4N +3/2}+\ns{   \eta}_{4N-1/2}.
\end{equation}
We also define the ``horizontal" energy and dissipation with localization as
\begin{equation}
 \bar{\mathcal{E}}  =  \ns{  \nab_{\ast 0}^{\  4N}(\chi u) }_{0}  +   \ns{ \eta}_{4N} + \sigma \ns{ \eta}_{4N+1}\text{ and } \bar{\mathcal{D}}  = \ns{ \nab_{\ast 0}^{\  4N}  \mathbb{D}(\chi u) }_{0} +\kappa\sigma \ns{ \eta}_{4N +2},
\end{equation}
where $\chi$ is as defined in \eqref{chi_properties}.

We first record an estimate of the nonlinear terms $G^i$, defined by \eqref{G1_def}--\eqref{G4_def}.
\begin{lem}\label{gle}
Let $\varepsilon_0$ be the small universal constant in \eqref{smallsmall}. Then we have
\begin{equation}\label{Gesti}
\begin{split}
 \norm{ G^{1}}_{4N-1}+\norm{ G^{2}}_{4N} +\norm{ G^{3}}_{4N-1/2} +\norm{ G^{4}}_{4N-1/2}\ls   {\varepsilon_0}\sqrt{ \mathcal{D}}+  P(1+ \mathfrak{E} ).
  \end{split}
\end{equation}
\end{lem}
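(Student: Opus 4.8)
The plan is to estimate each of the four nonlinear terms $G^1,\dots,G^4$ defined by \eqref{G1_def}--\eqref{G4_def} by expanding them via the Leibniz rule and then applying the product estimates of Lemma \ref{i_sobolev_product_1}, the Poisson estimates of Lemma \ref{p_poisson}, trace embeddings, and the usual Sobolev embeddings. The fundamental mechanism is that each $G^i$ is (at least) quadratic: every term is a product in which one factor carries a coefficient built from $\bar\eta$ (and hence, through the harmonic extension and \eqref{ABJ_def}, is controlled by norms of $\eta$), and the other factor is linear in derivatives of $u$, $p$, or $\nab_\ast\eta$. This structure is precisely why the right side of \eqref{Gesti} has a factor of $\sqrt{\mathcal{D}}$ (the top-order linear factor) multiplied by a smallness constant $\varepsilon_0$ coming from \eqref{smallsmall}, plus a polynomial $P(1+\mathfrak{E})$ absorbing all lower-order contributions.

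First I would handle $G^2=-(\a_{ij}-\delta_{ij})\p_j u_i$, which is the simplest: the matrix $\a-I$ involves $AK,BK,K-1$, each of which vanishes when $\eta=0$ and is controlled, via Lemma \ref{p_poisson} and the smallness \eqref{smallsmall}, so that $\norm{\a-I}_{L^\infty}\ls\varepsilon_0$ at top order while $\norm{\a-I}_{H^{4N}}\ls P(1+\mathfrak{E})$; splitting the $4N$ derivatives by Leibniz then gives $\norm{G^2}_{4N}\ls\varepsilon_0\norm{u}_{4N+1}+P(1+\mathfrak{E})\ls\varepsilon_0\sqrt{\mathcal{D}}+P(1+\mathfrak{E})$. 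Next, $G^4=-u\cdot\nab_\ast\eta$ on $\Sigma$: here $\norm{u}_{H^{4N-1/2}(\Sigma)}\ls\varepsilon_0$ is not what we want; rather, when all derivatives hit $\nab_\ast\eta$ we get $\norm{u}_{L^\infty}\norm{\eta}_{4N+1/2}$, and $\norm{\eta}_{4N+1/2}\ls\sqrt{\mathcal{D}}$ — wait, $\mathcal{D}$ contains $\ns{\eta}_{4N-1/2}$ and $\sigma^2\ns{\eta}_{4N+3/2}$, so the clean $\sqrt{\mathcal{D}}$ bound only works if the $\eta$ factor is put at order $\le 4N-1/2$; the remaining half-derivative gap is closed by using that $u$ traced to $\Sigma$ is small in a norm slightly above $H^{4N-1/2}$, or more precisely by interpolating and using the $\varepsilon_0$-smallness of $\norm{\eta}_{4N-1/2}$ on the complementary piece. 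For $G^1$ one expands all five groups of terms; the genuinely top-order pieces are $(\a\a-\delta\delta)\p^2 u$ and $(\a-I)\p p$, handled exactly as $G^2$ with $\sqrt{\mathcal{D}}$ supplied by $\norm{u}_{4N+1}$ and $\norm{p}_{4N}$ respectively, while $\dt\bar\eta\,\tilde b K\p_3 u$, $\a\p\a\p u$ need the time-derivative $\dt\eta$ norms hidden inside $\mathfrak{E}$ (note $\dt\eta(0)$ etc.\ sit in $\mathfrak{E}$ through the definition) — these go into $P(1+\mathfrak{E})$. Finally $G^3$ has the extra surface-tension piece $-\sigma\diverge_\ast(((1+|\nab_\ast\eta|^2)^{-1/2}-1)\nab_\ast\eta)\n$; the scalar function $(1+|\nab_\ast\eta|^2)^{-1/2}-1$ is quadratic in $\nab_\ast\eta$ and small in $L^\infty$, so at top order it contributes $\varepsilon_0\cdot\sigma\norm{\eta}_{4N+3/2}\ls\varepsilon_0\sqrt{\sigma^2\ns{\eta}_{4N+3/2}}\le\varepsilon_0\sqrt{\mathcal{D}}$, and the remaining lower-order $\sigma\|\cdot\|$ terms (with $\sigma\le 1$) go into $P(1+\mathfrak{E})$; the non-surface-tension part of $G^3$ (all the $\p_1\eta,\p_2\eta$ and $(K-1)$ prefactors) is estimated like $G^2$ on $\Sigma$ using trace embeddings $H^{r}(\Omega)\hookrightarrow H^{r-1/2}(\Sigma)$.

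The main obstacle I anticipate is bookkeeping the half-derivative and the precise split between the ``$\varepsilon_0\sqrt{\mathcal{D}}$'' bucket and the ``$P(1+\mathfrak{E})$'' bucket: because $\mathcal{D}$ only controls $\eta$ at orders $4N-1/2$ (without surface tension) and $4N+3/2$ (with a factor $\sigma^2$), one must be careful that the top-order $\eta$-factor appearing in $G^3$ and $G^4$ is always accompanied by either the surface tension coefficient $\sigma$ (so it pairs with the $\sigma^2\ns{\eta}_{4N+3/2}$ term of $\mathcal{D}$) or is placed at an order low enough to be dominated by $\ns{\eta}_{4N-1/2}\le\varepsilon_0$. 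The rest is the standard but lengthy Leibniz-plus-product-estimate routine, which I would carry out term by term but not reproduce in full detail, invoking Lemmas \ref{i_sobolev_product_1} and \ref{p_poisson} together with the Sobolev and trace embeddings exactly as in the analogous estimates of \cite{GT_lwp,GT_per}.
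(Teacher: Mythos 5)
Your strategy coincides with the paper's: expand each $G^i$ by the Leibniz rule, place the single top-order factor (a derivative of $u$, of $p$, or $\sigma\nabla_\ast^2\eta$) into $\sqrt{\mathcal{D}}$, and observe that its coefficient involves only low-order spatial derivatives of $\bar{\eta}$, whose norms are controlled by $\sqrt{\mathfrak{L}(\eta)}\ls \varepsilon_0$ via \eqref{smallsmall}; all remaining (lower-order) products are absorbed into $P(1+\mathfrak{E})$, and the $\sigma$ in the mean-curvature part of $G^3$ is indeed what pairs it with $\sigma^2\ns{\eta}_{4N+3/2}$ in $\mathcal{D}$. The one place your reasoning goes astray is $G^4$: there is no half-derivative gap to close. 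In this subsection $\mathfrak{E}$ contains $\sum_{j=0}^{2N}\ns{\dt^j\eta}_{4N-2j+1}$, in particular $\ns{\eta}_{4N+1}$ with no $\sigma$ weight, so the term in which the derivatives fall on $\nabla_\ast\eta$ is bounded by $\norm{u}_{L^\infty}\norm{\eta}_{4N+1/2}\ls P(1+\mathfrak{E})$ directly; it never needs to be routed through $\sqrt{\mathcal{D}}$. Your fallback of interpolating $\norm{\eta}_{4N+1/2}$ between $\norm{\eta}_{4N-1/2}$ and $\norm{\eta}_{4N+3/2}$ inside $\mathcal{D}$ would in fact fail, since $\ns{\eta}_{4N+3/2}$ enters $\mathcal{D}$ only with the prefactor $\sigma^2$, which degenerates as $\sigma\to 0$. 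With that correction the argument is the paper's.
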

\begin{proof}
We note that all terms are at least quadratic. Then
we apply the spatial derivatives $\pa$ (for the appropriate $\al$ depending on $G^i$) and expand using the Leibniz rule; each term in the resulting
sum is also at least quadratic and can be written in the form $X Y$ or $ZZ$. Here $Y$ is the factor involving the highest derivatives and $X$ is a product of factors involving very low derivatives; $Z$ is the factor involving fewer derivatives than $Y$ so that the various norms of $Z$ can be bounded by $ P(1+ \mathfrak{E} ) \sqrt{\mathfrak{E} }$.  We then focus on how to estimate the terms of form $XY$.

We first estimate the $\ns{\pa G^{1}}_0$ for $\al\in \mathbb{N}^3$ with $|\al|\le 4N-1$. Note that those factors appearing as $Y$ of $\pa G^1$ are
$ \p^{\al}\nab^2 u,\p^{\al}\nab  p, \p^{\al}\dt\bar\eta,\p^{\al}\nab^2\bar\eta
$. Then we have
\begin{equation}\label{1alpha1}
\norm{\p^{\al}\nab^2 u}_{0} \le  \sqrt{\mathcal{D}}, \norm{\p^{\al}\nab  p}_{0} \le  \sqrt{\mathcal{D}},
\end{equation}
\begin{equation}
\norm{\p^{\al}\dt\bar\eta}_{(\H1)^\ast}\ls \norm{ \dt \bar \eta}_{4N -1}\ls  \norm{ \dt   \eta}_{4N-3/2} \ls   \sqrt{\mathfrak{E} },
\end{equation}
\begin{equation}\label{1alpha7}
\norm{\p^{\al}\nab^2\bar\eta}_{(\H1)^\ast}\ls \norm{ \nab^2\bar \eta}_{4N -1}\ls \norm{   \eta}_{4N +1/2}\ls  \sqrt{\mathfrak{E} }.
\end{equation}
We can bound certain norms of $X$ related to the last two factors by $ P(1+ \mathfrak{E} ) \sqrt{\mathfrak{E} }$. However,
to bound the $X$ related to the first two factors we need to resort to the factor that such $X$ only involves the spatial derivatives of $\bar{\eta}$ (up to second order); hence we can bound the certain norms of such $X$ by $  P(1+ \mathfrak{L}(\eta))\sqrt{\mathfrak{L}(\eta)}\ls {\varepsilon_0}$, using \eqref{smallsmall}.  In light of the analysis above and using that $ P(1+ \mathfrak{E} ) {\mathfrak{E} }\le  P(1+ \mathfrak{E} )$, we conclude the estimate of $G^1$ as in Lemma \ref{l_iteration_estimates_1}.

We now  estimate the $\ns{\pa G^{3}}_{1/2}$ for $\al\in \mathbb{N}^2$ with $|\al|\le 4N-1$. Note that the factors appearing as $Y$ of $\pa G^{3}$ are $ \p^{\al}p,  \p^{\al}\nab  u$, $\p^{\al}\nab \bar\eta$ and $\sigma\pa \nab_\ast^2\eta$. For the first three factors, we have
\begin{equation}\label{3alpha1}
 \norm{\p^{\al}p}_{H^{ 1/2}(\Sigma)}\ls \norm{ p}_{4N}\le \sqrt{\mathcal{D}},\ \norm{\p^{\al}\nab  u}_{H^{ 1/2}(\Sigma)} \ls \norm{ u}_{4N+1} \le \sqrt{\mathcal{D}} .
\end{equation}
\begin{equation}
 \norm{\p^{\al}\nab \bar\eta}_{H^{-1/2}(\Sigma)}\ls\norm{ \nab \bar\eta}_{H^{4N-1/2}(\Sigma)}
 \ls\norm{   \bar\eta}_{ {4N+1} }\ls\norm{    \eta}_{ 4N+1/2 } \ls  \sqrt{\mathfrak{E} } .
\end{equation}
To estimate the last factor we need to use the fact that there is a $\sigma$. Hence we can bound by
\begin{equation}\label{3alpha7}
 \sigma \norm{\p^{\al}\nab_\ast^2\eta}_{-1/2} \ls  \sigma \norm{ \eta}_{4N+3/2}\le  \sqrt{\mathcal{D}}.
\end{equation}
Then arguing as for the $G^1$ term, we conclude the estimate of $G^3$.

Finally, the estimates of $G^2$ and $G^4$ follows similarly as that for $G^1$ and $G^3$ and we then conclude the estimate \eqref{Gesti}.
\end{proof}

We now state the estimates of $(u,p,\eta)$ itself without temporal derivatives as follows.
\begin{prop}\label{up2222}
Suppose that $(u,p,\eta)$ is the solution to \eqref{kgeometric}. Then
\begin{equation}\label{upspatial}
  {\mathcal{E}}+\int_0^t  {\mathcal{D}} \ls P(\tilde{\mathcal{E}}_0)+ P (1+ \mathfrak{K}^{\kappa})T^{1/2}+ \ns{\partial_t u}_{L^2H^{4N-1}}+ \ns{\partial_t p}_{L^2H^{4N-2}}.
\end{equation}
\end{prop}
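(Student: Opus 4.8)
The estimate is proved in two stages, in the spirit of the two-tier energy method of \cite{GT_per,GT_inf}: a localized ``horizontal'' energy estimate for \eqref{p_localized_equations}, followed by an elliptic comparison that upgrades the horizontal energy and dissipation $\bar{\mathcal{E}},\bar{\mathcal{D}}$ to the full $\mathcal{E},\mathcal{D}$. For the first stage, I would apply $\pa$ with $\al\in\mathbb{N}^2$, $|\al|\le 4N$, to the localized system \eqref{p_localized_equations} (legitimate since $\chi u,\chi p$ vanish near $\Sigma_b$), take the $L^2(\Omega)$ pairing of the momentum equation with $\pa(\chi u)$, and integrate by parts. The viscous term produces, via Korn's inequality (Lemma \ref{i_korn}), the dissipation $\ns{\nab_{\ast 0}^{\ 4N}\mathbb{D}(\chi u)}_0$; the pressure term is absorbed through $\diverge(\chi u)=\chi G^2+H^2$; and the boundary integral over $\Sigma$ (where $\chi u=u$) equals $\int_\Sigma\pa\big((\eta-\sigma\Delta_\ast\eta)e_3+G^3\big)\cdot\pa u$. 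Inserting $u_3|_\Sigma=\dt\eta-\kappa\Delta_\ast\eta-\kappa\Psi-G^4$ from the kinematic equation and integrating by parts in $x'$ and in $t$ turns the $\eta$ and $\sigma\Delta_\ast\eta$ contributions into $\tfrac{d}{dt}\big(\ns{\eta}_{4N}+\sigma\ns{\eta}_{4N+1}\big)$ plus the dissipation $\kappa\sigma\ns{\eta}_{4N+2}$, plus errors involving $\kappa\Psi,G^3,G^4$ and lower-order commutators. Integrating in time and using \eqref{2l_Fe2} for $\bar{\mathcal{E}}(0)$ and \eqref{2l_Fe2123} for the $\Psi$-terms gives
\[
\bar{\mathcal{E}}(t)+\int_0^t\bar{\mathcal{D}}\ls P(\tilde{\mathcal{E}}_0)+\int_0^t\big(\text{products of the }G^i\text{ against }u,p,\eta\big).
\]

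The products are then estimated by Lemma \ref{gle}: since $\norm{G^i}\ls\varepsilon_0\sqrt{\mathcal{D}}+P(1+\mathfrak{E})$, the Cauchy--Schwarz and Young inequalities split each contribution into an $\varepsilon_0\mathcal{D}$-part and a $P(1+\mathfrak{E})$-part. The $\varepsilon_0\mathcal{D}$-parts are absorbed once the full dissipation appears on the left after the second stage; the essential point is that the contributions coming from the mean curvature $\sigma H\sim\sigma\nab_\ast^2\eta$ carry \emph{no} power of $T$, so their absorption relies on the a priori smallness $\mathfrak{L}(\eta)=\sup_{0\le t\le T}\ns{\eta}_{4N-1/2}\le\varepsilon_0$ of \eqref{smallsmall} rather than on shrinking $T$. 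The $P(1+\mathfrak{E})$-parts integrate to $\int_0^t P(1+\mathfrak{E})\le t\,P\big(1+\sup_{0\le s\le t}\mathfrak{E}(s)\big)\le T\,P(1+\mathfrak{K}^\kappa)\le T^{1/2}P(1+\mathfrak{K}^\kappa)$, using $\sup_{0\le t\le T}\mathfrak{E}\le\mathfrak{K}^\kappa$ and $T\le T_0^\kappa\le1$.

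For the second stage, I would freeze $t$ and view \eqref{perturb2} as a stationary Stokes problem. Since $\bar{\mathcal{D}}$ controls the horizontal derivatives of $\chi u$ up to order $4N$ and $\chi u=u$ near $\Sigma$, it controls the trace $u|_\Sigma$; hence the Stokes regularity theory with velocity boundary data (Lemma \ref{i_linear_elliptic2}), applied with $r=4N$ and $r=4N+1$, bounds $\norm{u}_{4N}+\norm{\nab p}_{4N-2}$ and $\norm{u}_{4N+1}+\norm{\nab p}_{4N-1}$ in terms of $\bar{\mathcal{E}}^{1/2},\bar{\mathcal{D}}^{1/2}$, the forcings $G^1-\dt u$ and $G^2$, and $u|_\Sigma$ — this is where the terms $\ns{\dt u}_{L^2H^{4N-1}}$ and, through the pressure equation, $\ns{\dt p}_{L^2H^{4N-2}}$ enter. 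The full norm of $p$ is recovered from the momentum equation and the stress boundary condition, and $\eta$ from the normal part of the stress condition, which has the form $(1-\sigma\Delta_\ast)\eta=[\text{normal stress terms in }p,u]+\sigma(\text{l.o.t.})$; elliptic regularity for $1-\sigma\Delta_\ast$ on $\Sigma$ then yields $\ns{\eta}_{4N-1/2}+\sigma^2\ns{\eta}_{4N+3/2}$ in terms of $\norm{u}_{4N+1},\norm{p}_{4N},\norm{G^3}_{4N-1/2}$ and a small multiple of $\sigma^2\ns{\eta}_{4N+3/2}$ (absorbed again by \eqref{smallsmall}). Thus $\mathcal{E}$ and $\int_0^t\mathcal{D}$ are controlled by $\bar{\mathcal{E}}$, $\int_0^t\bar{\mathcal{D}}$, the nonlinear terms, $\ns{\dt u}_{L^2H^{4N-1}}+\ns{\dt p}_{L^2H^{4N-2}}$, and lower-order data terms bounded by $P(\tilde{\mathcal{E}}_0)$. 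Combining with the first stage and taking $\varepsilon_0$ small enough to absorb all the $\varepsilon_0\int_0^t\mathcal{D}$-terms on the left yields \eqref{upspatial}.

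The main obstacle is the surface-tension structure: the top-order curvature term $\sigma\nab_\ast^2\eta$ in the stress boundary condition (and its counterpart in $G^3$) produces contributions that cannot be made small by shrinking $T$, so the whole argument hinges on carrying the smallness $\sup_{0\le t\le T}\ns{\eta}_{4N-1/2}\le\varepsilon_0$ through every step and, in the comparison stage, on first estimating $(u,p)$ without reference to $\eta$ and only afterwards recovering $\eta$ — keeping this loop non-circular (and $\kappa$-independent) is the delicate point.
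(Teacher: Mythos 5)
Your overall strategy coincides with the paper's proof: the $\pa$-energy identity for the localized system \eqref{p_localized_equations} with $\al\in\mathbb{N}^2$, $|\al|\le 4N$; the use of Lemma \ref{gle} to split the nonlinear contributions into an $\varepsilon_0\mathcal{D}$ part (absorbed thanks to the smallness \eqref{smallsmall} rather than a power of $T$) and a $P(1+\mathfrak{E})$ part (which integrates to $P(1+\mathfrak{K}^\kappa)T^{1/2}$); and the comparison stage in which $\bar{\mathcal{D}}$ controls the trace $\norm{u}_{H^{4N+1/2}(\Sigma)}^2$, so that the Dirichlet--Stokes estimate of Lemma \ref{i_linear_elliptic2} with $r=4N+1$ yields $\ns{u}_{4N+1}+\ns{p}_{4N}$ up to $\ns{\dt u}_{4N-1}$ and the $G^i$, after which $\eta$ is recovered from the normal stress condition $\eta-\sigma\Delta_\ast\eta=p-2\p_3 u_3-G^3_3$ as in \eqref{etaboundary1}.

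There is, however, a gap in your concluding step. You claim the instantaneous part of $\mathcal{E}$ (the bounds on $\ns{u}_{4N}$ and $\ns{p}_{4N-1}$ at each time) follows from a fixed-time application of Lemma \ref{i_linear_elliptic2} with $r=4N$ ``in terms of $\bar{\mathcal{E}}^{1/2},\bar{\mathcal{D}}^{1/2}$.'' This does not work as stated: the boundary datum needed at level $r=4N$ is $\norm{u}_{H^{4N-1/2}(\Sigma)}$, which is not controlled pointwise in time by $\bar{\mathcal{E}}$ (that quantity contains only $L^2(\Omega)$ norms of horizontal derivatives of $\chi u$, with no $H^{1/2}$-trace gain), while $\bar{\mathcal{D}}$ is controlled only after integration in time, so it yields no $\sup_t$ bound. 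The paper instead upgrades the $L^2_t$ dissipation bounds to $L^\infty_t$ energy bounds via the interpolation Lemma \ref{l_sobolev_infinity}, and this is exactly where the two terms $\ns{\dt u}_{L^2H^{4N-1}}$ and $\ns{\dt p}_{L^2H^{4N-2}}$ on the right of \eqref{upspatial} come from; in particular your statement that $\dt p$ enters ``through the pressure equation'' is not accurate, since $\dt p$ never appears in any of the elliptic systems used here, but only in the bound $\ns{p}_{L^\infty H^{4N-1}}\le \ns{p(0)}_{4N-1}+\ns{p}_{L^2H^{4N}}+\ns{\dt p}_{L^2H^{4N-2}}$. With this replacement of your fixed-time $r=4N$ argument by the time-interpolation step, your proof closes and is essentially the paper's.
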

\begin{proof}
We first derive the evolution of $\bar{\mathcal{E}}$. We claim that
\begin{equation}\label{0ese}
 \bar{\mathcal{E}}+\int_0^t \bar{\mathcal{D}} \ls P(\tilde{\mathcal{E}}_0)+ P (1+ \mathfrak{K}^{\kappa}) T^{1/2}  + {\varepsilon_0} \int_0^t \mathcal{D}.
\end{equation}

To prove the claim, applying $\partial^\alpha$ with $\al\in \mathbb{N}^2$ so that $|\al|\le 4N$ to the first equation of \eqref{p_localized_equations} and then taking the dot product with $\pa u$, using the other equations, we obtain
\begin{equation} \label{p_u_e_1}
\begin{split}
 &\hal \frac{d}{dt}\left(  \int_\Omega \abs{\pa (\chi u)}^2  +    \int_\Sigma \abs{\pa \eta}^2 +  \sigma \abs{\pa\nab_\ast \eta}^2\right)
+ \hal \int_\Omega \abs{\sg \pa (\chi u)}^2
 \\&\quad+ \int_\Sigma\kappa\abs{\nab_\ast\pa\eta}^2+\kappa\sigma  \abs{\Delta_\ast\pa \eta}^2= \int_\Omega \chi  \pa u  \cdot (\chi\pa G^1+\pa H^{1 })
\\&\quad\ \
+\int_\Omega  \chi \pa p (\chi\pa G^2+\pa H^{2 })   + \int_\Sigma -\pa u \cdot \pa G^3 + (\pa \eta-\sigma\Delta_\ast\pa \eta) (\kappa\pa\Psi+\pa G^4).
\end{split}
\end{equation}

We will estimate the terms on the right side of \eqref{p_u_e_1}, beginning with the terms involving $H^{1}$ and $H^{2}$.
Since $\chi$ is only a function of $x_3$, we have that
\begin{equation}
 \pa H^{1} = \p_3 \chi  ( \pa p e_3 - 2 \pa \p_3 u) - \p_3^2 \chi  \pa u \text{ and } \pa H^{2} = \p_3 \chi  \pa u_3.
\end{equation}
This allows us to estimate
\begin{equation}\label{p_u_e_2}
\begin{split}
 \int_\Omega \chi  \pa u  \cdot \pa H^{1 }  + \chi  \pa p \pa H^{2 } &\ls
\norm{\pa u}_{0}\left(\norm{\pa p}_{0} + \norm{\pa u}_{1} \right)
+ \norm{\pa p}_{0} \norm{\pa u}_{0} \\
&\ls \norm{ u}_{4N}\left(\norm{ p}_{4N} + \norm{ u}_{4N+1} \right) \le
  \sqrt{\mathfrak{E} }\sqrt{\mathcal{D}} .
  \end{split}
\end{equation}

We now turn to estimate the terms involving $G^{i},1\le i\le 4$.  Using Lemma \ref{gle} and the fact that $\pa$ only involves the horizontal spatial derivatives, we may bound
\begin{equation}\label{kg1}
\begin{split}
&\int_\Omega  \chi^2(\pa u \cdot \pa G^{1}+\pa p   \pa G^{2})
\\&\ \  \le   \norm{\pa u}_{1} \norm{\pa G^{1}}_{(\H1)^\ast}+\norm{\pa p}_{0}\norm{\pa G^{2}}_{0}
 \le   \norm{\pa u}_{1} \norm{ G^{1}}_{4N-1}+\norm{\pa p}_{0}\norm{ G^{2}}_{4N}
 \\&\ \ \ls   \sqrt{ \mathcal{D}}\left(  {\varepsilon_0}\sqrt{\mathcal{D}}+  P(1+ \mathfrak{E} )   \right)
\le      P(1+ \mathfrak{E} )\sqrt{\mathcal{D}}+C {\varepsilon_0} {\mathcal{D}} .
  \end{split}
\end{equation}
Similarly, we use Lemma \ref{gle} along with the trace embedding to find that
\begin{equation}\label{kg3}
\begin{split}
 \int_\Sigma -\pa u \cdot \pa G^{3} &\le    \snormspace{\pa u}{1/2}{\Sigma}  \norm{\pa G^{3}}_{-1/2}\ls    \norm{\pa u}_{1}   \norm{  G^{3}}_{4N-1/2}
  \\&\ls  \norm{\pa u}_{1}  \left(  {\varepsilon_0}\sqrt{ \mathcal{D}}+  P(1+ \mathfrak{E} )  \right)
\le      P (1+ \mathfrak{E} )\sqrt{\mathcal{D}}+C {\varepsilon_0} {\mathcal{D}} .
\end{split}
\end{equation}

Now we estimate the $G^4$ term.  For the first term for $G^4$, we use Lemma \ref{gle} to have
\begin{equation}\label{4alpha1}
\begin{split}
  \int_\Sigma   \pa \eta  \pa G^{4}   &\le     \norm{ \pa\eta}_{1/2}  \norm{ \pa G^{4}}_{-1/2}\le     \norm{ \eta}_{4N+1/2}  \norm{ \pa G^{4}}_{-1/2}
  \\& \ls   \sqrt{ \mathfrak{E}}
  \left(  {\varepsilon_0}\sqrt{\mathcal{D}}+  P(1+ \mathfrak{E} )   \right) \le
 P (1+ \mathfrak{E} )  + {\varepsilon_0} \mathcal{D} .
\end{split}
\end{equation}
For the other term, we need to do the splitting:
\begin{equation}
 \pa G^{4} =  \sum_{0 \le  \beta < \alpha} C_{\alpha,\beta}  \p^\beta \nab_\ast \eta \cdot \p^{\alpha-\beta} u +\p^\al \nab_\ast \eta \cdot   u:=I+\p^\al \nab_\ast \eta \cdot   u.
\end{equation}
Note that the highest derivatives appearing as $Y$ of $I$ are $\p^{\al}u$ and $\p^{\al-\beta}\nab_\ast\eta$ for some $\beta\in \mathbb{N}^2$ with $|\beta|=1$, and we have
\begin{equation}
 \norm{\p^{\al}u}_{H^{1/2}(\Sigma)}\ls \norm{ u}_{4N+1}\le \sqrt{\mathcal{D}} ,\ \norm{\p^{\al-\beta}\nab_\ast  \eta}_{ {1/2} } \le \norm{\eta}_{4N+1/2} \le  \sqrt{\mathfrak{E} } .
\end{equation}
Then arguing as in Lemma \ref{gle}, we may bound
\begin{equation}
\norm{I}_{1/2}
\ls\sqrt{ \mathcal{D}}\left(  {\varepsilon_0}\sqrt{ \mathcal{D}}+  P(1+ \mathfrak{E} )     \right).
\end{equation}
Hence, we have
\begin{equation}\label{4alpha2}
\begin{split}
 \int_\Sigma -\sigma\Delta_\ast\p^\al\eta I&\le  \sigma  \norm{\Delta_{\ast}\p^\al \eta}_{-1/2}\norm{I}_{1/2}
\le  \sigma  \norm{ \eta}_{4N+3/2}\norm{I}_{1/2}
  \\&\ls\sqrt{ \mathcal{D}}\left(  {\varepsilon_0}\sqrt{ \mathcal{D}}+  P(1+ \mathfrak{E} )     \right)
\le      P(1+ \mathfrak{E} )\sqrt{ \mathcal{D}} + {\varepsilon_0} \mathcal{D}
  .
\end{split}
\end{equation}
On the other hand, we integrate by parts to obtain, by Lemma \ref{i_sobolev_product_1},
\begin{equation}\label{4alpha7}
\begin{split}
& \int_\Sigma -\sigma\Delta_\ast \p^\al \eta \p^\al \nab_\ast \eta \cdot   u =   \int_\Sigma  \sigma   \p_i\p^\al \eta   ( \nab_\ast\p_i\p^\al\eta \cdot   u +\nab_\ast\p^\al\eta \cdot  \p_i u )
\\&\quad
= \int_\Sigma  -\sigma \diverge_\ast  u \frac{|\p_i\p^\al \eta |^2}{2}   +\sigma   \p_i\p^\al \eta  \nab_\ast\p^\al\eta \cdot  \p_i u
 \\&\quad
\le \norm{\nab_\ast u}_{H^2(\Sigma)}\norm{\nab_\ast\p^\al\eta}_{-1/2}\norm{\sigma \nab_\ast\p^\al\eta}_{1/2}\le  \mathfrak{E} \sqrt{\mathcal{D}} .
\end{split}
\end{equation}
Hence, in light of \eqref{4alpha1}, \eqref{4alpha2} and \eqref{4alpha7}, we may bound
\begin{equation}\label{kg4}
\int_\Sigma (\pa \eta-\sigma\Delta_\ast\pa \eta) \pa G^4 \le    P (1+ \mathfrak{E} )\sqrt{\mathcal{D}}  +C {\varepsilon_0}\mathcal{D} .
\end{equation}

Finally, using Cauchy's inequality, we have
\begin{equation}\label{kgk}
\int_\Sigma  (\pa \eta-\sigma\Delta_\ast\pa \eta)  \kappa\pa\Psi\le \kappa \ns{\pa \eta}_0+\frac{1}{2}\kappa\sigma\ns{\Delta_\ast\pa \eta}_0 + C\ns{\Psi}_{4N}.
\end{equation}

Now, in light of \eqref{p_u_e_2}--\eqref{kg3}, \eqref{kg4} and \eqref{kgk}, we deduce from \eqref{p_u_e_1} that
\begin{equation}\label{hihi}
\begin{split}
 &   \frac{d}{dt}\left( \int_\Omega \abs{\pa(\chi u)}^2  +    \int_\Sigma \abs{\pa \eta}^2 +  \sigma \abs{\pa\nab_\ast \eta}^2\right)
+    \int_\Omega \abs{\sg \pa (\chi u)}^2
 + \int_\Sigma {\kappa\sigma }  \abs{\Delta_\ast\pa \eta}^2
\\&\quad\le      P (1+\mathfrak{E} ) \sqrt{\mathcal{D}}  +C {\varepsilon_0}  \mathcal{D} +C\ns{ \Psi}_{4N}
\end{split}
\end{equation}
for all $\al\in \mathbb{N}^2$ with $|\al|\le 4N$. We then integrate \eqref{hihi} in time to have, by \eqref{2l_Fe2123},
\begin{equation}
\begin{split}
 \bar{\mathcal{E}}+\int_0^t \bar{\mathcal{D}} &\ls  P(\tilde{\mathcal{E}}_0)+\int_0^t P (1+ \mathfrak{E} )(1+\sqrt{\mathcal{D}})   + {\varepsilon_0} \mathcal{D} +\ns{ \Psi}_{4N}
 \\&\ls  P(\tilde{\mathcal{E}}_0)+ P (1+ \mathfrak{K}^{\kappa}) T^{1/2} + {\varepsilon_0}\int_0^t \mathcal{D}.
 \end{split}
\end{equation}
This completes the proof of the claim \eqref{0ese}.

Now we will show that $\mathcal{D}$ is comparable to $\bar{\mathcal{D}}$  so that we can improve the energy evolution obtained in \eqref{0ese}. We will employ the elliptic theory to improve the estimates of $(u,p)$. Notice that $G^3$ involves the high spatial derivative of $\eta$, say, $\nab_\ast^2\eta$, neither $\bar{\mathcal{E}}$ nor $\bar{\mathcal{D}}$ contain an enough $\kappa$-independent regularity estimate of $\eta$ that allows us to apply Lemma \ref{i_linear_elliptic}. But it is crucial to observe that we can get higher regularity estimates of $u$ on the boundary $\Sigma$ from $\bar{\mathcal{D}}$. Indeed, since $\Sigma $ is flat, we may use the definition of Sobolev norm on $\Sigma$, the trace
theory and Korn's inequality of Lemma \ref{i_korn} to obtain, by the definition of $\bar{\mathcal{D}}$,
\begin{equation}
\begin{split}
\norm{  u}_{H^{4N +1/2}(\Sigma)}^2&=\norm{ \chi u}_{H^{4N +1/2}(\Sigma)}^2 \lesssim \norm{ \chi u }_{L^2(\Sigma)}^2
+\norm{\nab_\ast^{4N } (\chi u)}_{H^{1/2}(\Sigma )}^2
 \\&\lesssim \norm{   \chi u }_{1}^2+\norm{\nab_\ast^{4N }   (\chi u)  }_{1}^2
 \le  \bar{\mathcal{D}} .
 \end{split}
\end{equation}
This motivates us to use Lemma \ref{i_linear_elliptic2}. Observe that $(u,p)$ solves the problem
\begin{equation}
\begin{cases}
 - \Delta u+\nabla p=-\partial_t u+G^1\quad&\text{in }\Omega
\\ \diverge{u}=G^2&\text{in }\Omega
 \\ u=u&\text{on }\Sigma
\\ u=0 &\text{on }\Sigma_b.
\end{cases}
\end{equation}
We then apply Lemma \ref{i_linear_elliptic2} with $r=4N+1$ to have, by Lemma \ref{gle},
\begin{equation}\label{0ese22}
\begin{split}
 \ns{u}_{4N+1}+\ns{  p}_{4N} &\ls \ns{ p}_{0}+\ns{\partial_t u}_{4N-1}+\ns{G^1}_{4N-1}+\ns{G^2}_{4N}+\norm{  u}_{H^{4N +1/2}(\Sigma)}^2
 \\&\le   P(1+ \mathfrak{E} )+{\varepsilon_0} \mathcal{D}+\ns{\partial_t u}_{4N-1}+\bar{\mathcal{D}}.
 \end{split}
\end{equation}
Combining \eqref{0ese} and \eqref{0ese22},  we obtain
\begin{equation}\label{0ese2122}
 \bar{\mathcal{E}}+\int_0^t \ns{   u  }_{4N+1}+\ns{   p  }_{4N}  +\kappa\sigma \ns{ \eta}_{4N +2} \ls P(\tilde{\mathcal{E}}_0)+ P (1+ \mathfrak{K}^{\kappa}) T^{1/2}  + {\varepsilon_0} \int_0^t \mathcal{D}.
\end{equation}

We now improve the estimates of $\eta$; we use the boundary condition
\begin{equation} \label{etaboundary1}
 \eta-\sigma\Delta_\ast  \eta =p-2\p_3u_3 - G_3^3 \text{ on } \Sigma.
\end{equation}
We use the standard elliptic theory on \eqref{etaboundary1} to obtain
 \begin{equation}\label{00eta0}
 \begin{split}
 \int_0^t\sigma^2\ns{   \eta}_{4N+3/2}+\ns{   \eta}_{4N-1/2}\le &\int_0^t
 \ns{  p}_{H^{4N-1/2}(\Sigma)}+\ns{\nab u}_{H^{4N-1/2}(\Sigma)} +\ns{G^3}_{ {4N-1/2} }
  \\\ls & \int_0^t
 \ns{ p}_{ {4N } } +  \ns{ u}_{ {4N+1} }+{\varepsilon_0} \mathcal{D}+ P (1+ \mathfrak{E} )
 \\\ls &  P (1+ \mathfrak{K}^{\kappa}) T^{1/2}+\int_0^t
 \ns{ p}_{ {4N } } +  \ns{ u}_{ {4N+1} } + {\varepsilon_0} \mathcal{D}   .
  \end{split}
\end{equation}

Now, in light of \eqref{0ese2122} and \eqref{00eta0}, we have
\begin{equation}
 \bar{\mathcal{E}}+\int_0^t  {\mathcal{D}} \ls P(\tilde{\mathcal{E}}_0)+ P (1+ \mathfrak{K}^{\kappa})T^{1/2} + {\varepsilon_0} \int_0^t \mathcal{D}.
\end{equation}
If we have made $\varepsilon_0$ be smaller (we can achieve this by assuming $\mathfrak{L}_0$ be smaller in Theorem \ref{l_knwp}), then we can conclude \eqref{upspatial} by further applying Lemma \ref{l_sobolev_infinity} to control the energy.
\end{proof}

\subsubsection{The limit as $\kappa\rightarrow0$: local well-posedness of \eqref{geometric} with $\sigma>0$}

Now we can produce a unique local strong solution to the original problem \eqref{geometric} for each fixed $\sigma>0$, which is the main result of this subsection. In this subsubsection, we will allow the generic polynomial $P$ to depend on $\sigma$.
\begin{thm}\label{l_sigmanwp}
Let $\sigma>0$. Assume that $u_0\in H^{4N}(\Omega),\eta_0\in H^{4N+1}(\Sigma)$ with $\mathfrak{L}_0\le \varepsilon_0/2$ for  the universal $\varepsilon_0>0$ from Theorem \ref{l_knwp}, and that the initial data $\dt^j u(0)$, etc are as constructed in Section \ref{l_data_section} and satisfy the $(2N)^{th}$  compatibility conditions \eqref{l_comp_cond_2N}. There exists a $0< T_0^\sigma\le 1$ such that there exists a unique solution triple $(u,p,\eta)$ to the  problem \eqref{geometric} on the time interval $[0,T_0^\sigma]$ that  achieves the initial data  and  satisfies
\begin{equation}\label{kindependent}
    \mathcal{K}_{2N}^1(u,p,\eta) \le P_\sigma\left(\ns{u_0}_{4N}+\ns{\eta_0}_{4N+1}\right) \text{ and } \mathfrak{L} (\eta) \le \varepsilon_0,
\end{equation}
where the energy functional $\mathcal{K}_{2N}^1$ is the one setting $\sigma=1$ in \eqref{energysigma}.
Moreover, $\eta$ is such that the mapping $\Phi(\cdot,t)$, defined by \eqref{mapping_def}, is a $C^{4N-3/2}$ diffeomorphism for each $t \in [0,T_0^\sigma]$.
\end{thm}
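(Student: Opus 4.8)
The plan is to start from the family of local strong solutions $(u^\kappa,p^\kappa,\eta^\kappa)$ to the regularized $\kappa$--problem \eqref{kgeometric} produced by Theorem \ref{l_knwp}, and to upgrade their $\kappa$--dependent existence times and bounds to $\kappa$--independent ones by a continuity argument driven by the special quantity $\mathfrak{K}^{\kappa}$ of \eqref{dnk}. First I would assemble the three a priori estimates already prepared: Proposition \ref{k11} for the temporal derivatives of $(u,p)$, Proposition \ref{k33} for the temporal derivatives of $\eta$, and Proposition \ref{up2222} for $(u,p,\eta)$ itself without temporal derivatives, together with Lemma \ref{2l_Ffrak_bound} which controls the forcing terms by $P(\tilde{\mathcal{E}}_0)+P(T^{1/4}\mathfrak{K}^{\kappa})$. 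The term $\kappa^2\ns{\eta}_{L^2H^{4N+2}}$ on the right of \eqref{etaall} is harmless, since it equals $\kappa\int_0^T\kappa\ns{\eta}_{4N+2}\le\kappa\int_0^T\mathfrak{D}^{\kappa}\le\kappa\,\mathfrak{K}^{\kappa}$, so for $\kappa$ small it is absorbed into the left side; applying Lemma \ref{l_sobolev_infinity} to recover the $L^\infty$ pieces of $\mathfrak{E}$ and using \eqref{2l_Fe2} to bound $\mathfrak{E}(0)\le P(\tilde{\mathcal{E}}_0)$, I expect to obtain the exact analogue of \eqref{cco},
\[
\mathfrak{K}^{\kappa}\le\left(P(\tilde{\mathcal{E}}_0)+P(1+\mathfrak{K}^{\kappa})T^{1/4}\right)\exp\left(\left(P(\tilde{\mathcal{E}}_0)+P(T^{1/4}\mathfrak{K}^{\kappa})\right)T\right),
\]
valid so long as the smallness \eqref{smallsmall} persists.

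Next I would run the standard continuity/bootstrapping argument on this inequality. One checks that there is a $T_0=T_0(\tilde{\mathcal{E}}_0)>0$, independent of $\kappa$, so that the set of times $t\le\min\{T_0,T_0^\kappa\}$ on which both $\mathfrak{K}^{\kappa}(t)\le 2P(\tilde{\mathcal{E}}_0)$ and $\mathfrak{L}(\eta^\kappa)(t)\le\varepsilon_0$ hold is nonempty, open and closed; here the local existence time $T_0^\kappa$ of Theorem \ref{l_knwp} rules out blow-up before $T_0$, and the smallness of $\eta^\kappa$ is propagated via Lemma \ref{l_sobolev_infinity}, which yields $\mathfrak{L}(\eta^\kappa)\le\mathfrak{L}_0+CT_0\mathfrak{K}^{\kappa}\le\varepsilon_0$ once $T_0$ is small relative to the other data. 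This produces uniform bounds $\mathfrak{K}^{\kappa}\le 2P(\tilde{\mathcal{E}}_0)$ and $\mathfrak{L}(\eta^\kappa)\le\varepsilon_0$ on the common interval $[0,T_0^\sigma]$, with $T_0^\sigma:=T_0$.

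Then I would pass to the limit $\kappa\to0$. The uniform bound on $\mathfrak{K}^{\kappa}$ allows extraction of a subsequence converging weakly/weak-$*$ in the norms defining $\mathfrak{K}^{\kappa}$, hence strongly in lower-order norms by compactness, to a limit $(u,p,\eta)$; the terms $\kappa\Delta_\ast\eta^\kappa$ and $\kappa\Psi$ in the fourth equation of \eqref{kgeometric} vanish since $\kappa\ns{\eta^\kappa}_{4N+2}$ and $\kappa\ns{\Psi}_{4N}$ are bounded, so the limit solves \eqref{geometric}. As in the proof of Theorem \ref{l_knwp}, a contraction estimate in a low-order norm shows that the whole family converges and gives uniqueness, and interpolating between the strong low-order convergence and the weak high-order convergence lets us pass to the limit in every nonlinear term. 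Weak and weak-$*$ lower semicontinuity then transfer $\mathfrak{K}^{\kappa}\le 2P(\tilde{\mathcal{E}}_0)$ and $\mathfrak{L}(\eta)\le\varepsilon_0$ to $(u,p,\eta)$; to obtain the cleaner bound \eqref{kindependent} in terms of $\mathcal{K}_{2N}^1$ one only needs, in addition, to estimate $\f=\ns{\eta}_{4N+1/2}$ directly from the kinematic equation $\dt\eta=u\cdot\n$ using Lemma \ref{l_sobolev_infinity} (or Lemma \ref{i_sobolev_transport}), which on the finite interval $[0,T_0^\sigma]$ is controlled by $\mathfrak{K}^{\kappa}$. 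Finally, the resulting bound forces $\eta\in C^0([0,T_0^\sigma];H^{4N+1/2}(\Sigma))$ with $\sup_t\ns{\eta}_{4N-1/2}\le\varepsilon_0$; by Lemma \ref{p_poisson} the harmonic extension $\bar\eta=\mathcal{P}\eta$ and its derivatives appearing in \eqref{mapping_def} are correspondingly small, so $J=\det\nab\Phi$ stays close to $1$ and $\Phi(\cdot,t)$ is injective, and the Sobolev embedding $H^{4N+1/2}(\Sigma)\hookrightarrow C^{4N-3/2}$ in two dimensions makes $\Phi(\cdot,t)$ a $C^{4N-3/2}$ diffeomorphism for each $t\in[0,T_0^\sigma]$.

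The main obstacle is the derivation of the $\kappa$--independent inequality of type \eqref{cco}. The curvature forcing $\sigma H$ contributes, after localizing and integrating by parts, terms whose top-order factor is $\sigma\nab_\ast^2\eta\sim\sqrt{\mathcal{D}}$ paired against $\nab_\ast u\sim\sqrt{\mathcal{D}}$, and these genuinely cannot be endowed with a small power of $T$; the remedy, already built into Proposition \ref{up2222} and Lemma \ref{gle}, is to carry the coefficient $\varepsilon_0=\sup_t\ns{\eta}_{4N-1/2}$ through such terms and absorb $C\varepsilon_0\mathcal{D}$ into the dissipation, which is exactly why the smallness \eqref{smallsmall} of a \emph{low} norm of $\eta$ (rather than of $T$) must be invoked. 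Arranging the bookkeeping so that every remaining term does pick up a genuine power of $T$ or $T^{1/4}$ — the latter coming from the interpolation used in \eqref{jaja2} — is the delicate part of the argument.
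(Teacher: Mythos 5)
Your overall route is the same as the paper's: combine Propositions \ref{k11}, \ref{k33} and \ref{up2222} (with Lemma \ref{2l_Ffrak_bound}) into a closed inequality of the form \eqref{cco} for $\mathfrak{K}^{\kappa}$, run a continuity argument to get a $\kappa$-independent time $T_0^\sigma$ and bound $\mathfrak{K}^{\kappa}\le 2P(\tilde{\mathcal{E}}_0)$, propagate $\mathfrak{L}(\eta)\le\varepsilon_0$ via Lemma \ref{l_sobolev_infinity}, pass to the limit $\kappa\to 0$ by weak/weak-$*$ compactness plus a low-order contraction (which also gives uniqueness), and read off the diffeomorphism property from the regularity and smallness of $\eta$. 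Your absorption of the $\kappa^2\ns{\eta}_{L^2H^{4N+2}}$ term using the $\kappa$-weighted piece of the dissipation, and your insistence that the curvature terms be handled by the smallness $\varepsilon_0$ rather than a power of $T$, both match what the paper does.

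The one genuine mis-step is in the final upgrade from the bound on $\mathfrak{K}^{0}$ to the claimed bound \eqref{kindependent} on $\mathcal{K}_{2N}^1$. You assert that the only additional quantity to estimate is $\f=\ns{\eta}_{4N+1/2}$; but $\f$ is already dominated by $\mathfrak{K}^{0}$, since $\mathfrak{E}$ contains $\ns{\eta}_{L^\infty H^{4N+1}}$. What $\mathfrak{K}^{0}$ does \emph{not} control are the temporal derivatives of $\eta$ at the strength required by $\se{2N}^1$ and $\sd{2N}^1$: one needs $\ns{\dt^j\eta}_{L^2H^{4N-2j+5/2}}$ (in particular $\ns{\dt\eta}_{L^2H^{4N+1/2}}$, i.e.\ the $\sigma^2\ns{\dt\eta}_{2n+1/2}$ term with $\sigma=1$) and $\ns{\dt^j\eta}_{L^\infty H^{4N-2j+3/2}}$ for $j=1,\dots,2N+1$, which are half a derivative stronger than the $H^{4N-2j+2}$ and $H^{4N-2j+1}$ norms in $\mathfrak{D}^{0}$ and $\mathfrak{E}$, and the $j=2N+1$ energy term is absent from $\mathfrak{E}$ altogether. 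The remedy is essentially the tool you name, but aimed at the right target: apply $\dt^{j-1}$ to the kinematic equation $\dt\eta=u\cdot\n$ and use the trace of $\dt^{j-1}u\in L^2H^{4N-2(j-1)+1}$ together with product estimates to get the $L^2H^{4N-2j+5/2}$ bounds, then Lemma \ref{l_sobolev_infinity} for the $L^\infty$ pieces with $j\le 2N$; the remaining case $\ns{\dt^{2N+1}\eta}_{L^\infty H^{-1/2}}$ cannot be obtained this way and requires the geometric identity $\dt^{2N+1}\eta=D_t^{2N}u\cdot\n$ together with Lemma \ref{l_boundary_dual_estimate} and $\diverge_{\a}(D_t^{2N}u)=0$, exactly as in the paper's proof. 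Without this step your argument proves a bound on $\mathfrak{K}^{0}$ but not the stated estimate \eqref{kindependent}.
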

\begin{proof}
A suitable linear combination of the estimate \eqref{kestimate1} of Proposition \ref{k11}, the estimate \eqref{etaall} of Proposition \ref{k33} and the estimate \eqref{upspatial} of Proposition \ref{up2222} yields
\begin{equation}\label{ee123}
\begin{split}
    \mathfrak{K}^{\kappa}
&\le  \left(  P(\tilde{\mathcal{E}}_0)  + P(  T^{1/4} \mathfrak{K}^{\kappa})
  \right)  \exp\left( \left(  P(\tilde{\mathcal{E}}_0)  + P(  T^{1/4} \mathfrak{K}^{\kappa})
  \right) T \right)
 \\& \quad+  P(\tilde{\mathcal{E}}_0)+ P (1+ \mathfrak{K}^{\kappa})T^{1/2}  + P(  T^{1/4} \mathfrak{K}^{\kappa})
 \\& \le  \left(  P_1(\tilde{\mathcal{E}}_0)  + P (1+ \mathfrak{K}^{\kappa}) T^{1/4}
  \right)  \exp\left( \left(  P(\tilde{\mathcal{E}}_0)  + P(  T^{1/4} \mathfrak{K}^{\kappa})
  \right) T \right) .
   \end{split}
\end{equation}
Note that here we have transferred the $\sigma$ dependence of $\mathcal{E}$ and $\mathcal{D}$ to the polynomials $P$, which we have allowed to depend on $\sigma$. We name the polynomial $ P_1$ so that it can be referred later.

 We can now use a standard continuity argument  (cf. Section 9 of \cite{coutand_shkoller_1} for instance) to infer from \eqref{ee123} that there exists a $T_0^\sigma>0$ that depends on the initial data and $\sigma$ but does not depend on $\kappa$ so that for $0<T\le T_0^\sigma$, recovering the dependence of the solution on $\kappa$,
 \begin{equation}
 \mathfrak{K}^{\kappa}(u^\kappa,p^\kappa,\eta^\kappa)\le  2  P_1(\tilde{\mathcal{E}}_0)  .
\end{equation}
This $\kappa$-independent estimates in turn imply that $(u^\kappa,p^\kappa,\eta^\kappa)$ is indeed a solution of \eqref{kgeometric} on the time interval $[0,T_0^\sigma]$ and yield a strong convergence of $(u^\kappa,p^\kappa,\eta^\kappa)$ to a limit $(u,p,\eta)$, up to extraction of a subsequence, which is more than sufficient for us to pass to the limit as $\kappa\rightarrow0$ in the system \eqref{kgeometric} for each $t\in[0,T_0^\sigma]$. We then find that $(u,p,\eta)$ is a strong solution of \eqref{geometric} on $[0,T_0^\sigma]$ and satisfies the estimates
\begin{equation} \label{keses}
 \mathfrak{K}^{0}(u,p,\eta) \le P_\sigma\left(\ns{u_0}_{4N}+\ns{\eta_0}_{4N+1}\right)
\end{equation}
if we recover the dependence of $ P_1$ on $\sigma$, where the quantity $\mathfrak{K}^{0}$ is the one setting $\kappa=0$ in \eqref{dnk}. Moreover, if we restrict $T_0^\sigma$ to be smaller if necessary, then we have
\begin{equation}\label{11ere1}
 \mathfrak{L}(\eta ) \le   \mathfrak{L}_0   + \ns{\eta  }_{L^2H^{4N}}+\ns{\dt\eta  }_{L^2H^{4N-1}}
 \le    \frac{\varepsilon_0}{2}+ {T \mathfrak{K}^{0}}  \le      \varepsilon_0.
\end{equation}
The uniqueness of the solution to \eqref{geometric} with $\sigma>0$
satisfying \eqref{keses} follows as Theorem \ref{l_knwp}.

To complete the theorem, we need to improve the estimates \eqref{keses} to be \eqref{kindependent}. Comparing the definitions of $\mathfrak{K}^{0}$ (as defined by \eqref{dnk}) and $\mathcal{K}_{2N}^1$ (as defined by \eqref{energysigma}), it means to improve the estimates of $\eta$ with temporal derivatives. For $j=1,\dots,2N+1$, we use the transport equation $\dt \eta=F^4(u,\eta)$ to have
\begin{equation}\label{tt923}
  \ns{\dt^j \eta}_{L^2 H^{4N-2j+5/2}}  =\ns{\dt^{j-1} F^4}_{L^2 H^{4N-2j+5/2}}
  = \ns{\dt^{j-1} F^4}_{L^\infty H^{4N-2(j-1)+1/2} }.
\end{equation}
But as in Lemma \ref{l_iteration_estimates_1} we easily have
\begin{equation}
 \ns{\dt^{j-1} F^4}_{L^2 H^{4N-2(j-1)+1/2} } \ls  \mathfrak{K}^{0}+(\mathfrak{K}^{0})^2 .
 \end{equation}
These two imply, by  \eqref{keses},
 \begin{equation}\label{siginf}
\sum_{j=1}^{2N+1} \ns{\dt^j \eta}_{L^2 H^{4N-2j+5/2}}\ls \mathfrak{K}^{0}+(\mathfrak{K}^{0})^2\le P_\sigma\left(\ns{u_0}_{4N}+\ns{\eta_0}_{4N+1}\right).
  \end{equation}
This completes the improvement of the $L^2H^k$ estimates of $\dt^j\eta$ for $j=1,\dots,2N+1$. We may further apply Lemma \ref{l_sobolev_infinity} to improve the $L^\infty H^k$ estimates of $\dt^j\eta$ for $j=1,\dots,2N$. For $j=2N+1$, we shall use the geometric fact: $\dt^{2N+1}\eta=D_t^{2N}u\cdot \n$. Then by Lemma \ref{l_boundary_dual_estimate} and since $\diverge_\a (D_t^{2N}u)=0$, we have
\begin{equation}
 \ns{\dt^{2N+1}\eta}_{L^\infty H^{-1/2}}=\ns{D_t^{2N}u \cdot \n}_{L^\infty H^{-1/2}}  \ls \ns{D_t^{2N}u}_{L^\infty \mathcal{H}^0}\le P_\sigma\left(\ns{u_0}_{4N}+\ns{\eta_0}_{4N+1}\right).
\end{equation}
We thus conclude the estimate \eqref{kindependent} in view of the analysis above. The $C^{4N-3/2}$ diffeomorphism of $\Phi(\cdot,t)$ for each $t\in [0,T_0^\sigma]$ follows from the regularity of $\eta$ and the smallness \eqref{11ere1}.
\end{proof}

\subsection{Local zero surface tension limit of \eqref{geometric}}
In this subsection our aim is to pass to the limit as $\sigma\rightarrow0$ in the problem \eqref{geometric} within a local time interval, where $\sigma>0$ is the surface tension coefficient. We define a special quantity for this subsection as
\begin{equation}\label{ensigma}
\begin{split}
  \mathfrak{K}^{\sigma}=\mathfrak{K}^{\sigma}(u,p,\eta):&
  =\sup_{0\le t\le T}\mathfrak{E}^{\sigma}(t) +\int_0^T\mathfrak{D}^{\sigma}(t)\,dt+ \ns{\dt^{2N+1} u}_{(\x_T)^*},
 \end{split}
\end{equation}
where
\begin{equation}
\mathfrak{E}^{\sigma}= \sum_{j=0}^{2N} \ns{\dt^j u}_{ {4N-2j }} + \sum_{j=0}^{2N-1}\ns{\dt^j p}_{ {4N-2j-1}}
+\sigma\ns{ \eta}_{ {4N+1}}+ \sum_{j=0}^{2N} \ns{\dt^j \eta}_{ {4N-2j+1/2}}
\end{equation}
and
\begin{equation}
\begin{split}
\mathfrak{D}^{\sigma}= &\sum_{j=0}^{2N} \ns{\dt^j u}_{ {4N-2j +1}} + \sum_{j=0}^{2N-1}\ns{\dt^j p}_{{4N-2j}}
\\&  + \sigma^2\ns{ \eta}_{ {4N+3/2}}+\ns{ \eta}_{ {4N+1/2}}+  \sum_{j=1}^{2N+1}   \ns{\dt^j \eta}_{ {4N-2j+3/2}}.
\end{split}
\end{equation}
We note that by the definition,
\begin{equation}\label{viewc}
 \mathfrak{K}^{\sigma}=\mathfrak{K}(u,p)+\mathfrak{F}(\eta)+ \ns{ \eta}_{L^\infty H^{4N+1/2}}+\sigma\ns{ \eta}_{L^\infty H^{4N+1}}+\sigma^2\ns{ \eta}_{L^2 H^{4N+3/2}},
\end{equation}
where $\mathfrak{K}(u,p)$, $\mathfrak{F}(\eta)$ are defined by \eqref{l_DEfrak_def} and \eqref{l_Kfrak_def}, respectively.

For each $\sigma>0$, we let $(u,p,\eta)$ (dropping the
$\sigma$-dependence temporally) be the solution to \eqref{geometric}
on $[0,T_0^\sigma]$ produced by Theorem \ref{l_sigmanwp}. Moreover,
Theorem \ref{l_sigmanwp} implies that for $0<T\le T_0^\sigma$, we
have $\mathfrak{K}^{\sigma}<\infty$ and the following smallness of
$\eta$:
\begin{equation}\label{smallsmallsigma}
  \mathfrak{L}(\eta):=\sup_{0\le t\le T} \ns{ \eta(t)}_{4N-1/2}\le \varepsilon_0
\end{equation}
for the universal constant $\varepsilon_0\ll1$. Our goal is to
derive the $\sigma$-independent estimates of the solution on
$[0,T_0^\sigma]$, and the key step is to derive the uniform bound of
$\mathfrak{K}^{\sigma}$.  The smallness \eqref{smallsmallsigma} is
crucial for closing the $\sigma$-independent energy estimates again
since we can not get a power of $T$ in front of some terms stemming
from the mean curvature.

We recall from the construction of the initial data in Section
\ref{l_data_section} that
\begin{equation}\label{2l_Fe211f}
\mathfrak{E}^\sigma(0) \le\mathcal{K}_{2N}^\sigma(0):=\mathcal{E}_{2N}^\sigma(0)+\f(0) \le P ( \mathcal{E}_0^\sigma ).
 \end{equation}
Here we recall that $\mathcal{E}_0^\sigma$, $\mathcal{E}_{2N}^\sigma(0)$, $\f(0)$ and $\mathcal{K}_{2N}^\sigma(0)$ are as defined by \eqref{00eta}, \eqref{p_energy_def}, \eqref{fff} and \eqref{energysigma}, respectively.

\subsubsection{The $\sigma$-independent transport estimates}
We shall first use the transport equation to derive the estimates for $\eta$:
\begin{equation}
 \dt \eta + u \cdot \nab_\ast \eta = u_3   \text{ in } \Sigma
\end{equation}
\begin{prop}\label{proe1}
We have the following estimates for $\eta$:
\begin{equation}\label{2etaes1}
 \ns{\eta}_{L^\infty H^{4N+1/2}} \le \exp\left( C T^{1/2}\sqrt{\mathfrak{K}^\sigma }  \right) \left( \ns{\eta_0}_{4N+1/2} + T {\mathfrak{K}^\sigma }\right)
\end{equation}
and
\begin{equation}\label{2etaes2}
 \mathfrak{F}(\eta)\le   P(\mathcal{E}_0^\sigma)+T^{1/4}  P(\mathfrak{K}^\sigma ).
\end{equation}

\end{prop}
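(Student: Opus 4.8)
The plan is to prove the two estimates \eqref{2etaes1} and \eqref{2etaes2} separately, using the Sobolev transport estimate Lemma \ref{i_sobolev_transport} for the top-order spatial estimate and then bootstrapping the time derivatives directly from the transport equation.

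First I would establish \eqref{2etaes1}. Since $4N+1/2 \ge 2$ when $N\ge 3$, Lemma \ref{i_sobolev_transport} does not apply directly at this regularity; instead the idea is to differentiate the transport equation $\dt\eta + u\cdot\nab_\ast\eta = u_3$ by applying $\nab_\ast^k$ for the appropriate $k$ so that the resulting quantity $\nab_\ast^k\eta$ has regularity $s<2$, and treat the commutator $[\nab_\ast^k, u\cdot\nab_\ast]\eta$ together with $\nab_\ast^k u_3$ as the forcing term $g$. Concretely, I would write $\snorm{\eta}{4N+1/2}$ in terms of $\snorm{\nab_\ast^{4N-1}\eta}{3/2}$, apply Lemma \ref{i_sobolev_transport} with $s=3/2$, and bound the exponential factor $\exp(C\int_0^t\snorm{\nab_\ast u}{3/2})$ by $\exp(CT^{1/2}\sup_t\snorm{\nab_\ast u}{3/2}) \le \exp(CT^{1/2}\sqrt{\mathfrak{K}^\sigma})$ using Cauchy-Schwarz in time and the fact that $\snorm{u}{4N}\subset\snorm{u}{5/2}$ is controlled by $\mathfrak{E}^\sigma$. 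For the forcing term, the commutator $[\nab_\ast^{4N-1}, u\cdot\nab_\ast]\eta$ is estimated by the product estimate Lemma \ref{i_sobolev_product_1}: it is a sum of terms of the form $\nab_\ast^a u \cdot \nab_\ast^{b+1}\eta$ with $a+b=4N-1$, $a\ge 1$, which are controlled (after integrating in time) by $\int_0^t\snorm{u}{4N}\snorm{\eta}{4N+1/2}$ — but to get the clean bound in \eqref{2etaes1} one should absorb $\snorm{\eta}{4N+1/2}$ into the exponential à la Gronwall, so that the forcing contributes only $\snorm{u_3}{4N+1/2}$ plus lower-order commutator pieces, giving $\int_0^t\snorm{u}{4N+1} \le T^{1/2}(\int_0^t\snorm{u}{4N+1}^2)^{1/2}$... actually $\int_0^t\snorm{u_3}{4N+1/2}\,dr \le T\sup_r\snorm{u}{4N+1/2}\lesssim T\mathfrak{K}^\sigma$ after using $\snorm{u}{4N+1/2}\subset\snorm{u}{4N}$ is in $\mathfrak{E}^\sigma$. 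This yields the $T\mathfrak{K}^\sigma$ term, and the Gronwall structure produces the exponential prefactor.

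Next, for \eqref{2etaes2}, I would recall $\mathfrak{F}(\eta)$ from \eqref{l_Kfrak_def}: it involves $\ns{\eta}_{L^2H^{4N+1/2}}$, $\ns{\dt^j\eta}_{L^2H^{4N-2j+3/2}}$ for $j=1,\dots,2N+1$, and the corresponding $L^\infty$ norms. The $L^2H^{4N+1/2}$ term is handled trivially: $\ns{\eta}_{L^2H^{4N+1/2}} \le T\ns{\eta}_{L^\infty H^{4N+1/2}} \le T\mathfrak{K}^\sigma$. For $j=1,\dots,2N$, since $\dt^j\eta = \dt^{j-1}(u_3 - u\cdot\nab_\ast\eta) = \dt^{j-1}F^4(u,\eta)$ where $F^4$ is as in \eqref{l_F_forcing_def}, I would use Sobolev interpolation exactly as in \eqref{keta1}: $\ns{\dt^j\eta}_{L^2H^{4N-2j+3/2}} \le T^{1/2}\norm{\dt^j\eta}_{L^\infty H^{4N-2j+1}}\norm{\dt^j\eta}_{L^2 H^{4N-2j+2}}$... but here $\eta$ is the actual solution (no $\kappa$), so instead I would estimate $\dt^{j-1}F^4$ in $L^2H^{4N-2j+3/2}$ directly via the product estimate and the embeddings for $\bar\eta$, obtaining $\ns{\dt^{j-1}F^4}_{L^2H^{4N-2j+3/2}} \le P(1+\mathfrak{F}_{2N}(\eta))\mathfrak{F}_{2N}(u)$-type bounds, then bound $\mathfrak{F}_{2N}(\eta), \mathfrak{F}_{2N}(u)$ in terms of $P(\mathcal{E}_0^\sigma) + T^{1/4}$-powers of $\mathfrak{K}^\sigma$ exactly as in \eqref{eses2}--\eqref{eses3}/\eqref{jaja1}--\eqref{jaja2}. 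For the top index $j=2N+1$, I cannot use a regularized equation, so I would instead use the geometric identity $\dt^{2N+1}\eta = D_t^{2N}u\cdot\n$ (as in Remark \ref{iii} and the proof of Theorem \ref{l_sigmanwp}), and estimate $\ns{\dt^{2N+1}\eta}_{L^2H^{-1/2}} = \ns{D_t^{2N}u\cdot\n}_{L^2H^{-1/2}}$ via the trace-type Lemma \ref{l_boundary_dual_estimate} together with $\diverge_\a(D_t^{2N}u)=0$, giving $\lesssim \ns{D_t^{2N}u}_{L^2\mathcal{H}^0} \lesssim T\sup_t\ns{D_t^{2N}u}_0 \le T\mathfrak{K}^\sigma$. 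The $L^\infty H^k$ norms in $\mathfrak{F}(\eta)$ then follow by Lemma \ref{l_sobolev_infinity} (interpolating $L^2H^{k+1}$ against $L^2H^{k-1}$ of the time derivative), absorbing the $L^\infty$ norms into the $L^2$ ones plus the data term $\mathfrak{F}_0(\eta)\lesssim P(\mathcal{E}_0^\sigma)$ by \eqref{sflja}.

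The main obstacle I anticipate is the top-order estimate \eqref{2etaes1}: unlike all the other terms, the $H^{4N+1/2}$ estimate of $\eta$ genuinely requires the transport structure (there is no surface-tension-uniform smoothing available), and one must carefully set up the commutator estimates so that (i) the factor $\snorm{\eta}{4N+1/2}$ appearing on the right from the commutator is absorbed via Gronwall into the exponential prefactor rather than left as a source, and (ii) the time integrals of the $u$-norms are arranged to produce the genuine power $T$ (for the $u_3$ forcing, which only sits in $\mathfrak{E}^\sigma$) versus $T^{1/2}$ (for the exponent, which uses the $L^2$-in-time dissipation-level control) — getting these powers right is exactly what makes the later continuity argument via \eqref{cco} close without requiring smallness of all data. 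The remaining pieces (\eqref{2etaes2} and the time-derivative bookkeeping) are routine applications of Lemmas \ref{i_sobolev_product_1}, \ref{l_sobolev_infinity}, \ref{l_boundary_dual_estimate}, \ref{p_poisson} and the interpolation trick already used repeatedly in Section \ref{sec kk}.
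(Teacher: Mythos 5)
Your proposal follows essentially the same route as the paper: the kinematic transport equation plus the transport/Gronwall estimate for the top-order norm $\ns{\eta}_{L^\infty H^{4N+1/2}}$, time-interpolation to gain the $T^{1/4}$ factor for the $\dt^j\eta$ terms, and Lemma \ref{l_sobolev_infinity} together with the initial-data bound \eqref{sflja} to recover the $L^\infty H^k$ pieces of $\mathfrak{F}(\eta)$. Your two deviations are both acceptable variants: (i) the paper simply invokes the transport theory at high regularity to get \eqref{l_tt_1} (with $\norm{u}_{H^{4N+1/2}(\Sigma)}$ in the exponent), whereas you propose differentiating the equation and applying Lemma \ref{i_sobolev_transport} at $s=3/2$ with commutators absorbed by Gronwall — more work, but it honestly addresses the $s<2$ restriction in the stated lemma; (ii) for $j=2N+1$ the paper again uses $\dt^{2N+1}\eta=\dt^{2N}(u\cdot\n)$ directly, while you use the geometric identity $\dt^{2N+1}\eta=D_t^{2N}u\cdot\n$ and Lemma \ref{l_boundary_dual_estimate}, which mirrors Remark \ref{iii} and works equally well here.

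One step in your argument for \eqref{2etaes1} is wrong as written and should be repaired: the bound $\int_0^t\snorm{u_3(r)}{H^{4N+1/2}(\Sigma)}\,dr\le T\sup_r\snorm{u}{H^{4N+1/2}(\Sigma)}\ls T\mathfrak{K}^\sigma$ is not available, because $\mathfrak{K}^\sigma$ controls $u$ only in $L^\infty H^{4N}(\Omega)$ and in $L^2 H^{4N+1}(\Omega)$, and the trace into $H^{4N+1/2}(\Sigma)$ requires the full $H^{4N+1}(\Omega)$ regularity, which is \emph{not} controlled uniformly in time. The correct estimate is the one you wrote and then discarded: by the trace theory and Cauchy--Schwarz in time,
\begin{equation}
\int_0^T \norm{u_3(r)}_{H^{4N+1/2}(\Sigma)}\,dr \ls \int_0^T \norm{u(r)}_{4N+1}\,dr \le T^{1/2}\left(\int_0^T \ns{u(r)}_{4N+1}\,dr\right)^{1/2}\le T^{1/2}\sqrt{\mathfrak{K}^\sigma},
\end{equation}
and squaring the (unsquared) transport inequality then yields precisely the $T\,\mathfrak{K}^\sigma$ term in \eqref{2etaes1}; this is exactly the paper's \eqref{l_tt_12}. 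With this fix, and keeping in mind that getting the powers of $T$ from the dissipation (rather than nonexistent $L^\infty$-in-time high norms) is what makes the continuity argument \eqref{cco} close, your proof is in line with the paper's.
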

\begin{proof}
Basing on the transport theory of Lemma \ref{i_sobolev_transport}, we may obtain
\begin{equation}\label{l_tt_1}
 \norm{\eta}_{L^\infty H^{4N+1/2}} \le \exp\left( C \int_0^T \norm{u(t)}_{H^{4N+1/2}(\Sigma)}dt  \right) \left( \sqrt{\mathcal{F}_0} + \int_0^T \norm{u_3(t)}_{H^{4N+1/2}(\Sigma)}dt \right).
\end{equation}
The trace theory and the Cauchy-Schwarz inequality implies
\begin{equation}\label{l_tt_12}
 \int_0^T \norm{u(t)}_{H^{4N+1/2}(\Sigma)}dt \ls \int_0^T \norm{u(t)}_{4N+1}dt \ls {T}^{1/2} \sqrt{\mathfrak{K}^\sigma }.
\end{equation}
Then the estimate \eqref{2etaes1} easily follows from \eqref{l_tt_1} and \eqref{l_tt_12}.

To estimate $\mathfrak{F}(\eta)$, we first trivially have
\begin{equation}\label{l_tt_8}
 \ns{\eta}_{L^2 H^{4N+1/2}} \le  T\ns{\eta}_{L^\infty H^{4N+1/2}}  \le T {\mathfrak{K}^\sigma }.
\end{equation}
For $j=1,\dots,2N+1$, we use the transport equation directly as $\dt\eta=u\cdot\n$ to have
\begin{equation}\label{tt9}
  \ns{\dt^j \eta}_{L^2 H^{4N-2j+3/2}} =\ns{\dt^{j-1}(u\cdot\n)}_{L^2 H^{4N-2j+3/2}}= \ns{\dt^{j-1}(u\cdot\n)}_{L^2H^{4N-2(j-1)-1/2}} .
\end{equation}
By the trace theory and the Sobolev interpolation, we obtain that for $\ell=0,\dots,2N$,
\begin{equation}\label{l_tt_1100}
\begin{split}
\ns{\dt^\ell u_3}_{L^2H^{4N-2\ell-1/2}(\Sigma)}&\ls \ns{\dt^\ell u_3}_{L^2H^{4N-2\ell+3/4}}
\le  T^{1/4}\norm{\dt^\ell u }_{L^\infty H^{4N-2\ell}}^{1/2}  \norm{\dt^\ell u }_{L^2 H^{4N-2\ell+1}}^{3/2}
\\&\le T^{1/4}{\mathfrak{K}^\sigma }.
 \end{split}
\end{equation}
Trivially again, for $\ell=0,\dots,2N$,
\begin{equation}\label{l_tt_1111}
\ns{\dt^\ell \eta}_{L^2H^{4N-2\ell+1/2}} \le T\ns{\dt^\ell \eta}_{L^\infty H^{4N-2\ell+1/2}} \le T {\mathfrak{K}^\sigma }.
\end{equation}
Hence, we deduce from \eqref{l_tt_1100} and \eqref{l_tt_1111} that for $j=1,\dots,2N-1$
\begin{equation}
\begin{split}
 \ns{\dt^{j-1}(u\cdot\n)}_{L^2H^{4N-2(j-1)-1/2}} &\ls \left(\sum_{\ell=0}^{j-1}\ns{\dt^\ell u}_{L^2H^{4N-2\ell-1/2}(\Sigma)}+\ns{\dt^\ell \eta}_{L^2H^{4N-2\ell+1/2}}\right) \mathfrak{K}^\sigma
 \\& \ls \left(T^{1/4}{\mathfrak{K}^\sigma } +T {\mathfrak{K}^\sigma } \right) \mathfrak{K}^\sigma \ls T^{1/4} ({\mathfrak{K}^\sigma })^2.
 \end{split}
 \end{equation}
This together with \eqref{tt9} implies
\begin{equation}\label{tt91}
  \ns{\dt^j \eta}_{L^2 H^{4N-2j+3/2}}  \ls  T^{1/4}{\mathfrak{K}^\sigma }+T^{1/4} ({\mathfrak{K}^\sigma })^2.
\end{equation}
We thus conclude \eqref{2etaes2} by further applying Lemma \ref{l_sobolev_infinity} to control the $L^2H^k$ norms in $\mathfrak{F}(\eta)$.
\end{proof}

\subsubsection{The remaining $\sigma$-independent estimates}
To derive the remaining $\sigma$-independent estimates for the solution, our strategy follows Sections \ref{ksec}--\ref{ksec2}. We first record an estimate of the nonlinear terms $\mathfrak{F}_\ast(F^1,F^3)$ (defined by \eqref{2l_Ffrak_def}) in terms of $\mathfrak{K}^{\sigma}$ (defined by \eqref{ensigma}).
\begin{lem}\label{2l_Ffrak_boundsigma}
Let $F^1(u,\eta),F^3(\eta)$ be defined by \eqref{l_F_forcing_def}. Then
 \begin{equation}\label{fast222}
 \mathfrak{F}_\ast(F^1(u,\eta),F^3(\eta)) \le  P(\mathcal{E}_0^\sigma)+ T^{1/4} P( \mathfrak{K}^\sigma).
\end{equation}
 \end{lem}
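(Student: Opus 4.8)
The plan is to mimic the argument of Lemma \ref{2l_Ffrak_bound} (specifically the proof of \eqref{fast}), but now measuring $\eta$ and $u$ against $\mathfrak{K}^\sigma$ rather than $\mathfrak{K}^\kappa$. First I would recall the structural estimate \eqref{jaja00} from that earlier proof, namely
\begin{equation}\label{plan_start}
 \mathfrak{F}_\ast(F^1(u,\eta),F^3(\eta)) \le  P(1+ \mathfrak{F}(\eta))\left(   \mathfrak{F} (\eta) +(\mathfrak{F}_{2N}(u))^2\right),
\end{equation}
which holds because every term entering $\mathfrak{F}_\ast$ carries at least one temporal derivative, so the worst spatial term $\nabla_\ast^2\eta$ in $F^3$ contributes only through $\dt^j F^3$ with $j\ge1$, and thus $\mathfrak{F}(\eta)$ rather than the stronger $\mathfrak{K}(\eta)$ suffices. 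The remaining task is then purely to bound $\mathfrak{F}(\eta)$ and $\mathfrak{F}_{2N}(u)$ in terms of $\mathcal{E}_0^\sigma$ and $\mathfrak{K}^\sigma$, picking up a power of $T$.

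For $\mathfrak{F}(\eta)$ I would simply invoke \eqref{2etaes2} of Proposition \ref{proe1}, which already gives exactly $\mathfrak{F}(\eta)\le P(\mathcal{E}_0^\sigma)+T^{1/4}P(\mathfrak{K}^\sigma)$. For $\mathfrak{F}_{2N}(u)$, following \eqref{jaja2}/\eqref{eses3}, I would use Lemma \ref{l_sobolev_infinity} to replace the initial-data terms $\ns{\dt^j u(0)}_{4N-2j-1/4}$ by $P(\mathcal{E}_0^\sigma)$ via \eqref{2l_Fe211f}, and then Sobolev interpolation $\norm{\dt^j u}_{L^2 H^{4N-2j+3/4}}^2 \le T^{1/2}\norm{\dt^j u}_{L^\infty H^{4N-2j}}\,\norm{\dt^j u}_{L^2 H^{4N-2j+1}}\cdot(\text{interpolation weights summing appropriately})$, more precisely the estimate used in \eqref{eses3}: $\ns{\dt^j u}_{L^2H^{4N-2j+3/4}}\le CT^{1/4}\norm{\dt^j u}_{L^\infty H^{4N-2j}}^{1/2}\norm{\dt^j u}_{L^2H^{4N-2j+1}}^{3/2}\le CT^{1/4}\mathfrak{K}^\sigma$, since all these norms are controlled by $\mathfrak{K}^\sigma$ through \eqref{viewc}. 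This yields $\mathfrak{F}_{2N}(u)\le P(\mathcal{E}_0^\sigma)+CT^{1/4}\mathfrak{K}^\sigma$, hence $(\mathfrak{F}_{2N}(u))^2\le P(\mathcal{E}_0^\sigma)+T^{1/2}P(\mathfrak{K}^\sigma)$.

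Plugging these two bounds into \eqref{plan_start} and using that $P(1+a)b$ is itself a polynomial in $a,b$, I would conclude
\begin{equation}
 \mathfrak{F}_\ast(F^1(u,\eta),F^3(\eta)) \le  P\big(1+P(\mathcal{E}_0^\sigma)+T^{1/4}P(\mathfrak{K}^\sigma)\big)\big(P(\mathcal{E}_0^\sigma)+T^{1/4}P(\mathfrak{K}^\sigma)\big)\le P(\mathcal{E}_0^\sigma)+T^{1/4}P(\mathfrak{K}^\sigma),
\end{equation}
which is \eqref{fast222}. I do not expect a genuine obstacle here: the only point requiring care is the bookkeeping of which $\eta$-norms appear in $\mathfrak{F}_\ast$ versus $\mathfrak{F}(\eta)$ — ensuring that the $j=2N+1$ temporal derivative of $\eta$ (with its $H^{-1/2}$-level regularity handled via the transport equation $\dt\eta=u\cdot\n$ and the duality/trace argument, cf. \eqref{tt91}) is correctly absorbed into $\mathfrak{F}(\eta)$ as bounded in \eqref{2etaes2}, and that no term secretly needs $\mathfrak{K}(\eta)$-level control. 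Since the surface-tension term in $F^3$ is $-\sigma H\n$ and $\sigma\le 1$, its contribution to $\mathfrak{F}_\ast$ is no worse than the $\eta\n$ term after one temporal derivative, so the presence of $\sigma$ causes no extra difficulty. The rest is the same "at least quadratic, split as $XY$" routine as in Lemma \ref{l_iteration_estimates_1}, so I would omit those details.
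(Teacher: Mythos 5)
Your proposal is correct and follows essentially the same route as the paper: it invokes the structural bound \eqref{jaja00}, controls $\mathfrak{F}(\eta)$ by \eqref{2etaes2} of Proposition \ref{proe1}, controls $\mathfrak{F}_{2N}(u)$ by the interpolation argument of \eqref{jaja2} together with $\mathfrak{K}_{2N}(u)\le\mathfrak{K}^\sigma$ and the initial-data bound \eqref{2l_Fe211f}, and combines. The only cosmetic difference is that you spell out why $\mathfrak{F}(\eta)$ (rather than $\mathfrak{K}(\eta)$) suffices in \eqref{jaja00}, a point the paper had already made in the proof of Lemma \ref{2l_Ffrak_bound}.
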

 \begin{proof}
We also recall the estimate \eqref{jaja00} from the proof of Lemma \ref{2l_Ffrak_bound} that
\begin{equation} \label{jaja002}
 \mathfrak{F}_\ast(F^1(u,\eta),F^3(\eta)) \le  P(1+ \mathfrak{F}(\eta))\left(   \mathfrak{F} (\eta) +(\mathfrak{F}_{2N}(u))^2\right)
\end{equation}
We recall from the estimate \eqref{jaja2} from the proof of Lemma \ref{2l_Ffrak_bound} that, since $\mathfrak{K}_{2N}(u)\le  \mathfrak{K}^\sigma$,
 \begin{equation}\label{jaja222}
 \mathfrak{F}_{2N}(u )  \le \sum_{j=0}^{2N-1}\ns{\dt^j u(0)}_{ {4N-2j-1/4}}+C T^{1/4}\mathfrak{K}_{2N}(u) \le  P(\mathcal{E}_0^\sigma)+ 2 T^{1/4} \mathfrak{K}^\sigma .
 \end{equation}
Plugging the estimates \eqref{jaja222} and \eqref{2etaes2} into \eqref{jaja002}, we have
\begin{equation}
\begin{split}
 \mathfrak{F}_\ast(F^1(u,\eta),F^3(\eta)) &\le  P(1+ P(\mathcal{E}_0^\sigma)+T^{1/4} P( \mathfrak{K}^\sigma)) \left( P(\mathcal{E}_0^\sigma)+T^{1/4} P( \mathfrak{K}^\sigma)+
( P(\mathcal{E}_0^\sigma)+ 2 T^{1/4} \mathfrak{K}^\sigma )^2\right)
\\&\le   P(\mathcal{E}_0^\sigma)+ T^{1/4} P( \mathfrak{K}^\sigma ).
 \end{split}
\end{equation}
We thus conclude the lemma.
\end{proof}

For the nonlinear terms $G^i$ (defined by \eqref{G1_def}--\eqref{G4_def}), we have the following estimates.
\begin{lem}\label{gle22}
Let $\varepsilon_0$ be the small universal constant in \eqref{smallsmall}. Then we have
\begin{equation}\label{Gesti22}
\begin{split}
 \norm{ G^{1}}_{4N-1}+\norm{ G^{2}}_{4N} +\norm{ G^{3}}_{4N-1/2} +\norm{ G^{4}}_{4N-1/2}\ls   {\varepsilon_0}\sqrt{  \mathcal{D}}+ P(1+\mathfrak{E}^{\sigma}).
  \end{split}
\end{equation}
\end{lem}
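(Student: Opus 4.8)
\textbf{Proof proposal for Lemma \ref{gle22}.}

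The plan is to follow the proof of Lemma \ref{gle} essentially verbatim, tracking only the modifications forced by the fact that we are now in the $\sigma$-independent setting rather than the $\kappa$-problem. The structure of the argument is the same: every term in $G^i$ is at least quadratic, so after applying the horizontal derivatives $\pa$ (with $\al\in\mathbb{N}^3$ and $|\al|\le 4N-1$ for $G^1$, $|\al|\le 4N$ for $G^2$, and $\al\in\mathbb{N}^2$ with $|\al|\le 4N-1$ for $G^3,G^4$) and expanding by the Leibniz rule, each resulting summand has the form $XY$ or $ZZ$, where $Y$ carries the highest derivatives, $X$ is a product of factors carrying very few derivatives, and $Z$ carries intermediate derivatives so that its relevant norms are bounded by $P(1+\mathfrak{E}^\sigma)\sqrt{\mathfrak{E}^\sigma}$. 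The key accounting point, exactly as in Lemma \ref{gle}, is that the factors $X$ accompanying the top-order factors $\p^\al\nab^2 u$, $\p^\al\nab p$ (in $G^1$) and $\p^\al p$, $\p^\al\nab u$ (in $G^3$) involve only spatial derivatives of $\bar{\eta}$ up to second order, hence are controlled by $P(1+\mathfrak{L}(\eta))\sqrt{\mathfrak{L}(\eta)}\ls\varepsilon_0$ via the smallness \eqref{smallsmallsigma}; this is what produces the $\varepsilon_0\sqrt{\mathcal{D}}$ term on the right-hand side.

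First I would handle $G^1$: the top-order factors are $\p^\al\nab^2 u$, $\p^\al\nab p$, $\p^\al\dt\bar\eta$, $\p^\al\nab^2\bar\eta$, estimated in $(\H1)^\ast$ or $L^2$ exactly as in \eqref{1alpha1}--\eqref{1alpha7}: $\norm{\p^\al\nab^2 u}_0\le\sqrt{\mathcal{D}}$, $\norm{\p^\al\nab p}_0\le\sqrt{\mathcal{D}}$, $\norm{\p^\al\dt\bar\eta}_{(\H1)^\ast}\ls\norm{\dt\eta}_{4N-3/2}\ls\sqrt{\mathfrak{E}^\sigma}$, $\norm{\p^\al\nab^2\bar\eta}_{(\H1)^\ast}\ls\norm{\eta}_{4N+1/2}\ls\sqrt{\mathfrak{E}^\sigma}$, where Lemma \ref{p_poisson} and the trace embeddings are used to pass from $\bar\eta$ to $\eta$. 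Here I note that $\norm{\dt\eta}_{4N-3/2}$ and $\norm{\eta}_{4N+1/2}$ are indeed controlled by $\mathfrak{E}^\sigma$ (these norms sit inside $\sum_j\ns{\dt^j\eta}_{4N-2j+1/2}$). Then I would treat $G^3$: the top-order factors are $\p^\al p$, $\p^\al\nab u$, $\p^\al\nab\bar\eta$ and the surface-tension term $\sigma\pa\nab_\ast^2\eta$; the first two are bounded in $H^{1/2}(\Sigma)$ by $\norm{p}_{4N}\le\sqrt{\mathcal{D}}$ and $\norm{u}_{4N+1}\le\sqrt{\mathcal{D}}$ via trace, the third by $\norm{\nab\bar\eta}_{H^{4N-1/2}(\Sigma)}\ls\norm{\eta}_{4N+1/2}\ls\sqrt{\mathfrak{E}^\sigma}$, and crucially the surface-tension term is estimated using the factor $\sigma$: $\sigma\norm{\pa\nab_\ast^2\eta}_{-1/2}\ls\sigma\norm{\eta}_{4N+3/2}\le\sqrt{\mathcal{D}}$ since $\sigma^2\ns{\eta}_{4N+3/2}\le\mathcal{D}$. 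Finally $G^2$ and $G^4$ are estimated in the same fashion, $G^2$ like $G^1$ and $G^4$ like $G^3$ (the $\p^\al\nab_\ast\eta\cdot u$-type terms handled as in \eqref{3alpha1}--\eqref{3alpha7}). Collecting all these and absorbing $P(1+\mathfrak{E}^\sigma)\mathfrak{E}^\sigma$ into $P(1+\mathfrak{E}^\sigma)$ yields \eqref{Gesti22}.

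The only real subtlety — and it is a minor one — is bookkeeping: one must confirm that each intermediate-order norm appearing in the $Z$ factors and in the $X$ factors (beyond the spatial-$\bar\eta$-only ones) is genuinely dominated by $\mathfrak{E}^\sigma$ rather than by the larger quantity $\mathfrak{D}$; this is where the hypothesis that $\eta$ enjoys the $H^{4N-2j+1/2}$ regularity recorded in $\mathfrak{E}^\sigma$ (slightly weaker than in the $\kappa$-setting) is exactly enough, by the same careful-check argument as in Lemma \ref{l_iteration_estimates_1} and in the proof of Lemma \ref{gle}. Since the computation is entirely parallel to Lemma \ref{gle}, I would present it as "the proof is identical to that of Lemma \ref{gle}, with $\mathfrak{E}$ replaced by $\mathfrak{E}^\sigma$ and $\mathcal{D}$ unchanged; we only remark that the estimates \eqref{1alpha1}--\eqref{1alpha7} and \eqref{3alpha1}--\eqref{3alpha7} go through verbatim because the relevant norms of $\eta$ and $\dt\eta$ there are controlled by $\mathfrak{E}^\sigma$." I do not anticipate any genuine obstacle; the main point is simply to verify that no term requires more regularity of $\eta$ than $\mathfrak{E}^\sigma$ provides.
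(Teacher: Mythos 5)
Your proposal is correct and coincides with the paper's treatment: the paper proves Lemma \ref{gle22} simply by asserting that "the estimate follows in the same way as Lemma \ref{gle}," and your argument is exactly that adaptation, with the needed verification that the norms $\norm{\dt\eta}_{4N-3/2}$, $\norm{\eta}_{4N+1/2}$ used in \eqref{1alpha1}--\eqref{1alpha7}, \eqref{3alpha1}--\eqref{3alpha7} are indeed contained in $\mathfrak{E}^{\sigma}$, the smallness \eqref{smallsmallsigma} replacing \eqref{smallsmall}, and the $\sigma$-weighted term absorbed by $\sigma^2\ns{\eta}_{4N+3/2}\le\mathcal{D}$.
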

\begin{proof}
The estimate follows in the same way as Lemma \ref{gle}.
\end{proof}

\begin{prop}\label{proe2}
Suppose that $(u,p,\eta)$ is the solution to \eqref{geometric}. Then
\begin{equation}\label{sigmaestimate1}
\begin{split}
 &\mathfrak{K}(u,p)+   \sigma\ns{ \eta}_{L^\infty H^{4N+1}}+\sigma^2\ns{ \eta}_{L^2H^{4N+3/2}}
 \\&\quad\le  \left(  P(\mathcal{E}_0^\sigma)  + P (1+ \mathfrak{K}^\sigma ) T^{1/4}
  \right)  \exp\left( \left(  P(\mathcal{E}_0^\sigma)  + T^{1/4} P( \mathfrak{K}^\sigma )
  \right) T \right).
  \end{split}
  \end{equation}
\end{prop}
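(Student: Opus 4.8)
The plan is to combine the estimates with temporal derivatives (Propositions \ref{k11}--\ref{k33}) with the estimates without temporal derivatives (Proposition \ref{up2222}), now run in the $\sigma$-independent form rather than the $\kappa$-independent form. First I would estimate $(\dt^j u,\dt^j p)$ for $j\ge1$ exactly as in Proposition \ref{k11}: apply Theorem \ref{l_strong_solution} to the equation \eqref{l_lwp_0111} for $(D_t^{2N-1}u,\dt^{2N-1}p)$, then run the backward induction via the $\a$--Stokes elliptic regularity of Proposition \ref{l_stokes_regularity} (as in \eqref{idid}--\eqref{2n12}) to control $(D_t^j u,\dt^j p)$ for $j=1,\dots,2N-2$, and finally convert back from $D_t^j$ to $\dt^j$ using Lemma \ref{l_iteration_estimate_66}. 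The only change from Section \ref{ksec} is bookkeeping: the mean-curvature-free quantity $\mathcal{K}(\eta)$ appearing in \eqref{2n1} should be absorbed into $\mathfrak{F}(\eta)$, which by Proposition \ref{proe1} (estimate \eqref{2etaes2}) is bounded by $P(\mathcal{E}_0^\sigma)+T^{1/4}P(\mathfrak{K}^\sigma)$; the forcing terms $\mathfrak{F}_\ast(F^1,F^3)$ are controlled by Lemma \ref{2l_Ffrak_boundsigma}; and $\mathfrak{F}(u,p)\le \mathfrak{E}_0(u,p)+T\mathfrak{K}(u,p)\le P(\mathcal{E}_0^\sigma)+T\mathfrak{K}^\sigma$ by \eqref{upast} together with \eqref{2l_Fe211f}. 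Threading these through the chain in Proposition \ref{k11} gives $\mathfrak{K}(u,p)$ bounded by $(P(\mathcal{E}_0^\sigma)+P(1+\mathfrak{K}^\sigma)T^{1/4})\exp((P(\mathcal{E}_0^\sigma)+T^{1/4}P(\mathfrak{K}^\sigma))T)$.

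Next I would bound the surface-tension-weighted terms $\sigma\ns{\eta}_{L^\infty H^{4N+1}}$ and $\sigma^2\ns{\eta}_{L^2 H^{4N+3/2}}$. These come from the energy-dissipation estimate in Proposition \ref{up2222}: inspecting its proof, the quantity $\mathcal{D}$ already contains $\sigma^2\ns{\eta}_{4N+3/2}$ and $\mathcal{E}$ already contains $\sigma\ns{\eta}_{4N+1}$, and the estimate \eqref{upspatial} controls $\mathcal{E}+\int_0^t\mathcal{D}$ by $P(\tilde{\mathcal{E}}_0)+P(1+\mathfrak{K}^\sigma)T^{1/2}+\ns{\dt u}_{L^2 H^{4N-1}}+\ns{\dt p}_{L^2 H^{4N-2}}$. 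The last two terms are themselves controlled by $T\cdot\mathfrak{K}(u,p)$ via Lemma \ref{l_sobolev_infinity} (or directly as a subset of the $L^2$-in-time norms inside $\mathfrak{D}^\sigma$), which in turn is bounded by the previous paragraph. Using Lemma \ref{l_sobolev_infinity} to convert the $\int_0^t \sigma^2\ns{\eta}_{4N+3/2}$ control plus the $L^\infty$-in-time control of $\sigma\ns{\eta}_{4N+1}$ into the stated form, and crucially using the smallness \eqref{smallsmallsigma} to absorb the $\varepsilon_0\mathcal{D}$ terms on the right (exactly as at the end of the proof of Proposition \ref{up2222}), yields the claimed bound. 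The final step is to take a suitable linear combination of the two displays so that the $\mathfrak{K}(u,p)$, $\sigma\ns{\eta}_{L^\infty H^{4N+1}}$, and $\sigma^2\ns{\eta}_{L^2 H^{4N+3/2}}$ pieces all appear on the left with coefficient one, producing \eqref{sigmaestimate1}.

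The main obstacle — and the place where the $\sigma$-independence is delicate — is that, unlike in the $\kappa$-problem, we cannot gain a power of $T$ in front of the terms coming from the mean curvature $\sigma H\n$ in $F^3$ and $G^3$. Specifically, the term $\sigma\nab_\ast^2\eta$ in $G^3$ and the analogous highest-order contribution to $F^{3,j}$ are estimated by $\sigma\ns{\eta}_{4N+3/2}\le\sqrt{\mathcal{D}}$ (see \eqref{3alpha7}, \eqref{4alpha2}), which puts them on the dissipation side with no small factor; the analogue for the dissipation comparison would genuinely fail without extra input. The resolution, already visible in the proof of Proposition \ref{up2222}, is that these terms are paired against $\sigma\nab_\ast\pa\eta$ or $\Delta_\ast\pa\eta$ and, after integration by parts (as in \eqref{4alpha7}), the leftover is multiplied by a \emph{low-regularity} norm of $\nab_\ast u$ which by the smallness \eqref{smallsmallsigma} of $\eta$ (hence of $\nab_\ast u$ on $\Sigma$ at low order) carries a factor $\varepsilon_0$; choosing $\varepsilon_0$ small, i.e.\ demanding $\mathfrak{L}_0$ sufficiently small in Theorem \ref{l_sigmanwp}, lets us absorb $C\varepsilon_0\mathcal{D}$ into the left-hand side. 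I would make sure this absorption is done uniformly in $\sigma$ — it is, since $\varepsilon_0$ is universal and the $\sigma$-weights are all $\le1$ — and then the continuity/bootstrap argument in the subsequent subsubsection closes the estimate on a $\sigma$-independent time interval.
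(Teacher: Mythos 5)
Your proposal is correct and follows essentially the same route as the paper: the $j\ge1$ temporal-derivative bounds are recycled from the proof of Proposition \ref{k11} (i.e.\ \eqref{upes}) threaded with the $\sigma$-adapted nonlinear estimates (Lemma \ref{2l_Ffrak_boundsigma}, \eqref{2etaes2}, \eqref{upast}, \eqref{2l_Fe211f}), the $j=0$ and $\sigma$-weighted $\eta$ norms come from Proposition \ref{up2222} with $\kappa=0$ together with Lemma \ref{gle22}, and the two displays are summed, with $\ns{\dt u}_{L^2H^{4N-1}}+\ns{\dt p}_{L^2H^{4N-2}}$ closed directly against the temporal-derivative bound (as in your parenthetical remark --- the $T\cdot\mathfrak{K}(u,p)$ variant would not quite work, since $\mathfrak{K}(u,p)$ only controls $\ns{\dt u}_{L^\infty H^{4N-2}}$, not $\ns{\dt u}_{L^\infty H^{4N-1}}$). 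Your account of the $\varepsilon_0$-absorption of the mean-curvature contributions, uniform in $\sigma\le 1$, matches the mechanism already built into Lemma \ref{gle22} and the proof of Proposition \ref{up2222}.
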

\begin{proof}
We recall the estimates for $(\dt^ju,\dt^jp)$ for $j\ge 1$  from \eqref{upes} that, using the estimates \eqref{fast222} of Lemma \ref{2l_Ffrak_boundsigma}, \eqref{2l_Fe211f} and \eqref{upast},
\begin{equation}\label{11111}
\begin{split}
   & \sum_{j=1}^{2N} \ns{\dt^j u}_{L^2 H^{4N-2j +1}} + \ns{\dt^{2N+1} u}_{(\x_T)^*} + \sum_{j=1}^{2N-1}\ns{\dt^j p}_{ L^2 H^{4N-2j}}\\
 &\quad+\sum_{j=1}^{2N} \ns{\dt^j u}_{L^\infty H^{4N-2j }} + \sum_{j=1}^{2N-1}\ns{\dt^j p}_{L^\infty H^{4N-2j-1}}
\\&\qquad\le   P(1+ \mathfrak{F}(\eta)) \exp\left( P(1+ \mathfrak{F}(\eta)) T \right)
\left(  P(\mathcal{E}_0^\sigma) +\mathfrak{F}_\ast(F^1,F^3) +
\mathfrak{F}(u,p)
  \right)
  \\&\qquad\le  P(1+  P(\mathcal{E}_0^\sigma)+ T^{1/4} P( \mathfrak{K}^\sigma )) \exp\left( P(1+  P(\mathcal{E}_0^\sigma)+ T^{1/4} P( \mathfrak{K}^\sigma )) T \right)
  \\&\qquad\quad\times  \left(  P(\mathcal{E}_0^\sigma)  + P(\mathcal{E}_0^\sigma)+ T^{1/4} P( \mathfrak{K}^\sigma ) +
T \mathfrak{K}^\sigma
  \right)
  \\&\qquad\le \left(  P(\mathcal{E}_0^\sigma)+ T^{1/4} P( \mathfrak{K}^\sigma )
  \right)  \exp\left( P(1+ \mathcal{E}_0^\sigma+  T^{1/4} P( \mathfrak{K}^\sigma )) T \right)
  \\&\qquad\le \left(  P(\mathcal{E}_0^\sigma)  +T^{1/4} P( \mathfrak{K}^\sigma )
  \right)  \exp\left( \left(  P(\mathcal{E}_0^\sigma)  +T^{1/4} P( \mathfrak{K}^\sigma )
  \right) T \right).
   \end{split}
\end{equation}
In view of the estimate \eqref{Gesti22} of  Lemma \ref{gle22} and  Proposition \ref{up2222} (setting $\kappa=0$), we obtain
\begin{equation}\label{33333}
\begin{split}
&\ns{u}_{L^2 H^{4N+1}} +\ns{  p}_{L^2 H^{4N}}+\ns{u}_{L^\infty H^{4N}}+\ns{  p}_{L^\infty H^{4N-1}}
\\&\quad+\sigma^2\ns{ \eta}_{L^2 H^{4N+3/2}}+  \ns{ \eta}_{L^2 H^{4N -1/2}} +\sigma \ns{ \eta}_{L^\infty H^{4N +1}}+  \ns{ \eta}_{L^\infty H^{4N }}
\\&\qquad\ls P(\mathcal{E}_0^\sigma)+ P (1+  \mathfrak{K}^\sigma ) T^{1/2}  +
\ns{\partial_t u}_{L^2 H^{4N-1}}+\ns{\partial_t p}_{L^2 H^{4N-2}}.
\end{split}
\end{equation}
 Suitably summing up the estimate \eqref{11111}--\eqref{33333}, we get
\begin{equation}
\begin{split}
   & \mathfrak{K}(u,p)
+\sigma^2\ns{ \eta}_{L^2 H^{4N+3/2}}+  \ns{ \eta}_{L^2 H^{4N -1/2}} +\sigma \ns{ \eta}_{L^\infty H^{4N +1}}+  \ns{ \eta}_{L^\infty H^{4N }}
\\&  \le  \left(  P(\mathcal{E}_0^\sigma)  +T^{1/4} P( \mathfrak{K}^\sigma )
  \right)  \exp\left( \left(  P(\mathcal{E}_0^\sigma)  +T^{1/4} P( \mathfrak{K}^\sigma )
  \right) T \right)
 +  P(\mathcal{E}_0^\sigma)+ P (1+ \mathfrak{K}^\sigma ) T^{1/2}
 \\&  \le  \left(  P(\mathcal{E}_0^\sigma)  + P (1+ \mathfrak{K}^\sigma ) T^{1/4}
  \right)  \exp\left( \left(  P(\mathcal{E}_0^\sigma)  + T^{1/4} P( \mathfrak{K}^\sigma )
  \right) T \right)  .
   \end{split}
\end{equation}
We thus conclude the proposition.
\end{proof}

\subsubsection{The limit as $\sigma\rightarrow0$: the proof of Theorem \ref{lwp}}

We can now prove a $\sigma$-independent local well-posedness of \eqref{geometric} which allow us to pass to the limit as $\sigma\rightarrow0$ in the local time interval.
 \begin{proof}[Proof of Theorem \ref{lwp}]
 Assume that the initial data $ u_0^\sigma$ and $\eta_0^\sigma$ satisfy the assumptions of Theorem \ref{lwp} for $0<\sigma\le 1$. For each $\sigma$, we let $(u^\sigma,p^\sigma,\eta^\sigma)$ be the solution to \eqref{geometric} on $[0,T_0^\sigma]$ produced by Theorem \ref{l_sigmanwp}.
By the estimate \eqref{2etaes1}--\eqref{2etaes2} of Proposition \ref{proe1} and the estimate \eqref{sigmaestimate1} of Proposition \ref{proe2}, in light of \eqref{viewc},
we conclude that
 \begin{equation}\label{ttts}
 \begin{split}
 \mathfrak{K}^\sigma &\ls  \left(  P(\mathcal{E}_0^\sigma)  + P (1+ \mathfrak{K}^\sigma ) T^{1/4}
  \right)  \exp\left( \left(  P(\mathcal{E}_0^\sigma)  +T^{1/4}  P( \mathfrak{K}^\sigma )
  \right) T \right)
  \\&\quad+\exp\left( C T^{1/2}\sqrt{\mathfrak{K}^\sigma }  \right) \left( \ns{\eta_0}_{4N+1/2} + T {\mathfrak{K}^\sigma }\right)
  \\&\le  \left(  P_1(\mathcal{E}_0^\sigma)  + P (1+\mathfrak{K}^\sigma ) T^{1/4}
  \right)  \exp\left( \left(  P(\mathcal{E}_0^\sigma)  +T^{1/4} P(\mathfrak{K}^\sigma )
  \right) T \right) .
  \end{split}
\end{equation}
Here we name the polynomial $ P_1$ so that it can be referred later.

Since $\mathcal{E}_0^\sigma(u_0^\sigma,
\eta_0^\sigma)\rightarrow\mathcal{E}_0^0(u_0, \eta_0)$ as
$\sigma\rightarrow0$, we can now use a standard continuity argument
to infer from \eqref{ttts} that there exists a $ {T}_0>0$ that
depends only on the initial data but does not depend $\sigma$ so
that for $0<T\le T_0$
 \begin{equation}\label{whole}
 \mathfrak{K}^\sigma (u^\sigma ,p^\sigma ,\eta^\sigma )\le  2  P_1(\mathcal{E}_0^\sigma)= P(\mathcal{E}_0^\sigma).
\end{equation}
This $\sigma$-independent estimates in turn imply that $(u^\sigma,p^\sigma,\eta^\sigma)$ is indeed a solution of \eqref{geometric} on the time interval $[0,T_0]$.
Moreover, we can improve the estimate \eqref{whole} to be \eqref{zero_1} by improving the estimates of $\eta$ with temporal derivatives as in Theorem \ref{l_sigmanwp}.

The $\sigma$-independent estimates \eqref{zero_1} yield  a strong
convergence of $(u^\sigma,p^\sigma,\eta^\sigma)$ to a limit
$(u,p,\eta)$ in any functional space that $\mathcal{K}_{2N}^0$ can
compactly embed into, up to extraction of a subsequence, which is
more than sufficient for us to pass to the limit as
$\sigma\rightarrow0$ in \eqref{geometric} for each $t\in[0,T_0]$. We
find that $(u,p,\eta)$ is a strong solution of \eqref{geometric} for
$\sigma=0$ on $[0,T_0]$ and satisfies the estimates
\eqref{zero_123}. It can be shown as in Theorem \ref{l_knwp} that
the solution of \eqref{geometric} with $\sigma=0$ satisfying
\eqref{zero_123} is unique. This in turn implies that indeed the
whole family of the solutions $(u^\sigma,p^\sigma,\eta^\sigma)$
converges to the unique limit $(u,p,\eta)$. We thus conclude the
zero surface tension limit within the local time interval $[0,T_0]$
and complete the proof of Theorem \ref{lwp}.
\end{proof}

\section{Global in time theory}\label{sec global}

The aim of this section is to prove a $\sigma$-independent global well-posedness of the problem \eqref{geometric} and hence justify the global-in-time zero surface tension limit in \eqref{geometric}. Note that Theorem \ref{lwp} in particular implies that if $\mathcal{E}_0^\sigma\le \delta_0\ll1$ ($\delta_0$ will be determined later but is smaller than $\varepsilon_0/2$ in Theorem \ref{lwp}), then there exists a unique solution $(u,p,\eta)$ (dropping the $\sigma$ dependence) to \eqref{geometric} on $[0,T_0]$ for $T_0=T_0(\mathcal{E}_0^\sigma)>0$. Moreover, the solution satisfies the estimates
\begin{equation}
\mathcal{G}_{2N}^\sigma(T_0)\ls \mathcal{K}_{2N}^\sigma(T_0)
\le   P ( \mathcal{E}_0^\sigma)\ls  \mathcal{E}_0^\sigma.
\end{equation}
Hence for $\delta>0$ (sufficiently small that will be determined later), if $\mathcal{E}_0^\sigma$ is sufficiently small, then
\begin{equation}
\mathcal{G}_{2N}^\sigma(T_0) \le  \delta.
\end{equation}
This smallness will be used to derive the global energy estimates throughout the rest of this section. Since $\eta_0^\sigma$ satisfies the zero average condition \eqref{z_avg}, we have
\begin{equation}\label{avege}
\int_\Sigma \eta(t)=0\text{ for }t\ge 0.
\end{equation}
Indeed, from the second and fourth equations of \eqref{geometric}, we obtain
\begin{equation}
\frac{d}{dt} \int_\Sigma \eta=\int_\Sigma \partial_t\eta =\int_\Sigma u\cdot\n =\int_\Omega \diverge_\a u=0.
\end{equation}
The zero average \eqref{avege} will allow us to use Poincar\'e's inequality on $\Sigma$.

\subsection{Preliminaries}

To derive the global energy evolution of the pure temporal derivatives of the solution to \eqref{geometric} we shall use the following geometric formulation. Applying the temporal differential operator $\dt^j$ for $j=0,\dots,2N$ to \eqref{geometric}, we find that
\begin{equation}\label{linear_geometric}
\begin{cases}
  \dt (\dt^j u) - \dt \bar{\eta} \tilde{b} K \p_3 (\dt^j u) + u \cdot \naba (\dt^j u) + \diva S_{\a}(\dt^j p,\dt^j u) = F^{1,j}  & \text{in } \Omega \\
 \diva (\dt^j u) = F^{2,j}  & \text{in } \Omega \\
 S_{\a}(\dt^j p,\dt^j u) \n = (\dt^j \eta-\sigma\Delta_\ast \dt^j \eta) \n + F^{3,j} & \text{on } \Sigma \\
 \dt (\dt^j \eta)   = \dt^j u\cdot \n+F^{4,j} & \text{on } \Sigma\\
\dt^j u =0 & \text{on } \Sigma_b,
\end{cases}
\end{equation}
where
\begin{equation}\label{F_def_start}
\begin{split}
& F_i^{1,j}  = \sum_{0 < \ell \le j }  C_j^\ell\left\{  \dt^\ell ( \dt \bar{\eta} \tilde{b} K)      \dt^{j - \ell} \p_3 u_i+
\mathcal{A}_{lk} \p_k (\dt^{ \ell} \mathcal{A}_{im} \dt^{j - \ell} \p_m u_l  + \dt^{\ell} \mathcal{A}_{lm} \dt^{j - \ell} \p_m u_i)
  \right.
\\
 &\left.\qquad\qquad +   \dt^{ \ell}\mathcal{A}_{lk}\dt^{j - \ell}\p_k (\mathcal{A}_{im} \p_m u_l + \mathcal{A}_{lm}\p_m u_i)- \dt^{\ell} ( u_l  \mathcal{A}_{lk} ) \dt^{j - \ell} \p_k u_i -  \dt^{\ell} \mathcal{A}_{ik} \dt^{j - \ell} \p_k p\right\},
 \end{split}
\end{equation}
 \begin{equation}\label{i_F2_def}
 F^{2,j} = - \sum_{0 < \ell \le j}  C_j^\ell\dt^{\ell} \mathcal{A}_{lk} \dt^{j - \ell} \p_k u_l
,
\end{equation}
\begin{equation}\label{i_F3_def}
\begin{split}
 F_i^{3,j} =& \sum_{0 < \ell \le j} C_j^\ell \left\{ \dt^{\ell} \n_i \dt^{j - \ell}( \eta -  p)  +
  \dt^{ \ell} ( \n_l \mathcal{A}_{ik} ) \dt^{j - \ell} \p_k u_l + \dt^{ \ell} ( \n_l \mathcal{A}_{lk} ) \dt^{j - \ell} \p_k u_i\right\}
\\
& -\sigma\sum_{0 < \ell \le j} C_j^\ell  \dt^{ \ell} \n \dt^{j -
\ell}   \Delta_\ast\eta  -\sigma\dt^{j
}\left(\diverge_\ast(((1+|\nab_\ast\eta|^2)^{-1/2}-1)\nab_\ast\eta)\n\right),
 \end{split}
 \end{equation}
 and
\begin{equation}\label{F_def_end}
 F^{4,j} =  \sum_{0 < \ell \le j}  C_j^\ell \dt^{ \ell} \n\cdot \dt^{j - \ell}  u.
\end{equation}

We shall present the estimates of these nonlinear terms $F^{1,j},F^{2,j},F^{3,j}$ and $F^{4,j}$ in the following lemma, at both $2N$ and $N+2$ levels.

\begin{lem}\label{p_F_estimates}
Let $F^{i,j}$ be defined by \eqref{F_def_start}--\eqref{F_def_end}, then the following estimates hold.
\begin{enumerate}
\item For $0\le j\le 2N$, we have
\begin{equation}\label{p_F_e_01}
 \ns{F^{1,j} }_{0}+ \ns{  F^{2,j} }_{0}  + \ns{\dt (J F^{2,j} ) }_{0} + \ns{F^{3,j}}_{0} + \norm{F^{4,j}}_{0} \ls \se{2N}^0 \sd{2N}^0
\end{equation}
and
\begin{equation}\label{p_F_e_02}
 \ns{F^{2,j}}_{0} \ls (\se{2N}^0)^2.
\end{equation}
\item For $0\le j\le N+2$, we have
\begin{equation}\label{p_F_e_h_01}
 \ns{F^{1,j} }_{0}+ \ns{  F^{2,j} }_{0}  + \ns{\dt (J F^{2,j} ) }_{0} + \ns{F^{3,j}}_{0} + \norm{F^{4,j}}_{0} \ls \se{2N}^0  \sd{N+2}^0
\end{equation}
and
\begin{equation}\label{p_F_e_h_02}
 \ns{F^{2,j}}_{0}\ls \se{2N}^0   \se{N+2}^0 .
\end{equation}
\end{enumerate}
\end{lem}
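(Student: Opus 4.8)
\textbf{Proof proposal for Lemma \ref{p_F_estimates}.}

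The plan is to treat the four forcing terms $F^{1,j},F^{2,j},F^{3,j},F^{4,j}$ uniformly by the same mechanism: each is a finite sum (over $0<\ell\le j$, with $j\le 2N$ resp. $j\le N+2$) of products in which one factor carries the ``many'' derivatives and the other carries the ``few'', and every summand is at least quadratic. First I would expand each $F^{i,j}$ via the Leibniz rule and the definitions \eqref{ABJ_def} of $A,B,J,K$, so that every term is written as a product of derivatives of $\bar\eta$ (through $\a,\n,J,K$), derivatives of $u$, and derivatives of $p$. The key bookkeeping point — the one that makes the estimate work — is that in $F^{i,j}$ the temporal order is split: the constraint $0<\ell\le j$ means at least one temporal derivative lands on the coefficient ($\bar\eta$, hence on $\eta$), so that $\bar\eta$-factor may be placed in $L^\infty$ and controlled by $\se{2N}^0$ via Sobolev embedding, Lemma \ref{p_poisson}, and trace theory, while the remaining factor — which carries the top spatial regularity — is placed in $L^2$ and absorbed into $\sd{2N}^0$ (resp. $\sd{N+2}^0$). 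I would carry out one representative estimate in detail, say for a term of the form $\dt^\ell\a_{lk}\,\p_k(\dt^{j-\ell}\a_{im}\dt^{j-\ell}\p_m u_l)$ inside $F^{1,j}$, using Lemma \ref{i_sobolev_product_1} to split the product in $L^2(\Omega)$, bounding the low-derivative $\bar\eta$ factors in $H^{s_2}$ by $\sqrt{\se{2N}^0}$ (noting $\dist(\a,I)$ and $\dist(J,1)$ are quadratically small but the point here is just the counting), and bounding the factor with the most derivatives on $u$ or $p$ by $\sqrt{\sd{2N}^0}$; then square. The remaining terms of $F^{1,j}$, and all of $F^{2,j}$, $F^{3,j}$, $F^{4,j}$, follow by the same pattern — for $F^{3,j}$ and $F^{4,j}$ one additionally uses the trace embedding $H^{s}(\Omega)\hookrightarrow H^{s-1/2}(\Sigma)$ to move $u$ and $\bar\eta$ derivatives onto $\Sigma$, and for the surface-tension contributions in $F^{3,j}$ (the $\sigma\dt^\ell\n\,\dt^{j-\ell}\Delta_\ast\eta$ and the $\diverge_\ast(\cdots)$ term) one notes these are either quadratic in low-order $\eta$ terms or carry a $\sigma$ that is harmless since we only need the $L^2$ bound and $\se{2N}^0$, $\sd{2N}^0$ dominate the relevant $\sigma$-weighted $\eta$ norms. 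For \eqref{p_F_e_02} and \eqref{p_F_e_h_02} one simply places \emph{both} factors of $F^{2,j}$ in $L^\infty$-type norms controlled by $\se{2N}^0$ (resp. $\se{2N}^0\se{N+2}^0$, using that one factor has strictly fewer derivatives), without spending any dissipation.

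The only genuinely delicate point is the derivative count at the top level, $j=2N$ (and $j=N+2$), where one must verify that $\sd{2N}^0$ (resp. $\sd{N+2}^0$) really controls the highest-regularity factor and that $\se{2N}^0$ controls the $\bar\eta$ factor; this is exactly where the definitions \eqref{p_energy_def}–\eqref{p_dissipation_def} are tailored — $\se{2N}^0$ includes $\ns{\dt^j\eta}_{2n-2j+3/2}$ and $\ns{\eta}_{2n}$, $\sd{2N}^0$ includes $\ns{\dt^ju}_{2n-2j+1}$ and $\ns{\dt^jp}_{2n-2j}$ — so one needs to check, term by term, that the parabolic-weighted index of each factor does not exceed what the respective functional provides, and in particular that the worst term ($\ell=0$ excluded, so $\ell\ge1$, which lowers the $u$/$p$ index by at least $2$, freeing enough room) fits. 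The term $\dt(JF^{2,j})$ is handled by first writing $\dt(JF^{2,j})=\dt J\,F^{2,j}+J\,\dt F^{2,j}$ and then expanding $\dt F^{2,j}$ by Leibniz, which reintroduces a term with $\ell=0$ in the derivative of $\a$ but now multiplied by $\dt$ of the $u$-factor, so one again has the needed splitting; this is the place I would be most careful. For the $N+2$ estimates \eqref{p_F_e_h_01}–\eqref{p_F_e_h_02}, the only change is that since $N+2\le 2N-1$ for $N\ge3$, there is always at least one spare derivative, and one keeps \emph{one} factor controlled by the lower-order $\se{N+2}^0$ or $\sd{N+2}^0$ while the genuinely high-order $\bar\eta$-coefficients (which at the $N+2$ level are below the $\se{2N}^0$ threshold) are controlled by $\se{2N}^0$; this is the standard ``two-tier'' bookkeeping of Guo–Tice, and I would simply invoke that scheme. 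Since all of this is routine product estimation once the splitting is fixed, I would state the representative computation and then write that the remaining terms are estimated identically, omitting the lengthy details.
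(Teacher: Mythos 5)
Your proposal is correct in substance, but it takes a different (more self-contained) route than the paper: the paper's proof of Lemma \ref{p_F_estimates} consists almost entirely of a citation to Theorems 4.1--4.2 of \cite{GT_per}, which contain exactly the Leibniz-rule/product-estimate bookkeeping you sketch (split each summand into a high-derivative factor placed in $L^2$ and low-derivative factors placed in $L^\infty$, using Lemma \ref{i_sobolev_product_1}, the trace embedding, and Lemma \ref{p_poisson}), and the only new content in the paper is the verification of the $\sigma$-dependent terms in $F^{3,j}$, which do not appear in \cite{GT_per}. So what you do differently is to redo the generic estimates rather than cite them; what the citation buys the paper is that the only thing left to check is the single new structural feature of the problem with surface tension. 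Your outline of the generic part is sound, including the correct treatment of $\dt(JF^{2,j})$ and the two-tier placement for the $N+2$ bounds, and the minor imprecision in \eqref{p_F_e_02} (one factor actually goes in $L^2$ controlled by $\se{2N}^0$, not both in $L^\infty$) is harmless.

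The one place where you should tighten the argument is precisely the part the citation does not cover: the $\sigma$-related terms $-\sigma\sum_{0<\ell\le j}C_j^\ell\,\dt^\ell\n\,\dt^{j-\ell}\Delta_\ast\eta$ and $-\sigma\dt^j\bigl(\diverge_\ast(((1+|\nab_\ast\eta|^2)^{-1/2}-1)\nab_\ast\eta)\n\bigr)$ in \eqref{i_F3_def}. Your justification (``the $\sigma$ is harmless since $\se{2N}^0,\sd{2N}^0$ dominate the relevant $\sigma$-weighted $\eta$ norms'') is off target: the right-hand sides of \eqref{p_F_e_01} and \eqref{p_F_e_h_01} are the $\sigma=0$ functionals, so no $\sigma$-weighted norm of $\eta$ is available at all, and the $\sigma$ prefactor only helps through $0<\sigma\le 1$. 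The correct observation, which is the paper's one-line argument, is that the highest-order factor occurring in these terms is $\dt^j\nab_\ast^2\eta$, and this is already controlled by the unweighted dissipation: $\sd{n}^0$ contains $\ns{\dt^j\eta}_{2n-2j+5/2}$ for $2\le j\le n+1$ and $\ns{\eta}_{2n-1/2}$, $\ns{\dt\eta}_{2n-1/2}$, and $2n-2j+5/2\ge 2$ for all $j\le n$ (both for $n=2N$ and $n=N+2$), so $\ns{\dt^j\nab_\ast^2\eta}_{0}\ls \sd{n}^0$; the remaining factors are low-order in $\eta$ and are bounded by $\se{2N}^0$. With that substitution your argument closes, and the rest is indeed routine.
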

\begin{proof}
All these estimates, except the estimates of the $\sigma$-related
terms in the definition \eqref{i_F3_def} of $F^{3,j}$, are recorded
in Theorems 4.1--4.2 of \cite{GT_per}. The highest derivative
appearing in these $\sigma$-related terms is
$\dt^{j}\nab_\ast^2\eta$. Since $\norm{\dt^{j}\nab_\ast^2\eta}_0\le
\sqrt{\sd{j}^0}$, we can deduce \eqref{p_F_e_01} and
\eqref{p_F_e_h_01}.
\end{proof}

To derive the global energy evolution of the mixed time-horizontal derivatives of the solution to \eqref{geometric} we shall use the following perturbed formulation, i.e., setting $\kappa=0$ in \eqref{perturb2},
\begin{equation}\label{perturb f}
\begin{cases}
 \partial_t u- \Delta u+\nabla p=G^1\quad&\text{in }\Omega
\\ \diverge{u}=G^2&\text{in }\Omega
\\ ( pI-\mathbb{D}(u)) e_3= (\eta -\sigma \Delta_\ast\eta )e_3+G^3&\text{on }\Sigma
\\ \partial_t\eta -u_3= G^4&\text{on }\Sigma
\\ u=0 &\text{on }\Sigma_b.
\end{cases}
\end{equation}
We shall again need also the localized equations of \eqref{perturb f}, i.e.,
setting $\kappa=0$ in \eqref{p_localized_equations},
\begin{equation}\label{perturb f local}
 \begin{cases}
  \dt (\chi  u) - \Delta (\chi  u)+ \nab (\chi  p) = \chi  G^1 + H^{1 } & \text{in }\Omega \\
  \diverge(\chi  u) = \chi  G^2 + H^{2 } & \text{in }\Omega \\
  ((\chi  p) I - \sg (\chi  u) )e_3 =   (\eta -\sigma \Delta_\ast\eta ) e_3 + G^3   & \text{on }\Sigma \\
   \dt \eta  - (\chi  u_3) = G^4  &\text{on } \Sigma \\
  \chi  u =0 & \text{on }\Sigma_b,
 \end{cases}
\end{equation}
where we recall $\chi$ and $H^1,H^2$ from \eqref{chi_properties} and
\eqref{p_H_def}, respectively.

 We shall present the estimates of the nonlinear terms $G^1,G^2,G^3$ and $G^4$ in the following lemma, at both $2N$ and $N+2$ levels. For this, we define one more specialized energy term by
\begin{equation}
 \k := \pnorm{\nab u}{\infty}^2 + \pnorm{\nab^2 u}{\infty}^2+  \snormspace{\nab_\ast u }{2}{\Sigma}^2.
\end{equation}
Note that $\k\ls \mathcal{E}_{N+2}$.
\begin{lem}\label{p_G_estimates}
Let $G^i$ be defined by \eqref{G1_def}--\eqref{G4_def}. Then
\begin{enumerate}
\item It holds that
\begin{equation}\label{p_G_e_0}
\begin{split}
& \ns{ \bar{\nab}_0^{4N-2} G^1}_{0}  +  \ns{ \bar{\nab}_0^{4N-2}  G^2}_{1} +
 \ns{ \bar{\nab}_{\ast 0}^{\  4N-2}  G^3}_{1/2}
 \\&\quad+
 \ns{\bar{\nab}_{\ast 0}^{\  4N-1} G^4}_{1/2} \ls(\se{2N}^\sigma)^{2} + \k \f,
 \end{split}
\end{equation}
and
\begin{equation}\label{p_G_e_00}
\begin{split}
&\ns{ \bar{\nab}_{0}^{4N-1} G^1}_{0}  +  \ns{ \bar{\nab}_0^{4N-1}  G^2}_{1}  +
 \ns{ \bar{\nab}_{\ast 0}^{\  4N-1} G^3}_{1/2} \\&\quad + \ns{\bar{\nab}_{\ast 0}^{\  4N-1} G^4}_{1/2}
   + \ns{\bar{\nab}_{\ast }^{\, 4N-2} \dt G^4}_{1/2}+\sigma^2\ns{\nab_{\ast }^{\,  4N} G^4}_{1/2}
\ls \se{2N}^\sigma \sd{2N}^\sigma + \k \f.
\end{split}
\end{equation}
 \item It holds that
\begin{equation}\label{p_G_e_h_0}
\begin{split}
&\ns{ \bar{\nab}_0^{2(N+2)-2} G^1}_{0} +  \ns{ \bar{\nab}_0^{2(N+2)-2}  G^2}_{1} +
 \ns{ \bar{\nab}_{\ast 0}^{\  2(N+2)-2} G^3}_{1/2}
\\&\quad+ \ns{\bar{\nab}_{\ast 0}^{\  2(N+2)-1} G^4}_{1/2}\ls \se{2N}^\sigma\se{N+2}^\sigma
\end{split}
\end{equation}
and
\begin{equation}\label{p_G_e_h_00}
\begin{split}
&\ns{ \bar{\nab}_0^{2(N+2)-1} G^1}_{0} +  \ns{ \bar{\nab}_0^{2(N+2)-1}  G^2}_{1}  +
 \ns{ \bar{\nab}_{\ast 0}^{\  2(N+2)-1} G^3}_{1/2}
\\&\quad + \ns{\bar{\nab}_{\ast 0}^{\  2(N+2) } G^4}_{1/2}
\ls
\se{2N}^\sigma \sd{N+2}^\sigma.
\end{split}
\end{equation}
\end{enumerate}
\end{lem}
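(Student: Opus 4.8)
\textbf{Proof proposal for Lemma \ref{p_G_estimates}.}

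The plan is to follow the proof of the analogous estimates in Guo and Tice \cite{GT_per, GT_inf}, where the estimates for the gravity-only nonlinearities $G^1,G^2,G^3,G^4$ (i.e. everything but the surface-tension contribution in $G^3$) are established. Thus I would split each $G^i$ into its ``$\sigma=0$'' part, handled by quoting the corresponding estimates of \cite{GT_per}, and the new surface-tension correction in $G^3$, namely the term $-\sigma\diverge_\ast(((1+|\nab_\ast\eta|^2)^{-1/2}-1)\nab_\ast\eta)\n$, which must be controlled by hand. Since that term carries an explicit factor $\sigma$ and is at least \emph{cubic} in $\nab_\ast\eta$, it will turn out to be easily absorbed: the highest derivative appearing in $\bar\nab_{\ast 0}^{\,k}$ applied to it is $\sigma\,\bar\nab_\ast^{\,k}\nab_\ast^2\eta$, which sits at the level $\sigma\norm{\eta}_{k+2}$ (with appropriate temporal counting) and is therefore bounded by $\sqrt{\se{2N}^\sigma}$ or $\sqrt{\sd{2N}^\sigma}$ depending on whether we aim for \eqref{p_G_e_0}/\eqref{p_G_e_h_0} or \eqref{p_G_e_00}/\eqref{p_G_e_h_00}; the remaining lower-order factors are quadratic in $\nab_\ast\eta$ and hence give the extra smallness.

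In more detail: for each of \eqref{p_G_e_0}--\eqref{p_G_e_h_00} I would apply $\pa$ with $\al\in\mathbb{N}^{1+2}$ in the allowed range, expand by the Leibniz rule, and write each resulting product in the schematic form $X\cdot Y$ with $Y$ the factor carrying the most derivatives and $X$ a product of low-order factors. For the $\sigma=0$ terms this is exactly the bookkeeping of \cite{GT_per}; for the surface-tension term I would use that $(1+|\nab_\ast\eta|^2)^{-1/2}-1$ and its derivatives are, by Sobolev embedding and the smallness $\mathfrak{L}(\eta)\le\varepsilon_0$, bounded in the relevant norms by a constant multiple of $\norm{\nab_\ast\eta}^2_{H^s}$ for suitable $s$ — this is the usual ``at least quadratic, so it can be absorbed'' mechanism from Lemma \ref{gle}. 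Then Lemma \ref{i_sobolev_product_1} together with the trace embeddings and, where a Poisson extension $\bar\eta$ intervenes, Lemma \ref{p_poisson}, reduces everything to products of $\se{2N}^\sigma$-, $\se{N+2}^\sigma$-, $\sd{2N}^\sigma$-, $\sd{N+2}^\sigma$- and $\k$-controlled quantities, matching the right-hand sides claimed. The $\f$ contributions enter exactly as in \cite{GT_per}, through the terms where $Y$ is the full $4N$-order horizontal derivative of $\eta$ (controlled by $\sqrt{\f}$) paired with an $X$ controlled by $\sqrt{\k}$; the surface-tension term contributes nothing new here because its $\sigma$ prefactor and cubic structure keep it strictly within the $\se{2N}^\sigma$/$\sd{2N}^\sigma$ budget.

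The main obstacle is purely organizational rather than conceptual: one must verify, term by term, that the derivative counts in the mixed space-time norms $\bar\nab_{0}^{k}$ and $\bar\nab_{\ast 0}^{\,k}$ never force a factor above the regularity that $\se{2N}^\sigma$ (resp.\ $\se{N+2}^\sigma$) or $\sd{2N}^\sigma$ (resp.\ $\sd{N+2}^\sigma$) can supply — in particular for the borderline terms in $\dt G^4$ and $\sigma^2\nab_\ast^{4N}G^4$ in \eqref{p_G_e_00}, where one uses the geometric identity $\dt(u\cdot\n)=D_tu\cdot\n$ and Lemma \ref{l_boundary_dual_estimate} exactly as in \cite{GT_per}. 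Since all of this is routine and the surface-tension correction is benign, I would simply assert that the argument of Theorems 4.1--4.2 of \cite{GT_per} applies verbatim for the $\sigma=0$ part and record the short estimate of the one new term, as indicated above.
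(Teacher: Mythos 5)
Your proposal is correct and follows essentially the same route as the paper: quote the corresponding estimates of Guo--Tice \cite{GT_per} (their Theorems 3.1--3.2) for the $\sigma$-independent parts, and bound the $\sigma$-related remainder in $G^3$ by exploiting the explicit $\sigma$ prefactor so that the top-order factor $\sigma\nab_\ast^2\eta$ is absorbed by $\sigma\ns{\eta}_{2n+1}$ in $\se{n}^\sigma$ or $\sigma^2\ns{\eta}_{2n+3/2}$ in $\sd{n}^\sigma$, with the higher-regularity $G^4$ norms handled by the same product estimates. The only cosmetic difference is your invocation of the identity $\dt(u\cdot\n)=D_tu\cdot\n$ and Lemma \ref{l_boundary_dual_estimate} for the $\dt G^4$ and $\sigma^2\nab_\ast^{4N}G^4$ terms, which the paper does not need (and which are handled there by direct Leibniz/product estimates, since the required norms of $\dt^j u$, $\dt^j\eta$ are already in $\sd{2N}^\sigma$).
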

\begin{proof}
These estimates are recorded with the slight differences in Theorems 3.1--3.2 of \cite{GT_per}. These differences result from the estimates of the $G^3$ and $G^4$ terms. For the $G^3$ term, the difference results from estimating the $\sigma$-related terms in the definition \eqref{G3_def} of $G^3$. The highest derivative appearing in these $\sigma$-related terms is $\sigma\nab_\ast^2\eta$. We need to make use of this $\sigma$ factor so that these $\sigma$-related terms can be bounded in terms of $\se{n}^\sigma$ and $\sd{n}^\sigma$. For example, to bound $\ns{\nab_{\ast }^{\, 4N-1} G^3}_{1/2}$, we estimate $\sigma\norm{\nab_{\ast }^{\, 4N-1} \nab_\ast^2\eta}_{1/2}\le\sigma\norm{  \eta}_{4N+3/2} \le \sqrt{\sd{2N}^\sigma}$. Hence, we can deduce the estimates of $G^3$ in \eqref{p_G_e_0}--\eqref{p_G_e_h_00}. For the $G^4$ term, the difference results from that we control the higher regularities of $G^4$ than those in Theorems 3.1--3.2 of \cite{GT_per}, but the proof is similar.
\end{proof}

\subsection{Global energy evolution}\label{sss}
In this subsection, we will derive the global energy evolution of the solution to \eqref{geometric}. We define the ``horizontal'' energies and dissipations with localization as follows. We define those only involving temporal derivatives by
\begin{equation}\label{p_temporal_energies_def}
 \seb{n}^{\sigma,t} = \sum_{j=0}^{n} \ns{\sqrt{J}\dt^j u}_{0}  + \sum_{j=0}^{n} \ns{\dt^j \eta}_{0}+ \sigma\sum_{j=0}^{n} \ns{\dt^j \eta}_{1}
\text{ and }
 \sdb{n}^{ t} = \sum_{j=0}^{n} \ns{ \sg \dt^j u}_{0}.
\end{equation}
We define those localized away from the lower boundary $\Sigma_b$ involving both the horizontal spatial derivatives and temporal derivatives by
\begin{equation}\label{p_upper_energy_def}
 \seb{n}^{\sigma,+} =  \ns{ \bar{\nab}_{\ast 0}^{\ 2n-1}  (\chi u)}_{0} + \ns{ \nab_\ast \bar{\nab}_{\ast 0}^{\ 2n-1}  (\chi u)}_{0} + \sum_{j=0}^{n-1} \ns{\dt^j \eta}_{2n-2j}+ \sigma\sum_{j=0}^{n-1} \ns{\dt^j \eta}_{2n-2j+1}
\end{equation}
and
\begin{equation}\label{p_upper_dissipation_def}
  \sdb{n}^{ +} =   \ns{ \bar{\nab}_{\ast 0}^{\ 2n-1} \sg (\chi u)}_{0} + \ns{ \nab_\ast\bar{\nab}_{\ast 0}^{\ 2n-1} \sg (\chi u)}_{0}.
\end{equation}
We now define
\begin{equation}
  \seb{n}^\sigma  = \seb{n}^{\sigma,t}+\seb{n}^{\sigma,+}\text{ and } \sdb{n}  = \sdb{n}^{ t}+\sdb{n}^{ +}.
\end{equation}

\begin{remark}
Note that we only consider the energy evolution of localized terms away from $\Sigma_b$.  The method employed in \cite{GT_per} involves another energy and dissipation pair for terms localized near $\Sigma_b$.   Here, our modification of the method of \cite{GT_per} frees us from the need to introduce such a lower localization.
\end{remark}

We first derive the global energy evolution of the pure temporal derivatives of the solution, that is, the evolution of $\bar{\mathcal{E}}_{n}^{\sigma,t}$. We first present the result at the $2N$ level.
\begin{prop}\label{i_temporal_evolution 2N}
It holds that
\begin{equation} \label{tem en 2N}
   \frac{d}{dt} \left( \bar{\mathcal{E}}_{2N}^{\sigma,t}+2\int_\Omega J   \dt^{2N-1} p  F^{2,2N}\right)
+  \bar{\mathcal{D}}_{2N}^{t}
  \ls  \sqrt{\se{2N}^0} \sd{2N}^0
\end{equation}
and
\begin{equation} \label{tem en 2N00}
     \int_\Omega J   \dt^{2N-1} p  F^{2,2N}\ls (\se{2N}^0)^{3/2}.
\end{equation}
\end{prop}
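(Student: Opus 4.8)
The plan is to run the natural $L^2$ energy estimate on the temporally differentiated system. First I would apply $\dt^j$ to \eqref{geometric} for $j=0,\dots,2N$ to obtain \eqref{linear_geometric}, test the evolution equation for $\dt^j u$ against $J\dt^j u$, integrate over $\Omega$, and sum over $0\le j\le 2N$ (the differentiability of the time-dependent $L^2$ pairing being justified as in Lemma \ref{l_x_time_diff}). The term $\int_\Omega J\dt(\dt^j u)\cdot\dt^j u$ contributes $\hal\frac{d}{dt}\ns{\sqrt{J}\dt^j u}_0$ plus the error $-\hal\int_\Omega|\dt^j u|^2\dt J$; the transport operators $-\dt\bar\eta\tilde b K\p_3(\dt^j u)$ and $u\cdot\naba(\dt^j u)$, after integration by parts using $\dt^j u=0$ on $\Sigma_b$, the Piola identity $\p_j(J\a_{ij})=0$, the identity $\p_3(\dt\bar\eta\tilde b)=\dt J$, and $\diva u=0$, cancel this interior error exactly and leave only a boundary integral on $\Sigma$ of the form $\hal\int_\Sigma(u\cdot\n)|\dt^j u|^2$. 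Integrating by parts in $\diva S_\a(\dt^j p,\dt^j u)$ and using $\p_j(J\a_{ij})=0$ again produces the dissipation $\hal\ns{\sg_\a\dt^j u}_0$, the boundary integral $\int_\Sigma S_\a(\dt^j p,\dt^j u)\n\cdot\dt^j u$, and the pressure contribution $-\int_\Omega J\dt^j p\,F^{2,j}$. On $\Sigma$ the stress boundary condition turns the boundary integral into $\int_\Sigma(\dt^j\eta-\sigma\Delta_\ast\dt^j\eta)(\n\cdot\dt^j u)+\int_\Sigma F^{3,j}\cdot\dt^j u$; substituting $\n\cdot\dt^j u=\dt(\dt^j\eta)-F^{4,j}$ from the kinematic equation and integrating the surface tension piece by parts in the horizontal variables gives $\hal\frac{d}{dt}\bigl(\ns{\dt^j\eta}_0+\sigma\ns{\nab_\ast\dt^j\eta}_0\bigr)$ plus remainders. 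Collecting the leading terms and invoking Korn's inequality (Lemma \ref{i_korn}) to relate $\ns{\sg_\a\dt^j u}_0$ to $\ns{\sg\dt^j u}_0$ recovers $\hal\frac{d}{dt}\seb{2N}^{\sigma,t}+\hal\sdb{2N}^t$ up to error terms.

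It then remains to show every error term is $\ls\sqrt{\se{2N}^0}\,\sd{2N}^0$. The commutator terms built from $F^{1,j}$, $F^{3,j}$, $F^{4,j}$, the boundary term $\int_\Sigma(u\cdot\n)|\dt^j u|^2$, and the horizontal-integration-by-parts remainder of the surface tension term are all at least quadratic; after Cauchy--Schwarz, trace estimates, and the Sobolev product rule (Lemma \ref{i_sobolev_product_1}) they are controlled by Lemma \ref{p_F_estimates}(1), and the $\sigma$-weighted pieces coming from the mean curvature are handled by discarding the harmless factor $\sigma\le1$ and using the $\sigma$-independent bounds recorded there. The pressure terms $-\int_\Omega J\dt^j p\,F^{2,j}$ for $0\le j\le 2N-1$ are immediate since $\dt^j p$ still lies in the energy and $F^{2,j}$ is quadratic.

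The one structurally delicate point is the top-order pressure term $-\int_\Omega J\dt^{2N}p\,F^{2,2N}$: as $\dt^{2N}p$ is not controlled by $\se{2N}^0$, it cannot be estimated directly, so I would integrate by parts in time,
\begin{equation}
 -\int_\Omega J\dt^{2N}p\,F^{2,2N}=-\frac{d}{dt}\int_\Omega J\dt^{2N-1}p\,F^{2,2N}+\int_\Omega\dt\bigl(JF^{2,2N}\bigr)\dt^{2N-1}p .
\end{equation}
After doubling to match the factor in the identity, the first term on the right is exactly the correction $2\int_\Omega J\dt^{2N-1}p\,F^{2,2N}$ appearing inside $\frac{d}{dt}$ in \eqref{tem en 2N}, while the remainder is bounded by $\norm{\dt(JF^{2,2N})}_0\,\norm{\dt^{2N-1}p}_0\ls\sqrt{\se{2N}^0\sd{2N}^0}\cdot\sqrt{\sd{2N}^0}=\sqrt{\se{2N}^0}\,\sd{2N}^0$, using $\norm{\dt^{2N-1}p}_0\le\norm{\dt^{2N-1}p}_2\le\sqrt{\sd{2N}^0}$ and $\ns{\dt(JF^{2,2N})}_0\ls\se{2N}^0\sd{2N}^0$ from \eqref{p_F_e_01}. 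This yields \eqref{tem en 2N}. Finally \eqref{tem en 2N00} is immediate: $\bigl|\int_\Omega J\dt^{2N-1}p\,F^{2,2N}\bigr|\ls\norm{J}_{L^\infty}\,\norm{\dt^{2N-1}p}_0\,\norm{F^{2,2N}}_0\ls\sqrt{\se{2N}^0}\cdot\se{2N}^0=(\se{2N}^0)^{3/2}$, where $\norm{J}_{L^\infty}\ls1$ by the smallness of $\eta$, $\norm{\dt^{2N-1}p}_0\le\norm{\dt^{2N-1}p}_1\le\sqrt{\se{2N}^0}$, and $\norm{F^{2,2N}}_0\ls\se{2N}^0$ by \eqref{p_F_e_02}. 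The main obstacle is therefore the correct bookkeeping of this top-order pressure term together with the $\sigma$-weighted surface tension remainders; everything else is a routine (if lengthy) application of Lemma \ref{p_F_estimates}.
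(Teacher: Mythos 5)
Your proposal is correct and follows essentially the same route as the paper: the same $J$-weighted energy identity for $\dt^j u$, the same use of the kinematic condition and horizontal integration by parts for the $\eta$- and $\sigma$-terms, the same temporal integration by parts producing the correction $2\int_\Omega J\,\dt^{2N-1}p\,F^{2,2N}$, and the same application of Lemma \ref{p_F_estimates} for the error terms and for \eqref{tem en 2N00}. One small correction: the passage from $\ns{\sg_\a \dt^j u}_{0}$ to $\ns{\sg \dt^j u}_{0}$ is not Korn's inequality but a perturbation estimate exploiting the smallness of $\a-I$ (the paper invokes (4.17)--(4.21) of \cite{GT_per}), with the resulting quadratic error absorbed into $\sqrt{\se{2N}^0}\sd{2N}^0$.
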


\begin{proof}
Taking the dot product of the first equation of $\eqref{linear_geometric}$ with $\dt^j u$ and then integrating by parts, using the second, third and fifth equations, we obtain
\begin{equation}\label{i_ge_ev_0}
\begin{split}
&\hal  \frac{d}{dt} \int_\Omega  J \abs{\dt^j u}^2
+ \hal \int_\Omega J \abs{ \sg_{\mathcal{A}} \dt^j u}^2
\\&\quad= \int_\Omega J (  \dt^j u\cdot F^{1,j}+  \dt^j p  \diverge_\a (\dt^j u))
- \int_\Sigma S_{ik}(\dt^j p,\dt^j u)\n_k \dt^j u_i
\\&\quad= \int_\Omega J (   \dt^j u\cdot F^{1,j}+  \dt^j p  F^{2,j})
- \int_\Sigma (\dt^j \eta-\sigma\Delta_\ast \dt^j \eta) \n\cdot \dt^j u + F^{3,j}\cdot \dt^j u.
\end{split}
\end{equation}
The fourth equation further implies
\begin{equation}\label{i_ge_ev_1}
\begin{split}
- \int_\Sigma (\dt^j \eta-\sigma\Delta_\ast \dt^j \eta) \n\cdot \dt^j u & = - \int_\Sigma(\dt^j \eta-\sigma\Delta_\ast \dt^j \eta)(\dt (\dt^j \eta) -F^{4,j})
\\&=-\hal \frac{d}{dt}\int_\Sigma \abs{\dt^j \eta}^2+\sigma\abs{\nab_\ast \dt^j \eta}^2+\int_\Sigma(\dt^j \eta-\sigma\Delta_\ast \dt^j \eta)F^{4,j}.
\end{split}
\end{equation}
Hence, we have
\begin{equation} \label{identity1}
\begin{split}
&\hal  \frac{d}{dt} \left(\int_\Omega  J \abs{\dt^j u}^2+\int_\Sigma \abs{\dt^j \eta}^2+\sigma\abs{\nab_\ast \dt^j \eta}^2\right)
+ \hal \int_\Omega J \abs{ \sg_{\mathcal{A}} \dt^j u}^2
\\&\quad= \int_\Omega J (   \dt^j u\cdot F^{1,j}+  \dt^j p  F^{2,j})
+\int_\Sigma -  \dt^j u\cdot F^{3,j}+(\dt^j \eta-\sigma\Delta_\ast \dt^j \eta)F^{4,j}.
\end{split}
\end{equation}

We now estimate the right hand side of \eqref{identity1}. For the $F^{1,j}$ term, by \eqref{p_F_e_01}, we may bound
\begin{equation}\label{i_te_2}
\int_\Omega J \dt^j u\cdot F^{1,j} \le   \pnorm{J}{\infty}\norm{\dt^j u}_{0}   \norm{F^{1,j}}_0 \ls  \sqrt{\sd{2N}^0 } \sqrt{\se{2N}^0 \sd{2N}^0}
 .
\end{equation}
For the $F^{3,j}$ and $F^{4,j}$ terms, by \eqref{p_F_e_01} and the trace theory, we have
\begin{equation}\label{i_te_3}
\begin{split}
 &\int_\Sigma -  \dt^j u\cdot F^{3,j}+(\dt^j \eta-\sigma\Delta_\ast \dt^j \eta)F^{4,j}\ls  \snormspace{\dt^{j} u}{0}{\Sigma} \norm{F^{3,j}}_{0} + \norm{\dt^{j} \eta}_{2} \norm{F^{4,j}}_{0} \\&\quad
\ls  \left( \norm{\dt^{j} u}_{1} + \norm{\dt^{j} \eta}_{2} \right)\sqrt{\se{2N}^0\sd{2N}^0} \le  \sqrt{\sd{2N}^0 } \sqrt{\se{2N}^0 \sd{2N}^0}
.
\end{split}
\end{equation}

For the $F^{2,j}$ term, we must consider the case $j<2N$ and $j=2N$ separately. For $j<2N$, by \eqref{p_F_e_01} we have
\begin{equation}\label{i_te_4}
\int_\Omega J   \dt^j p  F^{2,j}  \le  \pnorm{J}{\infty} \norm{\dt^j p}_{0}   \norm{F^{2,j}}_0 \ls  \sqrt{\sd{2N}^0} \sqrt{\se{2N}^0 \sd{2N}^0}.
\end{equation}
The case $j=2N$ is much more involved since we can not control $\dt^{2N}p$. We are then forced to integrate by parts in time:
\begin{equation}
 \int_\Omega J   \dt^{2N} p  F^{2,2N}= \int_\Omega \dt(J \dt^{2N-1} p  F^{2,2N})-\int_\Omega     \dt^{2N-1} p \dt(J F^{2,2N}) .
\end{equation}
By \eqref{p_F_e_01}, we may bound
\begin{equation}\label{i_te_5}
 -  \int_\Omega  \dt^{2N-1} p  \dt(J F^{2} ) \ls   \norm{\dt^{2N-1} p }_{0} \norm{\dt(J F^{2} )}_{0} \ls   \sqrt{\sd{2N}^0} \sqrt{\se{2N}^0 \sd{2N}^0} .
\end{equation}

Now we combine \eqref{i_te_2}--\eqref{i_te_5} to deduce from \eqref{identity1} that, summing over $j$,
\begin{equation} \label{identity2}
   \frac{d}{dt} \left( \bar{\mathcal{E}}_{2N}^{\sigma,t}+2\int_\Omega J   \dt^{2N-1} p  F^{2,2N}\right)
+  \sum_{j=0}^{2N} \int_\Omega J \abs{ \sg_{\mathcal{A}} \dt^j u}^2
  \ls \sqrt{\se{2N}^0} \sd{2N}^0.
\end{equation}
We follow the estimates (4.17)--(4.21) of \cite{GT_per} to have
\begin{equation}\label{identity222}
\int_\Omega J \abs{ \sg_{\mathcal{A}} \dt^j u}^2\ge  \ns{ \sg \dt^j u}_{0}-C\sqrt{\se{2N}^0} \sd{2N}^0.
\end{equation}
We then deduce \eqref{tem en 2N} from \eqref{identity2}--\eqref{identity222}. On the other hand, we may use \eqref{p_F_e_02} to obtain
\begin{equation}
     \int_\Omega J   \dt^{2N-1} p  F^{2,2N}\ls \norm{J}_{L^\infty}\norm{\dt^{2N-1} p}_{0}\norm{F^{2,2N}}_{0}\ls(\se{2N}^0)^{3/2}.
\end{equation}
This implies \eqref{tem en 2N00} and we thus conclude the proposition.
\end{proof}

We then record a similar result at the $N+2$ level.
\begin{prop}\label{i_temporal_evolution N+2}
It holds that
\begin{equation} \label{tem en N+2}
   \frac{d}{dt} \left( \bar{\mathcal{E}}_{N+2}^{\sigma,t}+2\int_\Omega J   \dt^{N+1} p  F^{2,N+2}\right)
+  \bar{\mathcal{D}}_{N+2}^{t}
  \le  \sqrt{\se{2N}^0} \sd{N+2}^0
\end{equation}
and
\begin{equation} \label{tem en N+200}
     \int_\Omega J   \dt^{N+1} p  F^{2,N+2}\ls \sqrt{\se{2N}^0}\se{N+2}^0.
\end{equation}
\end{prop}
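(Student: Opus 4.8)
The plan is to repeat \emph{verbatim} the argument of Proposition \ref{i_temporal_evolution 2N}, now applying the temporal operator $\dt^j$ for $j=0,\dots,N+2$ to \eqref{linear_geometric} and using the $N+2$-level forcing estimates from part (2) of Lemma \ref{p_F_estimates}. First I would take the dot product of the first equation of \eqref{linear_geometric} with $\dt^j u$, integrate by parts over $\Omega$ using the divergence structure (the weighted $J$-volume element as in \eqref{i_ge_ev_0}), and substitute the boundary conditions on $\Sigma$ and $\Sigma_b$. The kinematic boundary condition $\dt(\dt^j\eta)=\dt^j u\cdot\n + F^{4,j}$ converts the curvature and gravity boundary terms into an exact time-derivative of $\ns{\dt^j\eta}_0+\sigma\ns{\dt^j\eta}_1$ plus an $F^{4,j}$ error, exactly as in \eqref{i_ge_ev_1}. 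Summing over $j=0,\dots,N+2$ produces the identity analogous to \eqref{identity1} with the dissipation $\sum_{j}\int_\Omega J\abs{\sg_\a \dt^j u}^2$ on the left.

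Next I would estimate the right-hand side. For the $F^{1,j}$, $F^{3,j}$ and $F^{4,j}$ terms I would use Cauchy--Schwarz together with the trace theory and the bound $\ns{F^{1,j}}_0+\ns{F^{3,j}}_0+\norm{F^{4,j}}_0\ls \se{2N}^0\,\sd{N+2}^0$ from \eqref{p_F_e_h_01}, which gives contributions of the form $\sqrt{\sdb{N+2}^t}\sqrt{\se{2N}^0\sd{N+2}^0}\ls \sqrt{\se{2N}^0}\,\sd{N+2}^0$ after absorbing $\sdb{N+2}^t$ into $\sd{N+2}^0$ (note $\norm{\dt^j u}_1,\norm{\dt^j\eta}_2\ls\sqrt{\sd{N+2}^0}$ for $j\le N+2$). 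For the $F^{2,j}$ pressure terms with $j<N+2$, the factor $\dt^j p$ is controlled by $\sqrt{\sd{N+2}^0}$, so \eqref{p_F_e_h_01} again closes these. The delicate term is $j=N+2$: since $\dt^{N+2}p$ is not controlled, I would integrate by parts in time, writing $\int_\Omega J\dt^{N+2}p\,F^{2,N+2}=\frac{d}{dt}\int_\Omega J\dt^{N+1}p\,F^{2,N+2}-\int_\Omega \dt^{N+1}p\,\dt(JF^{2,N+2})$, move the exact derivative to the left-hand side (this is the $2\int_\Omega J\dt^{N+1}p F^{2,N+2}$ correction in \eqref{tem en N+2}), and bound the remainder by $\norm{\dt^{N+1}p}_0\norm{\dt(JF^{2,N+2})}_0\ls\sqrt{\se{2N}^0}\,\sd{N+2}^0$ using again \eqref{p_F_e_h_01}.

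To finish, I would invoke Korn's inequality exactly as in the $2N$ case: following the estimates analogous to (4.17)--(4.21) of \cite{GT_per} one has $\int_\Omega J\abs{\sg_\a\dt^j u}^2\ge\ns{\sg\dt^j u}_0-C\sqrt{\se{2N}^0}\,\sd{N+2}^0$, so the left-hand dissipation controls $\sdb{N+2}^t=\sum_{j=0}^{N+2}\ns{\sg\dt^j u}_0$ up to errors of the stated size. Combining everything yields \eqref{tem en N+2}. The bound \eqref{tem en N+200} is immediate from \eqref{p_F_e_h_02}: $\int_\Omega J\dt^{N+1}p\,F^{2,N+2}\ls\norm{J}_{L^\infty}\norm{\dt^{N+1}p}_0\norm{F^{2,N+2}}_0\ls\sqrt{\se{N+2}^0}\sqrt{\se{2N}^0\se{N+2}^0}=\sqrt{\se{2N}^0}\,\se{N+2}^0$. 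I expect no genuinely new obstacle here relative to Proposition \ref{i_temporal_evolution 2N}; the only point requiring care is again the top-order pressure term at $j=N+2$, where the in-time integration by parts and the precise splitting of which factor ($\se{2N}^0$ vs.\ $\sd{N+2}^0$ or $\se{N+2}^0$) carries the high derivatives must be tracked so that one power lands on the $2N$-energy — this is precisely what \eqref{p_F_e_h_01}--\eqref{p_F_e_h_02} are designed to deliver.
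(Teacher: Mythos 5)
Your proposal is correct and is exactly the argument the paper intends: the paper's own proof of this proposition simply states that it is the same as Proposition \ref{i_temporal_evolution 2N} with \eqref{p_F_e_h_01}--\eqref{p_F_e_h_02} replacing \eqref{p_F_e_01}--\eqref{p_F_e_02}, which is precisely what you carry out, including the temporal integration by parts for the uncontrolled $\dt^{N+2}p$ term and the Korn-type lower bound on $\int_\Omega J\abs{\sg_{\a}\dt^j u}^2$.
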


\begin{proof}
The proof is similar to Proposition \ref{i_temporal_evolution 2N}, except using \eqref{p_F_e_h_01}--\eqref{p_F_e_h_02} in place of \eqref{p_F_e_01}--\eqref{p_F_e_02}. We thus omit the details.
\end{proof}

We now derive the global energy evolution of $\bar{\mathcal{E}}_{2N}^{\sigma,+}$.
\begin{prop}\label{p_upper_evolution 2N}
For any $\ep \in (0,1)$ there exists a constant $C(\varepsilon)>0$ such that
\begin{equation}\label{p_u_e_00}
 \frac{d}{dt}\seb{2N}^{\sigma,+} +  \sdb{2N}^{+} \ls   \sqrt{\se{2N}^\sigma} \sd{2N}^\sigma + \sqrt{ \sd{2N}^\sigma \k \f } + \ep \sd{2N}^\sigma +  C(\varepsilon) \sdb{2N}^t.
\end{equation}
\end{prop}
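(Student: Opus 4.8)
\textbf{Proof proposal for Proposition \ref{p_upper_evolution 2N}.}

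The plan is to argue exactly as in the derivation of \eqref{0ese} in the proof of Proposition \ref{up2222}, but now carrying along the temporal derivatives and replacing the $\kappa$-dependent machinery by the $\sigma$-independent two-tier structure. First I would fix a multi-index $\alpha \in \mathbb{N}^{1+2}$ with $|\alpha| \le 4N$ that contains \emph{at least one} horizontal spatial derivative (so that $\p^\alpha$ legitimately acts on the localized system \eqref{perturb f local}, which has flat upper boundary $\Sigma$ but a possibly curved $\Sigma_b$ — this is precisely why we localize with $\chi$). Apply $\p^\alpha$ to \eqref{perturb f local}, take the $L^2(\Omega)$ inner product of the first equation with $\p^\alpha(\chi u)$, integrate by parts in space, and use the remaining equations of \eqref{perturb f local} together with the kinematic equation $\dt\eta - \chi u_3 = G^4$ on $\Sigma$ to convert the boundary terms. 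This produces an identity of the schematic form
\begin{equation}\label{prop_u_identity}
\begin{split}
 &\hal\frac{d}{dt}\left(\ns{\p^\alpha(\chi u)}_0 + \ns{\p^\alpha\eta}_0 + \sigma\ns{\nab_\ast\p^\alpha\eta}_0\right) + \hal\ns{\sg\p^\alpha(\chi u)}_0 \\
 &\quad = \int_\Omega \p^\alpha(\chi u)\cdot\p^\alpha(\chi G^1 + H^1) + \int_\Omega \p^\alpha(\chi p)\,\p^\alpha(\chi G^2 + H^2) \\
 &\qquad - \int_\Sigma \p^\alpha u\cdot\p^\alpha G^3 + \int_\Sigma (\p^\alpha\eta - \sigma\Delta_\ast\p^\alpha\eta)\,\p^\alpha G^4,
\end{split}
\end{equation}
which is the $\sigma=\kappa=0$ analogue of \eqref{p_u_e_1}. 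Summing over all admissible $\alpha$ (those with $2n-1$ total derivatives or $2n$ with at least one spatial, matching the definitions \eqref{p_upper_energy_def}--\eqref{p_upper_dissipation_def}) reconstructs $\frac{d}{dt}\seb{2N}^{\sigma,+}$ on the left and, after the Korn-type lower bound $\ns{\sg\p^\alpha(\chi u)}_0 \gtrsim \ns{\sg\p^\alpha(\chi u)}_0$ handled as in \eqref{identity222}, the dissipation $\sdb{2N}^+$.

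Next I would estimate the right side of the summed version of \eqref{prop_u_identity}. The $H^1, H^2$ commutator terms involve only $\p_3\chi, \p_3^2\chi$ hitting $u$ and $p$, so they are bounded by $\norm{u}_{4N}(\norm{p}_{4N} + \norm{u}_{4N+1})$-type products controlled by $\sqrt{\se{2N}^\sigma\sd{2N}^\sigma}$, exactly as in \eqref{p_u_e_2}; these feed into the $\sqrt{\se{2N}^\sigma}\sd{2N}^\sigma$ term (or into $\ep\sd{2N}^\sigma + C(\ep)\sdb{2N}^t$ after Young's inequality, since the $\p_3$ of a localized quantity is not horizontal). The $G^1, G^2, G^3, G^4$ terms are estimated using Lemma \ref{p_G_estimates}: the products are controlled by pairing the highest-order factor (with $\p^\alpha$ of total order up to $4N$, i.e. $\bar\nab^{4N-1}$ or $\bar\nab_{\ast 0}^{\,4N}$ acting on $G^i$) against the relevant norm of $u$ or $\eta$, then invoking \eqref{p_G_e_0}--\eqref{p_G_e_00} to bound $\ns{\bar\nab_0^{4N-1}G^1}_0$, $\ns{\bar\nab_0^{4N-1}G^2}_1$, $\ns{\bar\nab_{\ast0}^{\,4N-1}G^3}_{1/2}$, $\ns{\bar\nab_{\ast0}^{\,4N-1}G^4}_{1/2}$ (and the extra $\dt G^4$ and $\sigma^2\nab_\ast^{4N}G^4$ pieces) by $\se{2N}^\sigma\sd{2N}^\sigma + \k\f$. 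By Cauchy–Schwarz and Young these contribute $\sqrt{\se{2N}^\sigma}\sd{2N}^\sigma + \sqrt{\sd{2N}^\sigma\k\f}$. The one subtlety is the $G^3$ boundary term: when $\p^\alpha$ has full order $4N$ the factor $\p^\alpha u$ on $\Sigma$ has regularity $H^{1/2}(\Sigma)$ bounded by $\norm{u}_{4N+1} \le \sqrt{\sd{2N}^\sigma}$ (not by $\sqrt{\se{2N}^\sigma}$), so that product genuinely sits in the $\sqrt{\sd{2N}^\sigma\,\cdot\,}$ class — one pairs it against the $\se{2N}^\sigma + \k\f$-controlled norm of $G^3$ and Young's gives $\ep\sd{2N}^\sigma + C(\ep)(\se{2N}^\sigma\sd{2N}^\sigma + \k\f\sd{2N}^\sigma)^{1/2}$-type terms; since all of $\se{2N}^\sigma, \k$ are already small by the global smallness hypothesis, $\sqrt{\se{2N}^\sigma}\sd{2N}^\sigma$ absorbs the first and $\sqrt{\sd{2N}^\sigma\k\f}$ the second. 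The $G^4$ term involving $-\sigma\Delta_\ast\p^\alpha\eta$ requires the integration-by-parts trick of \eqref{4alpha7}, moving one horizontal derivative off $\eta$ onto $u$ and using the $\sigma$-weighted norm $\sigma\norm{\nab_\ast\p^\alpha\eta}_{1/2}$ from $\sd{2N}^\sigma$.

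Finally, a handful of the right-hand terms in \eqref{prop_u_identity} will, after expansion, contain \emph{only} temporal derivatives of $u$ on the region where $\chi$ and its derivatives live — these are controlled not by $\sdb{2N}^+$ but by $\sdb{2N}^t$, and this is the origin of the $C(\ep)\sdb{2N}^t$ term on the right of \eqref{p_u_e_00}: one keeps a small parameter $\ep$ in front of every genuinely-$\sd{2N}^\sigma$-sized term obtained by Young's inequality and collects everything purely-temporal into $C(\ep)\sdb{2N}^t$. I expect the main obstacle to be bookkeeping the endpoint case $|\alpha|=4N$ in the boundary integrals: there $\p^\alpha u|_\Sigma \in H^{1/2}$ costs a full $\sqrt{\sd{2N}^\sigma}$, so one must be careful that the partnered $G^3$ or $G^4$ factor is controlled by the \emph{energy} $\se{2N}^\sigma$ (or $\k\f$) rather than the dissipation, which is exactly what the refined Lemma \ref{p_G_estimates} delivers — and also to track that the commutators $[\p^\alpha,\chi]$ and $[\p^\alpha,\a]$ never produce an uncontrolled top-order term by exploiting that $\chi = \chi(x_3)$ has only $\p_3$-derivatives and $\a$ is as regular as $\bar\eta$. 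Assembling these estimates in \eqref{prop_u_identity} and summing over $\alpha$ yields \eqref{p_u_e_00}.
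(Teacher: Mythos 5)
Your overall architecture is the same as the paper's: apply $\pa$ with $\alpha\in\mathbb{N}^{1+2}$, $\alpha_0\le 2N-1$, $\abs{\alpha}\le 4N$ to the localized perturbed system \eqref{perturb f local}, derive the identity \eqref{p_u_e_111}, estimate the $G^i$ terms via Lemma \ref{p_G_estimates} with an integration by parts at the endpoint $\abs{\alpha}=4N$, and treat the pure-spatial top-order $G^4$ term by the trick of \eqref{e23}. However, there is a genuine gap exactly where the terms $\ep \sd{2N}^\sigma + C(\varepsilon)\sdb{2N}^t$ in \eqref{p_u_e_00} must be produced, namely in the commutator terms $H^1,H^2$. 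These terms are \emph{linear} in $(u,p)$, so they cannot be made small by the quadratic structure; your claimed bound $\sqrt{\se{2N}^\sigma\sd{2N}^\sigma}$ (transplanted from the local-in-time estimate \eqref{p_u_e_2}, where such quadratic terms are admissible because they are later beaten by powers of $T$) is \emph{not} of the admissible form $\sqrt{\se{2N}^\sigma}\,\sd{2N}^\sigma$, and applying Young's inequality to it yields $\ep\sd{2N}^\sigma + C(\varepsilon)\se{2N}^\sigma$, with an energy remainder that is not allowed on the right of \eqref{p_u_e_00}. The correct mechanism, as in \eqref{intt}--\eqref{m8}, is to bound $\int_\Omega \chi\pa u\cdot\pa H^1 + \chi\pa p\,\pa H^2 \ls \norm{\dt^{\alpha_0}u}_{4N-2\alpha_0}\sqrt{\sd{2N}^\sigma}$ and then interpolate $\norm{\dt^{\alpha_0}u}_{4N-2\alpha_0} \ls \norm{\dt^{\alpha_0}u}_0^{\theta}\norm{\dt^{\alpha_0}u}_{4N-2\alpha_0+1}^{1-\theta} \ls (\sdb{2N}^t)^{\theta/2}(\sd{2N}^\sigma)^{(1-\theta)/2}$ with $\theta=(4N-2\alpha_0+1)^{-1}$ (the $L^2$ norm being controlled by $\sdb{2N}^t$ through Korn's inequality, Lemma \ref{i_korn}), so that Young's inequality gives $\ep\sd{2N}^\sigma + C(\varepsilon)\sdb{2N}^t$. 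Your stated origin of the $C(\varepsilon)\sdb{2N}^t$ term --- contributions containing ``only temporal derivatives of $u$'' on $\supp \p_3\chi$ --- is not what happens: $\pa u$ in these integrals carries up to $4N$ mixed derivatives, and $\sdb{2N}^t$ enters only through the interpolation above.

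A secondary muddle concerns the endpoint boundary term with $G^3$ at $\abs{\alpha}=4N$. The proposition carries no smallness hypothesis, so you may not invoke ``the global smallness hypothesis,'' and the absorption $\sqrt{\se{2N}^\sigma\sd{2N}^\sigma}\ls \sqrt{\se{2N}^\sigma}\,\sd{2N}^\sigma$ you rely on there is false unless $\sd{2N}^\sigma\gtrsim 1$. Neither is needed: after moving one horizontal derivative off $u$, one estimates $\snormspace{\p^{\alpha+\beta}u}{-1/2}{\Sigma}\,\norm{\p^{\alpha-\beta}G^3}_{1/2} \ls \sqrt{\sd{2N}^\sigma}\,\sqrt{\se{2N}^\sigma\sd{2N}^\sigma + \k\f}$ directly from \eqref{p_G_e_00} (note the lemma controls $\ns{\bar{\nab}_{\ast 0}^{\ 4N-1}G^3}_{1/2}$ by $\se{2N}^\sigma\sd{2N}^\sigma+\k\f$, not by $\se{2N}^\sigma+\k\f$), which already splits into the admissible terms $\sqrt{\se{2N}^\sigma}\,\sd{2N}^\sigma + \sqrt{\sd{2N}^\sigma\k\f}$ with no Young and no smallness, as in \eqref{i_de_6}. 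Finally, two small points: your index restriction ``at least one horizontal spatial derivative'' in the first sentence is inconsistent with the summation you later describe (the correct class is $\alpha_0\le 2N-1$, $\abs{\alpha}\le 4N$, which includes $\alpha=0$ and low-order purely temporal indices, and is needed to reconstruct all of $\seb{2N}^{\sigma,+}$ and $\sdb{2N}^{+}$); and no analogue of \eqref{identity222} is needed here, since in the constant-coefficient localized system the dissipation integrand is already exactly $\abs{\sg\pa(\chi u)}^2$.
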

\begin{proof}
Applying $\partial^\alpha$ with $\al\in \mathbb{N}^{1+2}$ so that $\al_0\le 2N-1$ and $|\al|\le 4N$ to the first equation of \eqref{p_localized_equations} and then taking the dot product with $\pa u$, using the other equations as in Proposition \ref{i_temporal_evolution 2N}, we find that
\begin{equation} \label{p_u_e_111}
\begin{split}
 &\hal \frac{d}{dt}\left(  \int_\Omega \abs{\pa (\chi u)}^2  +    \int_\Sigma \abs{\pa \eta}^2 +  \sigma \abs{\pa\nab_\ast \eta}^2\right)
+ \hal \int_\Omega \abs{\sg \pa (\chi u)}^2
 \\&\quad= \int_\Omega \chi  \pa u  \cdot (\chi\pa G^1+\pa H^{1 })+\int_\Omega  \chi \pa p (\chi\pa G^2+\pa H^{2 })
\\&\quad\ \
  + \int_\Sigma -\pa u \cdot \pa G^3 + (\pa \eta-\sigma\Delta_\ast\pa \eta) \pa G^4.
\end{split}
\end{equation}

We will estimate the terms on the right side of \eqref{p_u_e_111}, beginning with the terms involving $H^{1}$ and $H^{2}$.
By the expression of $H^1$ and $H^2$ \eqref{p_H_def}, we have
\begin{equation}\label{intt}
\begin{split}
 \int_\Omega \chi  \pa u  \cdot \pa H^{1 }  + \chi  \pa p \pa H^{2 }  &\ls
\norm{\pa u}_{0}\left(\norm{\pa p}_{0} + \norm{\pa u}_{1} \right)
+ \norm{\pa p}_{0} \norm{\pa u}_{0}
\\
&\ls \norm{\pa u}_{0}\left(\norm{\pa p}_{0} + \norm{\pa u}_{1} \right)
  \ls \norm{\dt^{\alpha_0} u}_{4N-2\alpha_0}\sqrt{\sd{2N}^\sigma} .
  \end{split}
\end{equation}
We use the standard Sobolev interpolation to obtain
\begin{equation}
\|\partial^{\alpha_0} u\|_{4N-2\alpha_0} \lesssim \|\partial^{\alpha_0} u\|_{0}^{\theta}\| \partial^{\alpha_0} u\|_{4N-2\alpha_0+1}^{1-\theta} \lesssim (\bar{\mathcal{D}}_{2N}^{ t})^{\theta/2} (\mathcal{D}_{2N}^\sigma)^{(1-\theta)/2},
\end{equation}
where $\theta = (4N-2\alpha_0 +1)^{-1} \in (0,1)$. This and \eqref{intt} together with Young's inequality imply
\begin{equation}\label{m8}
\int_\Omega \chi \partial^\alpha u\cdot \partial^\alpha H^1 + \chi \partial^\alpha p \partial^\alpha H^2 \lesssim (\bar{\mathcal{D}}_{2N}^{ t})^{\theta/2} (\mathcal{D}_{2N}^\sigma)^{1-\theta/2}
\lesssim \varepsilon \mathcal{D}_{2N}^\sigma + C(\varepsilon) \bar{\mathcal{D}}_{2N}^t.
\end{equation}

We now turn to estimate the terms involving $G^{i},1\le i\le 4$.  Assume initially that   $\abs{\alpha}\le 4N-1$.  Then by \eqref{p_G_e_00} along with the trace theory, we have
\begin{equation}\label{i_de_2}
\begin{split}
\abs{ \int_\Omega   \chi^2\left(  \pa  u \cdot   \pa G^1 +  \pa p \pa G^2 \right)} &\le \norm{\pa  u}_{0}  \norm{ \pa G^1 }_{0} +
\norm{\pa  p}_{0}  \norm{ \pa G^2 }_{0} \\
&\ls \sqrt{\sd{2N}^\sigma} \sqrt{ \se{2N}^\sigma  \sd{2N}^\sigma + \k \f}
\end{split}
\end{equation}
and
\begin{equation}\label{i_de_3}
\begin{split}
&\abs{ \int_\Sigma -\pa u \cdot \pa G^3 + (\pa \eta-\sigma\Delta_\ast\pa \eta) \pa G^4 }
\\&\quad\le
\snormspace{\pa  u}{0}{\Sigma}  \norm{ \pa G^3 }_{0} +
(\norm{\pa  \eta}_{0} +\sigma\norm{\pa  \eta}_{2}) \norm{ \pa G^4 }_{0}
\\&\quad\ls \sqrt{\sd{2N}^\sigma} \sqrt{ \se{2N}^\sigma  \sd{2N}^\sigma + \k \f}  .
\end{split}
\end{equation}

Now assume that $\abs{\alpha}=4N$.  Since $\alpha_0 \le 2N-1$, we have that $\al_1+\al_2\ge 2$, then we may write $\alpha = \beta +(\alpha-\beta)$ for some $\beta \in \mathbb{N}^2$ with $\abs{\beta}=1$.  Since $\abs{\alpha-\beta} = 4N-1$, we can then integrate by parts and use \eqref{p_G_e_00} to have
\begin{equation}\label{i_de_4}
\begin{split}
\abs{ \int_\Omega   \chi^2  \pa  u \cdot   \pa G^1 } &= \abs{ \int_\Omega    \chi^2 \p^{\alpha+\beta}  u \cdot   \p^{\alpha-\beta} G^1 }
 \le \norm{\p^{\alpha+\beta}  u}_{0}  \norm{ \p^{\alpha-\beta} G^1 }_{0} \\&
\le \norm{\p^{\alpha}  u}_{1}  \norm{ \bar{\nab}^{4N-1} G^1 }_{0}
\ls \sqrt{\sd{2N}^\sigma} \sqrt{ \se{2N}^\sigma  \sd{2N}^\sigma + \k \f} .
\end{split}
\end{equation}
For the $G^2$ term we do not need to integrate by parts:
\begin{equation}\label{i_de_5}
\begin{split}
\abs{ \int_\Omega \chi^2  \pa p \pa G^2 }
&\le \norm{\pa p}_{0}  \norm{ \p^{\alpha-\beta}\p^\beta G^2 }_{0}
\le \norm{\pa p}_{0}  \norm{ \bar{\nab}^{4N-1}   G^2 }_{1}
  \\&\ls \sqrt{\sd{2N}^\sigma} \sqrt{ \se{2N}^\sigma  \sd{2N}^\sigma + \k \f}.
\end{split}
\end{equation}
For the $G^3$ term we integrate by parts and use the trace estimate to see that
\begin{equation}\label{i_de_6}
\begin{split}
\abs{ \int_\Sigma     \pa  u \cdot   \pa G^3 } &= \abs{ \int_\Sigma  \p^{\alpha+\beta}  u \cdot   \p^{\alpha-\beta} G^3 }
 \le \snormspace{\p^{\alpha+\beta}  u}{-1/2}{\Sigma}  \norm{ \p^{\alpha-\beta} G^3 }_{1/2} \\
&\le \snormspace{\p^{\alpha}  u}{1/2}{\Sigma}  \norm{ \bar{\nab}_{\ast}^{\, 4N-1} G^3 }_{1/2}
 \le \norm{\p^{\alpha}  u}_{1}  \norm{\bar{\nab}_{\ast}^{\, 4N-1} G^3 }_{1/2}
  \\&\ls \sqrt{\sd{2N}^\sigma} \sqrt{ \se{2N}^\sigma  \sd{2N}^\sigma + \k \f}.
\end{split}
\end{equation}
For the $G^4$ term we must split to two cases: $\alpha_0\ge 1$ and $\alpha_0 =0$.  In the former case, there is at least one temporal derivative in $\pa$, so by \eqref{p_G_e_00} we have
\begin{equation}\label{i_de_7}
\begin{split}
\abs{ \int_\Sigma    ( \pa \eta-\sigma\Delta_\ast\pa \eta )   \pa G^4 }    &\le (\norm{\p^{\alpha}  \eta}_{-1/2}+\sigma\norm{\p^{\alpha}  \eta}_{3/2} )  \norm{\bar{\nab}_{\ast}^{\, 4N-2} \dt G^4 }_{1/2}
\\&\ls \sqrt{\sd{2N}^\sigma} \sqrt{ \se{2N}^\sigma  \sd{2N}^\sigma + \k \f}.
\end{split}
\end{equation}
In the latter case, $\pa$ involves only spatial derivatives; in this case we claim that
\begin{equation}\label{i_de_8}
 \abs{ \int_\Sigma    ( \pa \eta-\sigma\Delta_\ast\pa \eta )    \pa G^4 } \ls \sqrt{\se{2N}^\sigma} \sd{2N}^\sigma + \sqrt{\sd{2N}^\sigma \k \f}.
\end{equation}
Indeed, by the Leibniz rule, we have
\begin{equation}
 \pa G^4 = \pa (\nab_\ast \eta \cdot u) =   \nab_\ast \pa \eta \cdot   u+\sum_{ 0<\beta \le \alpha  } C_{\alpha}^{\beta} \nab_\ast\p^{\alpha-\beta} \eta \cdot\p^\beta u
\end{equation}
For the first term, we use the integration by parts to see that
\begin{equation}\label{e23}
\begin{split}
 &\int_\Sigma ( \pa \eta-\sigma\Delta_\ast\pa \eta )  \nab_\ast \pa \eta \cdot u
 = \int_\Sigma   \pa \eta    \nab_\ast \pa \eta \cdot u +\int_\Sigma  \sigma\nab_\ast\pa \eta   \nab_\ast (\nab_\ast \pa \eta \cdot u )
 \\&\quad= \hal\int_\Sigma     \nab_\ast \abs{\pa \eta}^2 \cdot u+\hal \sigma\int_\Sigma \nab_\ast \abs{\pa \nab_\ast\eta}^2 \cdot u +\int_\Sigma \sigma\pa \nab_\ast\eta  (\nab_\ast \pa \eta \cdot  \nab_\ast u )
 \\&\quad
= \hal\int_\Sigma  \abs{\pa  \eta}^2 \diverge_\ast u -\hal \sigma\int_\Sigma   \abs{\pa \nab_\ast\eta}^2 \diverge_\ast u+\int_\Sigma \sigma\pa \nab_\ast\eta  (\nab_\ast \pa \eta \cdot  \nab_\ast u )
 \\& \quad\ls  (\norm{\eta}_{4N-1/2}+\sigma\norm{\eta}_{4N+3/2})\norm{\eta}_{4N+1/2}\norm{\nab_\ast  u}_{H^2(\Sigma)}\le \sqrt{\sd{2N}^\sigma\f \k}.
\end{split}
\end{equation}
For the second term, we estimate for $|\beta|\ge 1$, similarly as Lemma \ref{p_G_estimates},
\begin{equation}
 \norm{\nab_\ast \p^{\alpha-\beta} \eta \cdot\p^\beta u}_{1/2}
 \ls  \sqrt{ \se{2N}^\sigma  \sd{2N}^\sigma + \k \f}.
\end{equation}
Hence, we have
\begin{equation}\label{e2312}
\begin{split}
 \int_\Sigma   ( \pa \eta-\sigma\Delta_\ast\pa \eta )  \nab_\ast\p^{\alpha-\beta} \eta \cdot\p^\beta u
 &\ls \norm{\pa \eta-\sigma\Delta_\ast\pa \eta}_{-1/2}   \norm{\nab_\ast\p^{\alpha-\beta} \eta \cdot\p^\beta u}_{1/2}
 \\&\ls \sqrt{\sd{2N}^\sigma}\sqrt{ \se{2N}^\sigma  \sd{2N}^\sigma + \k \f}.
 \end{split}
\end{equation}
In light of \eqref{e23} and \eqref{e2312}, we conclude the claim \eqref{i_de_8}.

Now, by \eqref{m8}--\eqref{i_de_8}, we deduce from \eqref{p_u_e_111} that for all $\abs{\alpha} \le 4N$ with $\alpha_0 \le 2N-1$,
\begin{equation}  \label{p_u_e_9}
\begin{split}
 &\hal \frac{d}{dt}\left(  \int_\Omega \abs{\pa (\chi u)}^2  +    \int_\Sigma \abs{\pa \eta}^2 +  \sigma \abs{\pa\nab_\ast \eta}^2\right)
+ \hal \int_\Omega \abs{\sg \pa (\chi u)}^2
 \\&\quad\ls  \sqrt{\sd{2N}^\sigma} \sqrt{ \se{2N}^\sigma  \sd{2N}^\sigma + \k \f}
+ \varepsilon \mathcal{D}_{2N}^\sigma + C(\varepsilon) \bar{\mathcal{D}}_{2N}^t
 \\&\quad\ls   \sqrt{\se{2N}^\sigma} \sd{2N}^\sigma + \sqrt{ \sd{2N}^\sigma \k \f } + \ep \sd{2N}^\sigma +  C(\varepsilon) \sdb{2N}^t.
\end{split}
\end{equation}
The estimate \eqref{p_u_e_00} then follows from \eqref{p_u_e_9} by summing over such $\alpha$.
\end{proof}

\begin{prop}\label{p_upper_evolution N+2}
For any $\ep \in (0,1)$ there exists a constant $C(\varepsilon)>0$ such that
\begin{equation}\label{p_u_e_11}
 \frac{d}{dt}\seb{N+2}^{\sigma,+} +  \sdb{N+2}^{+} \ls   \sqrt{\se{2N}^\sigma} \sd{N+2}^\sigma  + \ep \sd{N+2}^\sigma +  C(\varepsilon) \sdb{N+2}^t.
\end{equation}
\end{prop}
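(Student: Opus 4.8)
The plan is to follow the proof of Proposition~\ref{p_upper_evolution 2N} almost verbatim, lowering every index from the $2N$ level to the $N+2$ level and invoking the $N+2$ parts \eqref{p_G_e_h_0}--\eqref{p_G_e_h_00} of Lemma~\ref{p_G_estimates} in place of the $2N$ parts \eqref{p_G_e_0}--\eqref{p_G_e_00}. Concretely, I would apply $\pa$ with $\alpha \in \mathbb{N}^{1+2}$ such that $\alpha_0 \le N+1$ and $|\alpha| \le 2(N+2)$ to the first equation of the localized system \eqref{perturb f local}, take the $L^2(\Omega)$ inner product with $\pa(\chi u)$, and integrate by parts using the remaining equations exactly as in the derivation of \eqref{p_u_e_111} (equivalently \eqref{identity1}); this yields the same differential identity as \eqref{p_u_e_111}, namely the time derivative of $\hal(\|\pa(\chi u)\|_0^2 + \|\pa\eta\|_0^2 + \sigma\|\pa\nab_\ast\eta\|_0^2)$ plus $\hal\|\sg\pa(\chi u)\|_0^2$ on the left, and the $H^1,H^2,G^1,\dots,G^4$ contributions on the right.

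The right-hand terms are then estimated exactly as in Proposition~\ref{p_upper_evolution 2N}. For the $H^1,H^2$ terms I would reproduce \eqref{intt}--\eqref{m8}: after reducing to $\|\dt^{\alpha_0} u\|_{2(N+2)-2\alpha_0}$, interpolate $\|\dt^{\alpha_0} u\|_{2(N+2)-2\alpha_0} \lesssim (\bar{\mathcal{D}}_{N+2}^{t})^{\theta/2}(\sd{N+2}^\sigma)^{(1-\theta)/2}$ with $\theta = (2(N+2)-2\alpha_0+1)^{-1}\in(0,1)$ and apply Young's inequality to obtain $\ep\,\sd{N+2}^\sigma + C(\ep)\sdb{N+2}^t$. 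For the $G^i$ terms with $|\alpha| \le 2(N+2)-1$, use \eqref{p_G_e_h_00} together with the trace theory as in \eqref{i_de_2}--\eqref{i_de_3}, which gives $\lesssim \sqrt{\sd{N+2}^\sigma}\sqrt{\se{2N}^\sigma\sd{N+2}^\sigma} \lesssim \sqrt{\se{2N}^\sigma}\sd{N+2}^\sigma$. For $|\alpha| = 2(N+2)$, since $\alpha_0 \le N+1$ forces $\alpha_1+\alpha_2 \ge 2$, write $\alpha = \beta + (\alpha-\beta)$ with $\beta \in \mathbb{N}^2$, $|\beta| = 1$, integrate by parts to shift $\p^\beta$ onto $\pa u$ (respectively $\pa\eta$) and apply \eqref{p_G_e_h_00} to $\bar{\nab}^{2(N+2)-1}G^i$, as in \eqref{i_de_4}--\eqref{i_de_7}. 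The only term needing extra care is the $G^4 = -u\cdot\nab_\ast\eta$ term with $\alpha_0 = 0$: here one splits $\pa G^4 = \nab_\ast\pa\eta\cdot u + \sum_{0 < \beta \le \alpha} C_\alpha^\beta\,\nab_\ast\p^{\alpha-\beta}\eta\cdot\p^\beta u$, estimates the first summand by the integration-by-parts identity \eqref{e23} (which at the $N+2$ level is in fact simpler, since the resulting bound is already of the form $\sqrt{\se{2N}^\sigma}\,\sd{N+2}^\sigma$ once one uses that $\sigma\le 1$ together with the $\sigma^2\|\eta\|_{2(N+2)+3/2}^2$ and $\|\eta\|_{2(N+2)-1/2}^2$ pieces of $\sd{N+2}^\sigma$), and bounds the remaining summands by \eqref{p_G_e_h_00}-type product estimates as in \eqref{e2312}. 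Summing over all admissible $\alpha$ and recalling \eqref{p_upper_energy_def}--\eqref{p_upper_dissipation_def} then produces \eqref{p_u_e_11}.

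I expect the main obstacle to be purely bookkeeping rather than conceptual: one must verify that in the top-order case $|\alpha| = 2(N+2)$ exactly one horizontal derivative can always be peeled off (this is where $\alpha_0 \le N+1$ is used) so that no regularity is lost relative to $\sd{N+2}^\sigma$, and one must keep track of the surface-tension factors in the $G^3,G^4$ contributions so that the $\sigma\nab_\ast^2\eta$ pieces are absorbed into the $\sigma^2\|\eta\|_{2(N+2)+3/2}^2$ part of $\sd{N+2}^\sigma$ (legitimate since $0 < \sigma \le 1$). Since all the genuinely nonlinear work is already contained in Lemma~\ref{p_G_estimates}, the argument is then routine, and one may simply note that it is identical to the proof of Proposition~\ref{p_upper_evolution 2N} with the indicated substitutions, omitting the repeated details.
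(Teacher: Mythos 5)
Your proposal is correct, and for all terms except one it coincides with the paper's argument (which is indeed just ``repeat Proposition \ref{p_upper_evolution 2N} with \eqref{p_G_e_h_00} in place of \eqref{p_G_e_00}''). The one place where you genuinely diverge is the top-order $G^4$ contribution with $\abs{\alpha}=2(N+2)$: you propose to re-run the $2N$-level machinery — split off $\nab_\ast\pa\eta\cdot u$, integrate by parts as in \eqref{e23}, and handle the commutator sum as in \eqref{e2312}. The paper instead exploits the fact that, unlike \eqref{p_G_e_00}, the bound \eqref{p_G_e_h_00} controls the \emph{full} top-order quantity $\ns{\bar{\nab}_{\ast 0}^{\ 2(N+2)}G^4}_{1/2}$ (not merely order $2(N+2)-1$ plus the $\sigma^2$-weighted and $\dt$-shifted variants), so one can directly pair $\pa\eta-\sigma\Delta_\ast\pa\eta$ in $H^{-1/2}$ (resp. $\sigma\cdot H^{3/2}$) against $\pa G^4\in H^{1/2}$ and get $\sqrt{\sd{N+2}^\sigma}\sqrt{\se{2N}^\sigma\sd{N+2}^\sigma}$ with no splitting or integration by parts at all; this is precisely why the integration-by-parts device and the $\sqrt{\sd{}\k\f}$ term are needed at level $2N$ but absent from \eqref{p_u_e_11}. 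Your route also closes, but only because of two facts you should make explicit: the middle factor $\norm{\eta}_{2(N+2)+1/2}$ produced by the \eqref{e23}-type identity is \emph{not} controlled by $\sqrt{\sd{N+2}^\sigma}$ (the dissipation only gives $\norm{\eta}_{2(N+2)-1/2}$ without a $\sigma$ weight) and must instead be absorbed into $\sqrt{\se{2N}^\sigma}$ via $2(N+2)+1/2\le 4N$ for $N\ge 3$; and the factor $\norm{\nab_\ast u}_{H^2(\Sigma)}\ls\sqrt{\k}$ must be bounded by $\sqrt{\sd{N+2}^\sigma}$ (which holds since $\k\ls\ns{u}_{4}\le\sd{N+2}^\sigma$) rather than left as $\sqrt{\k}$, since no $\f$-type term is allowed on the right of \eqref{p_u_e_11}. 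With those two absorptions spelled out your argument is a valid, if slightly longer, alternative to the paper's one-line observation.
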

\begin{proof}
The proof is similar to Proposition \ref{i_temporal_evolution 2N}, except using \eqref{p_G_e_h_00} in place of \eqref{p_G_e_00} and estimating in a slight different way the $G^4$ term when $\al=2(N+2)$, namely,
\begin{equation}
\int_\Sigma    ( \pa \eta-\sigma\Delta_\ast\pa \eta )   \pa G^4 .
\end{equation}
But by \eqref{p_G_e_h_00}, we have
\begin{equation}
\int_\Sigma    ( \pa \eta-\sigma\Delta_\ast\pa \eta )   \pa G^4
\le (\norm{\p^{\alpha}  \eta}_{-1/2}+\sigma\norm{\p^{\alpha}  \eta}_{3/2} )  \norm{ \bar{\nab}_\ast^{\,2(N+2)}  G^4 }_{1/2}\ls\sqrt{\se{2N}^\sigma} \sd{N+2}^\sigma.
\end{equation}
Using this, we can conclude the proposition.
\end{proof}

\subsection{Comparison results}
In this subsection, we shall now show  that, up to some error terms, ${\mathcal{E}}_{n}^\sigma$ is comparable to $ \bar{\mathcal{E}}_{n}^\sigma$ and that ${\mathcal{D}} _{n}^\sigma$ is comparable to $ \bar{\mathcal{D}}_{n}$, for both $n=2N$ and $n=N+2$.  We begin with the result for the instantaneous energy.

\begin{thm}\label{eth}
It holds that
\begin{equation}\label{e2n}
{\mathcal{E}}_{2N}^\sigma\lesssim  \bar{\mathcal{E}}_{2N}^\sigma +({\mathcal{E}}_{2N}^\sigma)^{2}+\k \f
\end{equation}
and
\begin{equation}\label{en+2}
{\mathcal{E}}_{N+2}^\sigma\lesssim  \bar{\mathcal{E}}_{N+2}^\sigma + \mathcal{E}_{2N}^\sigma \mathcal{E}_{N+2}^\sigma.
\end{equation}
\end{thm}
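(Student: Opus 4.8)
The strategy is to recover the full energy $\mathcal{E}_n^\sigma$ (involving all space-time derivatives $\dt^j u$, $\dt^j p$, $\eta$) from the purely horizontal localized energy $\bar{\mathcal{E}}_n^\sigma$ by invoking the Stokes elliptic regularity theory of Lemma \ref{i_linear_elliptic}, applied to the system \eqref{linear_geometric}. The point of departure is that $\bar{\mathcal{E}}_n^\sigma$ already controls $\ns{\dt^j u}_0$ for all $j\le n$ (via $\bar{\mathcal{E}}_n^{\sigma,t}$), controls enough horizontal derivatives of $\chi u$ near $\Sigma$, and controls $\ns{\eta}_{2n}$ and $\sigma\ns{\eta}_{2n+1}$. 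What is missing is the normal-derivative regularity of $u$ and $p$ and the temporal-derivative regularity of $\eta$; these are to be bootstrapped out of the elliptic equations.

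First I would rewrite \eqref{linear_geometric} for fixed $j$ as a stationary $\mathcal{A}$-Stokes problem with right-hand side $F^{1,j} - \dt(\dt^j u) + \dt\bar\eta\tilde b K\p_3(\dt^j u) - u\cdot\naba(\dt^j u)$ in $\Omega$, divergence $F^{2,j}$, and boundary data $(\dt^j\eta - \sigma\Delta_\ast\dt^j\eta)\n + F^{3,j}$ on $\Sigma$. Since $\mathcal{A}$ is a small perturbation of the identity (by the smallness of $\eta$ coming from $\mathcal{G}_{2N}^\sigma\le\delta$), one reduces to the classical Stokes estimate of Lemma \ref{i_linear_elliptic}. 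Arguing by downward induction on $j$ (starting from $j=2N$, where only $\ns{\dt^{2N}u}_0$ is available, down to $j=0$), one gains two spatial derivatives at each level: $\ns{\dt^j u}_{2n-2j} + \ns{\dt^j p}_{2n-2j-1}$ is bounded by $\ns{\dt^{j+1}u}_{2n-2j-2}$ (already controlled at the previous induction step), plus the horizontal-only contribution already sitting in $\bar{\mathcal{E}}_n^\sigma$, plus norms of the nonlinear forcing terms $F^{1,j}, F^{2,j}, F^{3,j}$. The boundary term $\ns{\dt^j\eta - \sigma\Delta_\ast\dt^j\eta}_{2n-2j-3/2}$ is controlled by $\ns{\dt^j\eta}_{2n-2j-3/2} + \sigma\ns{\dt^j\eta}_{2n-2j+1/2}$; for $j=0$ this is $\ns{\eta}_{2n-3/2} + \sigma\ns{\eta}_{2n+1/2}$, dominated by $\ns{\eta}_{2n} + \sigma\ns{\eta}_{2n+1}$ which is in $\bar{\mathcal{E}}_n^\sigma$, and for $j\ge 1$ the term $\ns{\dt^j\eta}_{2n-2j-3/2}$ comes from the kinematic equation $\dt\eta = u\cdot\n + F^{4,j}$ traced against the $u$-regularity just obtained. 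The nonlinear terms $F^{i,j}$ are then absorbed: by Lemma \ref{p_F_estimates} (and the analogous product estimates used there) they are bounded by $(\mathcal{E}_{2N}^\sigma)^2$ at the $2N$ level and by $\mathcal{E}_{2N}^\sigma\mathcal{E}_{N+2}^\sigma$ at the $N+2$ level, which is precisely the quadratic error allowed in \eqref{e2n}--\eqref{en+2}. The $\k\f$ term in \eqref{e2n} arises from those pieces of $F^{i,0}$ (no temporal derivative) that hit the highest horizontal derivative of $\eta$, mirroring the $G^i$ estimates of Lemma \ref{p_G_estimates}; at the $N+2$ level there is no $\f$ contribution because the top-order count is below $2N$.

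After the induction closes, it remains to recover $\ns{\eta}_{2n}$ itself (not just what is in $\bar{\mathcal{E}}_n^\sigma$ — though here it is already present) and the highest temporal derivative $\ns{\dt^{n+1}\eta}_{2n-2(n+1)+3/2}$, which follows from $\dt^{n+1}\eta = \dt^n(u\cdot\n)$ on $\Sigma$ via Lemma \ref{l_boundary_dual_estimate} or a direct trace estimate, and $\ns{\dt^j p}_{2n-2j-1}$ for all $j$, already produced by the Stokes estimates. Summing over $j=0,\dots,n$ and collecting, one obtains \eqref{e2n} and \eqref{en+2}. I expect the main obstacle to be the careful bookkeeping of which norm of $\eta$ appears in the boundary data at each induction level and checking that it is genuinely controlled either by $\bar{\mathcal{E}}_n^\sigma$ (for $j$ near $0$) or by the kinematic equation combined with the already-established $u$-regularity (for larger $j$), together with verifying that all forcing-term contributions land inside the quadratic error $((\mathcal{E}_{2N}^\sigma)^2$ or $\mathcal{E}_{2N}^\sigma\mathcal{E}_{N+2}^\sigma)$ plus $\k\f$, rather than producing an uncontrolled $\mathcal{D}$-level term — this is where the precise degree count ($2n$ vs.\ $2n+1$, and the half-integer Sobolev indices) has to be handled exactly as in \cite{GT_per}.
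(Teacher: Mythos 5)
Your overall strategy is the one the paper uses: recover $\se{n}^\sigma$ from $\seb{n}^\sigma$ by a downward induction on the number of temporal derivatives via Stokes elliptic regularity, then upgrade $\dt^j\eta$ through the kinematic equation and handle $\dt^{n+1}\eta$ via $\dt^{n+1}\eta=D_t^n u\cdot\n$ and Lemma \ref{l_boundary_dual_estimate}. However, two concrete points in your execution would fail as written. First, you propose to work with the geometric system \eqref{linear_geometric} and absorb the forcing terms $F^{i,j}$ using Lemma \ref{p_F_estimates}. That lemma only provides $H^0$ bounds, and except for $F^{2,j}$ they are of the form $\se{2N}^0\sd{2N}^0$, i.e.\ weighted by the \emph{dissipation}; the elliptic step needs $\ns{F^{1,j}}_{2n-2j-2}$, $\ns{F^{2,j}}_{2n-2j-1}$, $\ns{F^{3,j}}_{2n-2j-3/2}$ controlled purely by products of energies, which is not available in the paper for the $F^{i,j}$. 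Worse, all the $F^{i,j}$ are sums over $0<\ell\le j$ and hence vanish for $j=0$, so your claimed source of the $\k\f$ term (``pieces of $F^{i,0}$ hitting the highest horizontal derivative of $\eta$'') does not exist: in the geometric formulation that contribution is hidden in the variable coefficients $\a$ and only surfaces through the perturbation/commutator analysis reducing $\da$ to $\Delta$. The paper sidesteps all of this by applying $\dt^j$ to the \emph{perturbed} constant-coefficient system \eqref{perturb f} (giving \eqref{jellip}), invoking the classical Lemma \ref{i_linear_elliptic} directly, and using the ready-made high-order bounds \eqref{p_G_e_0} and \eqref{p_G_e_h_0} of Lemma \ref{p_G_estimates} for the $G^i$, which produce exactly $(\se{2N}^\sigma)^2+\k\f$ and $\se{2N}^\sigma\se{N+2}^\sigma$.

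Second, your plan to control the boundary datum $\ns{\dt^j\eta}_{2n-2j-3/2}$ for $j\ge 1$ ``from the kinematic equation traced against the $u$-regularity just obtained'' is circular: the kinematic equation gives $\dt^j\eta=\dt^{j-1}u_3+\dt^{j-1}G^4$, so it requires regularity of $\dt^{j-1}u$, which in a downward induction is only produced at the \emph{later} step $j-1$. No such detour is needed: by the definitions \eqref{p_temporal_energies_def}--\eqref{p_upper_energy_def} one already has $\sum_{j=0}^{n}\ns{\dt^j\eta}_{2n-2j}+\sigma\sum_{j=0}^{n}\ns{\dt^j\eta}_{2n-2j+1}\ls\seb{n}^\sigma$, which dominates the boundary datum $\ns{\dt^j\eta-\sigma\Delta_\ast\dt^j\eta}_{2n-2j-3/2}$ at every level $j$; the kinematic equation is used only \emph{after} the induction closes, to upgrade $\dt^j\eta$ to the $H^{2n-2j+3/2}$ regularity demanded by $\se{n}^\sigma$. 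With these two corrections your argument coincides with the paper's proof.
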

\begin{proof}
We first let $n$ denote either $2N$ or $N+2$ throughout the proof, and we compactly write
\begin{equation}\label{p_E_b_0}
 \z_n =  \ns{ \bar{\nab}_0^{2n-2} G^1}_{0}  +  \ns{ \bar{\nab}_0^{2n-2}  G^2}_{1} +
 \ns{ \bar{\nab}_{\ast 0}^{\  2n-2} G^3}_{1/2} +
 \ns{\bar{\nab}_{\ast 0}^{\  2n-1} G^4}_{1/2} .
\end{equation}
Note that  the definitions of $\seb{n}^\sigma  = \seb{n}^{\sigma,t}+\seb{n}^{\sigma,+}$ guarantee that
\begin{equation}\label{p_E_b_4}
  \ns{\dt^{n} u}_{0}+\sum_{j=0}^{n} \ns{\dt^j \eta}_{2n-2j}+\sigma\sum_{j=0}^{n} \ns{\dt^j \eta}_{2n-2j+1} \ls   \seb{n}^\sigma.
\end{equation}

Now we let $j=0,\dots,n-1$ and then apply $\partial_t^j$ to the equations in \eqref{perturb f} to find
\begin{equation}\label{jellip}
\begin{cases}
- \Delta \partial_t^j u+\nabla\partial_t^j p=-\partial_t^{j+1} u+\partial_t^j G^1
 &\text{in }\Omega
\\ \diverge\partial_t^j u=\partial_t^j G^2&\text{in }\Omega
\\   (\partial_t^j p I- \mathbb{D}(\partial_t^j u ) ) e_3= (\partial_t^j\eta-\sigma \Delta_\ast\partial_t^j\eta)  e_3+\partial_t^jG^3&\text{on }\Sigma
\\ \partial_t^j u=0 &\text{on }\Sigma_b.
\end{cases}
\end{equation}
Applying the elliptic estimates of Lemma \ref{i_linear_elliptic} with $r=2n-2j\ge 2$ to the problem \eqref{jellip} and using \eqref{p_E_b_0}--\eqref{p_E_b_4}, we obtain
\begin{equation}\label{p_E_b_2}
\begin{split}
 &\norm{\dt^j  u  }_{2n-2j}^2 + \norm{\dt^j  p  }_{2n-2j-1}^2
 \\&\quad \ls
\norm{\dt^{j+1} u   }_{2n-2j-2 }^2 + \norm{ \dt^j G^1   }_{2n-2j-2}^2
+ \norm{\dt^j  G^2  }_{2n-2j-1}^2 \\ &\qquad
 + \norm{\dt^j  \eta }_{2n-2j-3/2}^2+\sigma^2 \norm{\dt^j  \eta }_{2n-2j+1/2}^2 + \norm{\dt^j  G^3  }_{2n-2j-3/2}^2
 \\&\quad \ls   \norm{\dt^{j+1} u   }_{2n-2(j+1) }^2 + \seb{n}^\sigma+  \z_n.
\end{split}
\end{equation}
A simple induction on \eqref{p_E_b_2} yields, by \eqref{p_E_b_4} again,
\begin{equation}\label{claim22}
\sum_{j=0}^{n } \norm{\dt^{j}  u  }_{2n-2j }^2 + \sum_{j=0}^{n-1}\norm{\dt^{j} p  }_{2n-2j-1}^2 \ls \ns{\dt^{n} u}_{0}+
\seb{n}^\sigma  +\z_n\ls
\seb{n}^\sigma  +\z_n.
\end{equation}
On the other hand, we use the boundary condition
\begin{equation}
\partial_t\eta=u_3+G^4\text{ on }\Sigma
\end{equation}
to have that for $j=1,\dots,n$, by the trace theory, \eqref{p_E_b_0} and \eqref{claim22},
\begin{equation}\label{lj}
\begin{split}
\ns{\partial_t^j\eta}_{2n-2j+3/2} &\le \ns{\dt^{j-1}u_3}_{H^{2n-2j+3/2}(\Sigma)}+\ns{\dt^{j-1} G^4}_{2n-2j+3/2}
\\&\le \ns{\dt^{j-1}u }_{2n-2(j-1)}+\ns{\dt^{j-1} G^4}_{2n-2(j-1)-1/2}\ls  \seb{n}^\sigma  +\z_n.
\end{split}
\end{equation}
For $j=n+1$, we need to use the geometric facts: $\dt^{n+1}\eta=D_t^{n}u\cdot \n$ and $\diverge_\a (D_t^{n}u)=0$. Then by Lemma \ref{l_boundary_dual_estimate} and \eqref{claim22}, we have
\begin{equation}\label{lj23}
 \ns{\dt^{n+1}\eta}_{ {-1/2}}=\snormspace{D_t^{n}u \cdot \n}{-1/2}{\Sigma} \ls \hn{D_t^{2N}u}{0}\ls \seb{n}^\sigma  +\z_n.
\end{equation}

Consequently, summing \eqref{p_E_b_4}, \eqref{claim22} and \eqref{lj}--\eqref{lj23}, we obtain
\begin{equation}\label{claim}
 {\mathcal{E}}_{n}^\sigma\ls
\seb{n}^\sigma  +\z_n.
\end{equation}
Setting $n=2N$ in \eqref{claim}, and using \eqref{p_G_e_0} of Lemma \ref{p_G_estimates} to estimate $\mathcal{Z}_{2N} \lesssim ({\mathcal{E}}_{2N}^\sigma)^{2}+\k \f$, we then obtain \eqref{e2n}; setting $n=N+2$ in \eqref{claim}, and using \eqref{p_G_e_h_0} of Lemma \ref{p_G_estimates} to estimate $\mathcal{Z}_{N+2}\lesssim  {\mathcal{E}} _{2N}^\sigma {\mathcal{E}}_{N+2}^\sigma$, we then obtain \eqref{en+2}.
\end{proof}

Now we consider a similar result for the dissipation.

\begin{thm}\label{dth}
It holds that
\begin{equation}\label{d2n}
{\mathcal{D}}_{2N}^\sigma\lesssim  \bar{\mathcal{D}}_{2N} + \mathcal{E}_{2N}^\sigma \mathcal{D}_{2N}^\sigma+ \mathcal{K}{\mathcal{F}}_{2N}
\end{equation}
and
\begin{equation}\label{dn+2}
{\mathcal{D}}_{N+2}^\sigma\lesssim  \bar{\mathcal{D}}_{N+2}
+\mathcal{E}_{2N}^\sigma \mathcal{D}_{N+2}^\sigma.
\end{equation}
\end{thm}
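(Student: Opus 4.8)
The plan is to establish \eqref{d2n} and \eqref{dn+2} by one argument carried out for a generic $n\in\{2N,N+2\}$, keeping all nonlinear contributions to the very end where they are absorbed by Lemma \ref{p_G_estimates}. The first step converts $\sdb{n}$ into boundary control of the velocity. Since the cutoff $\chi$ of \eqref{chi_properties} vanishes near $\Sigma_b$ and equals $1$ near $\Sigma$, Korn's inequality (Lemma \ref{i_korn}) applies to $\dt^j(\chi u)$ for $j\le n$ and to $\p^\beta(\chi u)$ for space--time horizontal multi-indices $\beta$ with $|\beta|\le 2n$ (peeling off one horizontal spatial derivative when $|\beta|=2n$), turning every summand of $\sdb{n}^t$ and $\sdb{n}^+$ into an $H^1(\Omega)$--bound; combined with $\ns{v}_{H^{2n-2j+1/2}(\Sigma)}\ls\ns{v}_{L^2(\Sigma)}+\ns{\nab_\ast^{2n-2j}v}_{H^{1/2}(\Sigma)}$ on the flat $\Sigma$ and the trace $\snormspace{\cdot}{1/2}{\Sigma}^2\ls\ns{\cdot}_1$ this gives $\sum_{j=0}^n\ns{\dt^ju}_1+\sum_{j=0}^n\snormspace{\dt^ju}{2n-2j+1/2}{\Sigma}^2\ls\sdb{n}$. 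Now apply $\dt^j$ to \eqref{perturb f} and regard the resulting $\Sigma$--condition as the Dirichlet datum $\dt^ju|_\Sigma$ (cf.\ \eqref{jellip}); the compatibility condition of Lemma \ref{i_linear_elliptic2} holds because $\int_\Omega\dt^jG^2=\int_\Omega\diverge\dt^ju=\int_\Sigma\dt^ju\cdot e_3$. Applying Lemma \ref{i_linear_elliptic2} with $r=2n-2j+1$ for $j=n-1,n-2,\dots,0$ (the case $j=n$ being the Korn bound) and noting that the forcing $-\dt^{j+1}u+\dt^jG^1$ at order $2n-2j-1$ returns precisely the velocity--dissipation norm one level up, a downward induction yields
\begin{equation}\label{plan_u}
\sum_{j=0}^{n}\ns{\dt^j u}_{2n-2j+1}\ls\sdb{n}+\sum_{j=0}^{n-1}\left(\ns{\dt^jG^1}_{2n-2j-1}+\ns{\dt^jG^2}_{2n-2j}\right).
\end{equation}

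Next I recover the pressure using the equations directly. Applying $\dt^j$ to the momentum equation gives the pointwise identity $\nab\dt^jp=\dt^jG^1-\dt^{j+1}u+\Delta\dt^ju$, so $\sum_{j=0}^{n-1}\ns{\nab\dt^jp}_{2n-2j-1}$ is controlled by \eqref{plan_u} plus $G^1$--norms. To promote this to the full norm I use $\ns{\dt^jp}_{2n-2j}\ls\ns{\nab\dt^jp}_{2n-2j-1}+\ns{\dt^jp}_0$ together with the Poincaré inequality \eqref{poincare_b1}, $\ns{\dt^jp}_0\ls\snormspace{\dt^jp}{0}{\Sigma}^2+\ns{\nab\dt^jp}_0$, and the trace of $\dt^jp$ furnished by the dynamic boundary condition, $\dt^jp|_\Sigma=2\p_3\dt^ju_3+\dt^j\psi+\dt^jG_3^3$ with $\psi:=\eta-\sigma\Delta_\ast\eta$. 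The crucial point is that the zero-average identity \eqref{avege} forces $\int_\Sigma\psi=0$ at every time, hence $\int_\Sigma\dt^j\psi=0$; so Poincaré on $\Sigma$ gives $\ns{\dt^j\psi}_{L^2(\Sigma)}\ls\ns{\nab_\ast\dt^j\psi}_{L^2(\Sigma)}$, and writing $\nab_\ast\dt^j\psi=\nab_\ast(\dt^jp|_\Sigma-2\p_3\dt^ju_3-\dt^jG_3^3)$ and tracing back into $\Omega$ bounds the right side by $\ns{\nab\dt^jp}_1+\ns{\dt^ju}_{5/2}+\ns{\dt^jG^3}_1$, in which no norm of $\eta$ appears. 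Thus $\sum_{j=0}^{n-1}\ns{\dt^jp}_{2n-2j}\ls\sdb{n}+(\text{sums of }G^1,G^3\text{ norms})$, and in particular $\ns{p}_{2n}\ls\sdb{n}+(\cdots)$.

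With $u$ and $p$ controlled, I close the surface norms. The undifferentiated terms come from the dynamic boundary condition read as the elliptic equation $\eta-\sigma\Delta_\ast\eta=g$ on $\Sigma$ with $g:=p|_\Sigma-2\p_3u_3|_\Sigma-G_3^3$: since the Fourier multipliers $(1+\sigma|\xi|^2)^{-1}$ and $\sigma|\xi|^2(1+\sigma|\xi|^2)^{-1}$ are bounded by $1$ uniformly in $\sigma\in(0,1]$, one gets $\ns{\eta}_{2n-1/2}+\sigma^2\ns{\eta}_{2n+3/2}\ls\ns{g}_{2n-1/2}\ls\ns{p}_{2n}+\ns{u}_{2n+1}+\ns{G^3}_{2n-1/2}$. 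The time-differentiated terms $\ns{\dt\eta}_{2n-1/2}$, $\sigma^2\ns{\dt\eta}_{2n+1/2}$ and $\ns{\dt^j\eta}_{2n-2j+5/2}$ ($j=2,\dots,n+1$) instead come from the kinematic equation $\dt^j\eta=\dt^{j-1}u_3+\dt^{j-1}G^4$, whose linear part, after a trace, lands exactly on the velocity--dissipation norm at level $j-1$ already controlled by \eqref{plan_u} (for $j=n+1$ this is $\ns{\dt^nu}_1\ls\sdb{n}^t$, cf.\ the argument around \eqref{lj23}). Collecting everything gives ${\mathcal{D}}_n^\sigma\ls\sdb{n}+(\text{a finite sum of }G^i\text{--norms of parabolic order }\le 2n\text{ with the appropriate }H^1\text{ or }H^{1/2}\text{ loss})$. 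The latter sum is precisely of the type estimated in Lemma \ref{p_G_estimates}: for $n=2N$, \eqref{p_G_e_00} bounds it by $\se{2N}^\sigma\sd{2N}^\sigma+\k\f$, giving \eqref{d2n}; for $n=N+2$, \eqref{p_G_e_h_00} bounds it by $\se{2N}^\sigma\sd{N+2}^\sigma$, giving \eqref{dn+2}.

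The main obstacle is the circular coupling, through the dynamic boundary condition, between the top-order norms $\ns{p}_{2n}$ and $\ns{\eta}_{2n-1/2}+\sigma^2\ns{\eta}_{2n+3/2}$: the two are linked with $O(1)$ coefficients, so neither can be absorbed into the other. Breaking the loop forces one to obtain $\ns{p}_{2n}$ purely from the bulk momentum equation together with the $L^2(\Sigma)$--trace of $p$, and to control that trace through the mean-zero property of $\psi=\eta-\sigma\Delta_\ast\eta$ (a consequence of \eqref{avege}) rather than through any norm of $\eta$ itself; only afterwards is the surface elliptic estimate for $\eta$ available. The remaining work---the Korn-and-trace bookkeeping that extracts the correct boundary norms of $u$ from $\sdb{n}$, and the verification that every elliptic or trace loss falls on a term already present in $\sd{n}^\sigma$ one level up or inside the $G^i$--estimates of Lemma \ref{p_G_estimates}---is routine but must be tracked with care.
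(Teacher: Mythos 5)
Your proposal is correct and follows essentially the same route as the paper's proof: extract boundary regularity of $u$ from $\bar{\mathcal{D}}_n$ via Korn's inequality and the flatness of $\Sigma$, apply the Dirichlet Stokes estimates of Lemma \ref{i_linear_elliptic2} with a downward induction in the number of time derivatives to control $u$ and $\nabla p$, recover $p$ and $\eta$ from the boundary conditions using the zero-average condition \eqref{avege} and Poincar\'e on $\Sigma$, and absorb all nonlinear contributions with Lemma \ref{p_G_estimates}. The only real deviation is the order in which the $p\vert_\Sigma$--$\eta$ circularity is broken: you first obtain the full pressure by applying the mean-zero Poincar\'e inequality to $\psi=\eta-\sigma\Delta_\ast\eta$ (so that only $\nabla_\ast p\vert_\Sigma$, controlled by the interior bound on $\nabla p$, is needed) and then solve $\eta-\sigma\Delta_\ast\eta=g$ with $\sigma$-uniform multiplier bounds, whereas the paper first applies $\nabla_\ast$ to the dynamic boundary condition to estimate $\nabla_\ast\eta$, uses Poincar\'e on $\Sigma$ for $\eta$ itself, and only afterwards recovers $p\vert_\Sigma$ and the full pressure via Poincar\'e in $\Omega$; both variants rest on the same ingredients and are equally valid.
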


\begin{proof}
We again let $n$ denote either $2N$ or $N+2$ throughout the proof, and we compactly write
\begin{equation}\label{p_D_b_4}
\begin{split}
 \y_{n} = & \ns{ \bar{\nab}_0^{2n-1} G^1}_{0} +  \ns{ \bar{\nab}_0^{2n-1}  G^2}_{1}  +
 \ns{\bar{\nab}_{\ast 0}^{\  2n-1} G^3}_{1/2} \\&+ \ns{\bar{\nab}_{\ast 0}^{\  2n-1} G^4}_{1/2}
+ \ns{\bar{\nab}_{\ast 0}^{\  2n-2} \dt G^4}_{1/2}+ \sigma^2\ns{\nab_\ast^{\,2n}   G^4}_{1/2}.
\end{split}
\end{equation}

Notice that we have not yet derived an estimate of $\eta$ in terms of the dissipation, so we can not apply the elliptic estimates of Lemma \ref{i_linear_elliptic} as in Theorem \ref{eth}.  It is crucial to observe that we can get higher regularity estimates of $u$ on the boundary $\Sigma$ from $\bar{\mathcal{D}}_{n}$. Indeed, since $\Sigma$ is flat, we may use the definition of Sobolev norm on $\Sigma$, the trace
theorem and Korn's inequality of Lemma \ref{i_korn} to obtain, by the definitions of $\sdb{n}  = \sdb{n}^{ t}+\sdb{n}^{ +}$,
\begin{equation}\label{n21}
\begin{split}
\ns{\partial_t^{j} u}_{H^{2n-2j+1/2}(\Sigma)}& =\ns{\partial_t^{j}(\chi u)}_{H^{2n-2j+1/2}(\Sigma)} \lesssim \ns{\partial_t^{j} (\chi u)}_{L^2(\Sigma)}
+\ns{\nab_\ast^{\,2n-2j}\partial_t^{j}(\chi u)}_{H^{1/2}(\Sigma )}
\\&
\lesssim \ns{ \partial_t^{j}   (\chi u) }_{1} +\ns{\nab_\ast^{\,2n-2j}\partial_t^{j}  (\chi u) }_{1}
 \lesssim \bar{\mathcal{D}}_{n}.
 \end{split}
\end{equation}
This motivates us to use the elliptic estimates of Lemma \ref{i_linear_elliptic2}.

Now we let $j=0,\dots,n-1$ and then apply $\partial_t^j$ to the equations in \eqref{perturb f} to find
\begin{equation}\label{jellip2}
\begin{cases}
- \Delta \partial_t^j u+\nabla\partial_t^j p=-\partial_t^{j+1} u+\partial_t^j G^1
 &\text{in }\Omega
\\ \diverge\partial_t^j u=\partial_t^j G^2&\text{in }\Omega
\\  \partial_t^j u=\partial_t^j u &\text{on }\Sigma
\\ \partial_t^j u=0 &\text{on }\Sigma_b.
\end{cases}
\end{equation}
Applying the elliptic estimates of Lemma \ref{i_linear_elliptic2} with $r=2n-2j+1\ge 3$ to the problem \eqref{jellip2} and using \eqref{p_D_b_4}--\eqref{n21}, we obtain
\begin{equation}\label{p_D_b_2}
\begin{split}
 &\norm{\dt^j  u  }_{2n-2j+1}^2 + \norm{\nab \dt^j  p  }_{2n-2j-1}^2
 \\  &\quad\ls
\norm{\dt^{j+1} u   }_{2n-2j-1 }^2 + \norm{ \dt^j G^1   }_{2n-2j-1}^2
+ \norm{\dt^j  G^2  }_{2n-2j}^2 + \norm{\dt^j  u  }_{2n-2j+1/2}^2
\\&\quad\ls   \norm{\dt^{j+1} u   }_{2n-2(j+1)+1 }^2 + \sdb{n} +  \y_n.
\end{split}
\end{equation}
A simple induction on \eqref{p_D_b_2} yields, by Korn's inequality,
\begin{equation}\label{claim2}
\sum_{j=0}^{n} \norm{\dt^{j}  u  }_{2n-2j+1}^2 + \norm{\nab \dt^{j} p  }_{2n-2j-1}^2  \ls \ns{\dt^{n} u}_{1}+
\sdb{n}   +\y_n\ls
\sdb{n}   +\y_n.
\end{equation}

Now that we have obtained \eqref{claim2}, we estimate the rest parts in $\mathcal{D}_{n}$. We will turn to the boundary conditions in \eqref{perturb f}. First we derive estimates for $\eta$. For the term $\dt^j \eta$ for $j\ge 2$ we use the boundary condition
\begin{equation}\label{n61}
\partial_t\eta=u_3+G^4\text{ on }\Sigma.
\end{equation}
Indeed, for $j=2,\dots,n+1$ we apply $\partial_t^{j-1}$ to \eqref{n61} to see, by \eqref{claim2} and \eqref{p_D_b_4}, that
\begin{equation}\label{eta1}
\begin{split}
\ns{\partial_t^j\eta}_{2n-2j+5/2} & \le  \ns{\partial_t^{j-1}u_3}_{H^{2n-2j+5/2}(\Sigma)} +\ns{\partial_t^{j-1}G^4}_{2n-2j+5/2}
\\
&\lesssim \ns{\partial_t^{j-1}u }_{{2n-2(j-1)+1}} +\ns{\partial_t^{j-1}G^4}_{2n-2(j-1)+1/2}
 \lesssim \sdb{n}   +\y_n.
\end{split}
\end{equation}
For the term $\partial_t\eta$, we again use \eqref{n61}, \eqref{claim2} and \eqref{p_D_b_4} to find
\begin{equation}\label{eta2}
\begin{split}
\sigma^2\ns{\partial_t \eta}_{2n+1/2}+\ns{\partial_t \eta}_{2n-1/2} &\lesssim \ns{u_3}_{H^{2n+1/2}(\Sigma)}+\sigma^2\ns{ G^4}_{2n+1/2}+\ns{ G^4}_{2n-1/2}
\\&\lesssim \ns{ u }_{2n+1}+\y_n\lesssim\sdb{n}   +\y_n.
\end{split}
\end{equation}
For the remaining $\eta$ term, i.e. those without temporal derivatives, we use the boundary condition
\begin{equation}\label{pb1}
 -\sigma\Delta_\ast\eta+\eta= p- 2\partial_3u_{3} -G_{3}^3\text{ on }\Sigma.
\end{equation}
Notice that at this point we do not have any bound of $p$ on the boundary $\Sigma$, but we have bounded  $\nabla p$ in $\Omega$.  Applying $\nab_\ast$  to \eqref{pb1} and then applying the standard elliptic theory,  by the trace theory, \eqref{claim2} and \eqref{p_D_b_4}, we obtain
\begin{equation}\label{n51}
\begin{split}
&\sigma^2\ns{ \nab_\ast \eta}_{2n+1/2}+\ns{ \nab_\ast \eta}_{2n-3/2}
\\&\quad\lesssim \ns{  \nab_\ast p }_{H^{2n-3/2}(\Sigma)}  + \ns{  \nab_\ast \partial_3u_3 }_{H^{2n-3/2}(\Sigma)} +\ns{\nab_\ast G^3_3}_{ 2n-3/2 }  \\
&\quad\lesssim \ns{\nabla p}_{2n-1} + \ns{u }_{2n+1}  + \ns{G^3}_{2n-1/2} \lesssim \sdb{n}   +\y_n.
\end{split}
\end{equation}
We may then use Poincar\'e's inequality on $\Sigma$ (since \eqref{avege})  to obtain from \eqref{n51} that
\begin{equation} \label{eta3}
\begin{split}
\sigma^2\ns{\eta}_{2n+3/2}+\ns{\eta}_{2n-1/2} &\lesssim \ns{\eta}_{0}+\sigma^2\ns{ \nab_\ast \eta}_{2n+1/2} + \ns{\nab_\ast \eta}_{2n-3/2}
\\&\lesssim \sigma^2\ns{ \nab_\ast \eta}_{2n+1/2} + \ns{\nab_\ast \eta}_{2n-3/2} \lesssim \sdb{n}   +\y_n.
\end{split}
\end{equation}
Summing  \eqref{eta1}, \eqref{eta2}  and \eqref{eta3}, we complete the estimates for $\eta$:
\begin{equation} \label{eta0}
\sigma^2\ns{\eta}_{2n+3/2}+\sigma^2\ns{\dt\eta}_{2n+1/2}+\ns{\eta}_{2n-1/2}+\ns{\partial_t \eta}_{2n-1/2}  + \sum_{j=2}^{n+1} \ns{\partial_t^j\eta}_{2n-2j+5/2} \lesssim\sdb{n}   +\y_n.
\end{equation}

The $\eta$ estimates \eqref{eta0} allows us to further bound $\ns{\partial_t^jp}_0$. Applying $\partial_t^j,\ j=0,\dots,n-1$ to \eqref{pb1} and employing the trace theory, \eqref{claim2}, \eqref{eta0} and \eqref{p_D_b_4}, we find
\begin{equation}\label{pp1}
\begin{split}
\ns{\partial_t^j p}_{L^2(\Sigma )}
&\lesssim \ns{\partial_t^j \eta}_{0}+\sigma^2\ns{\Delta_\ast\partial_t^j \eta}_{0}  + \ns{\partial_3\partial_t^j u_3}_{L^2(\Sigma)}  + \ns{\partial_t^j G^3_3}_{0}
\\
&\lesssim \ns{\partial_t^j \eta}_{2}+  \ns{ \partial_t^j u }_{2}+\ns{\partial_t^j G^3}_{0} \lesssim \sdb{n}  +\y_n.
\end{split}
\end{equation}
By Poincar\'e's inequality on $\Omega$ (Lemma \ref{poincare_b}) and \eqref{claim2} and \eqref{pp1}, we have
\begin{equation}\label{pp2}
\ns{\partial_t^j p}_{1}  = \ns{\partial_t^j p}_{0}+\ns{\nab\partial_t^j p}_{0}
\lesssim \ns{\partial_t^j p}_{L^2(\Sigma )} +\ns{\nab\partial_t^j p}_{0}
\lesssim \sdb{n}  +\y_n.
\end{equation}
In light of \eqref{pp2}, we may improve the estimate \eqref{claim2} to be
\begin{equation}\label{claim00}
\sum_{j=0}^{n} \norm{\dt^{j}  u  }_{2n-2j+1}^2 + \sum_{j=0}^{n-1}\norm{  \dt^{j} p  }_{2n-2j }^2   \ls
\sdb{n}  +\y_n.
\end{equation}

Consequently, summing \eqref{eta0} and \eqref{claim00}, we obtain
\begin{equation}\label{upeta0}
 \sd{n}^\sigma\ls
\sdb{n}   +\y_n.
\end{equation}
Setting $n=2N$ in \eqref{upeta0} and using \eqref{p_G_e_00} of Lemma \ref{p_G_estimates} to estimate
$\mathcal{Y}_{2N}\lesssim  {\mathcal{E}}_{2N}^\sigma {\mathcal{D}}_{2N}^\sigma + \mathcal{K} {\mathcal{F}}_{2N}$,
we then obtain \eqref{d2n};  setting $n=N+2$ in \eqref{upeta0}, and using \eqref{p_G_e_h_00} of Lemma \ref{p_G_estimates} to estimate $\mathcal{Y}_{N+2}\lesssim  {\mathcal{E}}_{2N}^\sigma {\mathcal{D}}_{N+2}^\sigma$, we then obtain \eqref{dn+2}.
\end{proof}

\subsection{Global energy estimates}

In this subsection, we shall now conclude our global energy estimates of the solution to \eqref{geometric}. We begin with the estimate of $\f$.

\begin{prop}\label{p_f_bound}
There exists a universal constant $0< \delta < 1$ so that if $\g^\sigma(T) \le \delta$, then
\begin{equation}\label{p_f_b_0}
\sup_{0\le r \le t}  \f(r) \ls
\f(0) +   t \int_0^t \sd{2N}^\sigma\text{ for all }0 \le t \le T.
\end{equation}
\end{prop}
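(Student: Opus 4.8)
The plan is to exploit the kinematic boundary condition in \eqref{geometric}, which expresses $\eta$ as the solution of the transport equation
\begin{equation}\label{fb_transport}
\dt\eta + u\cdot\nab_\ast\eta = u_3 \quad\text{on }\Sigma,
\end{equation}
and then to invoke the fixed-order transport estimate of Lemma \ref{i_sobolev_transport}, which is available only for regularity index $s<2$. First I would apply a horizontal spatial multi-index $\pa$, $\alpha\in\mathbb{N}^2$ with $|\alpha|\le 4N$, to \eqref{fb_transport}, obtaining
\begin{equation}
\dt(\pa\eta) + u\cdot\nab_\ast(\pa\eta) = \pa u_3 - \sum_{0<\beta\le\alpha} C_{\alpha,\beta}\, \p^\beta u\cdot\nab_\ast\p^{\alpha-\beta}\eta =: g_\alpha ,
\end{equation}
and apply Lemma \ref{i_sobolev_transport} with $s=1/2$ to each such equation. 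Summing over $|\alpha|\le 4N$ and using the equivalence $\f\sim\ns{\eta}_0+\sum_{|\alpha|=4N}\snorm{\pa\eta}{1/2}^2$, the proof reduces to controlling the exponential prefactor $\exp\big(C\int_0^t\snormspace{\nab_\ast u}{3/2}{\Sigma}\,dr\big)$ and the forcing integrals $\big(\int_0^t\snormspace{g_\alpha}{1/2}{\Sigma}\,dr\big)^2$.

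For the prefactor, the key point is that the hypothesis $\g^\sigma(T)\le\delta$ gives the strong decay $\se{N+2}^\sigma(r)\le\delta(1+r)^{-(4N-8)}$; since $4N-8\ge 4$ for $N\ge 3$, the trace bound $\snormspace{\nab_\ast u}{3/2}{\Sigma}\ls\norm{u}_3\ls\sqrt{\se{N+2}^\sigma}$ yields $\int_0^t\snormspace{\nab_\ast u}{3/2}{\Sigma}\,dr\ls\sqrt\delta\int_0^\infty(1+r)^{-(2N-4)}\,dr\ls\sqrt\delta$, so the prefactor is $\le 2$ once $\delta$ is small. The ``driving'' piece of the forcing is handled directly: $\snormspace{\pa u_3}{1/2}{\Sigma}\ls\norm{u}_{4N+1}\ls\sqrt{\sd{2N}^\sigma}$, so Cauchy--Schwarz in time gives $\big(\int_0^t\snormspace{\pa u_3}{1/2}{\Sigma}\,dr\big)^2\ls t\int_0^t\sd{2N}^\sigma$, which is exactly the right-hand side of \eqref{p_f_b_0}.

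The step I expect to be the main obstacle is the commutator $\sum_{0<\beta\le\alpha}\p^\beta u\cdot\nab_\ast\p^{\alpha-\beta}\eta$, because the naive bound for the term carrying a single derivative on $u$ pairs $\norm{u}_3$ against $\ns{\eta}_{4N+1/2}=\f$, and $\f$ is a priori only known to grow like $(1+r)$. The remedy is a careful split via the product rule of Lemma \ref{i_sobolev_product_1}. When $|\beta|=1$ one estimates $\snormspace{\p^\beta u\cdot\nab_\ast\p^{\alpha-\beta}\eta}{1/2}{\Sigma}\ls\snormspace{\nab_\ast u}{2}{\Sigma}\,\norm{\eta}_{4N+1/2}\ls\sqrt{\k}\sqrt{\f}$, so that $\int_0^t\sqrt{\k\f}\,dr\ls\big(\sup_{0\le r\le t}\sqrt{\f(r)}\big)\int_0^t\sqrt{\k}\,dr\ls\sqrt\delta\,\sup_{0\le r\le t}\sqrt{\f(r)}$ (again using $\k\ls\se{N+2}^\sigma$ and its decay); its square $\ls\delta\sup_{0\le r\le t}\f(r)$ is absorbable into the left side for $\delta$ small. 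When $|\beta|\ge 2$ the factor $\nab_\ast\p^{\alpha-\beta}\eta$ carries at most $4N-1$ derivatives, so it is controlled in $H^{1/2}(\Sigma)$ by $\norm{\eta}_{4N-1/2}\ls\sqrt{\sd{2N}^\sigma}$, while the complementary $u$-factor is of sufficiently low order to be controlled by $\sqrt{\se{N+2}^\sigma}$ (or $\sqrt\k$); hence these terms are $\ls\sqrt{\sd{2N}^\sigma\,\se{N+2}^\sigma}$, and using $\int_0^t\se{N+2}^\sigma\le\delta\int_0^t(1+r)^{-(4N-8)}\,dr\le\delta t$ together with Cauchy--Schwarz gives $\big(\int_0^t\sqrt{\sd{2N}^\sigma\,\se{N+2}^\sigma}\,dr\big)^2\le\big(\int_0^t\sd{2N}^\sigma\big)\big(\int_0^t\se{N+2}^\sigma\big)\le t\int_0^t\sd{2N}^\sigma$. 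Collecting all the bounds, summing over $\alpha$, and fixing $\delta$ small enough to absorb the $\delta\sup_{0\le r\le t}\f(r)$ contribution then yields $\sup_{0\le r\le t}\f(r)\ls\f(0)+t\int_0^t\sd{2N}^\sigma$, which is \eqref{p_f_b_0}.
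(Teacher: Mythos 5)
Your route is essentially the paper's: the paper also runs the Danchin transport estimate (Lemma \ref{i_sobolev_transport}) on the kinematic boundary condition — it simply quotes the resulting intermediate inequality from Lemma 7.1 of \cite{GT_per} rather than re-deriving it — and then closes exactly as you do, bounding the exponential prefactor by $\exp(C\int_0^t\sqrt{\k})\ls 1$ via $\k\ls\se{N+2}^\sigma\le\delta(1+r)^{-(4N-8)}$ and absorbing the $\delta\sup_{0\le r\le t}\f(r)$ contribution coming from the $\big(\int_0^t\sqrt{\k\f}\big)^2$ term. So the structure, the use of the decay hypothesis, and the absorption argument all coincide; your proposal just makes the commutator analysis behind the cited lemma explicit.

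There is one slip in that explicit analysis: in the case $|\beta|\ge 2$ you put the $\eta$-factor $\nab_\ast\p^{\alpha-\beta}\eta$ in $H^{1/2}(\Sigma)$ (bounded by $\norm{\eta}_{4N-1/2}\ls\sqrt{\sd{2N}^\sigma}$) and claim the complementary factor $\p^\beta u$ is always of low enough order to be controlled by $\sqrt{\se{N+2}^\sigma}$ or $\sqrt{\k}$. That fails once $|\beta|$ exceeds roughly $2N+1$: for instance $|\beta|=4N$ puts $4N$ horizontal derivatives on $u$, and neither $\se{N+2}^\sigma$ (which only reaches $H^{2N+4}$) nor $\k$ can see it. The repair is the standard high--low swap within your own scheme: for large $|\beta|$ estimate $\norm{\p^\beta u\cdot\nab_\ast\p^{\alpha-\beta}\eta}_{H^{1/2}(\Sigma)}\ls\snormspace{\p^\beta u}{1/2}{\Sigma}\,\norm{\nab_\ast\p^{\alpha-\beta}\eta}_{H^{2}(\Sigma)}$, where now $\snormspace{\p^\beta u}{1/2}{\Sigma}\ls\norm{u}_{4N+1}\ls\sqrt{\sd{2N}^\sigma}$ by the trace embedding and the $\eta$-factor is of low order, $\ls\norm{\eta}_{2(N+2)}\ls\sqrt{\se{N+2}^\sigma}$. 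This reproduces the same product bound $\sqrt{\sd{2N}^\sigma\,\se{N+2}^\sigma}$ you used, so the subsequent Cauchy--Schwarz in time and the final absorption go through unchanged.
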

\begin{proof}
Based on the transport estimate of Lemma \ref{i_sobolev_transport} on the kinematic boundary condition, we may show as in Lemma 7.1 of \cite{GT_per} that
\begin{equation}\label{l0}
\begin{split}
\sup_{0\le r \le t}  \f(r) \ls& \exp\left(C \int_0^t \sqrt{\k(r)} dr \right) \\
&\times \left[ \f(0)   +   t \int_0^t (1+\se{2N}^\sigma(r)) \sd{2N}^\sigma(r)dr  + \left( \int_0^t \sqrt{\k(r) \f(r)} dr\right)^2 \right].
\end{split}
\end{equation}
The Sobolev and trace embeddings allow us to estimate $\k \ls \se{N+2}^\sigma$, and hence
\begin{equation}\label{l1}
 \int_0^t \sqrt{\k(r)} dr \ls \int_0^t \sqrt{\se{N+2}^\sigma(r)} dr \le  \sqrt{\delta} \int_0^\infty  \frac{1}{(1+r)^{2N-4}}dr  \ls \sqrt{\delta}.
\end{equation}
Then by \eqref{l1}, we deduce from \eqref{l0} that
\begin{equation}\label{l2}
\begin{split}
\sup_{0\le r\le t}\mathcal{F}_{2N}(r) & \lesssim \mathcal{F}_{2N}(0)+t\int_0^t\mathcal{D}_{2N}^\sigma+\sup_{0\le r\le t}{{\mathcal{F}}_{2N}}(r)\left(
  \int_0^t\sqrt{ \mathcal{K}(r)}dr\right)^2
  \\
&\lesssim \mathcal{F}_{2N}(0) + t\int_0^t\mathcal{D}_{2N}^\sigma+\delta\sup_{0\le r\le t}{{\mathcal{F}}_{2N}}(r).
\end{split}
\end{equation}
By taking $\delta$ small enough, we may absorb the right-hand $\f$ term of \eqref{l2} onto the left and deduce \eqref{p_f_b_0}.
\end{proof}

Now we show the boundedness of $\mathcal{E}_{2N}^\sigma+\int_0^t\mathcal{D}_{2N}^\sigma$.

\begin{prop} \label{Dgle}
There exists $\delta>0$ so that if $\mathcal{G}_{2N}^\sigma(T)\le\delta$, then
\begin{equation}\label{Dg}
\mathcal{E}_{2N}^\sigma(t)+\int_0^t\mathcal{D}_{2N}^\sigma\lesssim
\mathcal{E}_{2N}^\sigma(0) + \mathcal{F}_{2N}(0)  \text{ for all
}0\le t\le T.
\end{equation}
\end{prop}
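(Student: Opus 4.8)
The plan is to assemble the global $2N$-level energy estimate from four ingredients already in hand: the temporal energy evolution of Proposition~\ref{i_temporal_evolution 2N}, the horizontal localized energy evolution of Proposition~\ref{p_upper_evolution 2N}, the comparison estimates of Theorems~\ref{eth}--\ref{dth}, and the transport bound for $\f$ of Proposition~\ref{p_f_bound}. Throughout, the hypothesis $\g^\sigma(T)\le\delta$ supplies the three facts that will be used: the smallness $\sup_{[0,T]}\se{2N}^\sigma\le\delta$, the integrability $\int_0^T\sd{2N}^\sigma\le\delta$, and the crucial polynomial decay $\se{N+2}^\sigma(r)\le\delta(1+r)^{-(4N-8)}$ (together with $\k\ls\se{N+2}^\sigma$).

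First I would form a suitable linear combination of the two evolution inequalities. Multiplying \eqref{tem en 2N} by a large constant $\lambda$ (depending on the $\varepsilon$ of Proposition~\ref{p_upper_evolution 2N}) so that $\lambda\sdb{2N}$ absorbs the term $C(\varepsilon)\sdb{2N}$ on the right of \eqref{p_u_e_00}, and using $\se{2N}^0\le\se{2N}^\sigma$, $\sd{2N}^0\le\sd{2N}^\sigma$, one gets
\[
\frac{d}{dt}\Big(\lambda\seb{2N}^\sigma+2\lambda\int_\Omega J\,\dt^{2N-1}p\,F^{2,2N}\Big)+\sdb{2N}\ls\sqrt{\se{2N}^\sigma}\,\sd{2N}^\sigma+\sqrt{\sd{2N}^\sigma\,\k\,\f}+\varepsilon\,\sd{2N}^\sigma .
\]
By \eqref{tem en 2N00} the Lagrange-multiplier correction $\int_\Omega J\,\dt^{2N-1}p\,F^{2,2N}$ is bounded by $(\se{2N}^\sigma)^{3/2}\le\delta^{1/2}\se{2N}^\sigma$, so under the smallness (and using $\se{2N}^\sigma\ls\seb{2N}^\sigma+\k\f$ from \eqref{e2n}) the bracketed quantity is comparable to $\seb{2N}^\sigma$ modulo a term $\ls\delta\,\k\,\f$, and at $t=0$ it is $\ls\se{2N}^\sigma(0)$. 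Integrating in time and then feeding in the comparison estimates \eqref{e2n} and \eqref{d2n} — absorbing the errors $(\se{2N}^\sigma)^2\le\delta\,\se{2N}^\sigma$, $\se{2N}^\sigma\sd{2N}^\sigma\le\delta\,\sd{2N}^\sigma$ and the $\sqrt{\delta}$- and $\varepsilon$-multiples of $\sd{2N}^\sigma$ into the left-hand side — I arrive at the estimate \eqref{pp}, namely
\[
\se{2N}^\sigma(t)+\int_0^t\sd{2N}^\sigma\ls\se{2N}^\sigma(0)+\k(t)\f(t)+\int_0^t\sqrt{\sd{2N}^\sigma\,\k\,\f}+\int_0^t\k\,\f .
\]

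It then remains to bound the three $\k\,\f$-type terms by $\f(0)=\mathcal{F}_{2N}(0)$ plus something absorbable. Here I would combine $\k\ls\se{N+2}^\sigma(r)\le\delta(1+r)^{-(4N-8)}$ with Proposition~\ref{p_f_bound}, which gives $\sup_{0\le r\le t}\f(r)\ls\f(0)+t\int_0^t\sd{2N}^\sigma$. For the endpoint term, $\k(t)\f(t)\ls\delta(1+t)^{-(4N-8)}\big(\f(0)+t\int_0^t\sd{2N}^\sigma\big)\ls\delta\,\mathcal{F}_{2N}(0)+\delta\int_0^t\sd{2N}^\sigma$, since $t(1+t)^{-(4N-8)}\le t(1+t)^{-1}<1$ when $N\ge3$. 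For the time integrals, the point is integrability in $r$: since $\int_0^r\sd{2N}^\sigma\le\int_0^t\sd{2N}^\sigma$ and, for $N\ge3$, both $\int_0^\infty(1+r)^{-(4N-8)}\,dr$ and $\int_0^\infty r(1+r)^{-(4N-8)}\,dr$ are finite, one gets $\int_0^t\k\,\f\ls\delta\,\mathcal{F}_{2N}(0)+\delta\int_0^t\sd{2N}^\sigma$, and $\int_0^t\sqrt{\sd{2N}^\sigma\,\k\,\f}\le\varepsilon\int_0^t\sd{2N}^\sigma+C(\varepsilon)\int_0^t\k\,\f$ is handled the same way. Choosing first $\varepsilon$ small and then $\delta$ small enough that all the resulting multiples of $\int_0^t\sd{2N}^\sigma$ can be absorbed into the left-hand side yields $\se{2N}^\sigma(t)+\int_0^t\sd{2N}^\sigma\ls\se{2N}^\sigma(0)+\mathcal{F}_{2N}(0)$, which is the claim.

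The main obstacle is precisely this last balancing step: naively each $\k\,\f$ term is only $O(\delta)$, far too weak to give a bound by $\se{2N}^\sigma(0)+\mathcal{F}_{2N}(0)$, so one must use Proposition~\ref{p_f_bound} to trade the slowly growing factor $\f(r)$ for $\f(0)+r\int_0^r\sd{2N}^\sigma$ and then play it against the decay of $\se{N+2}^\sigma$ — which is exactly where $N\ge3$ enters, through the integrability of $(1+r)^{-(4N-8)}$ and $r(1+r)^{-(4N-8)}$ on $[0,\infty)$. A secondary bookkeeping point is verifying that the comparison errors from \eqref{e2n}--\eqref{d2n} and the $(\se{2N}^\sigma)^{3/2}$ from the Lagrange-multiplier correction are all genuinely absorbable, for which the smallness $\sup_{[0,T]}\se{2N}^\sigma\le\delta$ is used.
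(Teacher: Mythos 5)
Your proposal is correct and follows essentially the same route as the paper: the same $\beta$-weighted (your $\lambda$) combination of Propositions \ref{i_temporal_evolution 2N} and \ref{p_upper_evolution 2N} to absorb $C(\varepsilon)\sdb{2N}^t$, the same use of \eqref{tem en 2N00} and the comparison estimates \eqref{e2n}, \eqref{d2n} to reach \eqref{pp}, and the same final balancing of the $\k\f$ terms via Proposition \ref{p_f_bound} against the decay $\se{N+2}^\sigma(r)\le\delta(1+r)^{-(4N-8)}$, with absorption of the resulting $\sqrt{\delta}$- and $\varepsilon$-multiples of $\int_0^t\sd{2N}^\sigma$. The only cosmetic difference is that you split $\int_0^t\sqrt{\sd{2N}^\sigma\k\f}$ by Young's inequality before invoking the $\f$-bound, whereas the paper bounds it directly as in \eqref{k2}; both are equivalent.
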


\begin{proof}
Note first that since ${\mathcal{E}}_{2N}^\sigma(t)\le
{\mathcal{G}}_{2N}^\sigma(T)\le \delta$, by taking $\delta$ small, we
obtain from \eqref{e2n} of Theorem \ref{eth} and \eqref{d2n} of
Theorem \ref{dth} that
\begin{equation}\label{q0}
\bar{\mathcal{E}}_{2N}^\sigma \lesssim{\mathcal{E}}_{2N}^\sigma
\lesssim \bar{\mathcal{E}}_{2N}^\sigma + \mathcal{K}
\mathcal{F}_{2N}, \text{ and } \bar{\mathcal{D}}_{2N}
\lesssim {\mathcal{D}}_{2N}^\sigma \lesssim
\bar{\mathcal{D}}_{2N} + \mathcal{K} \mathcal{F}_{2N}.
\end{equation}

Now we multiply the integration in time version of \eqref{tem en 2N}
by a constant $1+\beta\ge 2$ and add then this to  the integration in
time version of \eqref{p_u_e_00} to find, by \eqref{tem
en 2N00},
\begin{equation} \label{q1}
\begin{split}
&(1+\beta)\bar{\mathcal{E}}^{\sigma,t}_{2N}(t) +
\bar{\mathcal{E}}^{\sigma,+}_{2N}(t) + \int_0^t
(1+\beta)\bar{\mathcal{D}}^{t}_{2N}+ \bar{\mathcal{D}}^{+}_{2N}
\\&\quad
\lesssim  (1+\beta)(
\bar{\mathcal{E}}^{\sigma,t}_{2N}(0)+(\mathcal{E}_{2N}^\sigma(0))^{
{3}/{2}})+\bar{\mathcal{E}}^{\sigma,+}_{2N}(0) + (1+\beta)
(\mathcal{E}_{2N}^\sigma(t))^{ {3}/{2}}
\\&\qquad
+ \int_0^t(2+\beta) \sqrt{\mathcal{E}_{2N}^\sigma }
\mathcal{D}_{2N}^\sigma+
\sqrt{\mathcal{D}_{2N}^\sigma\mathcal{K}\mathcal{F}_{2N}}+\varepsilon\mathcal{D}_{2N}^\sigma
+ C(\varepsilon)\bar{\mathcal{D}}^{t}_{2N}.
\end{split}
\end{equation}
Then by \eqref{q0} we may improve \eqref{q1} to be
\begin{multline} \label{q20}
\mathcal{E}_{2N}^\sigma(t)+\int_0^t \mathcal{D}_{2N}^\sigma+ \beta
\bar{\mathcal{D}}^{t}_{2N} \lesssim  (1+\beta)
\mathcal{E}_{2N}^\sigma(0) + (1+\beta)
(\mathcal{E}_{2N}^\sigma(t))^{ 3/2}+\k(t)\f(t)
\\
+ \int_0^t  (2+\beta) \sqrt{\mathcal{E}_{2N}^\sigma }
\mathcal{D}_{2N}^\sigma+\sqrt{\mathcal{D}_{2N}^\sigma\mathcal{K}\mathcal{F}_{2N}}+\mathcal{K}\mathcal{F}_{2N}
+ \varepsilon \mathcal{D}_{2N}^\sigma
+C(\varepsilon)\bar{\mathcal{D}}^{t}_{2N}.
\end{multline}
Taking $\varepsilon$ sufficiently small first, $\beta$ sufficiently
large second, and $\delta$ sufficiently small third, we may deduce
from \eqref{q20} that
\begin{equation} \label{q2}
\mathcal{E}_{2N}^\sigma(t)+\int_0^t \mathcal{D}_{2N}^\sigma \lesssim
 \mathcal{E}_{2N}^\sigma(0)  +\k\f
+ \int_0^t
 \sqrt{\mathcal{D}_{2N}^\sigma\mathcal{K}\mathcal{F}_{2N}}+\mathcal{K}\mathcal{F}_{2N}.
\end{equation}

Since $\mathcal{G}_{2N}^\sigma(T) \le \delta$, it is easy to use Proposition
\ref{p_f_bound} to verify that
\begin{equation}\label{k0}
 \k(t)\f(t) \lesssim \delta(1+t)^{-4N+8}\f\lesssim \delta
\mathcal{F}_{2N}(0)+\delta\int_0^t\mathcal{D}_{2N}^\sigma(r)dr,
\end{equation}
\begin{equation}\label{k1}
\int_0^t   \mathcal{K}(r)\mathcal{F}_{2N}(r)dr\lesssim \delta
\mathcal{F}_{2N}(0)+\delta\int_0^t\mathcal{D}_{2N}^\sigma(r)dr,
\end{equation}
and
\begin{equation}\label{k2}
\int_0^t
\sqrt{\mathcal{D}_{2N}^\sigma(r)\mathcal{K}(r)\mathcal{F}_{2N}(r)}\lesssim
\mathcal{F}_{2N}(0)
+\sqrt{\delta}\int_0^t\mathcal{D}_{2N}^\sigma(r)dr.
\end{equation}
We then plug \eqref{k0}--\eqref{k2} into  \eqref{q2} to have
\begin{equation}\label{k3}
 \mathcal{E}_{2N}^\sigma(t) + \int_0^t\mathcal{D}_{2N}^\sigma
\lesssim \mathcal{E}_{2N}^\sigma(0) +
\mathcal{F}_{2N}(0)+\sqrt{\delta}\int_0^t\mathcal{D}_{2N}^\sigma(r)dr.
\end{equation}
Taking further $\delta$ sufficiently smaller, we may then conclude \eqref{Dg}.
\end{proof}

It remains to show the decay estimates of
$\mathcal{E}_{N+2}^\sigma$.

\begin{prop} \label{decaylm}
There exists $\delta>0$ so that if
$\mathcal{G}_{2N}^\sigma(T)\le\delta$, then
\begin{equation}\label{n+2}
(1+t)^{4N-8} \mathcal{E}_{N+2}^\sigma(t)\lesssim
\mathcal{E}_{2N}^\sigma(0)+ \mathcal{F}_{2N}(0) \ \text{for all
}0\le t\le T.
\end{equation}
\end{prop}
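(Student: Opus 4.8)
The plan is to reproduce at the $N+2$ level the same machinery already developed at the $2N$ level, but now the nonlinear bounds are purely quadratic, so the outcome is a \emph{clean} differential inequality rather than one polluted by $\k\f$; from that one closes the decay by interpolation and an ODE argument. Concretely, first I would establish \eqref{n+223}, i.e. $\dtt\se{N+2}^\sigma+\sd{N+2}^\sigma\le0$ in an integrated, $\delta$-perturbed form, and then combine it with a term-by-term Sobolev interpolation and the uniform bound of $\se{2N}^\sigma$ from Proposition \ref{Dgle}.

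\textbf{Step 1 (the $N+2$-level energy inequality).} Multiply the integrated form of \eqref{tem en N+2} by a large constant $1+\beta\ge2$ and add it to the integrated form of \eqref{p_u_e_11}, so that $(1+\beta)\bar{\mathcal{D}}_{N+2}^t$ absorbs $C(\ep)\sdb{N+2}^t$. By \eqref{tem en N+200} the auxiliary term $\int_\Omega J\dt^{N+1}p\,F^{2,N+2}$ is bounded by $\sqrt{\se{2N}^\sigma}\,\se{N+2}^\sigma\le\sqrt\delta\,\se{N+2}^\sigma$, hence is a small perturbation of $\seb{N+2}^{\sigma,t}$. Using the comparison estimates \eqref{en+2} and \eqref{dn+2} of Theorems \ref{eth} and \ref{dth} together with the smallness $\se{2N}^\sigma\le\delta$ to replace $\seb{N+2}^\sigma$ by $\se{N+2}^\sigma$ and $\sdb{N+2}$ by $\sd{N+2}^\sigma$, and then choosing $\ep$ small, $\beta$ large, $\delta$ small in that order (exactly as in the passage from \eqref{q1} to \eqref{q2} at the $2N$ level, except that now the $N+2$-level nonlinear bounds \eqref{p_F_e_h_01}, \eqref{p_F_e_h_02}, \eqref{p_G_e_h_0}, \eqref{p_G_e_h_00} are all of the quadratic form $\se{2N}^\sigma\sd{N+2}^\sigma$ or $\se{2N}^\sigma\se{N+2}^\sigma$ with \emph{no} $\k\f$ term), I obtain a functional $\widetilde{\mathcal{E}}_{N+2}^\sigma$ comparable to $\se{N+2}^\sigma$ with
\begin{equation*}
\dtt\widetilde{\mathcal{E}}_{N+2}^\sigma+\sd{N+2}^\sigma\le0\qquad\text{on }[0,T].
\end{equation*}

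\textbf{Step 2 (interpolation).} A term-by-term Sobolev interpolation gives $\se{N+2}^\sigma\ls(\sd{N+2}^\sigma)^{\theta}(\se{2N}^\sigma)^{1-\theta}$ with $\theta=(4N-8)/(4N-7)$. The binding contribution is the purely spatial surface term: $\sd{N+2}^\sigma$ controls only $\ns{\eta}_{2(N+2)-1/2}$ whereas $\se{N+2}^\sigma$ contains $\ns{\eta}_{2(N+2)}$, so one interpolates $\|\eta\|_{2(N+2)}$ between $\|\eta\|_{2(N+2)-1/2}$ (from $\sd{N+2}^\sigma$) and $\|\eta\|_{4N}$ (from $\se{2N}^\sigma$), which yields exactly the exponent $\theta$ since $\bigl(2(N+2)-\tfrac12\bigr)+\tfrac1{4N-7}\bigl(4N-(2(N+2)-\tfrac12)\bigr)=2(N+2)$. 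Every other term of $\se{N+2}^\sigma$ — the $\dt^ju$, $\dt^jp$ terms, whose dissipation norms carry one extra derivative, the $\dt^j\eta$ terms for $j\ge1$, and the $\sigma\ns{\eta}_{2(N+2)+1}$ term — is interpolated with an exponent $\ge\theta$, using $N\ge3$ so that the $\se{2N}^\sigma$-norms dominate the relevant $\sd{N+2}^\sigma$-norms, and noting that the factors of $\sigma\le1$ cause no loss. Combined with Step 1 this gives $\sd{N+2}^\sigma\gtrsim(\widetilde{\mathcal{E}}_{N+2}^\sigma)^{1/\theta}(\se{2N}^\sigma)^{-(1-\theta)/\theta}$.

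\textbf{Step 3 (the ODE) and the main obstacle.} By Proposition \ref{Dgle}, $\se{2N}^\sigma(t)\le C_0$ with $C_0:=C(\se{2N}^\sigma(0)+\f(0))$ uniformly on $[0,T]$, so Step 2 turns Step 1 into $\dtt\widetilde{\mathcal{E}}_{N+2}^\sigma+C_0^{-1/(4N-8)}(\widetilde{\mathcal{E}}_{N+2}^\sigma)^{1+1/(4N-8)}\le0$ (using $1/\theta=1+\tfrac1{4N-8}$, $(1-\theta)/\theta=\tfrac1{4N-8}$). Integrating this Bernoulli-type inequality with $\widetilde{\mathcal{E}}_{N+2}^\sigma(0)\ls\se{N+2}^\sigma(0)\le\se{2N}^\sigma(0)\le C_0$ yields $\widetilde{\mathcal{E}}_{N+2}^\sigma(t)\ls(1+t)^{-(4N-8)}C_0$, hence \eqref{n+2}. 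The ODE and interpolation steps are routine; the real work is Step 1 — carefully assembling Propositions \ref{i_temporal_evolution N+2} and \ref{p_upper_evolution N+2} with the comparison Theorems \ref{eth}--\ref{dth}, and checking that the smallness of $\se{2N}^\sigma$ alone suffices to absorb all error and remainder terms so that \eqref{n+223} emerges with no surviving $\k\f$-type contribution.
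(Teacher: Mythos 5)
Your proposal is correct and follows essentially the same route as the paper: combine Propositions \ref{i_temporal_evolution N+2} and \ref{p_upper_evolution N+2} with the comparison estimates \eqref{en+2}, \eqref{dn+2} and the smallness of $\se{2N}^\sigma$ to obtain the clean differential inequality \eqref{n+223} for an equivalent instantaneous energy, then interpolate the terms $\ns{\eta}_{2(N+2)}$ and $\sigma\ns{\eta}_{2(N+2)+1}$ exactly as in \eqref{intep0}--\eqref{intep1} to get $\se{N+2}^\sigma\ls(\sd{N+2}^\sigma)^{\theta}(\se{2N}^\sigma)^{1-\theta}$ with $\theta=(4N-8)/(4N-7)$, and close with Proposition \ref{Dgle} and the Bernoulli-type ODE argument. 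The only difference is that you spell out the $\beta$-weighted absorption in Step 1, which the paper leaves implicit.
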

\begin{proof}
Since ${\mathcal{E}}_{2N}^\sigma(t)\le
{\mathcal{G}}_{2N}^\sigma(T)\le \delta$, by taking $\delta$ small,
we obtain from \eqref{en+2} and \eqref{dn+2} that
\begin{equation}
{\mathcal{E}}_{N+2}^\sigma\lesssim\bar{\mathcal{E}}_{N+2}^\sigma
\lesssim{\mathcal{E}}_{N+2}^\sigma, \text{ and }
{\mathcal{D}}_{N+2}^\sigma\lesssim\bar{\mathcal{D}}_{N+2}^\sigma
\lesssim{\mathcal{D}}_{N+2}^\sigma.
\end{equation}
By these estimates and the smallness of $\delta$, we may deduce from
Proposition \ref{i_temporal_evolution N+2} and Proposition
\ref{p_upper_evolution N+2} that there exists an instantaneous
energy, which is equivalent to ${\mathcal{E}}_{N+2}^\sigma$, but for
simplicity is still denoted by ${\mathcal{E}}_{N+2}^\sigma$, such
that
\begin{equation} \label{u1}
\frac{d}{dt} \mathcal{E}_{N+2}^\sigma +  \mathcal{D}_{N+2}^\sigma\le
0.
\end{equation}

In order to get decay from \eqref{u1}, we shall now estimate $\mathcal{E}_{N+2}^\sigma$ in terms of $\mathcal{D}_{N+2}^\sigma$. Notice that ${\mathcal{D}}_{N+2}^\sigma$ can control every term in ${\mathcal{E}}_{N+2}^\sigma$ except $\sigma\ns{\eta}_{2(N+2)+1}$ and $\ns{\eta}_{2(N+2)}$. The key point is to use the Sobolev interpolation. Indeed, we first have that
\begin{equation} \label{intep0}
\ns{\eta}_{2(N+2)}\le  \norm{\eta}_{2(N+2)-1/2} ^{2\theta} \norm{\eta}_{4N}^{2(1-\theta)}\lesssim( {\mathcal{D}_{N+2}^\sigma})^\theta({\mathcal{E}_{2N}^\sigma})^{1-\theta},\text{
where }\theta=\frac{4N-8}{4N-7}.
\end{equation}
Similarly, we have
\begin{equation} \label{intep1}
\begin{split}
\sigma\ns{\eta}_{2(N+2)+1}
&\le \sigma\norm{\eta}_{2(N+2)+3/2}  \norm{\eta}_{2(N+2)+1/2}
\\& \le \sigma\norm{\eta}_{2(N+2)+3/2}    \norm{\eta}_{2(N+2)-1/2}^\frac{4N-9}{4N-7} \norm{\eta}_{4N}^\frac{2}{4N-7}\lesssim( {\mathcal{D}_{N+2}^\sigma})^\theta({\mathcal{E}_{2N}^\sigma})^{1-\theta}.
\end{split}
\end{equation}
Hence, in light of \eqref{intep0}--\eqref{intep1} we may deduce
\begin{equation} \label{intep}
{\mathcal{E}}_{N+2}^\sigma\lesssim({\mathcal{D}}_{N+2}^\sigma)^\theta({\mathcal{E}}_{2N}^\sigma)^{1-\theta}.
\end{equation}

Now since by Proposition \ref{Dgle},
\begin{equation}
\sup_{0\le r\le t}\mathcal{E}_{2N}^\sigma(r)\lesssim
\mathcal{E}_{2N}^\sigma(0)+ \mathcal{F}_{2N}(0):=\mathcal{M}_0,
\end{equation}
we obtain from  \eqref{intep} that
\begin{equation} \label{u2}
\mathcal{E}_{N+2}^\sigma\lesssim\mathcal{M}_0^{1-\theta}
(\mathcal{D}_{N+2}^\sigma)^\theta.
\end{equation}
Hence by \eqref{u1} and \eqref{u2}, there exists some constant $C>0$
such that
\begin{equation}
\frac{d}{dt} \mathcal{E}_{N+2}^\sigma+\frac{C}{\mathcal{M}_0^s}
(\mathcal{E}_{N+2}^\sigma)^{1+s}\le 0,\ \text{ where } s =
\frac{1}{\theta}-1 = \frac{1}{4N-8}.
\end{equation}
Solving this differential inequality directly, we obtain
\begin{equation} \label{u3}
\mathcal{E}_{N+2}^\sigma(t)\le \frac{\mathcal{M}_0}{(\mathcal{M}_0^s
+ s C( \mathcal{E}_{N+2}^\sigma(0))^s t)^{1/s} }
{\mathcal{E}}_{N+2}^\sigma(0).
\end{equation}
Using that ${\mathcal{E}}_{N+2}^\sigma(0)\lesssim\mathcal{M}_0 $ and
the fact $1/s=4n-8>1$, we obtain from \eqref{u3} that
\begin{equation}
{\mathcal{E}}_{N+2}^\sigma(t)\lesssim
\frac{\mathcal{M}_0}{(1+sCt)^{1/s} }\lesssim
\frac{\mathcal{M}_0}{(1+t^{1/s}) } =
\frac{\mathcal{M}_0}{(1+t^{4N-8}) }.
\end{equation}
This directly implies \eqref{n+2}.
\end{proof}

Now we can arrive at our ultimate energy estimates for
$\mathcal{G}_{2N}^\sigma$.
\begin{thm}\label{Ap}
There exists a universal $0 < \delta < 1$ so that if $
\mathcal{G}_{2N}^\sigma(T) \le \delta$, then
\begin{equation}\label{Apriori}
 \mathcal{G}_{2N}^\sigma(t) \ls\mathcal{G}_{2N}^\sigma(0) \text{ for all }0 \le t \le
 T.
\end{equation}
\end{thm}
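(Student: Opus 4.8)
The plan is to assemble the three global a priori estimates already established---Proposition \ref{p_f_bound}, Proposition \ref{Dgle} and Proposition \ref{decaylm}---which between them control each of the four pieces that make up $\g^\sigma(t)$. No new analytic input is needed; the point is simply to combine them and check that the time weights match up.

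First I would fix $\delta>0$ to be smaller than each of the three thresholds appearing in Propositions \ref{p_f_bound}, \ref{Dgle} and \ref{decaylm}, so that all three conclusions become available under the single hypothesis $\g^\sigma(T)\le\delta$. Write $\mathcal{M}_0:=\se{2N}^\sigma(0)+\f(0)$. Since $\se{N+2}^\sigma(0)\ls\se{2N}^\sigma(0)$ (by the Sobolev-embedding bound $\se{N+2}^\sigma\ls\se{2N}^\sigma$ already noted, or directly from $N+2\le 2N$), we have $\mathcal{M}_0\ls\g^\sigma(0)\ls\mathcal{M}_0$, so it suffices to prove $\g^\sigma(t)\ls\mathcal{M}_0$ for all $0\le t\le T$.

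Next I would bound the four terms of $\g^\sigma(t)$ one at a time. For the first two, Proposition \ref{Dgle} gives directly $\sup_{0\le r\le t}\se{2N}^\sigma(r)+\int_0^t\sd{2N}^\sigma\ls\se{2N}^\sigma(0)+\f(0)=\mathcal{M}_0$. For the third, Proposition \ref{decaylm} gives $\sup_{0\le r\le t}(1+r)^{4N-8}\se{N+2}^\sigma(r)\ls\mathcal{M}_0$. The only term needing a short extra step is the last one: for each fixed $r\le t$, Proposition \ref{p_f_bound} yields $\f(r)\ls\f(0)+r\int_0^r\sd{2N}^\sigma\le\f(0)+r\int_0^t\sd{2N}^\sigma$; dividing by $1+r$ and invoking $\int_0^t\sd{2N}^\sigma\ls\mathcal{M}_0$ from Proposition \ref{Dgle} gives $\f(r)/(1+r)\ls\f(0)+\int_0^t\sd{2N}^\sigma\ls\mathcal{M}_0$, hence $\sup_{0\le r\le t}\f(r)/(1+r)\ls\mathcal{M}_0$. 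Adding the four bounds gives $\g^\sigma(t)\ls\mathcal{M}_0\ls\g^\sigma(0)$, which is \eqref{Apriori}.

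There is no substantive obstacle remaining at this stage, as all the hard work was done in the preceding propositions. The one point that requires any care is the interplay in the last term between the at-worst linear-in-time growth of $\f$ coming from the transport estimate (Proposition \ref{p_f_bound}) and the division by $1+r$ built into the definition of $\g^\sigma$: the two are exactly balanced, provided one already knows $\int_0^t\sd{2N}^\sigma$ is bounded, and it is precisely this balance---together with the almost-exponential decay of $\se{N+2}^\sigma$ that is fed back into Propositions \ref{p_f_bound} and \ref{Dgle} through the terms $\k\f$ and $\mathcal{K}\f$---that dictated the shape of the functional $\g^\sigma$ in the first place.
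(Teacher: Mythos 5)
Your proposal is correct and follows essentially the same route as the paper, which likewise deduces the theorem directly from the definition of $\mathcal{G}_{2N}^\sigma$ together with Propositions \ref{p_f_bound}, \ref{Dgle} and \ref{decaylm}. The only detail you spell out beyond the paper's one-line proof is the division of the transport bound $\f(r)\ls\f(0)+r\int_0^r\sd{2N}^\sigma$ by $1+r$, and that step is handled correctly.
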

\begin{proof}
The conclusion follows directly from the definition of
$\mathcal{G}_{2N}^\sigma$ and Propositions
\ref{p_f_bound}--\ref{decaylm}.
\end{proof}

\subsection{The limit as $\sigma\rightarrow0$: the proof of Theorem \ref{gwp}}

We can now prove a $\sigma$-independent global well-posedness of \eqref{geometric} which allow us to pass to the limit as $\sigma\rightarrow0$ on the whole time interval $[0,\infty)$.
 \begin{proof}[Proof of Theorem \ref{gwp}]
 Assume that the initial data $ u_0^\sigma$ and $\eta_0^\sigma$ satisfy the assumptions of Theorem \ref{gwp} for $0<\sigma\le 1$. For each $\sigma$, we let $(u^\sigma,p^\sigma,\eta^\sigma)$ be the local solution to \eqref{geometric} on $[0,T_0]$ produced by Theorem \ref{lwp}. By the estimate \eqref{Apriori} of Proposition \ref{Ap} and \eqref{2l_Fe211f}, we have
 \begin{equation}
 \mathcal{G}_{2N}^\sigma(t) \ls\mathcal{G}_{2N}^\sigma(0)\ls P(\mathcal{E}_0^\sigma)\ls  \mathcal{E}_0^\sigma \text{ for all }0 \le t \le
 T_0.
\end{equation}
This allows us to employ a continuity argument as in Section 9 of \cite{GT_per} to prove that there exists a universal $\delta_0>0$ such that if $\mathcal{E}_0^\sigma\le \delta_0$, then the solution $(u^\sigma,p^\sigma,\eta^\sigma)$ is a unique global solution to \eqref{geometric} on $[0,\infty)$ and satisfies the $\sigma$-independent global estimates \eqref{zz22}.

As for Theorem \ref{lwp}, the $\sigma$-independent estimates \eqref{zz22} yield the strong convergence of the whole family of $(u^\sigma,p^\sigma,\eta^\sigma)$ to the unique limit $(u,p,\eta)$ which is the unique strong solution of \eqref{geometric} for $\sigma=0$ on $[0,\infty)$ that satisfies the estimates \eqref{zz33}. The strong convergence is in any functional space that $\mathcal{G}_{2N}^0$ can compactly embed into, which is more than sufficient for us to pass to the limit as $\sigma\rightarrow0$ in \eqref{geometric} for each $t\in[0,\infty)$. We thus conclude the global-in-time zero surface tension limit  and complete the proof of Theorem \ref{gwp}.
\end{proof}

\subsection{Byproduct: a global well-posedness of \eqref{geometric} with surface tension}
For each fixed $\sigma>0$, we can establish from the proof of Theorem \ref{gwp} a global well-posedness of \eqref{geometric} with exponential decay rate. In this subsection, we will allow the constants $C$, $\delta_0$, etc. depend on $\sigma$.
\begin{thm}\label{globall}
Let $\sigma>0$.
Assume that the initial data $ u_0$ and $\eta_0$ satisfy the $(2N)^{th}$ compatibility conditions  \eqref{l_comp_cond_2N}, and that $\eta_0$ satisfies the zero average condition \eqref{z_avg}. There exists a $\delta_0>0$ such that if $\ns{u_0}_{4N}+\ns{\eta_0}_{4N+1}\le \delta_0$, then there exists a unique global solution $(u,p,\eta)$ to \eqref{geometric} on  $[0,\infty)$.  The solution $(u,p,\eta)$ obeys the estimate
\begin{equation}\label{esti}
\sup_{t\ge 0} \se{2N}^1(t) +\int_0^t \sd{2N}^1(s)ds\le C\left(\ns{u_0}_{4N}+\ns{\eta_0}_{4N+1}\right).
\end{equation}
Moreover, there exist constants $C_1,C_2>0$ such that
\begin{equation} \label{esti2}
\se{2N}^1(t)
\le  C_1 \se{2N}^1(0)e^{-C_2 t}.
\end{equation}
\end{thm}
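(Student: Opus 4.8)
The plan is to read the conclusion off the global theory of Theorem~\ref{gwp}, and then to sharpen it to exponential decay by exploiting that for a \emph{fixed} $\sigma>0$ the surface-tension part of the dissipation dominates the entire energy. For the global existence and the bound \eqref{esti}: since $\mathcal{E}_0^\sigma=\ns{u_0}_{4N}+\ns{\eta_0}_{4N+1/2}+\sigma\ns{\eta_0}_{4N+1}\le(1+\sigma)\bigl(\ns{u_0}_{4N}+\ns{\eta_0}_{4N+1}\bigr)$, it suffices to take $\delta_0=\delta_0(\sigma)$ so small that $(1+\sigma)\delta_0$ lies below the universal threshold of Theorem~\ref{gwp}; the compatibility conditions \eqref{l_comp_cond_2N} and the zero-average condition \eqref{z_avg} are already assumed. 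Theorem~\ref{gwp} then yields the unique global solution on $[0,\infty)$ with $\g^\sigma(\infty)\le C\mathcal{E}_0^\sigma\le C_\sigma\bigl(\ns{u_0}_{4N}+\ns{\eta_0}_{4N+1}\bigr)$; extracting the $\se{2N}^\sigma$ and $\int_0^t\sd{2N}^\sigma$ parts of $\g^\sigma$ and using $\se{2N}^1\sim_\sigma\se{2N}^\sigma$, $\sd{2N}^1\sim_\sigma\sd{2N}^\sigma$ (an equivalence valid only for fixed $\sigma>0$) gives \eqref{esti}. In particular, after possibly shrinking $\delta_0$ we may assume $\g^\sigma(T)\le\delta$ for all $T$, with $\delta$ the universal smallness of Section~\ref{sss}, so that Propositions~\ref{i_temporal_evolution 2N}, \ref{p_upper_evolution 2N} and the comparison estimates of Theorems~\ref{eth}--\ref{dth} all apply on $[0,\infty)$.

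For the exponential decay \eqref{esti2}, the crucial input is that for a fixed $\sigma>0$,
\[
\f=\ns{\eta}_{4N+1/2}\le\sigma^{-2}\ns{\eta}_{4N+3/2}\le\sigma^{-2}\sd{2N}^\sigma,\qquad \f\lesssim_\sigma\se{2N}^\sigma,
\]
which, together with $\k\lesssim\se{N+2}^\sigma\le\se{2N}^\sigma\le\delta$, converts every ``bad'' cross term---$\k\f$ in the comparison \eqref{e2n}, and $\mathcal{K}\f$, $\sqrt{\sd{2N}^\sigma\,\mathcal{K}\f}$, $\sqrt{\se{2N}^\sigma}\,\sd{2N}^\sigma$ in \eqref{p_u_e_00} and \eqref{d2n}---into a $\delta$- or $\sqrt{\delta}$-small multiple of $\sd{2N}^\sigma$ or of $\se{2N}^\sigma$. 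Concretely, I would add $(1+\beta)$ times the temporal evolution \eqref{tem en 2N} to the localized evolution \eqref{p_u_e_00}: by \eqref{tem en 2N00} the correction $\int_\Omega J\,\partial_t^{2N-1}p\,F^{2,2N}=O\bigl((\se{2N}^\sigma)^{3/2}\bigr)$ is lower order, so the resulting modified energy $\mathfrak{E}$ is equivalent to $\se{2N}^\sigma$ (using \eqref{e2n} with $\k\f$ and $(\se{2N}^\sigma)^2$ absorbed). Choosing $\varepsilon$ small, then $\beta$ large to absorb $C(\varepsilon)\sdb{2N}^t$ into $(1+\beta)\sdb{2N}^t$, then replacing $\sdb{2N}$ by $\sd{2N}^\sigma$ via \eqref{d2n}--\eqref{dn+2} (legitimate since $\mathcal{K}\f\le\delta\sigma^{-2}\sd{2N}^\sigma$), and finally $\delta$ small, yields a differential inequality
\[
\frac{d}{dt}\mathfrak{E}(t)+c\,\sd{2N}^\sigma(t)\le0,\qquad \mathfrak{E}\sim_\sigma\se{2N}^\sigma .
\]
Lastly the dissipation is coercive over the energy once $\sigma>0$ is fixed: interpolating $\ns{\eta}_{4N}$ and $\sigma\ns{\eta}_{4N+1}$ between $\sigma^2\ns{\eta}_{4N+3/2}$ and $\ns{\eta}_{4N-1/2}$ (both present in $\sd{2N}^\sigma$) and dominating every remaining term of $\se{2N}^\sigma$ term by term by $\sd{2N}^\sigma$ gives $\se{2N}^\sigma\le C_\sigma\,\sd{2N}^\sigma$, hence $\mathfrak{E}\le C_\sigma\,\sd{2N}^\sigma$. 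The differential inequality then reads $\frac{d}{dt}\mathfrak{E}+c_\sigma\,\mathfrak{E}\le0$, and Gr\"onwall gives $\mathfrak{E}(t)\le\mathfrak{E}(0)e^{-c_\sigma t}$; the equivalences $\mathfrak{E}\sim_\sigma\se{2N}^1$ then give \eqref{esti2}.

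The delicate step, and the main obstacle, is the bookkeeping behind the clean differential inequality $\frac{d}{dt}\mathfrak{E}+c\,\sd{2N}^\sigma\le0$: one must verify that, with $\sigma$ fixed, \emph{every} $\eta$-cross term produced by the mean-curvature contributions to $G^3,G^4$ and by the non-variational pressure correction $\int_\Omega J\,\partial_t^{2N-1}p\,F^{2,2N}$ is genuinely absorbable, which forces a careful ordering in the choice of $\varepsilon,\beta,\delta$ (and $\delta$ small relative to $\sigma$). This is exactly where the crude bound $\f\le\sigma^{-2}\sd{2N}^\sigma$---which degenerates as $\sigma\to0$, and is the very reason the $\sigma$-uniform Theorem~\ref{gwp} must instead run the two-tier scheme against the linear-in-time growth of $\f$---does all the work.
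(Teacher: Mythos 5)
Your proposal is correct and follows essentially the same route as the paper: global existence and \eqref{esti} come from the $\sigma$-fixed smallness theory, and the exponential decay comes from observing that for fixed $\sigma>0$ one has $\f\lesssim_\sigma\se{2N}^\sigma$ and $\f\lesssim_\sigma\sd{2N}^\sigma$, so the $\mathcal{K}\f$ error terms are absorbable, the combination of Propositions \ref{i_temporal_evolution 2N} and \ref{p_upper_evolution 2N} with the comparison Theorems \ref{eth}--\ref{dth} yields $\frac{d}{dt}\mathcal{E}_{2N}^1+\mathcal{D}_{2N}^1\le 0$, and the coercivity $\mathcal{E}_{2N}^1\lesssim\mathcal{D}_{2N}^1$ plus Gr\"onwall gives \eqref{esti2}. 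The only cosmetic difference is that you carry the $\sigma$-weighted quantities and convert to $\se{2N}^1$ at the end, while the paper works with the $\sigma=1$ functionals throughout.
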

\begin{proof}
Since $\f \le C {\mathcal{E}}_{2N}^1$, we may derive from \eqref{q0} that
\begin{equation}
\bar{\mathcal{E}}_{2N}^1 \lesssim{\mathcal{E}}_{2N}^1 \lesssim
\bar{\mathcal{E}}_{2N}^1, \text{ and } \bar{\mathcal{D}}_{2N}
\lesssim {\mathcal{D}}_{2N}^1 \lesssim \bar{\mathcal{D}}_{2N} .
\end{equation}
We may then deduce from
Proposition \ref{i_temporal_evolution 2N} and Proposition
\ref{p_upper_evolution 2N} that there exists an instantaneous
energy, which is equivalent to ${\mathcal{E}}_{2N}^1$, but for
simplicity is still denoted by ${\mathcal{E}}_{2N}^1$, such
that
\begin{equation} \label{2N2N}
\frac{d}{dt} {\mathcal{E}}_{2N}^1 +  {\mathcal{D}}_{2N}^1\le
0.
\end{equation}
Integrating the inequality \eqref{2N2N} directly in time, we get \eqref{esti}. But we see that ${\mathcal{E}}_{2N}^1\ls  {\mathcal{D}}_{2N}^1$, so the decay estimate \eqref{esti} follows by \eqref{2N2N} and an application of Gronwall's inequality.
\end{proof}

\end{document}